
\documentclass[reqno,11pt]{amsart}
\usepackage[colorlinks=true, linkcolor=blue, citecolor=blue]{hyperref}

\usepackage{amssymb}
\usepackage{amsmath, graphicx, rotating}
\usepackage{color}     
\usepackage{soul}
\usepackage[dvipsnames]{xcolor}   
\usepackage{tcolorbox}

\usepackage{ifthen}
\usepackage{xkeyval}
\usepackage{todonotes}
\setlength{\marginparwidth}{3.0cm}




\usepackage[T1]{fontenc}
\usepackage{lmodern}
\usepackage[english]{babel}

\usepackage{ upgreek }
\usepackage{stmaryrd}
\SetSymbolFont{stmry}{bold}{U}{stmry}{m}{n}
\usepackage{amsthm}
\usepackage{float}

\usepackage{ bbm }
\usepackage{ stmaryrd }
\usepackage{ mathrsfs }
\usepackage{ frcursive }
\usepackage{ comment }

\usepackage{pgf, tikz}
\usetikzlibrary{shapes}
\usepackage{varioref}
\usepackage{enumitem}
\usepackage{longtable}

\usepackage{mathtools}

\usepackage{dsfont}

\setcounter{MaxMatrixCols}{10}
\definecolor{rouge}{rgb}{0.7,0.00,0.00}
\definecolor{vert}{rgb}{0.00,0.5,0.00}
\definecolor{bleu}{rgb}{0.00,0.00,0.8}

\usepackage[margin=1.3in]{geometry}

\newtheorem{theorem}{Theorem}[section]
\newtheorem*{theorem*}{Theorem}
\newtheorem{lemma}[theorem]{Lemma}

\newtheorem{corollary}[theorem]{Corollary}
\newtheorem{proposition}[theorem]{Proposition}

\labelformat{hypothesis}{\textbf{M\kern-0.1mm#1}}

\newtheorem{condition}{Condition}

\labelformat{conditionA}{\textbf{A\kern-0.1mm#1}}

\renewcommand\dots{\hbox to 1em{.\hss.\hss.}}

\theoremstyle{definition}

\numberwithin{equation}{section}

\def\bb#1{\mathbb{#1}}

\def\bf#1{\mathbf{#1}}
\def\scr#1{\mathscr{#1}}

\def\geq{\geqslant}
\def\leq{\leqslant}

\def\ddd{\ldots}

\newcommand\ee{\varepsilon}

\DeclareMathOperator{\Leb}{Leb}

\def\geq{\geqslant}
\def\leq{\leqslant}
\def\Rd {\mathbb{R}^d}
\def\Rd*{(\mathbb{R}^d)^*}
\def\Pd{{\mathbb{P}}^{d-1}}
\def\Pd*{(\mathbb{P}^{d-1})^*}

\def\bb#1{\mathbb{#1}}


%
%
%

%
%
\begin{document}
\title[Conditioned random walks on linear groups]{Conditioned random walks on linear groups II:\\ local limit theorems} 

\author{Ion Grama}
\author{Jean-Fran\c cois Quint}
\author{Hui Xiao}

\curraddr[Grama, I.]{ Univ Bretagne Sud, CNRS UMR 6205, LMBA, Vannes, France.}
\email{ion.grama@univ-ubs.fr}

\curraddr[Quint, J.-F.]{IMAG, Univ Montpellier, CNRS, Montpellier, France.}
\email{Jean-Francois.Quint@umontpellier.fr}

\curraddr[Xiao, H.]{Academy of Mathematics and Systems Science, Chinese Academy of Sciences, Beijing 100190, China.}
\email{xiaohui@amss.ac.cn}

\date{\today }
\subjclass[2020]{Primary 60B15, 60B20, 60F05, 60J05. Secondary 22D40, 22E46}
\keywords{Random walks on groups, exit time, 
random walks conditioned to stay positive, local limit theorem, target harmonic measure}

\begin{abstract}
We investigate random walks on the general linear group constrained within a specific domain, with a focus on their asymptotic behavior. 
In a previous work \cite{GQX24a}, we constructed the associated harmonic measure, a key element in formulating the local limit theorem for conditioned random walks on groups. The primary aim of this paper is to prove this theorem. The main challenge arises from studying the conditioned reverse walk, whose increments, in the context of random walks on groups, depend on the entire future. 
To achieve our goal, we combine a Caravenna-type conditioned local limit theorem with the conditioned version of the central limit theorem for the reversed walk. The resulting local limit theorem is then applied to derive the local behavior of the exit time.
\end{abstract}

\maketitle

\tableofcontents


\section{Introduction and results}

\subsection{Notation and background} \label{sec-background and motiv}
Let $\bb V$ be a finite dimensional real vector space. Denote by $\bb G = \textrm{GL}(\bb V)$ the group of linear automorphisms of $\bb V$.
We equip $\bb V$ with a Euclidean norm $\| \cdot \|$. 
If $g$ is a linear endomorphism of $V$, we write $\|g\| = \sup_{v \in \bb V: v \neq 0} \frac{\| gv \|}{\| v \|}$ for the operator norm of $g$. 
The group $\bb G$ acts on the projective space $\bb P(\bb V)$ of $\bb V$ through the formula 
$ g ( \bb R v ) = \bb R (gv)$, where $g \in \bb G$ and $v \in \bb V \setminus \{0\}$.

Let $\mu$ be a Borel probability measure on the group $\bb G$. 
Denote by $\Gamma_{\mu}$ the closed subsemigroup of $\bb G$ spanned by the support of the measure $\mu$. 
In the sequel we will make use of the condition that $\Gamma_{\mu}$ is proximal, meaning that   
$\Gamma_{\mu}$ contains an element $g$ such that 
the characteristic polynomial of $g$ admits a unique root of maximal modulus and that this root is simple. 
We will also assume that $\Gamma_{\mu}$ is strongly irreducible, meaning that no finite union of proper non-zero subspaces 
of $\bb V$ is $\Gamma_{\mu}$ invariant. 
We will say that $\mu$ admits a finite exponential moment if there exists a constant $\alpha >0$ such that 
\begin{align}\label{Exponential-moment}
\int_{\bb G} \max \{ \|g\|,  \|g^{-1} \| \}^{\alpha} \mu(dg) < \infty. 
\end{align}
For $g \in \bb G$ and $x = \bb R v \in \bb P(\bb V)$, we define 
\begin{align*}
\sigma(g, x) = \log \frac{\|gv\|}{\|v\|}. 
\end{align*}
The function $\sigma: \bb G \times \bb P(\bb V) \to \bb R$ satisfies the cocycle identity: for any $g_1, g_2 \in \bb G$ and $x \in \bb P(\bb V)$, we have 
$\sigma(g_2 g_1, x) = \sigma(g_2, g_1 x) + \sigma(g_1, x).$ 

Throughout the paper, we fix the probability space $(\Omega, \scr A, \bb P)$,
where $\Omega = \bb G^{\bb N^*}$, $\bb N^*$ is the set  of positive integers, 
$\bb G$ is equipped with the Borel $\sigma$-algebra, $\mathscr A$ is the corresponding product $\sigma$-algebra, 
and $\bb P = \mu^{\otimes \bb N^*}$ is the product measure.
The sequence of coordinate functions $g_1, g_2, \ldots$ on $\Omega$ forms a sequence of independent identically distributed elements of $\bb G$ with law $\mu$.
For any starting point $x\in \bb P(\bb V)$, consider the random walk
\begin{align} \label{def of direct RW-001}
 \sigma(g_n \cdots g_1,x) = \sum_{k=1}^{n}  \sigma(g_k, g_{k-1} \cdots g_1 x) , 
 \quad n\geq 1,
\end{align}
where, by a convention applied throughout the paper,  the empty left product $g_m \cdots g_1$ for $m<1$ 
is identified with the identity matrix.  
The study of the random walk \eqref{def of direct RW-001} 
has attracted much attention, see, for example, the papers \cite{FK60, LeP82, GR85, GM89, BQ16, GL16, Ser19, AS21, CDJP23, AS24a, AS24b}, 
as well as the books \cite{Boug-Lacr85, BQ16b},
where various asymptotic properties, such as the law of large numbers, the central limit theorem, and large deviations, 
have been established.

In particular, from the results in \cite{Boug-Lacr85, BQ16b}, we know that if $\Gamma_{\mu}$ is proximal and strongly irreducible, and if the measure $\mu$ 
admits an exponential moment, 
there exists a real number $\lambda_{\mu}$, called the first Lyapunov exponent of $\mu$,
such that,  for any $x \in \bb P(\bb V)$, $\bb P$-almost surely, as $n \to \infty$, 
\begin{align}\label{def-Lyapunov-001}
\frac{1}{n}  \sigma(g_n \cdots g_1, x) \to \lambda_{\mu}. 
\end{align}
We assume that the random walk \eqref{def of direct RW-001} is centered, which means that $\lambda_{\mu} = 0$. 
Introduce the following stopping times: 
for any $x \in \bb P(\bb V)$ and $t\in \bb R$, 
\begin{align} \label{stopping time tau x t-001} 
\tau_{x, t}  & = \min \{ k \geq 1: t + \sigma(g_k \cdots g_1, x) < 0 \}  
\end{align}
and
\begin{align} 
\check \tau_{x, t}  & = \min \{ k \geq 1: t - \sigma(g_k \cdots g_1, x) < 0 \},   \label{stopping time tau x t-002}
\end{align}
where by convention $\min \emptyset =\infty$.

The study of these stopping times was initiated in \cite{GLP17}, 
where the asymptotic of the persistence probability $\bb P(\tau_{x,t} >n)$ 
and the central limit theorem for the random walk $t + \sigma(g_n \cdots g_1, x)$  
conditioned on the event $\{ \tau_{x,t} >n \}$ were determined. 
In this paper, we focus on the associated local limit theorem for the walks $(t \pm \sigma(g_n \cdots g_1, x))_{n\geq 0}$ 
conditioned to remain non-negative.  
While such result has been established before for some classical random walks,   
its proof for walks on groups has proven to be highly technical and challenging;
it requires the development of new techniques, primarily due to the difficulties related to  
the reversibility of the walk \eqref{def of direct RW-001}. 
We provide detailed explanations for this in Subsection \ref{sec: proof strategy-001}.    
 The main goal of this paper, along with the companion paper \cite{GQX24a}, is to introduce the appropriate techniques to
 address these difficulties and to prove such a conditioned local limit theorem.
 Potential applications across several related fields include, for instance, 
random walks on affine groups in the critical case \cite{BPP20, ABP24}, 
reflected random walks \cite{Lalley95, EsPeRa2013, EP15},
multitype branching processes in random environment \cite{LPPP18, LPPP21, PP23b},
branching random walks on the linear group \cite{BDGM14, Mentem16, GXM2024ExtrPos}. 
Additionally, the methods developed in this work may prove beneficial in studying random walks on hyperbolic spaces 
\cite{Gou09, Gou14, Gou15, BQ16aa, AS22, BMSS22}. 


Let us end this section by stating some notation to be used all over the peper. We write 
$\bb R_+=[0,\infty)$ for the  set of non-negative numbers and $\bb R_-=(-\infty,0]$ for the  set of non-positive numbers. 
The set of non-negative integers is denoted by $\bb N$. 
The letters $c, C$ will represent positive constants, whose values may differ with each occurrence. 
The expectation corresponding to the probability measure $\bb P$ is denoted by $\bb E$.
We denote by $\mathds 1_{B}$ the indicator function of the event $B\in \scr A$. 
 For brevity, given a random variable $X$  and an event $B\in \scr A$, we will write $\bb E (X; B)$ 
 for the expectation $\bb E (X \mathds 1_{B})$.

\subsection{Statement of main results} \label{subsec-statement results}
To describe the asymptotic behavior of the conditioned random walk 
$(t + \sigma(g_n \cdots g_1, x))_{n\geq 0}$, 
we need to introduce two key concepts:
the harmonic function $V$, which is associated with the random walk $(t + \sigma(g_n \cdots g_1, x))_{n\geq 0}$ 
killed upon exiting $\bb R_+$, and a dual object to $V$, 
which we refer to as the harmonic measure, denoted by $\rho$.

We begin by stating the following existence result \cite[Theorem 2.1]{GLP17}. 
Under the assumptions that $\Gamma_{\mu}$ is proximal and strongly irreducible, 
the measure $\mu$ admits an exponential moment and the Lyapunov exponent $\lambda_{\mu}$ is zero, 
we have that, for any $x \in \bb P(\bb V)$ and $t \in \bb R$, the following limits exist:
\begin{align} 
& \lim_{n \to \infty} \bb E \Big( t + \sigma(g_n \cdots g_1, x); \tau_{x, t} > n \Big) = V(x, t),  \label{harm function GLPP-001} \\
 & \lim_{n \to \infty} \bb E \Big( t - \sigma(g_n \cdots g_1, x); \check{\tau}_{x, t} > n \Big) = \check{V}(x, t).  \label{harm function GLPP-002}
\end{align}
Moreover, the functions $V$ and $\check V$ are non-negative, non-decreasing in $t$ and satisfy, uniformly in $x \in \bb P(\bb V)$, 
\begin{align*} 
\lim_{t \to \infty} \frac{V(x, t)}{t}  =\lim_{t \to \infty}\frac{\check{V}(x, t)}{t} = 1.
\end{align*} 
From \eqref{harm function GLPP-001} it follows that the function  $V$ is $Q$-harmonic, 
meaning that it satisfies $Q V=V$, where the operator 
$$Qh(x,t)= \bb E \Big( h(g_1 x, t + \sigma(g_1, x)); \tau_{x, t} > 1 \Big)$$
is defined for any 
 continuous compactly supported function $h$ on $\bb P(\bb V)\times \bb R$. 
 A similar property holds for the function $\check V$.

In \cite{GQX24a}, we proved the existence of dual objects to the functions $V$ and $\check V$, 
which are Radon measures on $\bb P(\bb V) \times \bb R$. 
More precisely, under the same assumptions as before, we showed that 
there exist Radon measures $\rho$ and $\check{\rho}$ 
on $\bb P(\bb V) \times \bb R$ such that,  
for any continuous compactly supported function $h$ on $\bb P(\bb V) \times \bb R$,  
the following limits exist uniformly in $x \in \bb P(\bb V)$
and are independent of $x$: 
\begin{align}
 \lim_{n \to \infty}  \int_{0}^{\infty} t  \bb E \Big( h (g_n \cdots g_1 x,  &\  t +  \sigma(g_n \cdots g_1, x)); \tau_{x, t} > n -1 \Big) dt  \notag \\
   & \qquad\qquad\quad = \int_{\bb P(\bb V) \times \bb R} h(x', t') \rho(dx', dt'),  \label{exist of measure rho-001} \\ 
   \lim_{n \to \infty}  \int_{0}^{\infty} t  \bb E \Big( h (g_n \cdots g_1 x,  &\  t - \sigma(g_n \cdots g_1, x) ); \check{\tau}_{x, t} > n -1 \Big) dt  \notag \\  
&\qquad\qquad\quad  = \int_{\bb P(\bb V) \times \bb R} h(x', t') \check{\rho}(dx', dt') \label{exist of measure rho-002}. 
\end{align}
Note that, by Corollary 1.6  of \cite{GQX24a}, the Radon measures $\rho$ and $\check \rho$ are non-zero. 
Furthermore, by Corollary 1.3  of \cite{GQX24a}, the marginals of the measures $\rho$ and $\check \rho$ on $\bb R$ are absolutely continuous with respect to the Lebesgue measure,
with non-decreasing densities denoted by $W$ and $\check{W}$, respectively. 
According to Corollary 1.4 of \cite{GQX24a},
the measure $\rho$ is $R$-harmonic, 
meanning  it satisfies $R^* \rho=\rho$, where the operator 
\begin{align} \label{operator R 001}
R h(x,t) = \mathds 1_{\{t\geq 0\}} \bb E \Big( h (g_1 x, t + \sigma(g_1, x) ) \Big)
\end{align}
is defined for any  continuous compactly supported function $h$ on $\bb P(\bb V)\times \bb R$. 
 A similar property holds for the Radon measure $\check \rho$. 

We will use the harmonic functions $V$ and $\check V$, 
along with the Radon measures $\rho$ and $\check\rho$, to establish the following conditioned local limit theorem.
To state this we also need 
the asymptotic variance   
\begin{align*}
\upsilon_{\mu}^2 = \lim_{n \to \infty} \frac{1}{n} \bb E \Big[ (\sigma(g_n \cdots g_1, x))^2  \Big],
\end{align*}
which exists for any $x\in \bb P(\bb V)$, is independent of $x$, and is strictly positive, 
as shown in \cite{BQ16b}.

\begin{theorem}\label{Thm-CLLT-cocycle}
Assume that $\Gamma_{\mu}$ is proximal and strongly irreducible, 
the measure $\mu$ admits an exponential moment and the Lyapunov exponent $\lambda_{\mu}$ is zero. 
Then, for any fixed $t\in \bb R$ and for any continuous compactly supported function $h$ on $\bb P(\bb V) \times \bb R$, 
 we have, uniformly in $x \in \bb P(\bb V)$,
\begin{align*}
  \lim_{n \to \infty}  n^{3/2}  \bb E \Big( h (g_n \cdots g_1 x, & \  t + \sigma(g_n \cdots g_1, x)); \tau_{x, t} > n -1 \Big)  \\
& \qquad = \frac{2 V(x, t)}{ \sqrt{2 \pi} \upsilon_{\mu}^3 } \int_{\bb P(\bb V) \times \bb R} h (x',t') \rho(dx',dt'), 
 \notag\\
 \lim_{n \to \infty}  n^{3/2}  \bb E \Big( h (g_n \cdots g_1 x, & \   t - \sigma(g_n \cdots g_1, x) );  \check{\tau}_{x, t} > n -1 \Big)  \\
& \qquad = \frac{2 \check{V}(x, t)}{ \sqrt{2 \pi} \upsilon_{\mu}^3 } \int_{\bb P(\bb V) \times \bb R} h (x',t') \check{\rho}(dx',dt'). 
\end{align*}
\end{theorem}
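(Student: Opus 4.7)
The plan is to prove Theorem \ref{Thm-CLLT-cocycle} by a time-splitting argument combining the conditioned central limit theorem of \cite{GLP17} for the initial half of the trajectory with a Caravenna-type pointwise local limit theorem for the terminal half, the latter being where the harmonic measure $\rho$ constructed in \cite{GQX24a} appears. Set $m = \lfloor n/2 \rfloor$ and $k = n-m$. By the Markov property at time $m$ and the cocycle identity, the target expectation factors as
\begin{equation*}
\bb E\bigl(h(g_n \cdots g_1 x, t + \sigma(g_n \cdots g_1, x)); \tau_{x,t} > n-1\bigr) = \bb E\bigl(\Phi_k(g_m \cdots g_1 x,\, t + \sigma(g_m \cdots g_1, x)); \tau_{x,t} > m-1\bigr),
\end{equation*}
where $\Phi_k(y, s) := \bb E\bigl(h(g_k \cdots g_1 y, s + \sigma(g_k \cdots g_1, y)); \tau_{y, s} > k-1\bigr)$ for $s \ge 0$.

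The technical heart of the proof is a pointwise Caravenna-type asymptotic for $\Phi_k$: uniformly in $y \in \bb P(\bb V)$ and in $s$ ranging over a window of order $\sqrt{k}$,
\begin{equation*}
\Phi_k(y, s) = \frac{2\, \phi^+\!\bigl(s/(\upsilon_\mu \sqrt{k})\bigr)}{\sqrt{2\pi}\, \upsilon_\mu^2\, k} \int_{\bb P(\bb V) \times \bb R} h\, d\rho \;+\; o(k^{-1}),
\end{equation*}
with $\phi^+(u) = u e^{-u^2/2} \mathbf{1}_{u > 0}$. This is a pointwise refinement of the averaged limit \eqref{exist of measure rho-001} used to define $\rho$; the two are consistent via $\int_0^\infty u \phi^+(u)\, du = \sqrt{\pi/2}$. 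Once it is available, I substitute into the decomposition and apply the conditioned CLT of \cite{GLP17}: on the event $\{\tau_{x,t} > m-1\}$, whose probability satisfies $\bb P(\tau_{x,t} > m-1) \sim \frac{2 V(x,t)}{\upsilon_\mu \sqrt{2\pi m}}$, the rescaled height $(t + \sigma(g_m \cdots g_1, x))/(\upsilon_\mu \sqrt{m})$ converges in law to the Rayleigh distribution with density $\phi^+$. Since $\Phi_k$ depends on its second argument only through the bounded continuous function $\phi^+(\cdot/(\upsilon_\mu \sqrt{k}))$, and since $m/n, k/n \to 1/2$, applying this CLT together with the identity $\int_0^\infty \phi^+(u)^2\, du = \sqrt{\pi}/4$ produces both the scaling $n^{-3/2}$ and the prefactor $2 V(x,t)/(\sqrt{2\pi}\, \upsilon_\mu^3)$. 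A truncation argument, justified by tail estimates on $\tau_{x,t}$ from \cite{GLP17}, controls values of $s$ outside the Brownian-scale window. The second asymptotic, involving $\check V$, $\check \rho$, and $\check \tau$, follows by the symmetric argument applied to the reversed cocycle.

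The main obstacle is establishing the Caravenna-type asymptotic for $\Phi_k$. In the classical i.i.d.\ setting one reverses time: the reversed walk has i.i.d.\ increments with the same distribution, reducing the question to an unconditioned local CLT together with a conditioned CLT for the reversed walk. Here this strategy fails because $\sigma(g_k \cdots g_1, y)$ is not a sum of i.i.d.\ terms but depends on the projective trajectory $g_j \cdots g_1 y$; time reversal produces a Markovian walk whose transition law depends on the limiting direction of the forward walk. The harmonic measure $\rho$ of \cite{GQX24a}, characterized by the invariance $R^* \rho = \rho$, is designed precisely to play the role of the stationary law of the reversed conditioned walk on $\bb P(\bb V) \times \bb R$. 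Combining $R^*$-stationarity with the reverse conditioned CLT proved in the companion paper, together with a Fourier-smoothing argument in the $t$-variable that exploits the spectral gap of the transfer operator on $\bb P(\bb V)$, should yield the pointwise local limit and thereby close the argument.
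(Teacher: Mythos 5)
Your time-splitting layout and the terminal arithmetic with $\int_0^\infty (\phi^+(u))^2\,du = \sqrt{\pi}/4$ are consistent with the paper and do reproduce the constant $\frac{2V(x,t)}{\sqrt{2\pi}\,\upsilon_\mu^3}$. However, there is a genuine gap at the key lemma on which your whole argument hangs: the claimed pointwise Caravenna-type asymptotic
\[
\Phi_k(y,s) = \frac{2\,\phi^+\!\big(s/(\upsilon_\mu\sqrt{k})\big)}{\sqrt{2\pi}\,\upsilon_\mu^2\,k}\int_{\bb P(\bb V)\times\bb R} h\,d\rho \;+\; o(k^{-1}),
\]
uniformly in $y\in\bb P(\bb V)$ and over $s$ of order $\sqrt{k}$. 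This is not a corollary of anything available; it is essentially as deep as the target theorem, since it is a conditioned local limit theorem for the walk started at height $s\asymp\sqrt{k}$, conditioned to stay positive, and required to land in a compact window. The paper's Caravenna-type Theorem \ref{t-B 001} has the roles reversed — fixed starting height $t$, target window shifted to $u\asymp\sqrt{n}$ — and its limiting measure is $V(x,t)\,\nu\otimes dt'$, not $\rho$. Converting Theorem \ref{t-B 001} into your statement requires time-reversing the walk; but the reversed trajectory is a perturbed, non-Markovian process whose increments depend on the entire future, for which no conditioned \emph{local} limit theorem is available and for which there is no transfer-operator description. The Fourier-smoothing/spectral-gap argument you invoke in your final paragraph applies to the forward norm cocycle over $\bb P(\bb V)$ and does not transfer to the reversed process; this is precisely the obstruction the paper flags in its introduction.

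The paper sidesteps the gap by interchanging the roles of the two halves. It applies the Caravenna-type Theorem \ref{t-A 001} to the \emph{first} half of the trajectory (forward walk, fixed starting height $t$, producing $V(x,t)$ and the Rayleigh factor), with the second-half expectation $\Psi_m$ threaded through as the target function. Only after that does it reverse, and only inside the integral $\int\int\overline\Psi_m(x',t')\phi^+(t'/\upsilon_\mu\sqrt{k})\,dt'\,\nu(dx')$ — and at that stage a conditioned \emph{central} limit theorem for the reversed perturbed walk (Theorem \ref{Thm-CCLT-limit}), not a local one, suffices. The measure $\rho$ emerges as the limit of an integrated functional $\int\psi\, U^{x',m,\varphi}_{[m/2]}$, not as a pointwise density. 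If you assign the Caravenna step to the second half, you face the full difficulty of a reversed LLT; if instead you pass the second half through the forward Caravenna step and reverse only afterward, you reproduce the paper's proof.
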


The target function $h$ in the statement of Theorem \ref{Thm-CLLT-cocycle} is assumed to be continuous. 
However, by standard approximation techniques, 
one can prove that Theorem \ref{Thm-CLLT-cocycle} also holds for compactly supported functions $h$ 
that are continuous on a set of full measure with respect to $\rho$. 
For instance, applying the previous theorem with $h(x,t) \mathds 1_{\{t\geq 0\}}$ and 
$h(x,t) \mathds 1_{\{t < 0\}}$ in place of $h(x,t)$, we obtain the following result. 

\begin{corollary}\label{coroll-CLLT-cocycle-tau>n}
Assume that $\Gamma_{\mu}$ is proximal and strongly irreducible, 
the measure $\mu$ admits an exponential moment and the Lyapunov exponent $\lambda_{\mu}$ is zero. 
Then, for any fixed $t\in \bb R$ and for any continuous compactly supported function $h$ on $\bb P(\bb V) \times \bb R$, 
 we have, uniformly in $x \in \bb P(\bb V)$,
\begin{align*}
  \lim_{n \to \infty}  n^{3/2}  \bb E \Big( h (g_n \cdots g_1 x, & \  t + \sigma(g_n \cdots g_1, x)); \tau_{x, t} > n \Big)  \\
&\qquad = \frac{2 V(x, t)}{ \sqrt{2 \pi} \upsilon_{\mu}^3 } \int_{\bb P(\bb V) \times \bb R_+} h (x',t') \rho(dx',dt'), 
 \notag\\
 \lim_{n \to \infty}  n^{3/2}  \bb E \Big( h (g_n \cdots g_1 x,  & \  t - \sigma(g_n \cdots g_1, x) );   \check{\tau}_{x, t} > n  \Big)  \\
&\qquad = \frac{2 \check{V}(x, t)}{ \sqrt{2 \pi} \upsilon_{\mu}^3 } \int_{\bb P(\bb V) \times \bb R_+} h (x',t') \check{\rho}(dx',dt')  
\end{align*}
and
\begin{align*}
  \lim_{n \to \infty}  n^{3/2}  \bb E \Big( h (g_n \cdots g_1 x,  & \  t + \sigma(g_n \cdots g_1, x)); \tau_{x, t} = n \Big)  \\
&\qquad = \frac{2 V(x, t)}{ \sqrt{2 \pi} \upsilon_{\mu}^3 } \int_{\bb P(\bb V) \times \bb R_-} h (x',t') \rho(dx',dt'), 
 \notag\\
 \lim_{n \to \infty}  n^{3/2}  \bb E \Big( h (g_n \cdots g_1 x, & \  t - \sigma(g_n \cdots g_1, x) );   \check{\tau}_{x, t} = n  \Big)  \\
&\qquad = \frac{2 \check{V}(x, t)}{ \sqrt{2 \pi} \upsilon_{\mu}^3 } \int_{\bb P(\bb V) \times \bb R_-} h (x',t') \check{\rho}(dx',dt'). 
\end{align*}
\end{corollary}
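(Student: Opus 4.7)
The plan is to deduce the corollary from Theorem \ref{Thm-CLLT-cocycle} by an approximation argument that turns the discontinuous indicators $\mathds 1_{\{t' \geq 0\}}$ and $\mathds 1_{\{t' < 0\}}$ into continuous test functions. The starting observation is that on $\{\tau_{x,t} > n-1\}$ one has $\tau_{x,t} > n$ precisely when $t + \sigma(g_n \cdots g_1, x) \geq 0$ and $\tau_{x,t} = n$ precisely when $t + \sigma(g_n \cdots g_1, x) < 0$. Formally applying Theorem \ref{Thm-CLLT-cocycle} to $(x',t') \mapsto h(x',t') \mathds 1_{\{t' \geq 0\}}$ (resp.\ $h(x',t') \mathds 1_{\{t' < 0\}}$) would therefore yield the statements involving $\{\tau_{x,t} > n\}$ (resp.\ $\{\tau_{x,t} = n\}$); the obstruction is that these targets are discontinuous on $\bb P(\bb V) \times \{0\}$.

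To resolve this, I would fix piecewise linear continuous approximants $\phi_\varepsilon^-, \phi_\varepsilon^+ : \bb R \to [0,1]$ with $\phi_\varepsilon^- \leq \mathds 1_{[0, \infty)} \leq \phi_\varepsilon^+$ and $\phi_\varepsilon^+ - \phi_\varepsilon^-$ supported in $[-\varepsilon, \varepsilon]$. Writing $h = h^+ - h^-$ as a difference of non-negative continuous compactly supported functions, the identity $\{\tau_{x,t} > n\} = \{\tau_{x,t} > n-1\} \cap \{t + \sigma(g_n \cdots g_1, x) \geq 0\}$ gives the sandwich
\begin{align*}
\bb E\bigl(h^{\pm} \phi_\varepsilon^{-}; \tau_{x,t} > n-1\bigr) \leq \bb E\bigl(h^{\pm}; \tau_{x,t} > n\bigr) \leq \bb E\bigl(h^{\pm} \phi_\varepsilon^{+}; \tau_{x,t} > n-1\bigr),
\end{align*}
where the arguments $(g_n \cdots g_1 x, t + \sigma(g_n \cdots g_1, x))$ of $h^\pm$ and $t + \sigma(g_n \cdots g_1, x)$ of $\phi_\varepsilon^\pm$ have been suppressed. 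The two bounding quantities have continuous compactly supported targets on $\bb P(\bb V) \times \bb R$, so Theorem \ref{Thm-CLLT-cocycle} applies and produces their $n^{3/2}$-asymptotic uniformly in $x$.

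Sending $\varepsilon \to 0$, dominated convergence yields $\int h^\pm \phi_\varepsilon^\pm \, d\rho \to \int_{\bb P(\bb V) \times \bb R_+} h^\pm \, d\rho$, since the marginal of $\rho$ on $\bb R$ is absolutely continuous with respect to Lebesgue measure (Corollary 1.3 of \cite{GQX24a}), whence $\rho(\bb P(\bb V) \times [-\varepsilon, \varepsilon]) \to 0$. Reassembling via $h = h^+ - h^-$ gives the first pair of statements of the corollary; the $\{\tau_{x,t} = n\}$ statements follow by the same argument with $\mathds 1_{\{t' < 0\}}$ in place of $\mathds 1_{\{t' \geq 0\}}$, and the reversed-walk versions by replacing $V, \rho, \tau$ with $\check V, \check \rho, \check \tau$. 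The main subtlety I anticipate is keeping the convergence uniform in $x \in \bb P(\bb V)$ while passing to the limit $\varepsilon \to 0$; this reduces to the a priori bound $\sup_{x \in \bb P(\bb V)} V(x,t) < \infty$ for each fixed $t$, which makes the gap in the sandwich, controlled by a multiple of $V(x,t) \|h\|_\infty \rho(\bb P(\bb V) \times [-\varepsilon, \varepsilon])$, vanish uniformly as $\varepsilon \to 0$.
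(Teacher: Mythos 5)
Your argument is correct and follows the paper's own route: the paper indicates that Theorem \ref{Thm-CLLT-cocycle} extends by standard approximation to compactly supported targets continuous off a $\rho$-null set, and then applies it to $h(x',t')\mathds 1_{\{t'\geq 0\}}$ and $h(x',t')\mathds 1_{\{t'<0\}}$. Your sandwich with the continuous approximants $\phi_\varepsilon^\pm$, combined with absolute continuity of the $\rho$-marginal (Corollary 1.3 of \cite{GQX24a}) and the uniform-in-$x$ bound on $V(x,t)$, spells out precisely those ``standard approximation techniques''.
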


Similarly, when the target function is of the form $h(x,t) = \varphi(x)  \mathds 1_{[a,b]}(t)$, 
from Theorem \ref{Thm-CLLT-cocycle} one can deduce the following consequence. 

\begin{corollary}\label{Coro-CLLT-cocycle-001}
Assume that $\Gamma_{\mu}$ is proximal and strongly irreducible, 
the measure $\mu$ admits an exponential moment and the Lyapunov exponent $\lambda_{\mu}$ is zero. 
Then, for any fixed $t\in \bb R$, any continuous function $\varphi$ on $\bb P(\bb V)$ and any $- \infty < a < b < \infty$, 
 we have, uniformly in $x \in \bb P(\bb V)$,
\begin{align*}
  \lim_{n \to \infty}  n^{3/2}  \bb E \Big( \varphi (g_n \cdots g_1 x); & \   t + \sigma(g_n \cdots g_1, x)  \in [a, b], \tau_{x, t} > n -1 \Big)  \\
&\qquad\quad = \frac{2 V(x, t)}{ \sqrt{2 \pi} \upsilon_{\mu}^3 } \int_{\bb P(\bb V) \times [a, b]} \varphi (x') \rho(dx',dt'), 
 \notag\\
 \lim_{n \to \infty}  n^{3/2}  \bb E \Big( \varphi (g_n \cdots g_1 x);  & \  t - \sigma(g_n \cdots g_1, x)  \in [a, b],  \check{\tau}_{x, t} > n -1 \Big)  \\
&\qquad\quad = \frac{2 \check{V}(x, t)}{ \sqrt{2 \pi} \upsilon_{\mu}^3 } \int_{\bb P(\bb V) \times [a, b]}  \varphi (x') \check{\rho}(dx',dt'). 
\end{align*}
In particular, with $\varphi=1$, for any fixed $t\in \bb R$ and any $- \infty < a < b < \infty$, we have, uniformly in $x \in \bb P(\bb V)$,
\begin{align*}
&  \lim_{n \to \infty}  n^{3/2}  \bb P \Big(   t + \sigma(g_n \cdots g_1, x) \in [a, b],  \tau_{x, t} > n -1 \Big)   \notag\\
& \qquad\qquad\qquad\qquad = \frac{2 V(x, t)}{ \sqrt{2 \pi} \upsilon_{\mu}^3 }  \rho(\bb P(\bb V) \times [a, b]) 
= \frac{2 V(x, t)}{ \sqrt{2 \pi} \upsilon_{\mu}^3 } \int_a^b W(t') dt',  \notag\\
& \lim_{n \to \infty}  n^{3/2}  \bb P \Big(  t - \sigma(g_n \cdots g_1, x)  \in [a, b],   \check{\tau}_{x, t} > n -1 \Big)  \notag\\
& \qquad\qquad\qquad\qquad = \frac{2 \check{V}(x, t)}{ \sqrt{2 \pi} \upsilon_{\mu}^3 } \check{\rho}(\bb P(\bb V) \times [a, b]) 
 = \frac{2 \check{V}(x, t)}{ \sqrt{2 \pi} \upsilon_{\mu}^3 } \int_a^b \check{W}(t') dt'. 
\end{align*}
\end{corollary}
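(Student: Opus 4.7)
The plan is to deduce Corollary \ref{Coro-CLLT-cocycle-001} from Theorem \ref{Thm-CLLT-cocycle} by a routine approximation argument applied to the discontinuous target
\begin{align*}
h(x,t) = \varphi(x)\, \mathds 1_{[a,b]}(t),
\end{align*}
which is compactly supported but discontinuous on $\bb P(\bb V) \times \{a,b\}$. The crucial observation is that, by Corollary 1.3 of \cite{GQX24a} (recalled in the excerpt), the marginal of $\rho$ on $\bb R$ is absolutely continuous with density $W$, so the exceptional set $\bb P(\bb V) \times \{a,b\}$ is $\rho$-null. Hence $h$ is continuous on a set of full $\rho$-measure, and the extension of Theorem \ref{Thm-CLLT-cocycle} noted in the remark preceding Corollary \ref{coroll-CLLT-cocycle-tau>n} applies directly.

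To make this extension concrete while preserving uniformity in $x$, I would sandwich $\mathds 1_{[a,b]}$ between continuous compactly supported functions $\psi_\ee^- \leq \mathds 1_{[a,b]} \leq \psi_\ee^+$, with $\psi_\ee^+$ supported in $[a-\ee,b+\ee]$ and $\psi_\ee^-$ equal to $1$ on $[a+\ee,b-\ee]$. Decomposing $\varphi = \varphi^+ - \varphi^-$ into its continuous non-negative parts (continuity on the compact space $\bb P(\bb V)$ ensures $\varphi$ is bounded), I would apply Theorem \ref{Thm-CLLT-cocycle} to each $h_\ee^\pm(x,t) := \varphi^\pm(x) \psi_\ee^\pm(t)$. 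This yields uniform-in-$x$ upper and lower bounds for the quantity
\begin{align*}
n^{3/2} \bb E \bigl( \varphi(g_n \cdots g_1 x);\ t + \sigma(g_n \cdots g_1, x) \in [a,b],\ \tau_{x, t} > n - 1 \bigr),
\end{align*}
whose bounding integrals against $\rho$ collapse, as $\ee \to 0$, to $\int_{\bb P(\bb V) \times [a,b]} \varphi(x') \rho(dx', dt')$ by dominated convergence, using once more that $\bb P(\bb V) \times \{a,b\}$ is $\rho$-null. This gives the first asymptotic, uniformly in $x$.

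The statement for the reversed walk follows by the identical argument with $\check V$ and $\check \rho$ in place of $V$ and $\rho$. Finally, the special case $\varphi \equiv 1$ reduces to evaluating $\rho(\bb P(\bb V) \times [a,b]) = \int_a^b W(t')\, dt'$ and $\check \rho(\bb P(\bb V) \times [a,b]) = \int_a^b \check W(t')\, dt'$, which is exactly the content of the absolute continuity of the $\bb R$-marginals established in \cite{GQX24a}. There is no substantive obstacle in this proof: the entire argument is a standard passage from continuous to indicator test functions, and the only non-trivial input beyond Theorem \ref{Thm-CLLT-cocycle} itself is the absolute continuity of the $\bb R$-marginals of $\rho$ and $\check\rho$, which ensures that the boundary sets $\{t=a\}$ and $\{t=b\}$ carry no mass and thus that the sandwich bounds match in the limit.
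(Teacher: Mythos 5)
Your proposal is correct and matches the paper's intended argument: the paper states Corollary \ref{Coro-CLLT-cocycle-001} as a consequence of Theorem \ref{Thm-CLLT-cocycle} via ``standard approximation techniques'' applied to targets that are continuous off a $\rho$-null set, and your sandwich $\psi_\ee^-\leq\mathds 1_{[a,b]}\leq\psi_\ee^+$, combined with the absolute continuity of the $\bb R$-marginals of $\rho$ and $\check\rho$ to ensure $\bb P(\bb V)\times\{a,b\}$ carries no mass, is exactly that argument made explicit. The decomposition $\varphi=\varphi^+-\varphi^-$ and the uniform-in-$x$ collapse of the bounds as $\ee\to 0$ (using that $V(\cdot,t)$ is bounded for fixed $t$) complete the passage correctly, and the dual case with $\check V,\check\rho$ is identical.
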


In the above statement, we have denoted by $W$ and $\check{W}$ the non-decreasing functions on $\bb R$, 
which are the densities of the marginals of the measures $\rho$ and $\check{\rho}$ on $\bb R$. 
Recall that from Corollary 1.6 of \cite{GQX24a}, we have
\begin{align*} 
& 0 < \rho \big( \bb P(\bb V) \times \bb R_- \big) = \int_{-\infty}^0 W(t') dt'  < \infty,  \notag\\
& 0 < \check \rho \big( \bb P(\bb V) \times \bb R_- \big) = \int_{-\infty}^0 \check{W}(t') dt'  < \infty. 
\end{align*}
Using our main results, we derive the following local limit asymptotics for the exit times 
$\tau_{x,t}$ and $\check\tau_{x,t}$. 
This result essentially follows from applying Theorem \ref{Thm-CLLT-cocycle} to the \emph{non-compactly supported} function
$h(x,t)=\varphi(x)\mathds 1_{\{t<0\}}$.

\begin{corollary}\label{Thm-CLLT-cocycle-002}
Assume that $\Gamma_{\mu}$ is proximal and strongly irreducible, 
the measure $\mu$ admits an exponential moment and the Lyapunov exponent $\lambda_{\mu}$ is zero. 
Then, for any fixed $t\in \bb R$ and any continuous function $\varphi$ on $\bb P(\bb V)$, 
 we have, uniformly in $x \in \bb P(\bb V)$,
\begin{align*}
&  \lim_{n \to \infty}  n^{3/2}  \bb E \Big( \varphi(g_n \cdots g_1 x);  \tau_{x, t} = n \Big)  
 = \frac{2 V(x, t)}{ \sqrt{2 \pi} \upsilon_{\mu}^3 }  \int_{\bb P(\bb V) \times \bb R_-} \varphi (x') \rho(dx',dt'), 
 \notag\\
& \lim_{n \to \infty}  n^{3/2}  \bb E \Big( \varphi(g_n \cdots g_1 x);  \check{\tau}_{x, t} = n \Big)   
  = \frac{2 \check{V}(x, t)}{ \sqrt{2 \pi} \upsilon_{\mu}^3 }  \int_{\bb P(\bb V) \times \bb R_-} \varphi (x') \check{\rho}(dx',dt'). 
\end{align*}
In particular, 
\begin{align*}
&  \lim_{n \to \infty}  n^{3/2}  \bb P (  \tau_{x, t} = n )  
 = \frac{2 V(x, t)}{ \sqrt{2 \pi} \upsilon_{\mu}^3 }  \rho \left( \bb P(\bb V) \times \bb R_- \right) 
 = \frac{2 V(x, t)}{ \sqrt{2 \pi} \upsilon_{\mu}^3 } \int_{-\infty}^0 W(t') dt', 
 \notag\\
& \lim_{n \to \infty}  n^{3/2}   \bb P (  \check{\tau}_{x, t} = n )   
  = \frac{2 \check{V}(x, t)}{ \sqrt{2 \pi} \upsilon_{\mu}^3 } \check{\rho} \left( \bb P(\bb V) \times \bb R_- \right)
  = \frac{2 \check{V}(x, t)}{ \sqrt{2 \pi} \upsilon_{\mu}^3 } \int_{-\infty}^0 \check{W}(t') dt'. 
\end{align*}
\end{corollary}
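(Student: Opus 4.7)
The plan is to apply Theorem \ref{Thm-CLLT-cocycle} to the non-compactly supported target function $h(x',t')=\varphi(x')\mathds 1_{\{t'<0\}}$. Since $\{\tau_{x,t}=n\}=\{\tau_{x,t}>n-1\}\cap\{t+\sigma(g_n\cdots g_1,x)<0\}$, the expectation on the left-hand side of the target equality is exactly the one that would appear in Theorem \ref{Thm-CLLT-cocycle} for this choice of $h$; the only issue is that $h$ has unbounded support in $t'$, so a truncation argument is required.

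Fix $A>0$ and split $\bb E(\varphi(g_n\cdots g_1 x);\tau_{x,t}=n)=I_A(n)+J_A(n)$ according to whether $t+\sigma(g_n\cdots g_1,x)\in[-A,0)$ or $<-A$. For $I_A(n)$, Corollary \ref{Coro-CLLT-cocycle-001} with $a=-A$, $b=0$ gives, uniformly in $x\in\bb P(\bb V)$,
$$\lim_{n\to\infty}n^{3/2}I_A(n)=\frac{2V(x,t)}{\sqrt{2\pi}\upsilon_\mu^3}\int_{\bb P(\bb V)\times[-A,0)}\varphi(x')\rho(dx',dt'),$$
and sending $A\to\infty$ produces the claimed right-hand side, since $\rho(\bb P(\bb V)\times\bb R_-)<\infty$ by Corollary~1.6 of \cite{GQX24a}.

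The substance of the proof is the tail estimate on $J_A(n)$. Conditioning on $g_1,\ldots,g_{n-1}$ and using the cocycle identity, one writes $|J_A(n)|\le\|\varphi\|_\infty\,\bb E(F_A(g_{n-1}\cdots g_1 x,\,t+\sigma(g_{n-1}\cdots g_1,x));\tau_{x,t}>n-1)$ with $F_A(y,s):=\mu\{g\in\bb G:\sigma(g,y)<-A-s\}$. On $\{\tau_{x,t}>n-1\}$ one has $s\ge 0$, and $\sigma(g,y)<-(A+s)$ forces $\|g^{-1}\|>e^{A+s}$, so Markov's inequality combined with the exponential moment assumption \eqref{Exponential-moment} yields $F_A(y,s)\le C e^{-\alpha(A+s)}$. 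Hence
$$|J_A(n)|\le C\|\varphi\|_\infty\,e^{-\alpha A}\,\bb E\bigl(e^{-\alpha(t+\sigma(g_{n-1}\cdots g_1,x))};\tau_{x,t}>n-1\bigr).$$
Slicing this expectation over the integer slabs $\{t+\sigma(g_{n-1}\cdots g_1,x)\in[k,k+1)\}$ and invoking Corollary \ref{Coro-CLLT-cocycle-001} slab-by-slab reduces matters to the convergence of $\sum_{k\ge 0}e^{-\alpha k}\int_k^{k+1}W(t')\,dt'$, which follows from the at-most linear growth of $W$. One concludes $\limsup_n n^{3/2}|J_A(n)|\le CV(x,t)\|\varphi\|_\infty e^{-\alpha A}\to 0$ as $A\to\infty$. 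The companion statement for $\check\tau_{x,t}$ is proved identically after replacing $(V,\rho,\sigma)$ by $(\check V,\check\rho,-\sigma)$, and the special case $\varphi\equiv 1$ follows at once from the definitions of $W$ and $\check W$.

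The main obstacle will be making the slab-wise application uniform in $n$, since Corollary \ref{Coro-CLLT-cocycle-001} only supplies a pointwise asymptotic for each fixed $k$. The cleanest way around this is to establish, using the uniform Caravenna-type bound underlying Theorem \ref{Thm-CLLT-cocycle}, a majorisation of the form $n^{3/2}\bb P(t+\sigma(g_{n-1}\cdots g_1,x)\in[k,k+1),\tau_{x,t}>n-1)\le C V(x,t)(1+W(k+1))$ valid for all $n$ large and all $k\ge 0$; this is enough to interchange the sum and the $\liminf$. An alternative is to truncate the dyadic sum at $k\asymp A$ and bound the far tail $k>A$ via an elementary concentration estimate on $\sigma(g_{n-1}\cdots g_1,x)$ deduced from \eqref{Exponential-moment}.
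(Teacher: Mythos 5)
Your proposal takes essentially the paper's route: split at a level $-A$, use Corollary \ref{Coro-CLLT-cocycle-001} on $[-A,0)$, and control the tail via the Markov property, an exponential bound on $\mu\{g:\sigma(g,y)<-r\}$, and a uniform $n^{-3/2}$ bound. You correctly flag the real gap in your primary argument: Corollary \ref{Coro-CLLT-cocycle-001} gives, for each fixed slab $[k,k+1)$, only an asymptotic as $n\to\infty$, not a bound uniform in $n$ and $k$, so you cannot ``invoke it slab-by-slab'' and sum over $k$. What is actually needed — and what the paper supplies via Theorem \ref{Thm-CLLT-cocycle-bound-001} — is a genuinely uniform estimate $\bb P(t+\sigma(g_{n-1}\cdots g_1,x)\in[a,b],\tau_{x,t}>n-1)\leq c(1+\max\{t,0\})(1+b-a)(1+\max\{b,0\})n^{-3/2}$, which is at most linear in the right endpoint and therefore compatible with the exponential weights. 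The paper deploys this bound not through a dyadic slab sum but through a Stieltjes integration by parts on the distribution function $F_{n-1,x,t}(s)=\bb P(t+\sigma(g_{n-1}\cdots g_1,x)\in[0,s],\tau_{x,t}>n-1)$ against the exponential weight $e^{-\alpha(a+s)}$; this is just a cleaner packaging of your slab computation. Your ``alternative'' (truncate the sum and handle large $k$ by a concentration bound) would also work but is strictly more effort.

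Two minor inaccuracies in the body of your argument, neither fatal: (i) the majorisation $n^{3/2}\bb P(\cdots\in[k,k+1),\tau_{x,t}>n-1)\le C\,V(x,t)(1+W(k+1))$ is not what Theorem \ref{Thm-CLLT-cocycle-bound-001} gives — it gives $c(1+\max\{t,0\})(k+2)$, i.e.\ the $t$-dependence is via $1+\max\{t,0\}$, not $V(x,t)$, and the $k$-dependence is linear, not through $W$ — but since $V(x,t)\le c(1+\max\{t,0\})$ and $W$ has linear growth, the two are equivalent up to constants for present purposes; (ii) you write ``the at-most linear growth of $W$'' as what makes $\sum_k e^{-\alpha k}\int_k^{k+1}W$ converge, which is correct, but the convergence you actually need is for the uniform-in-$n$ majorant, not for the limit densities. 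With the missing uniform bound made explicit as in Theorem \ref{Thm-CLLT-cocycle-bound-001}, your argument closes correctly.
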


In the course of the proof of Theorem \ref{Thm-CLLT-cocycle}, we will establish the following estimate, 
which is also of independent interest.

\begin{theorem}\label{Cor-CLLT-cocycle-001}
Assume that $\Gamma_{\mu}$ is proximal and strongly irreducible, 
the measure $\mu$ admits an exponential moment and the Lyapunov exponent $\lambda_{\mu}$ is zero. 
Then, for any continuous compactly supported function $h$ on $\bb P(\bb V) \times \bb R$, 
 we have, uniformly in $x \in \bb P(\bb V)$,
\begin{align}
 \lim_{n \to \infty}  \sqrt{n} 
  \int_{\bb R}     \bb E \Big(  h (g_n \cdots g_1 x, & \  t + \sigma(g_n \cdots g_1, x) );   \tau_{x, t} > n - 1  \Big) dt  \notag\\
& = \frac{2}{ \sqrt{2 \pi} \upsilon_{\mu} }  \int_{\bb P(\bb V) \times \bb R} h (x',t') \rho(dx',dt'),  \label{Asym-rho-002}\\
  \lim_{n \to \infty}  \sqrt{n} 
  \int_{\bb R}    \bb E \Big(  h (g_n \cdots g_1 x, & \   t + \sigma(g_n \cdots g_1, x));  \tau_{x, t} > n  \Big) dt  \notag\\
&  =  \frac{2}{ \sqrt{2 \pi} \upsilon_{\mu} }  \int_{\bb P(\bb V) \times \bb R_+} h (x',t') \rho(dx',dt'). 
  \label{Asym-rho-001}
\end{align}
Moreover, similar assertions hold for $\check{\tau}_{x, t}$. 
\end{theorem}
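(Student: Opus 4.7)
The plan is to reduce \eqref{Asym-rho-002} and \eqref{Asym-rho-001} to a conditional central limit theorem for the \emph{reversed} walk via a change of variable. Write $Y_n := g_n \cdots g_1 x$ and $S_n := \sigma(g_n \cdots g_1, x)$. Since $h$ is compactly supported, substituting $u = t + S_n$ in the $t$-integral and interchanging the order of integration yields
\begin{equation*}
\int_{\bb R} \bb E\bigl[h(Y_n, t+S_n);\, \tau_{x,t}>n-1\bigr]\, dt
= \int_{\bb R} \bb E\bigl[h(Y_n, u)\,\mathds 1_{\{u \geq \check M_{n-1}\}}\bigr]\, du,
\end{equation*}
where $\check M_{n-1} := \max_{1 \leq k \leq n-1}(S_n - S_k) = \max_{1 \leq k \leq n-1}\sigma(g_n \cdots g_{k+1}, Y_k)$. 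For \eqref{Asym-rho-001} the same change of variable carries the extra requirement $u \geq 0$ coming from $t + S_n \geq 0$, so that the $u$-integration is restricted to $\bb R_+$. It is therefore enough to prove, uniformly for $u$ in a compact set and for continuous $\varphi$ on $\bb P(\bb V)$, an asymptotic of the form $\sqrt{n}\,\bb E\bigl[\varphi(Y_n)\,\mathds 1_{\{u \geq \check M_{n-1}\}}\bigr] \to \Phi(x, u,\varphi)$, and then to identify $\Phi$ in terms of $\rho$.

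The central difficulty is that the reversed increments $\sigma(g_n \cdots g_{k+1}, Y_k)$ depend on $Y_k$, so the time-reversed process $(S_n - S_{n-j})_{j}$ is not a random walk of the same form as $(S_k)$---precisely the obstruction flagged in the abstract and in Subsection~\ref{sec: proof strategy-001}. To handle it I would invoke the reversal/duality framework of the companion paper \cite{GQX24a}: under the contragredient action of $\bb G$ on the dual projective space, the reversed process can be realised as a genuine i.i.d.\ random walk driven by the transpose matrices $g_i^*$, at the cost of replacing $(V,\rho)$ by the dual objects $(\check V,\check\rho)$ and adjusting the norm. Under this reinterpretation the event $\{u \geq \check M_{n-1}\}$ becomes an exit-time event $\{\check\tau_{\cdot,\,u}>n-1\}$ for a bona fide Markov walk, to which the conditioned central limit theorem for the reversed walk---one of the two main inputs advertised in the abstract---applies and produces, uniformly for $u$ in compacts,
\begin{equation*}
\sqrt{n}\,\bb E\bigl[\varphi(Y_n)\,\mathds 1_{\{u \geq \check M_{n-1}\}}\bigr]\longrightarrow \tfrac{2}{\sqrt{2\pi}\,\upsilon_\mu}\,\Psi(x,u,\varphi),
\end{equation*}
with a limiting kernel $\Psi$ that is continuous in $u$ and linear in $u$ for $u$ large.

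To finish, I would integrate this pointwise limit in $u$ against $h$, using a Sparre-Andersen type bound $\sqrt{n}\,\bb P(\check M_{n-1} \leq u) = O(1+u_+)$ for dominated convergence, and then recognise the resulting kernel $\int_{\bb R}\Psi(x,u,\varphi)\,du$ through the defining property \eqref{exist of measure rho-001} of $\rho$: that relation, rewritten via the same substitution $u = t+S_n$, expresses $\int h\,d\rho$ as the limit of $\bb E\bigl[\int(u-S_n)h(Y_n,u)\mathds 1_{\{u\geq S_n,\, u\geq \check M_{n-1}\}}du\bigr]$, so stripping off the $(u-S_n)$ weight is exactly what the external $\sqrt{n}$ factor compensates for. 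The normalisation then works out to $\tfrac{2}{\sqrt{2\pi}\upsilon_\mu}\int h\,d\rho$, which is \eqref{Asym-rho-002}; the assertion \eqref{Asym-rho-001} follows immediately from the additional restriction $u \geq 0$, and the counterparts for $\check\tau_{x,t}$ are obtained by symmetry after interchanging $\mu$ with the law of the inverse. The hardest step is establishing the conditioned CLT for the reversed walk with enough uniformity in $u$ to carry out the dominated convergence; this is where the bulk of the technical work lies, and is exactly the portion whose difficulty is emphasised in the introduction.
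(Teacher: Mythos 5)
Your change of variable $u = t + S_n$ is exactly the content of the paper's reversal Lemma~\ref{lemma-duality-for-product-001}, and the high-level plan---reverse time, apply a conditioned CLT to the reversed walk, identify the limit via the defining property of~$\rho$---is the paper's strategy. However, there is a genuine gap in how you propose to treat the reversed process, and it is precisely the gap the whole companion paper \cite{GQX24a} and Appendix~\ref{sec invar func} here are designed to fill.

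You assert that ``the reversed process can be realised as a genuine i.i.d.\ random walk driven by the transpose matrices $g_i^*$'' and that the event $\{u \geq \check M_{n-1}\}$ ``becomes an exit-time event for a bona fide Markov walk''. This is false, and the paper is explicit about it in Subsection~\ref{sec: proof strategy-001}: for random walks on linear groups the reversed trajectory is \emph{not} a Markov walk of the same kind. Concretely, the reversal identity \eqref{reversed RWfor products-001}--\eqref{another form of the perturbRW-001} writes
$\tilde S^{x,m}_{n}(\omega,y) = -\sigma^*(g^{-1}_n\cdots g^{-1}_1, y) + \delta(g_{n+1}\cdots g_m x,\, g^{-1}_n\cdots g^{-1}_1 y) - \delta(g_1\cdots g_m x, y)$,
so the dual cocycle $-\sigma^*$ is \emph{perturbed} by the $\delta$-terms, and the perturbation $\delta(g_{n+1}\cdots g_m x, \cdot)$ depends on the \emph{future} coordinates $g_{n+1},\ldots,g_m$. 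These terms are unbounded and cannot be discarded: they precisely encode that $g^{-1}_n\cdots g^{-1}_1 y$ and $g_{n+1}\cdots g_m x$ are entangled with the same environment. There is therefore no conditioned CLT for ``a bona fide Markov walk'' to invoke; what the paper uses instead is Theorem~\ref{Thm-CCLT-limit-000}, a conditioned CLT for random walks \emph{with future-dependent perturbations}, which requires the moment condition \eqref{exp mom for g 002} and the finite-dimensional approximation property \eqref{approxim rate for gp-002} for the perturbation array $\mathfrak f^{x,n}=(f^{x,n}_k)_{0\le k\le n}$. Establishing that these hypotheses hold for the specific perturbations coming from products of random matrices is Corollary~2.3 and Proposition~2.4 of \cite{GQX24a}; proving the abstract CLT under them occupies all of Appendix~\ref{sec invar func}. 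Your proposal, as written, would direct one toward a CLT for the unperturbed dual walk $-\sigma^*(g^{-1}_n\cdots g^{-1}_1, y)$, and that step would fail: the persistence event $\{u\ge\check M_{n-1}\}$ involves the perturbed sums, not the unperturbed ones.

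A secondary, smaller imprecision: the dual objects you would need here are not $(\check V,\check\rho)$ in the sense of the paper (which refer to the walk $t-\sigma(g_n\cdots g_1,x)$ on the \emph{same} projective space $\bb P(\bb V)$), but the $\mu^{-1}$-stationary measure $\nu^*$ on $\bb P(\bb V^*)$ together with the $n$-step prelimit functions $U^{x,n,\varphi}_{[n/2]}$ of \eqref{def of U^xm_n with phi-001}, whose identification with $\int h\,d\rho$ (via Proposition~3.1 and Corollary~3.6 of \cite{GQX24a}) is an independent input you are not allowed to wave away with ``the normalisation then works out''. Your last step also needs the uniform bound of Corollary~4.3 of \cite{GQX24a} to pass from a product test function $\varphi(x)\psi(t)$ and a fixed pointwise limit in $u$ to the general $h$ and the integrated statement; the Sparre--Andersen bound you cite is, roughly, that corollary, and you are right that it is the piece that makes the dominated-convergence step legal.

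In summary: your reduction via $u=t+S_n$ and your overall plan are correct and match the paper, but the central claim that the reversed process is a genuine Markov/i.i.d.\ walk is wrong. Recognising that it is only a walk-with-future-perturbations, and building the conditioned CLT (Theorem~\ref{Thm-CCLT-limit-000}) in that framework, is the essential technical content you omit.
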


We will also establish the following uniform bound, which is used to prove Theorem \ref{Thm-CLLT-cocycle}.

\begin{theorem}\label{Thm-CLLT-cocycle-bound-001}
Assume that $\Gamma_{\mu}$ is proximal and strongly irreducible, 
the measure $\mu$ admits an exponential moment and the Lyapunov exponent $\lambda_{\mu}$ is zero. 
Then, there exists a constant $c>0$ such that for any $t \in \bb R$, $a<b$, $x \in \bb P(\bb V)$ and $n \geq 1$, 
we have 
\begin{align*}
& \bb P \Big(  t + \sigma(g_n \cdots g_1, x) \in [a, b],   \tau_{x, t} > n -1 \Big) \notag\\
& \qquad\qquad \leq  c ( 1+ \max\{t, 0\} ) ( 1+ b-a ) (1+\max\{b, 0\})  n^{-3/2}, \notag\\
& \bb P \Big(  t - \sigma(g_n \cdots g_1, x) \in [a, b],   \check{\tau}_{x, t} > n -1 \Big) \notag\\
& \qquad\qquad \leq  c ( 1+ \max\{t, 0\} ) ( 1+ b-a ) (1+\max\{b, 0\})  n^{-3/2}. 
\end{align*}
\end{theorem}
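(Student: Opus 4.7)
The plan is to prove the bound by decomposing the time interval $[0, n]$ into three blocks of comparable length and combining a persistence estimate, an unconditional local limit theorem, and a reversed persistence estimate. I would set $n_1 = \lfloor n/3 \rfloor$ and $n_2 = n - \lfloor n/3 \rfloor$, so that $n_1$, $n_2 - n_1$ and $n - n_2$ are all of order $n$. Applying the Markov property at times $n_1$ and $n_2$ splits the event $\{t + \sigma(g_n \cdots g_1, x) \in [a, b], \tau_{x, t} > n - 1\}$ into three pieces: positivity of the walk on $[0, n_1]$, the walk landing somewhere suitable after the middle block $[n_1, n_2]$, and the walk staying non-negative on $[n_2, n - 1]$ while ending in $[a, b]$ at time $n$.

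For the first block I would use the persistence estimate from \cite{GLP17}, giving $\bb P(\tau_{x, t} > n_1) \leq C(1 + \max\{t, 0\})/\sqrt{n}$ uniformly in $x$; this accounts for the factor $1 + \max\{t, 0\}$. For the middle block the positivity constraint is dropped, and I would apply an unconditional local limit theorem for the cocycle walk (as in Le Page \cite{LeP82}) to obtain the density bound $\bb P^{x_1, t_1}(t_1 + \tilde S_{n_2 - n_1} \in [a, b]) \leq C(1 + b - a)/\sqrt{n}$, uniformly in the starting cell $(x_1, t_1)$; this accounts for the factor $1 + b - a$. For the last block, after one more use of Markov I must bound, uniformly in $(x_2, t_2)$ with $t_2 \geq 0$, the quantity $\bb P^{x_2, t_2}(t_2 + \tilde{\tilde S}_{n - n_2} \in [a, b], \tilde{\tilde \tau}_{x_2, t_2} > n - n_2 - 1)$ by $C(1 + \max\{b, 0\})/\sqrt{n}$. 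My approach is to reverse time, so that the constraint of staying non-negative on $[n_2, n - 1]$ becomes a persistence-type constraint for the reversed cocycle starting from the endpoint, and then apply the dual persistence estimate from the companion paper \cite{GQX24a}. Multiplying the three bounds then yields the desired $n^{-3/2}$ estimate with the advertised factors, and the analogous bound for $\check \tau_{x, t}$ follows by running the same argument for the walk $t - \sigma(g_n \cdots g_1, x)$.

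The hard part will be the last block. Reversing the cocycle $\sigma(g_n \cdots g_1, x)$ does not simply produce a new random walk with i.i.d. increments of the same law: as emphasized in the abstract, the increments of the reversed cocycle depend on the entire future trajectory, and the reversed process is not a Markov chain with a tractable transition kernel. To push the argument through, one must use the reverse-walk harmonic function $\check V$ and the target harmonic measure $\check \rho$ constructed in \cite{GQX24a}, together with the corresponding persistence bound for the dual process. Ensuring that the resulting estimate is uniform in $x \in \bb P(\bb V)$ and yields the clean factor $1 + \max\{b, 0\}$, rather than a dependence on intermediate quantities that cannot be summed back to a usable bound, is the technically delicate point of the argument.
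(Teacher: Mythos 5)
Your overall architecture --- front-block persistence, middle-block local limit theorem with the positivity constraint dropped, reversal of the tail --- matches the paper's approach, which packages the first two steps into the Caravenna-type bound of Corollary~\ref{Cor-Cara-Bound} applied to the tail functional and then bounds that functional's integral via the reversal identity of Lemma~\ref{lemma-duality-for-product-001} together with the reversed-walk persistence estimate of Theorem~\ref{Thm-inte-exit-time-001} (established in the Appendix of this paper, not in \cite{GQX24a} alone).

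As written, however, the step ``multiplying the three bounds'' does not compose correctly through the Markov property. If the middle-block factor is taken to be the probability of landing in $[a,b]$ at time $n_2$, you are estimating a \emph{smaller} event than the target one, because the original event requires the walk to be in $[a,b]$ only at time $n$, not at time $n_2$. The correct composition uses the middle-block LLT solely as a uniform density estimate of order $n^{-1/2}$, which must then be integrated against the tail functional $t'\mapsto\Psi_{n-n_2}(\cdot,t')$; the reversal argument is what controls $\int_{\bb R}\sup_{x'}\Psi_{n-n_2}(x',t')\,dt'$ by a constant times $(1+b-a)(1+\max\{b,0\})/\sqrt{n}$, so \emph{both} of your last two factors actually arise from the tail block, not one from the middle and one from the tail. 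In addition, to invoke the transfer-operator local limit machinery you must first mollify the indicator $\mathds 1_{[a,b]}$ and show that the resulting tail functional is a $\gamma$-H\"older function of the starting point with $\scr H_{\gamma}$-norm $\lesssim 1+b-a$; this is a substantial computation in the style of Proposition~\ref{Lem_Inequality_Aoverline}, and your sketch leaves it out.
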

We refer to \cite[Corollary 3.7]{LPPP21} and \cite[Theorem 1.2]{PP23} for  similar results 
concerning products of random non-negative matrices. 
Note that \cite[Theorem 1.2]{PP23} also provides lower bounds of the same order for these probabilities.

From Theorem \ref{Thm-CLLT-cocycle-bound-001} we can derive the following uniform upper bound 
for the local probability $\bb P(\tau_{x, t} = n)$ of the exit time.

\begin{corollary}\label{Corol-local prob for exit time}
Assume that $\Gamma_{\mu}$ is proximal and strongly irreducible, 
the measure $\mu$ admits an exponential moment and the Lyapunov exponent $\lambda_{\mu}$ is zero. 
Then, there exists a constant $c>0$ such that for any $t \in \bb R$, $x \in \bb P(\bb V)$ and $n \geq 1$, 
we have 
\begin{align*}
& \bb P \Big( \tau_{x, t} =n \Big)  \leq  c ( 1+ \max\{t, 0\} ) n^{-3/2}, \notag\\
& \bb P \Big( \check{\tau}_{x, t} =n \Big) \leq  c ( 1+ \max\{t, 0\} ) n^{-3/2}. 
\end{align*}
\end{corollary}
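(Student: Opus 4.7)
The plan is to reduce Corollary \ref{Corol-local prob for exit time} to Theorem \ref{Thm-CLLT-cocycle-bound-001} applied at time $n-1$, by decomposing the event $\{\tau_{x,t} = n\}$ according to the value of the walk at step $n-1$ and exploiting the exponential moment condition \eqref{Exponential-moment} to absorb the last step. Writing $S_k := t + \sigma(g_k\cdots g_1, x)$ for brevity, one has $\{\tau_{x,t} = n\} = \{S_n < 0\} \cap \{\tau_{x,t} > n-1\}$. A direct attempt to apply Theorem \ref{Thm-CLLT-cocycle-bound-001} with $[a,b] \supseteq (-\infty,0]$ fails, because its bound is linear in $b-a$ while $S_n$ is unbounded below; one must therefore transfer the cost of a deep negative jump onto the last increment $g_n$.

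For $n \geq 2$, the plan is to split
\begin{align*}
\bb P(\tau_{x,t} = n) = \sum_{k=0}^{\infty} \bb P\bigl( S_{n-1} \in [k, k+1),\ S_n < 0,\ \tau_{x,t} > n-1 \bigr).
\end{align*}
For $k \geq 1$, the conditions $S_{n-1} \geq k$ and $S_n < 0$ force $\sigma(g_n, g_{n-1}\cdots g_1 x) < -k$, and the pointwise bound $|\sigma(g,y)| \leq \log\max\{\|g\|,\|g^{-1}\|\}$ implies the event $A_{n,k} := \{\log\max\{\|g_n\|,\|g_n^{-1}\|\}>k\}$. Since $A_{n,k}$ depends only on $g_n$ and is thus independent of $(g_1,\ldots,g_{n-1})$, and since Markov's inequality combined with \eqref{Exponential-moment} yields $\bb P(A_{n,k})\leq Ce^{-\alpha k}$, I would obtain
\begin{align*}
\bb P\bigl(S_{n-1}\in [k,k+1),\, S_n<0,\, \tau_{x,t}>n-1\bigr) \leq Ce^{-\alpha k}\,\bb P\bigl(S_{n-1}\in [k,k+1),\, \tau_{x,t}>n-2\bigr),
\end{align*}
using the inclusion $\{\tau_{x,t}>n-1\}\subset\{\tau_{x,t}>n-2\}$ to make the remaining event measurable with respect to $(g_1,\ldots,g_{n-1})$.

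Theorem \ref{Thm-CLLT-cocycle-bound-001}, applied at time $n-1$ to the interval $[k,k+1]$, then dominates the last probability by $c(1+\max\{t,0\})(1+k)(n-1)^{-3/2}$. The same bound (without the $e^{-\alpha k}$) handles the $k=0$ term. Summing in $k$ and using the convergence of $\sum_{k\geq 0}(1+k)e^{-\alpha k}$ together with $(n-1)^{-3/2} \leq 2^{3/2}n^{-3/2}$ for $n \geq 2$, one arrives at the required inequality; the case $n=1$ is trivial by choosing the constant $\geq 1$, since probabilities are bounded by $1$ and $1+\max\{t,0\} \geq 1$. The argument for $\check\tau_{x,t}$ is identical, replacing $\sigma$ by $-\sigma$ throughout.

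The main (and essentially only) obstacle is the initial observation that Theorem \ref{Thm-CLLT-cocycle-bound-001} cannot be used directly on $(-\infty,0]$; the peeling-off trick above circumvents this by converting a large downward excursion into an exponentially rare event for the increment $g_n$. The independence argument goes through cleanly thanks to the \emph{uniform} estimate $|\sigma(g,y)| \leq \log\max\{\|g\|,\|g^{-1}\|\}$, which decouples $A_{n,k}$ from the projective position $g_{n-1}\cdots g_1 x$.
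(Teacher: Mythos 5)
Your proposal is correct and follows essentially the same path as the paper's proof: decompose on the position $S_{n-1}$ in unit intervals $[k,k+1)$, exploit the uniform pointwise bound $|\sigma(g,y)|\leq\log\max\{\|g\|,\|g^{-1}\|\}$ together with the exponential moment \eqref{Exponential-moment} to absorb the last downward jump with an $e^{-\alpha k}$ factor, apply Theorem \ref{Thm-CLLT-cocycle-bound-001} at time $n-1$, and sum. The paper packages the last-step analysis as $F(x,t)=\bb P(t+\sigma(g_1,x)<0)\leq ce^{-\alpha t}$ via the Markov property rather than your explicit event $A_{n,k}$ and independence, but the two formulations are the same computation; note also that $\{\tau_{x,t}>n-1\}$ is already $\sigma(g_1,\ldots,g_{n-1})$-measurable, so the inclusion $\{\tau_{x,t}>n-1\}\subset\{\tau_{x,t}>n-2\}$ is needed only to match the exact index convention of Theorem \ref{Thm-CLLT-cocycle-bound-001} applied at time $n-1$, not for measurability.
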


All the results stated in this section also apply to
 the stopping times defined using the large inequality $\leq$ instead of the strict inequality $<$. 
The corresponding results can be obtained using the same methods.


\subsection{A Caravenna-type result}\label{Sec-Caravenna-type}

In this section, we formulate a special type of local limit theorem for conditioned random walks on linear groups, 
which we refer to as the Caravenna-type conditioned local limit theorem. 
This theorem provides an asymptotic rate of order $n^{-1}$ 
and is effective when the support of the target function $h$ moves to infinity at a rate of $\sqrt{n}$.
It serves as an intermediate step between the conditioned central limit theorem (Theorem 2.3 of \cite{GLP17}) 
and the conditioned local limit theorem (Theorem \ref{Thm-CLLT-cocycle}). 

We denote by $\phi^+$ the standard Rayleigh density function:  
\begin{align} \label{Rayleigh law-001} 
\phi^+(t) = t e^{-t^2/2} \mathds 1_{\{ t \geq 0 \}}, 
\quad  t \in \bb R.  
\end{align}
Let $\nu$ denote the unique $\mu$-stationary Borel probability measure on $\bb P(\bb V)$, 
as discussed in \cite{Boug-Lacr85} and \cite{BQ16b}. 

\begin{theorem} \label{t-B 001}
Assumptions that $\Gamma_{\mu}$ is proximal and strongly irreducible, 
the measure $\mu$ admits an exponential moment and the Lyapunov exponent $\lambda_{\mu}$ is zero. 
Then, for any continuous compactly supported function $h$ on $\bb P(\bb V) \times \bb R$ and any $t\in \bb R$, 
uniformly for $x \in \bb P(\bb V)$ 
and $u \in \bb R$, 
\begin{align*}
\lim_{n \to \infty} \Bigg(  & \frac{ \upsilon_{\mu}^2 \sqrt{2 \pi} n }{ 2 } 
  \bb E \Big[  h \Big( g_n \cdots g_1 x,  t + \sigma(g_n \cdots g_1, x) - u  \Big);  \tau_{x, t} >n  \Big]    \nonumber\\
&\qquad  - V(x, t) \phi^+ \bigg( \frac{u}{ \upsilon_{\mu} \sqrt{n}} \bigg) 
  \int_{\bb P(\bb V)}  \int_{\bb R_+}  h \left(x', t' \right) dt' \nu(dx')  \Bigg) = 0. 
\end{align*}
\end{theorem}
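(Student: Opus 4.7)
My plan is to prove Theorem~\ref{t-B 001} by combining the conditioned central limit theorem of \cite{GLP17} with the (unconditioned) local limit theorem for the norm cocycle, glued together via the Markov property at an intermediate time. Fix an auxiliary scale $m=m_n$ with $m_n\to\infty$ and $m_n=o(n)$, and set $k=n-m_n$. By the Markov property at time $k$,
\begin{align*}
\bb E\Big[h\Big(g_n\cdots g_1 x,\,t+\sigma(g_n\cdots g_1,x)-u\Big);\tau_{x,t}>n\Big]=\bb E\Big[F_{m_n}\Big(g_k\cdots g_1 x,\,t+\sigma(g_k\cdots g_1,x)\Big);\tau_{x,t}>k\Big],
\end{align*}
where $F_m(x',s)=\bb E[h(g_m\cdots g_1 x',s+\sigma(g_m\cdots g_1,x')-u);\tau_{x',s}>m]$. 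On the event $\{\tau_{x,t}>k\}$ the intermediate height $s=t+\sigma(g_k\cdots g_1,x)$ is typically of order $\sqrt{k}\gg\sqrt{m_n}$, so the probability that a fresh walk of length $m_n$ starting at height $s$ dips below $0$ is negligible, and dropping the constraint $\tau_{x',s}>m_n$ inside $F_{m_n}$ introduces only an $o(m_n^{-1/2})$ error uniformly on the relevant $(x',s)$.

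For the unconditioned expectation that remains, the classical local limit theorem for the norm cocycle (see \cite{Boug-Lacr85,BQ16b,GQX24a}) gives, uniformly in $x'\in\bb P(\bb V)$ and in the shift $u-s$,
\begin{align*}
\bb E\Big[h\Big(g_{m_n}\cdots g_1 x',\,s+\sigma(g_{m_n}\cdots g_1,x')-u\Big)\Big]=\frac{C_h}{\upsilon_{\mu}\sqrt{2\pi m_n}}\,e^{-(u-s)^{2}/(2\upsilon_{\mu}^{2} m_n)}+o\bigl(m_n^{-1/2}\bigr),
\end{align*}
where $C_h=\int_{\bb P(\bb V)\times\bb R}h(x'',t')\nu(dx'')dt'$. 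Substituting this back reduces the original expectation to
\begin{align*}
\frac{C_h}{\upsilon_{\mu}\sqrt{2\pi m_n}}\,\bb E\Big[e^{-(u-t-\sigma(g_k\cdots g_1,x))^{2}/(2\upsilon_{\mu}^{2} m_n)};\tau_{x,t}>k\Big]+o(n^{-1}).
\end{align*}

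Now the conditioned CLT of Theorem~2.3 in \cite{GLP17}, combined with the persistence tail $\bb P(\tau_{x,t}>k)\sim 2V(x,t)/(\upsilon_{\mu}\sqrt{2\pi k})$, shows the last expectation is asymptotic to
\begin{align*}
\frac{2V(x,t)}{\upsilon_{\mu}\sqrt{2\pi k}}\int_{0}^{\infty}e^{-(u-\upsilon_{\mu}\sqrt{k}\,r)^{2}/(2\upsilon_{\mu}^{2} m_n)}\,r\,e^{-r^{2}/2}\,dr.
\end{align*}
Writing $v=u/(\upsilon_{\mu}\sqrt{n})$ and performing the change of variable $r=\sqrt{n/k}\,v+\sqrt{m_n/k}\,y$, the integral evaluates in the limit $m_n/n\to 0$ to $\sqrt{2\pi m_n/k}\,\phi^{+}(v)$ by dominated convergence (the lower endpoint $r=0$ corresponds to $y=-\sqrt{n/m_n}\,v\to-\infty$ when $v>0$, and the integrand converges pointwise to $v\,e^{-v^{2}/2}\,e^{-y^{2}/2}$). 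Chaining these estimates delivers the desired asymptotic
\begin{align*}
\bb E\Big[h\Big(g_n\cdots g_1 x,\,t+\sigma(g_n\cdots g_1,x)-u\Big);\tau_{x,t}>n\Big]\sim\frac{2V(x,t)C_h}{\upsilon_{\mu}^{2}\sqrt{2\pi}\,n}\,\phi^{+}\Big(\frac{u}{\upsilon_{\mu}\sqrt{n}}\Big),
\end{align*}
which is precisely the statement.

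The chief difficulty is to make every step \emph{uniform in $u\in\bb R$}. The main regime $|u|$ of order $\upsilon_{\mu}\sqrt{n}$ matches the Gaussian window and is the technical heart of the argument, but the extremes $|u|\ll\sqrt{n}$ (where $\phi^{+}(u/(\upsilon_{\mu}\sqrt{n}))$ vanishes linearly) and $|u|\gg\sqrt{n}$ (where both sides decay super-polynomially) must be handled without loss. This forces the local limit theorem for the cocycle to be quantitatively uniform over shifts of Gaussian order $\sqrt{m_n}$, which ultimately rests on spectral-gap estimates for the associated transfer operator; and it forces the removal of the conditioning in $F_{m_n}$ to be uniform in the starting height $s$, a task supplied by the fluctuation estimates from \cite{GLP17,GQX24a} together with a careful choice of intermediate scale (for instance $m_n=\lfloor\sqrt{n}\rfloor$).
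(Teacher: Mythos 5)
Your overall strategy -- split at an intermediate time $k=n-m$, handle the first block via the conditioned CLT of \cite{GLP17} and the last block via an effective cocycle LLT, then glue -- is exactly the one implemented in the paper. The paper, however, takes the last block to be a \emph{fixed positive proportion} of $n$, namely $m=\lfloor\sqrt{\varepsilon}\,n\rfloor$ with $\varepsilon\in(0,\tfrac18)$ fixed, and only sends $\varepsilon\to 0$ at the very end; accordingly the Gaussian from the LLT and the Rayleigh from the conditioned CLT have comparable scales, and the paper evaluates their convolution explicitly via Lemma~\ref{t-Aux lemma}. This makes all error terms explicit in $(\varepsilon,n)$ -- indeed the paper proves the sharper Theorem~\ref{t-A 001}, of which Theorem~\ref{t-B 001} is the soft version -- and those quantitative bounds are reused (Corollary~\ref{Cor-Cara-Bound}, Theorem~\ref{Thm-CLLT-cocycle-bound-001}). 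Your choice $m_n=o(n)$ is the more classical route, under which the LLT Gaussian concentrates to a point and the convolution trivializes, so you don't need Lemma~\ref{t-Aux lemma}; that is a genuine and legitimate simplification if one only wants the soft asymptotic.

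There are, however, two concrete gaps. First, $m_n=\lfloor\sqrt{n}\rfloor$ does not close the argument: the unconditioned LLT carries an error $O(\|\widehat h_\varepsilon\|_{\scr H_\gamma}/m_n)$ (see Theorem~\ref{Theor-LLT-bound001}), and after integrating against $\bb P(\tau_{x,t}>k)\sim c/\sqrt{k}$ this produces an error $O(1/(m_n\sqrt n))$, which with $m_n\asymp\sqrt n$ is $O(1/n)$ -- the same order as the main term. You need $m_n\gg\sqrt n$ (e.g.\ $m_n=n^{3/4}$); once you make that adjustment the first and second sources of error are both $o(1/n)$. Second, your statement that dropping the constraint $\tau_{x',s}>m_n$ inside $F_{m_n}$ ``introduces only an $o(m_n^{-1/2})$ error uniformly on the relevant $(x',s)$'' is false: for intermediate heights $0\le s\lesssim\sqrt{m_n}$ the walk of length $m_n$ starting at $s$ goes negative with probability of order one, so the pointwise error is $\Theta(m_n^{-1/2})$, not $o(m_n^{-1/2})$. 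What saves the argument is not uniform smallness of the pointwise error but the fact that the conditioned density of the intermediate height $s$ near $0$ is itself $O(s\,k^{-3/2})$, so integrating the $\Theta(m_n^{-1/2})$ discrepancy over $s\in[0,C\sqrt{m_n}]$ gives $O(m_n^{1/2}/k^{3/2})=o(1/n)$. This is precisely the step where the paper must work hardest: in the lower bound of Theorem~\ref{t-A 001} the decomposition $K_n=K_{n,1}+K_{n,2}$ handles small versus large intermediate heights separately, supported by Lemma~\ref{FK-joint-inequality} and the H\"older-norm control of Proposition~\ref{Lem_Inequality_Aoverline}. Without an argument of that type, the uniformity in $u$ you flag at the end -- especially the regime $|u|\ll\sqrt n$ where $\phi^+$ vanishes -- cannot be established.
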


Such type of results trace back to  
Caravenna \cite{Carav05} for random walks on the real line. 
We refer to  \cite{DW15}  for random walks in cones of $\bb R^d$ and to \cite{GLL20} for Markov chains with finite states. 
A similar local limit theorem has recently been obtained in \cite{PP23} for products of positive random matrices. 

The convolution technique used in our proof is inspired by \cite{GX2022IID}; 
however, the estimates of the remainder terms are significantly more laborious for random walks on linear groups. 
Unlike the previous works \cite{Carav05, DW15, GLL20, GQX23, GX2022IID, PP23},
the approach developed in this paper allows to establish this theorem without employing any reversal techniques.

As a corollary of the proof of Theorem \ref{t-B 001}, we derive the following uniform bound which is of independent interest. 
This bound is sharper than the one provided by Theorem \ref{Thm-CLLT-cocycle-bound-001} when $b \gg \sqrt{n}$. 

\begin{corollary}\label{Cor-bound-n-001}
Assume that $\Gamma_{\mu}$ is proximal and strongly irreducible, $\mu$ admits finite exponential moments
and that the Lyapunov exponent $\lambda_{\mu}$ is zero. 
Then, there exists a constant $c>0$ such that for any $t \in \bb R$, $x \in \bb P(\bb V)$, $a < b$ and $n \geq 1$, 
we have 
\begin{align*}
\bb P \Big(  t + \sigma(g_n \cdots g_1, x) \in [a, b],   \tau_{x, t} > n -1 \Big)
 \leq  \frac{c}{n} ( 1+ \max\{t, 0\} ) (b-a + 1). 
\end{align*}
\end{corollary}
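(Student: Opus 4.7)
The plan is to decompose the walk at the midpoint using the Markov property of the chain $(g_k\cdots g_1 x,\, t+\sigma(g_k\cdots g_1, x))_{k\geq 0}$, and then combine the unconditioned local limit theorem for $\sigma$ with the uniform-in-$x$ persistence bound of \cite{GLP17}. These are precisely the ingredients available in the course of the proof of Theorem \ref{t-B 001}, which is why the statement is phrased as a corollary of that proof.

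Set $m = \lfloor n/2 \rfloor$. Since $\{\tau_{x,t} > n-1\} \subseteq \{\tau_{x,t} > m\}$ and the event $\{\tau_{x,t} > m\}$ is measurable with respect to $\sigma(g_1, \dots, g_m)$, conditioning on this $\sigma$-algebra and using the iid structure of the $g_k$ together with the cocycle identity yields
\begin{align*}
&\bb P\bigl(t + \sigma(g_n \cdots g_1, x) \in [a,b],\ \tau_{x,t} > n-1\bigr) \\
&\qquad \leq \bb E\bigl[\Phi_{n-m}\bigl(g_m \cdots g_1 x,\ t + \sigma(g_m \cdots g_1, x)\bigr);\ \tau_{x,t} > m\bigr],
\end{align*}
where, for $k \geq 1$ and $(x', s) \in \bb P(\bb V) \times \bb R$,
\[
\Phi_k(x', s) := \bb P\bigl( s + \sigma(g_k \cdots g_1, x') \in [a,b] \bigr).
\]

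A Stone-type local limit theorem for the cocycle $\sigma$, uniform in the starting point $x'$ and in the level $s$ (a standard consequence of the spectral gap of the transfer operator of $\mu$; see e.g.\ \cite{BQ16b}), gives
\[
\Phi_k(x', s) \leq \frac{c\,(b - a + 1)}{\sqrt{k}}.
\]
Together with the uniform-in-$x$ persistence bound of \cite{GLP17},
\[
\bb P(\tau_{x,t} > m) \leq \frac{c\,(1 + \max\{t, 0\})}{\sqrt{m}},
\]
and using $\sqrt{m(n-m)} \asymp n$, we conclude
\[
\bb P\bigl(t + \sigma(g_n \cdots g_1, x) \in [a,b],\ \tau_{x,t} > n-1\bigr) \leq \frac{c'\,(1 + \max\{t, 0\})(b - a + 1)}{n}.
\]
The finitely many small values of $n$ are absorbed into the constant trivially.

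The main obstacle lies in establishing the two uniform estimates with the claimed dependence on $(x', s)$ and $(x, t)$ respectively. Both are byproducts of the spectral and perturbative analysis carried out in the proof of Theorem \ref{t-B 001}: the LLT for $\Phi_k$ follows from a uniform Stone-type argument for the cocycle on $\bb P(\bb V)$, while the persistence bound refines the asymptotic $\sqrt m\,\bb P(\tau_{x,t} > m) \to cV(x,t)$ of \cite{GLP17} into a genuine uniform upper bound. Note that this route bypasses the Caravenna-type asymptotic itself: the proof only needs the Markov split and two coarser uniform inputs, which is why the $n^{-1}$ rate holds without any restriction on the location of $[a,b]$ relative to $\sqrt{n}$.
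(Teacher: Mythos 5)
Your proof is correct and arrives at the same $n^{-1} = n^{-1/2}\cdot n^{-1/2}$ factorisation as the paper, but by a more streamlined route. The paper presents the corollary as an immediate consequence of Corollary \ref{Cor-Cara-Bound} applied to $F = \mathds 1_{\bb P(\bb V)\times[a,b]}$ and $G = \mathds 1_{\bb P(\bb V)\times[a-\ee,\,b+\ee]}$; that corollary in turn is established through the $\scr H_\gamma$-norm framework inherited from the proof of Theorem \ref{t-A 001}, and internally it performs the same decomposition you do (there with $m=[\sqrt{\ee}\,n]$ steps of local-limit type and $k=n-m$ steps of persistence type, so $\sqrt{mk}\asymp n$). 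Your version bypasses the Caravenna-type packaging entirely: split at $m=\lfloor n/2\rfloor$ by the Markov property, use the uniform persistence bound \eqref{Upper-bound-tau-xt} of Theorem \ref{Thm-CCLT-001} on the first $m$ steps (order $n^{-1/2}(1+\max\{t,0\})$), and use the unconditioned uniform local limit bound on the remaining $n-m$ steps (order $n^{-1/2}(b-a+1)$). The latter is exactly what one obtains from Theorem \ref{Lem_LLT_Nonasm} applied to the shifted indicator $\mathds 1_{[a-s,\,b-s]}$ dominated by $\mathds 1_{[a-s-\ee,\,b-s+\ee]}$, uniformly in $x'$ and $s$; calling this a "Stone-type LLT" is informal but accurate, and replacing that phrase by a citation of Theorem \ref{Lem_LLT_Nonasm} would make the step airtight within the paper's framework. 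The main thing your presentation buys is transparency: it isolates the two coarse inputs on which the bound really rests, and it handles the $\tau_{x,t}>n-1$ versus $\tau_{x,t}>n$ discrepancy cleanly through the inclusion $\{\tau_{x,t}>n-1\}\subseteq\{\tau_{x,t}>m\}$, a point the paper's one-line proof leaves implicit since Corollary \ref{Cor-Cara-Bound} is stated with $\tau_{x,t}>n$.
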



\subsection{Extension to the case of flag manifolds}
As in \cite{GQX24a}, 
the methods developed in this paper can be extended to formulate conditioned local limit theorems for random walks on reductive groups.  
Let $\bf G$ be a real connected reductive group. Denote by $K$ a maximal compact subgroup of $G = \bf G(\bb R)$
and by $\bf A$ a maximal $\bb R$-split torus of $\bf G$, such that the Cartan involution of $\bf G$ associated with $K$ equals $-1$ on the Lie algebra $\mathfrak a$ of  $A = \bf A(\bb R)$. 
Let $\mathfrak a^+ \subset \mathfrak a$ be a Weyl chamber. 
Then we have the Cartan decomposition $G = K \exp (\mathfrak a^+) K$
and the associated Cartan projection $\kappa: G \to \mathfrak a^+$.

Let $\bf P$ be the unique minimal  $\bb R$-parabolic subgroup of $\bf G$ whose Lie algebra contains the root spaces associated with the elements of $\mathfrak a^+$. 
We have the Iwasawa decomposition $G = K P$, where $P = \bf P(\bb R)$. 
Let $\bf U$ be the unipotent radical of $\bf P$ so that $P = AU$, where $U = \bf U(\bb R)$,
and hence $G = KAU$. 
More precisely, for $g \in G$, the set $KgU$ contains a unique element of $\exp (\mathfrak a)$. 

We also denote by $\mathcal P = G/P$ the flag manifold of $G$, 
which is the set of minimal  $\bb R$-parabolic subgroups of $\bf G$, 
and by $\xi_0$ the unique fixed point of $P$ in $\mathcal P$. 
The set $\mathcal P$ is a compact homogeneous space of $G$.
For $g \in G$ and $\xi \in \mathcal P$, choose $k \in K$ such that $\xi = k \xi_0$,
and denote by $\sigma(g, \xi)$ the unique element of $\mathfrak a$ such that  
\begin{align*}
\exp(\sigma(g, \xi)) \in K g kU. 
\end{align*}
The map  $\sigma: G \times \mathcal P \to \mathfrak a$ 
is a smooth cocycle known as the Iwasawa cocycle. 

Let $\mu$ be a Borel probability measure on $G$. 
Assume that the first moment of $\mu$ is finite, meaning that $\int_{G} \| \kappa(g) \| \mu(dg) < \infty$
for some norm $\| \cdot \|$ on the vector space $\mathfrak a$. 
Then the limit $\lim_{n \to \infty} \frac{1}{n} \bb E \kappa(g_n \cdots g_1)$ exists and is called the Lyapunov vector $\lambda_{\mu} \in \mathfrak a^+$. 
Let $\phi$ be a linear functional on $\mathfrak a$ such that $\phi(\lambda_{\mu}) = 0$. 
For $t \in \bb R$ and $\xi \in \mathcal P$, we define the following stopping time 
\begin{align*}
\tau_{\xi, t}  = \min \{ k \geq 1: t + \phi( \sigma(g_k \cdots g_1, \xi) ) < 0 \}. 
\end{align*}
Denote by $\Gamma_{\mu}$ the subsemigroup of $G$ spanned by the support of $\mu$. 

In \cite[Theorems 1.8 and 1.9]{GQX24a},  we associated with the data determined by the cocycle $\phi \circ \sigma$ 
and the stopping times $\tau_{\xi, t}$ for $\xi \in \mathcal P$, 
a harmonic function $V$ on $\mathcal P \times \bb R$ 
as well as a harmonic Radon measure $\rho$ on the same space $\mathcal P \times \bb R$. 
These objects allow us to state the following analog of Theorem \ref{Thm-CLLT-cocycle}.

\begin{theorem}\label{Thm-CLLT-cocycle-flag}
Assume that $\Gamma_{\mu}$ is Zariski dense in $G$,  
the measure $\mu$ admits an exponential moment, meaning $\int_{G} e^{\alpha \|\kappa(g) \|} \mu(dg) < \infty$ for some $\alpha >0$, 
and $\phi(\lambda_{\mu}) = 0$. 
Then, for any fixed $t\in \bb R$ and for any continuous compactly supported function $h$ on $\mathcal P \times \bb R$, 
 we have, uniformly in $\xi \in \mathcal P$,
\begin{align*}
  \lim_{n \to \infty}  n^{3/2}  \bb E \Big[ h \Big( g_n \cdots g_1 \xi,  & \  t + \phi ( \sigma(g_n \cdots g_1, \xi)) \Big); \tau_{\xi, t} > n -1 \Big]  \\
& \qquad = \frac{2 V(\xi, t)}{ \sqrt{2 \pi} \upsilon_{\mu}^3 } \int_{\mathcal P \times \bb R} h (\xi',t') \rho(d\xi',dt'). 
\end{align*}
\end{theorem}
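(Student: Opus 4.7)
The plan is to mirror the argument used to prove Theorem \ref{Thm-CLLT-cocycle}, with the projective space $\bb P(\bb V)$ replaced by the flag manifold $\mathcal P$, the norm cocycle replaced by $\phi \circ \sigma$, and using the harmonic function $V$ and the harmonic Radon measure $\rho$ on $\mathcal P \times \bb R$ supplied by \cite{GQX24a}. The principal analytic input in the present paper is the existence of a spectral gap, on a suitable Hölder space, for the family of perturbed transfer operators $h \mapsto \bb E[e^{i s \phi(\sigma(g_1, \cdot))} h(g_1 \cdot)]$. Under Zariski density of $\Gamma_{\mu}$ together with the exponential moment assumption, the Benoist--Quint machinery \cite{BQ16b} provides the analogous spectral gap on $\mathcal P$, together with non-arithmeticity of $\phi \circ \sigma$, so the functional-analytic lemmas used throughout this paper have direct counterparts in the reductive framework.

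With this input, I would first establish the flag-manifold analog of the Caravenna-type Theorem \ref{t-B 001} for the cocycle $\phi \circ \sigma$. The proof in this paper rests on a convolution with a smoothing kernel, combined with (i) the conditioned central limit theorem of \cite{GLP17}, established in the reductive setting in \cite{GQX24a}, (ii) a Berry--Esseen bound for the perturbed transfer operator, and (iii) non-arithmeticity. All three ingredients are available on $\mathcal P$, so the convolution argument carries over verbatim; in particular one also obtains the analog of Corollary \ref{Cor-bound-n-001}. Combining this Caravenna-type asymptotic with a conditioned central limit theorem for the reversed walk $g_1 \cdots g_n$ then yields the stated asymptotic exactly as in the proof of Theorem \ref{Thm-CLLT-cocycle}: split the walk at a middle time of order $n/2$, apply the Caravenna estimate to the first half and the reverse conditioned CLT to the second half, integrate the middle increments against $\rho$, and collect the constant $\frac{2 V(\xi, t)}{\sqrt{2\pi}\, \upsilon_\mu^3} \int h \, d\rho$.

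The main obstacle is implementing the reversal in the flag-manifold setting. For the norm cocycle on $\bb P(\bb V)$, the dual walk lives naturally on $\bb P(\bb V^*)$ and the pairing $\log |\langle g v, v^* \rangle|$ makes the reversal identity between $\log \|g v\|$ and $\log \|{}^t g^{-1} v^*\|$ transparent. In the reductive setting the reversal is realized via the opposition involution and passage to the opposite parabolic $P^-$, producing a dual walk on $\mathcal P^- = G/P^-$ whose associated additive cocycle is obtained by composing $\phi$ with the Iwasawa cocycle of $g^{-1}$ (up to a twist by the longest Weyl element). One must then verify that the resulting $R$-harmonic measure on $\mathcal P^- \times \bb R$ coincides with the dual object $\check \rho$ constructed in \cite{GQX24a}, and that the constant $\upsilon_\mu^2$ is indeed the asymptotic variance of $\phi \circ \sigma$ both for the direct and the reversed walks. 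Once this bookkeeping is settled, all the remaining estimates transfer from the linear case without modification.
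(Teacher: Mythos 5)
The paper itself offers no explicit proof of this theorem: after stating it, the authors simply note that ``analogues of Corollaries \ref{coroll-CLLT-cocycle-tau>n}, \ref{Coro-CLLT-cocycle-001} and \ref{Thm-CLLT-cocycle-002} \dots can be derived from Theorem \ref{Thm-CLLT-cocycle-flag} utilizing the framework presented in \cite{BQ16b},'' leaving the reader to transport the entire argument from the linear to the reductive setting. Your proposal is therefore being compared not against a written proof but against this implicit ``the methods extend'' claim, and in that light it correctly identifies the structure of what needs to be done: the spectral-gap analysis of the twisted transfer operator on $\mathcal P$ from \cite{BQ16b} replaces the $\bb P(\bb V)$ statements of Section~\ref{Spectral gap theory}, which gives the Caravenna-type asymptotic; the harmonic function $V$, Radon measure $\rho$, and perturbation framework of Appendix~\ref{sec invar func} come from the flag-manifold construction in~\cite{GQX24a}; and the reversal step, which is indeed the bottleneck, is carried out using the opposite parabolic $P^-$ together with the opposition involution $\iota = -\mathrm{Ad}(w_0)|_{\mathfrak a}$, exactly as you indicate.

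Two points of bookkeeping in your last paragraph are off, though, and worth correcting. First, in the reversal identity of Lemma~\ref{lemma-duality-for-product-001}, the test function $h$ continues to be evaluated at $g_1\cdots g_n x$, i.e.\ on the original space $\bb P(\bb V)$ (or $\mathcal P$); the dual variable $y\in\bb P(\bb V^*)$ (respectively $\eta\in\mathcal P^-$) only enters through the perturbation terms $\delta(\,\cdot\,,y)$ and is integrated out against $\nu^*$. Consequently the target measure produced by the reversal is precisely $\rho$ on $\mathcal P\times\bb R$ -- this is in fact how $\rho$ is \emph{constructed} in \cite{GQX24a} -- and not some $R$-harmonic measure on $\mathcal P^-\times\bb R$ to be matched with $\check\rho$. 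The object $\check\rho$ is associated with the stopping time $\check\tau$, which does not appear in Theorem~\ref{Thm-CLLT-cocycle-flag}. Second, your phrasing suggests a separate verification that $\upsilon_\mu^2$ agrees for the direct and reversed walks; in the linear case this is an immediate consequence of the cohomological equation \eqref{cohomological eq -vers 002}, which makes the reversed increments differ from the direct ones by a coboundary, and its reductive analogue (involving the Gromov product and $\iota$, available in \cite{BQ16b}) yields the same conclusion for $\phi\circ\sigma$ versus $(\phi\circ\iota)\circ\sigma^-$. Neither issue invalidates your plan; you have correctly located the genuine work in the reversal step, and once the cohomological identity on $G/P$ versus $G/P^-$ is written down the remaining estimates transfer as you describe.
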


Analogues of Corollaries \ref{coroll-CLLT-cocycle-tau>n}, \ref{Coro-CLLT-cocycle-001} and \ref{Thm-CLLT-cocycle-002} 
can also be formulated in this setting. 
While we will not prove these results explicitly, they can be derived from Theorem \ref{Thm-CLLT-cocycle-flag} 
utilizing the framework presented in \cite{BQ16b}.


\subsection{Extension to the case of local fields}

Theorem \ref{Thm-CLLT-cocycle} and its corollaries are stated for random walks taking values in the linear group $\mathrm{GL}(d,\bb R)$ over the field of real numbers $\bb R$. 
However, by employing the framework of \cite{BQ16b}, they can be directly extended to random walks with values in the linear group $\mathrm{GL}(d,\bb K)$, 
where $\bb K$ is a local field, that is a locally compact topological field. 
The same extension applies to Theorem \ref{Thm-CLLT-cocycle-flag}.

\subsection{Proof strategy and organization of the paper} \label{sec: proof strategy-001}

The conditioned local limit theorem has garnered considerable attention in probability theory, with numerous applications in the study of random walks across various related fields. Following the pioneering work of Feller \cite{Feller} and Spitzer \cite{Spitzer}, this topic has been explored by several authors, including 
Borovkov \cite{Borovk62, Bor70},  
Iglehart \cite{Igle74}, 
Bolthausen \cite{Bolth}, 
Eppel \cite{Eppel-1979}, 
Bertoin and Doney \cite{BertDoney94},
Varopoulos \cite{Var1999, Var2000}, 
Caravenna \cite{Carav05}, 
Eichelsbacher and K\"onig \cite{EichKonig}, 
Vatutin and Wachtel \cite{VatWacht09}, 
Doney \cite{Don12},  
Denisov and Wachtel \cite{Den Wacht 2008, DW15},
Kersting and Vatutin \cite{KV17}, and Peign\' e and Pham \cite{PP23}. 
Various applications have been already mentioned in Subsection \ref{sec-background and motiv}. 


Let us briefly outline the proof strategy employed in this paper and stress the main difficulties. 
Our approach is based on the excursion decomposition,  
which is a common tool in the study of conditioned random walks and is often used alongside the classical Wiener-Hopf factorization technique (see \cite{Feller, Spitzer}). 
Our study diverges from this classical framework, instead utilizing excursions through a convolution approach. 

This method proves effective in handling dependent random variables and was applied in \cite{GLL20} to establish conditioned local limit theorems for finite-state Markov chains, inspired by \cite{DW15} for random walks in cones. 
Further advancements were achieved in \cite{GX2022IID} for random walks on $\bb R$, 
where the convolution approach was developed to refine previous asymptotics by specifying rates of convergence. 
Our current investigation into products of random matrices is also motivated by our earlier work on conditioned limit theorems for hyperbolic dynamical systems \cite{GQX23}, alongside \cite{GLL20, GX2022IID}.

In the excursion decomposition technique, the trajectory of a stochastic process is divided into three parts. The first part is analyzed using the conditioned central limit theorem, while the second part is examined via the ordinary local limit theorem. 
The combined analysis of these two parts, through a convolution of the resulting limits, 
leads to a Caravenna-type conditioned local limit theorem, as presented in Theorem \ref{t-B 001}.

The third part of the trajectory is analyzed using the reversal approach, 
which allows it to be treated as the trajectory of a dual process conditioned to remain positive. 
In certain cases, this reversed trajectory can be addressed using a conditioned central limit theorem, 
similar to the one applied in the first part of the excursion decomposition.

Indeed, in the classical case of random walks on $\bb R$, the reversed process is also a random walk on $\bb R$ 
(see \cite{VatWacht09, Don12}).
For finite-state Markov chains, the reversed process corresponds to the trajectory of a finite-state Markov chain, 
as demonstrated in \cite{GLL20}.
In the context of Birkhoff sums of an observable $f$ over a hyperbolic dynamical system $(\bb X,T)$, 
the reversed process is represented by the Birkhoff sums of the observable $f\circ T^{-1}$ over the dynamical system $(\bb X, T^{-1})$; 
see \cite{GQX23} for details.

However, in many interesting cases, including the random walks on linear groups considered in this paper, the reversed trajectory generally corresponds to a different dynamical object. The main challenge is then to develop an appropriate technique to effectively study the reversed process. In the context of random walks on linear groups, we introduced the language of random walks with future-dependent perturbations in our preliminary paper \cite{GQX24a}. This approach proves to be a suitable tool for addressing the issues related to the reversed trajectory.
In \cite{GQX24a}, 
this technique was used to construct the target harmonic measures $\rho$ and $\check\rho$, which appear as limits in \eqref{exist of measure rho-001} and \eqref{exist of measure rho-002} and are also featured in the main results of the present paper. Here, the analysis of the third part of the trajectory builds on the study of an abstract model with perturbations formulated in Appendix \ref{sec invar func}, leading to the conditioned central limit theorem stated in Theorem \ref{Thm-CCLT-limit}.

 Finally, we combine the Caravenna-type conditioned local limit theorem, derived from analyzing the first two parts of the excursion decomposition, with the conditioned central limit theorem for the reversed trajectory of the third part. This combination allows us to derive the statement of Theorem \ref{Thm-CLLT-cocycle}.

The organization of the paper is as follows:

Section \ref{sec EffectiveLLT}: We present an effective ordinary (non-conditioned) local limit theorem for products of random matrices, which will be used to address the middle part of the trajectory in the excursion decomposition.

Section \ref{sec: Effective CLLT}: We apply the local limit theorem discussed above in conjunction with the conditioned central limit theorem of 
\cite{GLP17} to prove the Caravenna-type local limit theorem (cf.\ Theorem \ref{t-B 001}). This section handles the behavior of the random walk over the union of the first and second parts of the trajectory in the excursion decomposition.

Section \ref{sec: reversed random walk}: We derive our reversal formula and use it, along with the results in Appendix \ref{sec invar func}, to prove Theorems \ref{Cor-CLLT-cocycle-001} and \ref{Thm-CLLT-cocycle-bound-001}. These results are instrumental in controlling remainder terms in the proof of our main result.

    Section \ref{Proof of main Theorem}: We establish our main result, Theorem \ref{Thm-CLLT-cocycle}, 
    as a convolution of the asymptotics obtained in Section \ref{sec: Effective CLLT} 
 and of the conditioned central limit theorem for the reversed random walk with perturbations from Theorem A.3 of the Appendix.
 As a consequence we deduce Corollary \ref{Thm-CLLT-cocycle-002}. 
    
    Appendix \ref{sec invar func}: We prove the conditioned central limit theorem (Theorem \ref{Thm-CCLT-limit}) in the abstract framework of random walks with future-dependent perturbations. This result is essential for analyzing the third part of the trajectory in the excursion decomposition.


\section{Effective local limit theorems}\label{sec EffectiveLLT}

In this section, we shall establish an effective version of the ordinary local limit theorem 
for the random walk $\sigma(g_n \cdots g_1, x)$. 
The arguments are adapted from the analogous statements in \cite[Section 5]{GQX23}.


\subsection{Spectral gap theory} \label{Spectral gap theory}

Let $\bb V$ be a finite dimensional real vector space which we equip with a Euclidean norm $\| \cdot \|$.
We equip the exterior square $\wedge^2 \bb V$ with the associated Euclidean norm so that if $v, w \in \bb V$, 
we have $\| v \wedge w \|^2 + \langle v, w \rangle^2 = \|v\|^2 \|w\|^2$, where $\langle \cdot, \cdot \rangle$ denotes the scalar product. 
We define a distance on $\bb P(\bb V)$ by setting $d(x, y) = \frac{\| v \wedge w \|}{\|v\| \|w\|}$
for $x = \bb R v$ and $y = \bb R w$. 

For $\gamma \in (0,1)$ and a real-valued function $\varphi$ on $\bb P(\bb V)$, we define the $\gamma$-H\"older constant $\omega_{\gamma}(\varphi)$ as
\begin{align*}
\omega_{\gamma}(\varphi) = \sup_{x, y \in \bb P(\bb V): x \neq y} \frac{| \varphi(x) - \varphi(y) |}{d(x, y)^{\gamma}}. 
\end{align*}
The function $\varphi$ is said to be $\gamma$-H\"older continuous if $\omega_{\gamma}(\varphi)$ is finite.
The space of all $\gamma$-H\"older continuous functions on $\bb P(\bb V)$ is denoted by $\mathscr B_{\gamma}$. 
It is a Banach space when equipped with the norm $\| \cdot \|_{\scr B_{\gamma}}$ defined by 
\begin{align*}
\| \varphi \|_{\scr B_{\gamma}} = \sup_{x \in \bb P(\bb V)} |\varphi(x)| + \omega_{\gamma}(\varphi). 
\end{align*}
Note that the norm $\| \cdot \|_{\scr B_{\gamma}}$ depends on the original choice of a Euclidean structure on $\bb V$,
but the space $\scr B_{\gamma}$ does not. 
Denote by $\mathscr L(\scr B_{\gamma}, \scr B_{\gamma})$ 
the set of all bounded linear operators from $\scr B_{\gamma}$ to $\scr B_{\gamma}$
equipped with the standard operator norm $\left\| \cdot \right\|_{\scr B_{\gamma} \to \scr B_{\gamma}}$. 

We fix a Borel probability measure $\mu$ on $\bb G = {\rm GL}(\bb V)$, which admits a finite exponential moment, meaning that \eqref{Exponential-moment} holds for some constant $\alpha>0$. 
We denote by $P$ the Markov operator on $\bb P(\bb V)$ associated with $\mu$, i.e., 
for any continuous function $\varphi$ on $\bb P(\bb V)$ and any $x \in \bb P(\bb V)$,
we set 
\begin{align*}
P \varphi(x) = \int_{\bb G} \varphi(gx) \mu(dg). 
\end{align*}
More generally, for $z \in \bb C$ with $|\Re(z)| < \alpha$, we denote by $P_z$ the perturbation of the operator $P$
defined by, for any continuous function $\varphi$ on $\bb P(\bb V)$ and any $x \in \bb P(\bb V)$, 
\begin{align*}
P_z \varphi(x) = \int_{\bb G} e^{z \sigma(g, x)} \varphi(gx) \mu(dg). 
\end{align*}
Note that, for $n \geq 0$, we have
\begin{align*} 
P_z^n \varphi(x) = \int_{\bb G^n} e^{z \sigma(g_n \cdots g_1, x)} \varphi(g_n \cdots g_1 x) \mu(dg_1) \ldots \mu(dg_n). 
\end{align*}
By \cite{Boug-Lacr85}, there exist $\gamma \in (0,1)$ and $\alpha >0$ such that for any $z \in \bb C$ with $|\Re(z)| < \alpha$,
the operator $P_z$ is bounded on $\scr B_{\gamma}$ and the operator valued map $z \mapsto P_z$ defines a holomorphic map
from the strip $\{z \in \bb C: |\Re(z)| < \alpha \}$ towards the space $\mathscr L(\scr B_{\gamma}, \scr B_{\gamma})$. 
Once for all, we fix such constant $\gamma \in (0,1)$. 

Suppose that the subsemigroup $\Gamma_{\mu}$ spanned by the support of $\mu$ is proximal and strongly irreducible,
and denote by $\nu$ the unique $\mu$-stationary Borel probability measure on $\bb P(\bb V)$ (see \cite{Boug-Lacr85}). 
The following result (see \cite{Boug-Lacr85}) provides the spectral gap properties for the perturbed transfer operator $P_z$. 

\begin{lemma} \label{Lem_Perturbation}
Assume that $\Gamma_{\mu}$ is proximal and strongly irreducible, and $\mu$ admits finite exponential moments. 
Then, there exists a constant $\delta > 0$ such that for any $z \in \bb D_{\delta}: = \{z \in \bb C:  |z| < \delta \}$,
\begin{align}
P_z^n  = \lambda^{n}_{z} \Pi_{z} + N^{n}_{z},  
\quad  n \geq 1, \label{perturb001}
\end{align}
where the mappings $z \mapsto \Pi_{z}: \bb D_{\delta} \to \mathscr L(\scr B_{\gamma}, \scr B_{\gamma})$
and $z \mapsto N_{z}: \bb D_{\delta} \to \mathscr L(\scr B_{\gamma}, \scr B_{\gamma})$ are analytic
in the operator norm topology, 
$\Pi_{z}$ is a rank-one projection with 
$\Pi_{0}(\varphi)(z) = \nu (\varphi)$ for any $\varphi \in \scr B_{\gamma}$ and $x \in \bb P(\bb V)$,
$\Pi_{z} N_{z} = N_{z} \Pi_{z} = 0$. 
Moreover, there exist $n_0 \geq 1$ and $q \in (0,1)$ such that for any $z \in \bb D_{\delta}$,
we have $\|N_{z}^{n_0}\|_{\scr B_{\gamma} \to \scr B_{\gamma}} \leq q$. 
\end{lemma}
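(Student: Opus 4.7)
The plan is to deduce the lemma from analytic perturbation theory, building on the spectral gap at $z = 0$ supplied by the classical theory of products of random matrices.

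First, I would invoke the Le Page--Guivarc'h--Raugi--Bougerol--Lacroix theorem already cited in the excerpt: under proximality, strong irreducibility of $\Gamma_\mu$, and a finite exponential moment of $\mu$, there exists $\gamma \in (0,1)$ such that $P$ acts boundedly on $\scr B_\gamma$ with a spectral gap. Concretely, $1$ is a simple isolated eigenvalue of $P$, with eigenfunction the constant $\mathbf{1}$ and left eigenfunctional $\varphi \mapsto \nu(\varphi)$, while the remainder of the spectrum of $P$ lies in a closed disk $\{|\zeta| \leq r_0\}$ with $r_0 < 1$. Equivalently, $P = \Pi_0 + N_0$ with $\Pi_0 \varphi = \nu(\varphi)\, \mathbf{1}$ a rank-one projection, $\Pi_0 N_0 = N_0 \Pi_0 = 0$, and the spectral radius of $N_0$ bounded by $r_0$.

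Second, using that $z \mapsto P_z$ is a holomorphic family in $\mathscr L(\scr B_\gamma, \scr B_\gamma)$ on the strip $\{|\Re z| < \alpha\}$ (termwise exponential integrability yields operator-norm convergence of the defining integral on H\"older functions), I would apply the Kato--Riesz functional calculus. Fix $r_0 < r_1 < 1$ and $\eta \in (0, 1 - r_1)$. By upper semicontinuity of the spectrum under holomorphic perturbations, there exists $\delta > 0$ such that for every $z \in \bb D_\delta$ the spectrum of $P_z$ is contained in $\overline{\bb D_{r_1}} \cup \{|\zeta - 1| < \eta\}$ and the inner component consists of a single simple eigenvalue $\lambda_z$ with $\lambda_0 = 1$, depending holomorphically on $z$. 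Setting
\[
\Pi_z = \frac{1}{2\pi i}\oint_{|\zeta - 1| = \eta}(\zeta - P_z)^{-1}\, d\zeta, \qquad N_z = P_z(I - \Pi_z),
\]
the Riesz calculus yields that $\Pi_z$ is a holomorphic family of projections of rank $1$ (rank is locally constant along holomorphic deformations), commuting with $P_z$, and satisfying $P_z \Pi_z = \lambda_z \Pi_z$ together with $\Pi_z N_z = N_z \Pi_z = 0$; by induction $P_z^n = \lambda_z^n \Pi_z + N_z^n$ for every $n \geq 1$, and uniqueness of the spectral projection at $1$ for $P_0$ forces $\Pi_0 \varphi = \nu(\varphi)\, \mathbf{1}$.

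Finally, to produce the uniform decay $\|N_z^{n_0}\|_{\scr B_\gamma \to \scr B_\gamma} \leq q < 1$, note that on the range of $I - \Pi_z$ the spectrum of $N_z$ is contained in $\overline{\bb D_{r_1}}$, so its spectral radius is at most $r_1$. Fix any $q \in (r_1, 1)$. By Gelfand's formula there exists $n_0$ with $\|N_0^{n_0}\|_{\scr B_\gamma \to \scr B_\gamma}^{1/n_0} < q$, and by continuity of $z \mapsto N_z^{n_0}$ in operator norm together with compactness of $\overline{\bb D_{\delta/2}}$, after possibly shrinking $\delta$ we get $\|N_z^{n_0}\|_{\scr B_\gamma \to \scr B_\gamma} \leq q$ uniformly in $z \in \bb D_\delta$. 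The only real difficulty lies in Step 1---the unperturbed spectral gap, established via Doeblin--Fortet/Lasota--Yorke inequalities on $\scr B_\gamma$ combined with the contraction properties of the $\Gamma_\mu$-action on $\bb P(\bb V)$---but since this is quoted from Bougerol--Lacroix, the lemma itself reduces to a clean application of analytic perturbation theory.
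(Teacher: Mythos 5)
Your proposal is correct and follows the standard route: the paper itself gives no proof and simply cites Bougerol--Lacroix, and the argument you supply---the unperturbed spectral gap of $P_0$ on $\scr B_\gamma$ combined with analytic perturbation theory via the Kato--Riesz contour-integral projection, plus Gelfand's formula and operator-norm continuity to get a uniform $n_0$ and $q$---is precisely the argument that reference would resolve to. No gaps.
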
  

Recall that we denote by $\lambda_{\mu}$ the first Lyapunov exponent of $\mu$, see \eqref{def-Lyapunov-001}. 
We now assume that $\lambda_{\mu} = 0$. 
In this case, the eigenvalue $\lambda_{z}$ has the asymptotic expansion: as $z \to 0$,   
\begin{align} \label{decomp-lambda001}
\lambda_{z} = 1 + \frac{\upsilon_{\mu}^2}{2} z^2 + O(|z|^3). 
\end{align}
Recall that the asymptotic variance $\upsilon_{\mu}^2$ appearing in \eqref{decomp-lambda001} is strictly positive, see \cite{Boug-Lacr85, BQ16b}.

\begin{lemma}\label{Lem_StrongNonLattice}
Assume that $\Gamma_{\mu}$ is proximal and strongly irreducible, and $\mu$ admits finite exponential moments. 
Then,
for any $t \neq 0$, the operator $P_{{\bf i} t}$ has spectral radius strictly less than $1$
in $\scr B_{\gamma}$. 
More precisely,  for any compact set $K \subset \bb R \setminus \{0\}$, 
there exist constants $c_K, c_K' >0$ such that for any $\varphi\in \scr B_{\gamma}$ and $n \geq 1$, 
\begin{align}\label{Spectral_Radius_Uniform}
\sup_{t \in K}   \| P_{ {\bf i}t }^n \varphi \|_{\scr B_{\gamma}}
\leq  c_K' e^{- c_K n}  \|\varphi\|_{\scr B_{\gamma}}.
\end{align}
\end{lemma}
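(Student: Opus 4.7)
The plan is to establish \eqref{Spectral_Radius_Uniform} in two stages. First, I would prove the pointwise statement: for each fixed $t \neq 0$, the spectral radius of $P_{\mathbf{i}t}$ on $\scr B_{\gamma}$ is strictly less than $1$. Second, I would upgrade this to a uniform bound on compact subsets $K \subset \bb R \setminus \{0\}$ via a compactness argument that exploits the operator-norm continuity of the map $z \mapsto P_z$ provided by Lemma \ref{Lem_Perturbation}.

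For the pointwise statement I would argue by contradiction. The same Doeblin-Fortet / Ionescu-Tulcea-Marinescu inequality that yields the spectral gap for $P$ also applies to $P_{\mathbf{i}t}$ (since $|e^{\mathbf{i}t\sigma(g,x)}| = 1$), so $P_{\mathbf{i}t}$ is quasi-compact on $\scr B_{\gamma}$ with essential spectral radius strictly less than $1$. If its spectral radius were $\geq 1$, there would exist $\theta \in \bb R$ and a nonzero $\varphi \in \scr B_{\gamma}$ with $P_{\mathbf{i}t} \varphi = e^{\mathbf{i}\theta} \varphi$. Then $|\varphi|$ is $P$-subharmonic and, by the uniqueness of the $\mu$-stationary measure $\nu$ together with strong irreducibility, $|\varphi|$ must be constant; after normalization $|\varphi| \equiv 1$. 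Iterating yields the cohomological equation
\begin{align*}
e^{\mathbf{i}t \sigma(g_n \cdots g_1, x)} \varphi(g_n \cdots g_1 x) = e^{\mathbf{i}n\theta} \varphi(x)
\end{align*}
valid for $\mu^{\otimes n}$-a.e. $(g_1,\dots,g_n)$ and $\nu$-a.e. $x$, and by continuity of $\varphi$ for all $x$ in the support of $\nu$ and all $g_i$ in $\Gamma_\mu$.

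The main obstacle, and the crux of the argument, is to derive a contradiction from this cohomological equation using proximality and strong irreducibility; this is the classical Guivarc'h non-arithmeticity argument. The idea is to fix a proximal element $g_0 \in \Gamma_\mu$ with attracting fixed point $x_0^+ \in \bb P(\bb V)$, dominant eigenvalue $\lambda_0$, and repelling hyperplane $H_0$. For any $h_1, h_2 \in \Gamma_\mu$ and $x \in \bb P(\bb V) \setminus H_0$, one has $g_0^n h_i x \to x_0^+$ as $n \to \infty$, while the normalized cocycle differences $\sigma(g_0^n h_i, x) - n\log|\lambda_0|$ converge. Applying the cohomological equation to $g_0^n h_1$ and $g_0^n h_2$ and letting $n \to \infty$ forces
\begin{align*}
t\bigl(\sigma(h_1, x) - \sigma(h_2, x)\bigr) \in 2\pi \bb Z + c_{h_1,h_2}
\end{align*}
for some constant $c_{h_1,h_2}$ independent of $x$. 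Strong irreducibility together with proximality of $\Gamma_\mu$ implies that the set of cocycle differences $\{\sigma(h_1,x) - \sigma(h_2,x) : h_1,h_2 \in \Gamma_\mu,\, x \in \mathrm{supp}\,\nu\}$ generates a dense subgroup of $\bb R$, contradicting the above lattice constraint for $t \neq 0$.

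The passage to the uniform bound on a compact set $K \subset \bb R \setminus\{0\}$ is then a soft argument. By the pointwise result, $\mathrm{spr}(P_{\mathbf{i}t_0}) < 1$ for each $t_0 \in K$, so by the spectral radius formula there exists $n_0(t_0)$ with $\|P_{\mathbf{i}t_0}^{n_0(t_0)}\|_{\scr B_\gamma \to \scr B_\gamma} < 1/2$. Since the map $t \mapsto P_{\mathbf{i}t}$ from $\bb R$ to $\mathscr L(\scr B_\gamma, \scr B_\gamma)$ is continuous (indeed analytic by Lemma \ref{Lem_Perturbation}), this inequality persists on an open neighborhood of $t_0$. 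Extracting a finite subcover of $K$ gives a single $n_K$ and $q_K \in (0,1)$ with $\sup_{t \in K} \|P_{\mathbf{i}t}^{n_K}\|_{\scr B_\gamma \to \scr B_\gamma} \leq q_K$, and iterating this bound yields the exponential decay \eqref{Spectral_Radius_Uniform} with $c_K = -\frac{\log q_K}{n_K}$ and some $c_K'$ absorbing the powers up to $n_K$.
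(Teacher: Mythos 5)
Your argument is correct and follows essentially the same route as the paper's, which simply cites the pointwise spectral-radius fact from Bougerol--Lacroix and Benoist--Quint (this is the Guivarc'h--Raugi aperiodicity theorem you sketch) and then obtains uniformity on compacts by a continuity-plus-finite-subcover argument. One minor point in your last step: the finite subcover produces different exponents $n_j$ on the different neighborhoods, so before concluding with a single $n_K$ you should pass to a common multiple of the $n_j$, or write a general $n$ as $qn_j + r$ and absorb the bounded remainder into the prefactor $c_K'$; this is routine bookkeeping.
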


\begin{proof}
The proof of the first assertion can be found in \cite{Boug-Lacr85, BQ16b}. 
By adapting the arguments of Lemma 5.2 in \cite{GQX23}, we obtain the inequality \eqref{Spectral_Radius_Uniform}. 
\end{proof}


\subsection{Local limit theorems}

In the following we state local limit theorems for products of random matrices with a precise estimation of remainder terms.
The main difficulty is to give explicit dependence of the remainder terms on target functions.  


The next lemma is an adaptation of  Lemma 5.3 in \cite{GQX23} to the case of functions defined on $\bb P(\bb V) \times \bb R$. 
It describes the class of target functions that we will use all over the paper. 

\begin{lemma}\label{Lem_Measurability}
Let $F$ be a real-valued function on $\bb P(\bb V) \times \bb R$ such that
\begin{enumerate}[label=\arabic*., leftmargin=*]
\item For any $t \in \bb R$, the function $x \mapsto F(x, t)$ is $\gamma$-H\"older continuous on $\bb P(\bb V)$.  
\item For any $x \in \bb P(\bb V)$, the function $t \mapsto F(x, t)$ is measurable on $\bb R$.
 \end{enumerate}
Then, the function $(x, t) \mapsto F(x,t)$ is measurable on $\bb P(\bb V) \times \bb R$ and 
the function $t \mapsto \| F (\cdot, t) \|_{\scr B_{\gamma}}$ is measurable on $\bb R$. 
Moreover, if the integral $\int_{\bb R} \| F (\cdot, t) \|_{\scr B_{\gamma}} dt$ is finite, 
we define the partial Fourier transform $\widehat F$ of $F$ by setting for any $x \in \bb P(\bb V)$ and $u \in \bb R$,
$$\widehat F(x, u) = \int_{\bb R} e^{itu}F (x, t) dt.$$ 
This is a continuous function on $\bb P(\bb V) \times \bb R$. 
In addition, for every $u \in \bb R$, 
the function $x \mapsto \widehat F(x, u)$ is $\gamma$-H\"older continuous
and $\| \widehat F (\cdot, u) \|_{\scr B_{\gamma}} \leq \int_{\bb R} \| F (\cdot, t) \|_{\scr B_{\gamma}} dt$. 
\end{lemma}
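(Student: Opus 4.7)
The plan is to carry out a routine Carath\'eodory-style verification, with care taken to exploit the compactness (hence separability) of $\bb P(\bb V)$ at each step. The main technical issue is organising the measurability claims before turning to the Fourier transform, where the bounds are straightforward integral estimates once measurability is known.

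First I would prove the joint measurability of $(x,t)\mapsto F(x,t)$. Fix a countable dense subset $\{x_j\}_{j\geq 1}$ of $\bb P(\bb V)$. Since $x\mapsto F(x,t)$ is $\gamma$-H\"older, hence continuous, one can write
\begin{align*}
F(x,t)=\lim_{n\to\infty}\sum_{j=1}^{\infty} F(x_j,t)\,\psi_{n,j}(x),
\end{align*}
where $\psi_{n,j}$ is a standard continuous partition of unity subordinated to a finite $1/n$-cover of $\bb P(\bb V)$ by balls centered at points of $\{x_j\}$. Each summand is jointly measurable by assumption 2, hence so is the limit. Next I would show that $t\mapsto \|F(\cdot,t)\|_{\scr B_\gamma}$ is measurable. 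By continuity of $x\mapsto F(x,t)$ we have $\sup_{x}|F(x,t)|=\sup_j |F(x_j,t)|$, which is a countable sup of measurable functions, hence measurable. Similarly,
\begin{align*}
\omega_\gamma(F(\cdot,t))=\sup_{j\neq k}\frac{|F(x_j,t)-F(x_k,t)|}{d(x_j,x_k)^\gamma},
\end{align*}
again a countable sup of measurable functions, so measurable.

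Once these measurability points are established, the claims about $\widehat F$ are purely analytic. Assume $\int_{\bb R}\|F(\cdot,t)\|_{\scr B_\gamma}\,dt<\infty$. For any $x,y\in\bb P(\bb V)$ and $u\in\bb R$,
\begin{align*}
|\widehat F(x,u)-\widehat F(y,u)|
\leq \int_{\bb R} |F(x,t)-F(y,t)|\,dt
\leq d(x,y)^\gamma \int_{\bb R} \omega_\gamma(F(\cdot,t))\,dt,
\end{align*}
and $\sup_{x}|\widehat F(x,u)|\leq\int_{\bb R}\sup_{x}|F(x,t)|\,dt$. Summing the two estimates yields $\|\widehat F(\cdot,u)\|_{\scr B_\gamma}\leq\int_{\bb R}\|F(\cdot,t)\|_{\scr B_\gamma}\,dt$, and in particular $x\mapsto\widehat F(x,u)$ is $\gamma$-H\"older continuous for every $u$.

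It remains to establish joint continuity of $\widehat F$ on $\bb P(\bb V)\times\bb R$. For $(x,u),(x_0,u_0)$ split
\begin{align*}
|\widehat F(x,u)-\widehat F(x_0,u_0)|\leq |\widehat F(x,u)-\widehat F(x_0,u)|+|\widehat F(x_0,u)-\widehat F(x_0,u_0)|.
\end{align*}
The first term is controlled by $d(x,x_0)^\gamma\int_{\bb R}\omega_\gamma(F(\cdot,t))\,dt$, uniformly in $u$, and tends to $0$ as $x\to x_0$. The second term tends to $0$ as $u\to u_0$ by the dominated convergence theorem, using the integrable majorant $|F(x_0,t)|$. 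This finishes the proof. No step is a genuine obstacle; the only subtlety is using the separability of $\bb P(\bb V)$ together with continuity in $x$ to reduce the two supremums defining $\|\cdot\|_{\scr B_\gamma}$ to countable ones.
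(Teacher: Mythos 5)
Your proof is correct. The paper itself omits the argument for this lemma (it cites Lemma~5.3 of \cite{GQX23} and says the proof is analogous), so a literal comparison is not possible, but your Carath\'eodory-style argument --- joint measurability via a dense countable subset and continuity in $x$, reduction of the two suprema in $\|\cdot\|_{\scr B_\gamma}$ to countable ones, and the straightforward integral bounds plus dominated convergence for $\widehat F$ --- is precisely the standard route that the cited reference follows.
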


The proof of Lemma \ref{Lem_Measurability} is similar to that of Lemma 5.3 in \cite{GQX23},
and is therefore omitted.  

We denote by $\scr H_{\gamma}$ the set of real-valued functions on $\bb P(\bb V) \times \bb R$ 
such that conditions (1) and (2) of Lemma \ref{Lem_Measurability} hold and 
the integral $\int_{\bb R} \| F (\cdot, t) \|_{\scr B_{\gamma}} dt$ is finite. 
For any compact set $K\subset \bb R $, 
denote by $\mathscr H_{\gamma, K}$ the set of functions $F \in \scr H_{\gamma}$
such that the Fourier transform $\widehat F(x, \cdot)$ has a support contained in $K$  for any $x \in \bb P(\bb V)$. 
Let $\phi$ be the standard normal density: 
$$
\phi(t)= \frac{1}{\sqrt{2\pi}}e^{-t^2/2}, \quad t \in \bb R.
$$

Now we state an ordinary local limit theorem for products of random matrices with smooth target functions,
along with explicit estimates for the remainder terms. 

\begin{theorem} \label{Theor-LLT-bound001}
Assume that $\Gamma_{\mu}$ is proximal and strongly irreducible, $\mu$ admits finite exponential moments
and that the Lyapunov exponent $\lambda_{\mu}$ is zero. 
Let $K \subset \bb R$ be a compact set. 
Then there exists a constant $c_K >0$ 
such that for any $F\in \mathscr H_{\gamma, K}$, 
$n\geq 1$ and $x \in \bb P(\bb V)$,
\begin{align*}
& \bigg| \sqrt{n}  \int_{\bb G^n} F \Big( g_n \cdots g_1 x,  \sigma (g_n \cdots g_1, x) \Big) \mu(dg_1) \ldots \mu(dg_n)   \notag\\
&  \quad  -  \int_{\bb P(\bb V) \times \bb R}  \frac{1}{\upsilon_{\mu}}  \phi \bigg( \frac{u}{ \upsilon_{\mu} \sqrt{n}} \bigg)
    F \left(x', u\right)  du  \, \nu(dx') \bigg|  
 \leq  \frac{c_K}{\sqrt{n} }   \int_{\bb R} \| F (\cdot, t) \|_{\scr B_{\gamma}} dt.  
\end{align*} 
\end{theorem}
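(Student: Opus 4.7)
The plan is to apply Fourier inversion in the second variable so as to rewrite both sides of the identity as integrals of the partial Fourier transform $\widehat F$, and then to invoke the spectral gap theory provided by Lemmas \ref{Lem_Perturbation} and \ref{Lem_StrongNonLattice}. For $F \in \scr H_{\gamma, K}$, inversion gives $F(x, u) = \frac{1}{2\pi} \int_K e^{-iut} \widehat F(x, t)\, dt$; Fubini together with the definition of the perturbed operator $P_{-it}$ yields
\begin{align*}
\int_{\bb G^n} F\big(g_n \cdots g_1 x, \sigma(g_n \cdots g_1, x)\big)\, \mu^{\otimes n}(dg) = \frac{1}{2\pi} \int_K P_{-it}^n \widehat F(\cdot, t)(x) \, dt,
\end{align*}
while the characteristic function of the centered Gaussian of variance $\upsilon_{\mu}^2 n$ being $e^{-n\upsilon_{\mu}^2 t^2/2}$ gives
\begin{align*}
\int_{\bb P(\bb V) \times \bb R} \frac{1}{\upsilon_{\mu}}\, \phi\Big(\tfrac{u}{\upsilon_{\mu} \sqrt n}\Big) F(x', u) \, du \, \nu(dx') = \frac{\sqrt n}{2\pi} \int_K e^{-n \upsilon_{\mu}^2 t^2/2}\, \nu(\widehat F(\cdot, t)) \, dt.
\end{align*}
After multiplying the first identity by $\sqrt n$, the problem reduces to showing that $\int_K [P_{-it}^n \widehat F(\cdot, t)(x) - e^{-n\upsilon_{\mu}^2 t^2/2} \nu(\widehat F(\cdot, t))] \, dt$ is bounded by $\frac{c_K}{n} \int_{\bb R} \|F(\cdot, s)\|_{\scr B_\gamma}\, ds$.

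To estimate this integral I would split $K = (K \cap [-\delta, \delta]) \cup (K \setminus [-\delta, \delta])$, where $\delta$ is the constant from Lemma \ref{Lem_Perturbation}. On the outer piece, inequality \eqref{Spectral_Radius_Uniform} yields $\|P_{-it}^n \widehat F(\cdot, t)\|_{\scr B_\gamma} \leq c_K' e^{-c_K n} \|\widehat F(\cdot, t)\|_{\scr B_\gamma}$ uniformly in $t$; since $e^{-n\upsilon_{\mu}^2 t^2/2}$ is also exponentially small there, both contributions fit into an error of order $e^{-cn} \int \|F(\cdot, s)\|_{\scr B_\gamma}\, ds$, which is negligible. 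On the inner piece I would insert the perturbative identity $P_{-it}^n = \lambda_{-it}^n \Pi_{-it} + N_{-it}^n$; the $N_{-it}^n$ contribution is exponentially small by the bound $\|N_{-it}^{n_0}\|_{\scr B_\gamma \to \scr B_\gamma} \leq q < 1$, reducing matters to the spectral term $\lambda_{-it}^n \Pi_{-it}(\widehat F(\cdot, t))(x)$.

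The core step is then the comparison of this spectral term with $e^{-n\upsilon_{\mu}^2 t^2/2} \nu(\widehat F(\cdot, t))$ for $|t| \leq \delta$. The expansion \eqref{decomp-lambda001} gives $|\lambda_{-it}^n - e^{-n\upsilon_{\mu}^2 t^2/2}| \leq c(n|t|^3 + |t|^2)\, e^{-cnt^2}$ on a suitable neighborhood of $0$, and the analyticity of $z \mapsto \Pi_z$ at $z = 0$ together with $\Pi_0 \varphi = \nu(\varphi)$ gives $\|\Pi_{-it} - \Pi_0\|_{\scr B_\gamma \to \scr B_\gamma} = O(|t|)$. Inserting both approximations bounds the integrand by $c(n|t|^3 + |t|)\, e^{-cnt^2}\, \|\widehat F(\cdot, t)\|_{\scr B_\gamma}$; the change of variables $t = s/\sqrt n$ turns the resulting $t$-integrals into Gaussian integrals of order $1/n$, and the bound $\|\widehat F(\cdot, t)\|_{\scr B_\gamma} \leq \int_{\bb R} \|F(\cdot, s)\|_{\scr B_\gamma}\, ds$ from Lemma \ref{Lem_Measurability} then yields a total error of order $\frac{c_K}{n} \int \|F(\cdot, s)\|_{\scr B_\gamma}\, ds$, which is precisely the announced bound.

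The main obstacle is not the spectral analysis itself, which is essentially the adaptation of the arguments in \cite[Section 5]{GQX23} referenced by the authors, but the bookkeeping required to keep every constant uniform in $F$ with a strictly linear dependence on $\int_{\bb R} \|F(\cdot, t)\|_{\scr B_\gamma}\, dt$. In particular, one must verify that $\Pi_{-it}(\widehat F(\cdot, t))(x)$ is controlled by $\|\widehat F(\cdot, t)\|_{\scr B_\gamma}$ uniformly in $t \in [-\delta, \delta]$ and $x \in \bb P(\bb V)$, and that the support hypothesis $F \in \scr H_{\gamma, K}$ is used precisely to truncate the $t$-integral so that all remainder constants depend on $K$ alone.
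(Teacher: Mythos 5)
Your proposal is correct and follows exactly the route the paper cites (the Fourier inversion plus Nagaev–Guivarc'h spectral decomposition used in Theorem 5.4 of \cite{GQX23}, invoking Lemmas \ref{Lem_Perturbation} and \ref{Lem_StrongNonLattice}): partial Fourier transform to express the law of $\sigma(g_n\cdots g_1,x)$ via $P_{-\mathbf{i}t}^n$, split $K$ at $\pm\delta$, exponential decay outside $[-\delta,\delta]$, and the expansion of $\lambda_{-\mathbf{i}t}^n\Pi_{-\mathbf{i}t}$ against $e^{-n\upsilon_\mu^2 t^2/2}\,\nu$ inside, with the $\|\widehat F(\cdot,t)\|_{\scr B_\gamma}\le\int\|F(\cdot,s)\|_{\scr B_\gamma}\,ds$ bound from Lemma \ref{Lem_Measurability} carrying the uniformity in $F$. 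The only minor inaccuracy is the stray $|t|^2$ in your bound for $|\lambda_{-\mathbf{i}t}^n-e^{-n\upsilon_\mu^2 t^2/2}|$ (the correct order is $n|t|^3 e^{-cnt^2}$), but this does not affect the conclusion since the rank-one projection already contributes an $O(|t|)$ term of the right size.
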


\begin{proof}
This is obtained as Theorem 5.4 in \cite{GQX23} 
by using the spectral gap properties of the perturbed transfer operator
$P_z$ (see Lemmas \ref{Lem_Perturbation} and \ref{Lem_StrongNonLattice}). 
\end{proof}



Next we extend Theorem \ref{Theor-LLT-bound001} to a larger class of target functions. 
To this aim, we need to introduce additional notation. 
Let $\ee >0$. For functions $f$ and $g$ on $\bb R$, 
we say that the function $g$ $\ee$-dominates the function $f$
(or $f$ $\ee$-minorates $g$) if for any $t \in \bb R$, it holds that
\begin{align*}
f(t) \leq g(t +v),  \quad  \forall  \  |v| \leq \ee.
\end{align*}
In this case we write $f \leq_{\ee} g$ or $g \geq_{\ee} f$.
For any functions $F$ and $G$  on $\bb P(\bb V) \times \bb R$, we say that $F \leq_{\ee} G$ if $F(x, \cdot) \leq_{\ee} G(x, \cdot)$ 
for any $x \in \bb P(\bb V)$.

%
%
%
%

Below, for any function $F \in \scr H_{\gamma}$,  we use the notation
\begin{align}\label{def-norm-H-gamma}
\| F \|_{\scr H_{\gamma}} =  \int_{\bb R} \| F(\cdot, t) \|_{\scr B_{\gamma}}   dt,
\quad
\| F \|_{\nu \otimes \Leb} = \int_{\bb R} \int_{\bb P(\bb V)}  | F(x, t) |    \nu(dx) dt,
\end{align}
where we recall that $\nu$ is the unique $\mu$-stationary Borel probability measure on $\bb P(\bb V)$.


\begin{theorem}\label{Lem_LLT_Nonasm}
Assume that $\Gamma_{\mu}$ is proximal and strongly irreducible, $\mu$ admits finite exponential moments
and that the Lyapunov exponent $\lambda_{\mu}$ is zero. 
There exists a constant $c>0$ with the following property:
for any $\ee \in (0, \frac{1}{8})$, 
there exists a constant $c_{\ee} >0$ such that for any non-negative function $F$ and any function $G \in \scr H_{\gamma}$
satisfying $F \leq_{\ee} G$, $n\geq 1$ and $x \in \bb P(\bb V)$, 
\begin{align}
&   \int_{\bb G^n}  
  F \Big( g_n \cdots g_1 x,  \sigma (g_n \cdots g_1, x) \Big) \mu(dg_1) \ldots \mu(dg_n)  \notag\\
&   \leq   \frac{1}{\sqrt{n}} \int_{\bb P(\bb V)}  \int_{\bb R}  
    \frac{1}{\upsilon_{\mu}} \phi \bigg(\frac{u}{\upsilon_{\mu} \sqrt{n}} \bigg)  G(x', u)  du  \nu(dx')  
        +  \frac{c\ee}{\sqrt{n}} \| G \|_{\nu \otimes \Leb} 
  +  \frac{c_{\ee}}{ n }  \| G \|_{\scr H_{\gamma}},    \label{LLT_Upper_aa}
\end{align}
and for any non-negative  function $F$ and non-negative  functions $G, H \in \scr H_{\gamma}$
satisfying $H \leq_{\ee} F \leq_{\ee} G$, $n\geq 1$ and $x \in \bb P(\bb V)$, 
\begin{align}
&  \int_{\bb G^n}  
  F \Big( g_n \cdots g_1 x,  \sigma (g_n \cdots g_1, x) \Big) \mu(dg_1) \ldots \mu(dg_n)   \notag\\
& \geq  \frac{1}{\sqrt{n}} \int_{\bb P(\bb V)} \int_{\mathbb{R}}  
  \frac{1}{\upsilon_{\mu}} \phi \bigg(\frac{u}{\upsilon_{\mu} \sqrt{n}} \bigg)   H(x', u)  du  \nu(dx')  
    -  \frac{ c \ee}{\sqrt{n}} \| G \|_{\nu \otimes \Leb}  
   -  \frac{c_{\ee}}{ n } 
      \left(  \| G \|_{\scr H_{\gamma}} 
       +  \| H \|_{\scr H_{\gamma}} \right).   
       \label{LLT_Lower_aa}
\end{align}
\end{theorem}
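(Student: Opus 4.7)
The plan is to reduce Theorem \ref{Lem_LLT_Nonasm} to the effective LLT of Theorem \ref{Theor-LLT-bound001} by a smoothing/convolution argument, following the template of \cite[Section 5]{GQX23}. The idea is to sandwich the non-negative function $F$ between band-limited functions, namely elements of $\mathscr H_{\gamma, K_\ee}$ for some compact $K_\ee$ depending on $\ee$, to which Theorem \ref{Theor-LLT-bound001} directly applies. Concretely, I would fix once and for all a non-negative Schwartz function $\chi$ on $\bb R$ with $\int_{\bb R}\chi = 1$ and $\widehat\chi$ compactly supported (for instance a suitably normalised $(\sin(t/2)/(t/2))^4$-type kernel, non-negative with finite second moment), and for a scale $\delta = \delta(\ee) > 0$ to be chosen later, set $\chi_\delta(t) = \delta^{-1}\chi(t/\delta)$. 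Then $\chi_\delta \geq 0$, $\|\chi_\delta\|_{L^1} = 1$, and $\widehat{\chi_\delta}$ has support in an interval $K_\delta$ of size $O(1/\delta)$. The Schwartz decay of $\chi$ lets me take $\delta$ as a slight subpower of $\ee$ so that simultaneously $c_\ee := \int_{-\ee}^\ee \chi_\delta(v)\,dv \geq 1-\ee$ and $\int_{|v|>\ee}\chi_\delta(v)\,dv \leq \ee$.

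For the upper bound \eqref{LLT_Upper_aa}, since $F \geq 0$ and $F \leq_\ee G$ I can replace $G$ by its positive part $G^+$ without loss of generality, since $F \leq_\ee G^+$ and both $\|G^+\|_{\scr H_\gamma} \leq \|G\|_{\scr H_\gamma}$, $\|G^+\|_{\nu\otimes\Leb} \leq \|G\|_{\nu\otimes\Leb}$. Multiplying the inequality $F(x,t)\leq G(x,t+v)$ (valid for $|v|\leq \ee$) by $\chi_\delta(v)$ and integrating over $[-\ee, \ee]$ yields
$$c_\ee F(x,t) \leq \int_{|v|\leq \ee} G(x,t+v)\chi_\delta(v)\,dv \leq (G *_t \check\chi_\delta)(x,t),$$
where $\check\chi_\delta(s) = \chi_\delta(-s)$ and the second inequality uses $G\geq 0$. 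Since $G *_t \check\chi_\delta \in \mathscr H_{\gamma, K_\delta}$ with $\|G *_t \check\chi_\delta\|_{\scr H_\gamma} \leq \|G\|_{\scr H_\gamma}$, Theorem \ref{Theor-LLT-bound001} applies and gives $L_n(G*_t\check\chi_\delta, x) = M_n(G*_t\check\chi_\delta) + O(c_{K_\delta}/n \cdot \|G\|_{\scr H_\gamma})$, where $L_n$ and $M_n$ stand for the expectation and Gaussian main term of that theorem. Comparing main terms via Fubini and the Lipschitz estimate on $K_n(u) = \upsilon_\mu^{-1}\phi(u/(\upsilon_\mu\sqrt n))$ gives $|M_n(G*_t\check\chi_\delta) - M_n(G)| \leq C\delta/n \cdot \|G\|_{\nu\otimes\Leb}$. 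Combined with $|M_n(G)|\leq C/\sqrt n \cdot \|G\|_{\nu\otimes\Leb}$ and $c_\ee^{-1} = 1+O(\ee)$, this produces \eqref{LLT_Upper_aa}, with $c_{K_\delta}$ absorbed into the $\ee$-dependent constant.

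For the lower bound \eqref{LLT_Lower_aa}, the hypothesis $H\leq_\ee F$ gives $F(x,t)\geq H(x,t-v)$ for $|v|\leq \ee$; convolving with $\chi_\delta$ on $[-\ee,\ee]$ yields $c_\ee F(x,t) \geq (H *_t \chi_\delta)(x,t) - R(x,t)$, where $R(x,t) = \int_{|v|>\ee}H(x,t-v)\chi_\delta(v)\,dv$ is a non-negative tail coming from the non-compact support of $\chi_\delta$. Theorem \ref{Theor-LLT-bound001} applied to $H *_t \chi_\delta \in \mathscr H_{\gamma, K_\delta}$ handles the main term. For the remainder I use Fubini, $L_n(R,x) = \int_{|v|>\ee}\chi_\delta(v) L_n(H(\cdot,\cdot-v), x)\,dv$, together with $\|R\|_{\nu\otimes\Leb}\leq \ee\|H\|_{\nu\otimes\Leb}$ and $\|R\|_{\scr H_\gamma}\leq \ee\|H\|_{\scr H_\gamma}$, and an a priori Gaussian $L^\infty$ bound $L_n(\cdot, x) \leq C/\sqrt n \cdot \|\cdot\|_{\nu\otimes\Leb}$ on the LLT density (obtained from Theorem \ref{Theor-LLT-bound001} applied to a further smoothed envelope, or equivalently by first proving \eqref{LLT_Upper_aa}). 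This produces an error of order $\ee/\sqrt n \cdot \|H\|_{\nu\otimes\Leb} + c_\ee/n\cdot\|H\|_{\scr H_\gamma}$, giving \eqref{LLT_Lower_aa}.

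The main technical obstacle is the delicate choice of the scale $\delta(\ee)$: the multiplicative prefactor $c_\ee^{-1}$ on the main term must equal $1+O(\ee)$, because any unit-order multiplicative error on $M_n(G)=O(1/\sqrt n)$ would produce a residue of order $1/\sqrt n \cdot \|G\|_{\nu\otimes\Leb}$, whereas the theorem demands an error of order $\ee/\sqrt n \cdot \|G\|_{\nu\otimes\Leb}$. This forces $\delta$ to be appreciably smaller than $\ee$, which in turn enlarges the Fourier support $K_\delta$ and inflates the constant $c_{K_\delta}$ inherited from Theorem \ref{Theor-LLT-bound001}. The Schwartz decay of $\chi$ is exactly what permits this trade-off: it keeps $c_{K_\delta}$ finite (though $\ee$-dependent) while simultaneously placing $c_\ee$ within $O(\ee)$ of $1$, and the blow-up of $c_{K_\delta}$ is absorbed into the constant $c_\ee$ of the statement.
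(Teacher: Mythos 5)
Your proposal takes the same route as the paper, which cites the smoothing/sandwich argument of Theorem 5.8 in \cite{GQX23}: majorize and minorize $F$ by convolutions of $G$ and $H$ with a non-negative band-limited kernel at a scale $\delta(\ee)\ll\ee$, and feed these into the effective LLT for band-limited targets (Theorem \ref{Theor-LLT-bound001}), tracking the error from the mass of the kernel outside $[-\ee,\ee]$. A couple of small points to polish: the Fej\'er-type kernel $(\sin(t/2)/(t/2))^4$ decays only polynomially (not Schwartz), though this suffices after choosing $\delta$ a slight subpower of $\ee$; the ``a priori Gaussian $L^\infty$ bound'' $L_n(\cdot,x)\leq C n^{-1/2}\|\cdot\|_{\nu\otimes\Leb}$ is \emph{not} true unconditionally and should instead be replaced by your Fubini decomposition $L_n(R,x)=\int_{|v|>\ee}\chi_\delta(v)\,L_n(\tau_{-v}H,x)\,dv$ combined with \eqref{LLT_Upper_aa} applied to $\tau_{-v}H\leq_\ee\tau_{-v}G$ and the translation-invariance of the norms (note also that $G\geq 0$ is automatic from $F\geq 0$ and $F\leq_\ee G$, so passing to $G^+$ is unnecessary).
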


\begin{proof}
This is obtained as Theorem 5.8 in \cite{GQX23} 
by using Theorem \ref{Theor-LLT-bound001} and the elementary properties of the relation $\leq_{\ee}$. 
\end{proof}

One of the main difficulties of the paper will be to estimate the norms $\| G \|_{\nu \otimes \Leb}$ and $\| G \|_{\scr H_{\gamma}}$
 on the right-hand sides of \eqref{LLT_Upper_aa} and \eqref{LLT_Lower_aa}.
 We will address this issue in Sections \ref{sec: Effective CLLT} and \ref{Proof of main Theorem}, particularly in Propositions \ref{Lem_Inequality_Aoverline}
 and \ref{Lem_HolderNormPsi}.


\section{Effective conditioned local limit theorems} \label{sec: Effective CLLT}

\subsection{Formulation of the result}
One of the key contributions of this paper is an effective version of the conditioned local limit theorem, 
referred to as the Caravenna-type local limit theorem, as formulated in Subsection \ref{Sec-Caravenna-type}. 
This result serves as a crucial step toward establishing Theorem \ref{Thm-CLLT-cocycle}. 
As previously mentioned, our approach does not rely on the reversal of the random walk, distinguishing it from prior works
 \cite{Carav05, DW15, GLL20, GQX23, GX2022IID, PP23}, 
 where the reversal technique is central. More specifically, our method closely resembles that of 
 \cite{GQX23} which was developed for hyperbolic dynamical systems; however, we encounter additional challenges in our context. First, we cannot expect strong contractivity of the Markov chain, which prevents us from simply substituting one starting point for another
 in a straightforward manner. Second, we must address the unboundedness of the cocycle $\sigma$, 
 which complicates the analysis and necessitates the use of truncation techniques.



\begin{theorem} \label{t-A 001}
Assume that $\Gamma_{\mu}$ is proximal and strongly irreducible, $\mu$ admits finite exponential moments
and that the Lyapunov exponent $\lambda_{\mu}$ is zero. 
Then, 
there exists a constant $\eta >0$ 
with the following properties. 
\begin{enumerate}[label=\arabic*., leftmargin=*]
\item  
For any $\ee \in (0,\frac{1}{8})$ and $\beta >0$, 
there exist constants $c_{\beta}, c_{\ee} > 0$ such that for any $n \geq 1$, $x \in \bb P(\bb V)$, $t \leq n^{1/2 - \beta}$,  
any measurable functions $F, G: \bb P(\bb V) \times \bb R \to \bb R_+$ satisfying $F \leq_{\ee} G$ and $G \in \scr H_{\gamma}$, 
\begin{align}\label{eqt-A 001}
&  n  \bb E \Big[  F \Big( g_n \cdots g_1 x,  t + \sigma (g_n \cdots g_1, x)  \Big);  \tau_{x, t} >n  \Big]   \notag\\
&  \leq  \frac{2 V(x, t)}{  \sqrt{2\pi}  \upsilon_{\mu}^2 } 
  \int_{\bb P(\bb V)}  \int_{\mathbb R_+}  G \left(x', t' \right)
\phi^+ \bigg( \frac{t'}{\upsilon_{\mu} \sqrt{n}} \bigg) dt' \nu(dx')   \notag\\
& \quad + c_{\beta} (1 + \max\{t, 0\}) \left( \ee^{1/4}  +  \ee^{-1/4} n^{-\eta} \right) 
  \| G \|_{\nu \otimes \Leb }     +   \frac{c_{\ee} (1 + \max\{t, 0\})}{ \sqrt{n}} \|  G \|_{\scr H_{\gamma}}.  
\end{align}
\item 
For any $\ee \in (0,\frac{1}{8})$ and $\beta >0$, 
there exist constants $c_{\beta}, c_{\ee} > 0$ such that for any $n \geq 1$, $x \in \bb P(\bb V)$, $t \leq n^{1/2 - \beta}$,  
any measurable functions $F, G, H: \bb P(\bb V) \times \bb R \to \bb R_+$ satisfying $H \leq_{\ee} F \leq_{\ee} G$
and $G, H \in \scr H_{\gamma}$, 
\begin{align} \label{eqt-A 002}
&  n  \bb E \Big[  F \Big( g_n \cdots g_1 x,  t + \sigma (g_n \cdots g_1, x)  \Big);  \tau_{x, t} >n  \Big]   \nonumber\\
&  \geq  \frac{2 V(x, t)}{  \sqrt{2\pi} \upsilon_{\mu}^2} 
\int_{\bb P(\bb V)}  \int_{\bb R_+}  H(x', t') \phi^+ \bigg(\frac{t'}{ \upsilon_{\mu} \sqrt{n} } \bigg) dt' \nu(d x')  \notag \\
& \quad  - c_{\beta} (1 + \max\{t, 0\}) \left( \ee^{1/12}  +   \ee^{-1/4}n^{-\eta}  \right)    \| G \|_{\nu \otimes \Leb }  \notag\\
& \quad  -  \frac{c_{\ee} (1 + \max\{t, 0\}) }{\sqrt{n}} 
  \left( \left\Vert  G  \right\Vert _{\scr H_{\gamma}} +  \left\Vert  H  \right\Vert _{\scr H_{\gamma}}  \right). 
\end{align}
\end{enumerate}
\end{theorem}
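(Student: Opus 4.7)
The plan is to apply the excursion decomposition at an intermediate time, combining the ordinary local limit theorem of Theorem \ref{Lem_LLT_Nonasm} on a short terminal segment with the conditioned central limit theorem of \cite[Theorem 2.3]{GLP17} on the long initial segment. Fix a split $n = m + k$ with $k = k_n$ a small positive power of $n$, say $k_n = \lfloor n^{1-\delta}\rfloor$ for some $\delta > 0$ to be adjusted later. The Markov property at time $m$ gives
\begin{align*}
\bb E\big[F(g_n\cdots g_1 x, t+\sigma(g_n\cdots g_1,x)); \tau_{x,t}>n\big] = \bb E\big[\psi_k(g_m\cdots g_1 x, t+\sigma(g_m\cdots g_1,x)); \tau_{x,t}>m\big],
\end{align*}
where $\psi_k(y,s) = \bb E[F(g_k\cdots g_1 y, s+\sigma(g_k\cdots g_1,y)); \tau_{y,s}>k]$.

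The next step is to estimate $\psi_k(y,s)$ for $s\geq 0$. On the event $\{\tau_{x,t}>m\}$, the conditioned CLT of \cite{GLP17} forces $s = t + \sigma(g_m\cdots g_1,x)$ to be typically of order $\upsilon_\mu\sqrt m$, which is much larger than $\sqrt k$, so the constraint $\tau_{y,s}>k$ is rarely binding. For the upper bound \eqref{eqt-A 001} we drop this constraint, $\psi_k(y,s) \leq \bb E[F(g_k\cdots g_1 y, s+\sigma(g_k\cdots g_1,y))]$, and apply Theorem \ref{Lem_LLT_Nonasm} with majorant $G$ to obtain
\begin{align*}
\psi_k(y,s) \leq \frac{1}{\upsilon_\mu\sqrt k}\int_{\bb P(\bb V)}\!\!\int_{\bb R}\phi\!\left(\frac{u-s}{\upsilon_\mu\sqrt k}\right) G(y',u)\,du\,\nu(dy') + \frac{c\ee}{\sqrt k}\|G\|_{\nu\otimes\Leb} + \frac{c_\ee}{k}\|G\|_{\scr H_\gamma}.
\end{align*}
For the lower bound \eqref{eqt-A 002} we write $\psi_k(y,s) \geq \bb E[H(g_k\cdots g_1 y, s+\sigma(g_k\cdots g_1,y))] - \bb E[H(g_k\cdots g_1 y, s+\sigma(g_k\cdots g_1,y)); \tau_{y,s}\leq k]$, apply Theorem \ref{Lem_LLT_Nonasm} to the main term with minorant $H$, and bound the correction by a hitting-time estimate: when $s$ is of order $\sqrt n$ and $k\ll n$, the probability $\bb P(\tau_{y,s}\leq k)$ is uniformly small, so this term is absorbed into the remainders.

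Substituting these estimates back, the problem reduces to evaluating, for each $(y',u)$ in the support of $G$,
\begin{align*}
I_{m,k}(y',u) := \bb E\!\left[\frac{1}{\upsilon_\mu\sqrt k}\phi\!\left(\frac{u-(t+\sigma(g_m\cdots g_1,x))}{\upsilon_\mu\sqrt k}\right); \tau_{x,t}>m\right]
\end{align*}
and then integrating against $G(y',u)\,du\,\nu(dy')$. Applying the conditioned CLT of \cite[Theorem 2.3]{GLP17} with the test function $s\mapsto \frac{1}{\upsilon_\mu\sqrt k}\phi((u-s)/(\upsilon_\mu\sqrt k))$ yields asymptotically
\begin{align*}
I_{m,k}(y',u) \approx \frac{2V(x,t)}{\sqrt{2\pi}\,\upsilon_\mu^2\,m}\int_0^\infty \frac{1}{\upsilon_\mu\sqrt k}\phi\!\left(\frac{u-s'}{\upsilon_\mu\sqrt k}\right)\phi^+\!\left(\frac{s'}{\upsilon_\mu\sqrt m}\right)ds'.
\end{align*}
Since the Gaussian kernel is concentrated near $s' = u$ at scale $\sqrt k \ll \sqrt m$ while $\phi^+$ varies at scale $\sqrt m$, a direct computation gives that the inner integral equals $\phi^+(u/(\upsilon_\mu\sqrt m))$ up to an error of order $\sqrt{k/m}$. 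Using $m \sim n$, we recover the principal term $\frac{2V(x,t)}{\sqrt{2\pi}\upsilon_\mu^2 n}\phi^+(u/(\upsilon_\mu\sqrt n))$ claimed in \eqref{eqt-A 001}-\eqref{eqt-A 002}.

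The principal obstacles will be: (i) ensuring that the intermediate test function $s\mapsto \int\phi((u-s)/(\upsilon_\mu\sqrt k)) G(y',u)\,du$ has sufficient regularity in $s$ for the conditioned CLT to apply with effective error, uniformly in $y'$, $u$, and in $t\leq n^{1/2-\beta}$; (ii) carrying out the $\ee$-domination bookkeeping so that the accumulated errors combine into the $\ee^{1/4}$ and $\ee^{1/12}$ thresholds appearing in \eqref{eqt-A 001}-\eqref{eqt-A 002}, with the correct dependence on $\|G\|_{\nu\otimes\Leb}$, $\|G\|_{\scr H_\gamma}$ and $(1+\max\{t,0\})$; and (iii) making the hitting-time correction $\bb E[H(\cdots);\tau_{y,s}\leq k]$ in the lower bound quantitative enough not to dominate the main asymptotic. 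The hypothesis $t\leq n^{1/2-\beta}$ is precisely what is required for the conditioned CLT of \cite{GLP17} to provide the necessary effective control at the relevant spatial scale.
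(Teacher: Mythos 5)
Your high-level plan is the same as the paper's: split the trajectory into a long initial segment (handled by the conditioned CLT of \cite{GLP17}) and a short terminal segment (handled by the ordinary LLT of Theorem~\ref{Lem_LLT_Nonasm}), then combine via a Gaussian--Rayleigh convolution. However, the specific split you choose, $n=m+k$ with $k=\lfloor n^{1-\delta}\rfloor$ for some $\delta>0$, is incompatible with the error structure of Theorem~\ref{Lem_LLT_Nonasm} and would produce a diverging remainder. Indeed, the unconditioned LLT on the last $k$ steps contributes an error $\tfrac{c\ee}{\sqrt k}\|G\|_{\nu\otimes\Leb}$; after taking the outer expectation over the first $m$ steps (costing a factor $\bb P(\tau_{x,t}>m)\lesssim (1+\max\{t,0\})/\sqrt m$) and multiplying by $n$, this becomes of order $\frac{n\,\ee}{\sqrt{km}}(1+\max\{t,0\})\|G\|_{\nu\otimes\Leb}$. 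For this to stay bounded (and ultimately be absorbed into the $\ee^{1/4}$ term of \eqref{eqt-A 001}) one needs $km\gtrsim n^2$, i.e.\ \emph{both} segments must be of order $n$. The paper therefore takes the terminal segment to have length $m=[\sqrt\ee\,n]$ (a constant-in-$n$ but $\ee$-small fraction), so that $\frac{n\ee}{\sqrt{km}}\asymp\ee^{3/4}\leq\ee^{1/4}$; with your sublinear $k=n^{1-\delta}$ the same quantity is $\ee\,n^{\delta/2}\to\infty$, which cannot be absorbed into any remainder that is bounded uniformly in $n$. Relatedly, the $\ee^{1/4}$ in the statement originates precisely from the convolution estimate of Lemma~\ref{t-Aux lemma}, where the error is $\sqrt{m/n}\asymp\ee^{1/4}$; under your schedule this mismatch error would instead be $n^{-\delta/2}$, and the $\ee$-dependence of the claim would not appear.

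For the lower bound \eqref{eqt-A 002}, your sketch (``the probability $\bb P(\tau_{y,s}\leq k)$ is uniformly small, so this term is absorbed into the remainders'') significantly underestimates the difficulty. The term $K_n(x,t)=\bb E\big[\overline A_m(g_k\cdots g_1x,\,t+\sigma(g_k\cdots g_1,x));\tau_{x,t}>k\big]$ with $\overline A_m$ encoding the event $\{\tau\leq m\}$ must first be smoothed (the indicator is replaced by $\overline\chi_\ee$ after convolving with a mollifier $\kappa_{\ee/2}$) so that its $\scr H_\gamma$-norm can be controlled --- that is the purpose of Proposition~\ref{Lem_Inequality_Aoverline}. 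One then separates $K_n$ at threshold $\ee^{1/6}\upsilon_\mu\sqrt n$ into $K_{n,1}$ and $K_{n,2}$; $K_{n,1}$ is handled via the conditioned CLT near the origin (producing $\int_0^{2\ee^{1/6}}\phi^+$ which is of order $\ee^{1/3}$, leading after normalisations to $\ee^{1/12}$), and $K_{n,2}$ is handled by feeding a truncated version of $\overline A_{m,\ee}$ back into the already-proved upper bound \eqref{eqt-A 001} together with a Fuk-type fluctuation estimate (Lemma~\ref{FK-joint-inequality}). A mere ``hitting-time estimate'' on $\bb P(\tau_{y,s}\leq k)$ does not produce the claimed $\ee^{1/12}$ threshold, nor does it explain how the $\scr H_\gamma$-norm remainders in \eqref{eqt-A 002} arise.
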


As a corollary of the proof of Theorem \ref{t-A 001}, 
we also obtain the following bound which holds uniformly in the starting point $t\in \bb R$. 

\begin{corollary} \label{Cor-Cara-Bound}
Assume that $\Gamma_{\mu}$ is proximal and strongly irreducible, $\mu$ admits finite exponential moments
and that the Lyapunov exponent $\lambda_{\mu}$ is zero. 
Then, for any $\ee \in (0,\frac{1}{8})$,  there exists a constant $c>0$ such that for any $n \geq 1$, $x \in \bb P(\bb V)$, $t \in \bb R$, 
any measurable functions $F, G: \bb P(\bb V) \times \bb R \to \bb R_+$ satisfying $F \leq_{\ee} G$ and $G \in \scr H_{\gamma}$, 
\begin{align*}
&   \bb E \Big[  F \Big( g_n \cdots g_1 x,  t + \sigma (g_n \cdots g_1, x)  \Big);  \tau_{x, t} >n  \Big]    \notag\\
 & \qquad \leq   \frac{c}{n} \left( 1 + \max\{t, 0\} \right) 
\left( 
  \| G \|_{\nu \otimes \Leb }  
    +   \frac{1}{ \sqrt{n}} \|  G \|_{\scr H_{\gamma}} \right).  
\end{align*}
\end{corollary}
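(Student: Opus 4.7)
The plan is to combine the Markov property at the midpoint of the trajectory with the ordinary effective local limit theorem (Theorem \ref{Lem_LLT_Nonasm}) and a uniform upper bound on the persistence probability, in order to obtain the full uniformity in $t$ that Theorem \ref{t-A 001} itself does not provide. The key point is that, for bounding purposes, we do not need the sharp Rayleigh density asymptotic, and a crude two-step decomposition suffices.

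Set $k = \lceil n/2 \rceil$ and $\ell = n - k$. Applying the Markov property at time $k$ rewrites the left-hand side as
\begin{align*}
\bb E \Big[ \psi_\ell \big( g_k \cdots g_1 x,\, t + \sigma(g_k \cdots g_1, x) \big);\, \tau_{x, t} > k \Big],
\end{align*}
where
\begin{align*}
\psi_\ell(x', s) := \bb E \Big[ F \big( g_\ell \cdots g_1 x',\, s + \sigma(g_\ell \cdots g_1, x') \big);\, \tau_{x', s} > \ell \Big].
\end{align*}
I would then bound $\psi_\ell(x', s)$ uniformly in $(x', s) \in \bb P(\bb V) \times \bb R$ by simply dropping the stopping-time indicator and invoking Theorem \ref{Lem_LLT_Nonasm} with the translated pair $F_s(y, u) := F(y, s + u)$, $G_s(y, u) := G(y, s + u)$. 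These translated functions still satisfy $F_s \leq_{\ee} G_s$, and the norms $\| \cdot \|_{\nu \otimes \Leb}$ and $\| \cdot \|_{\scr H_\gamma}$ are invariant under translation in the $\bb R$ variable. Fixing $\ee$ at, say, $\ee = 1/16$, Theorem \ref{Lem_LLT_Nonasm} yields
\begin{align*}
\psi_\ell(x', s) \leq \frac{c}{\sqrt{\ell}}\|G\|_{\nu \otimes \Leb} + \frac{c}{\ell}\|G\|_{\scr H_\gamma},
\end{align*}
uniformly in $(x', s)$.

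Combining this with the uniform persistence estimate
\begin{align*}
\bb P(\tau_{x, t} > k) \leq \frac{c(1 + \max\{t, 0\})}{\sqrt{k}},
\end{align*}
which follows from the asymptotic $\bb P(\tau_{x, t} > k) \sim 2 V(x, t) / (\sqrt{2 \pi k}\, \upsilon_\mu)$ of \cite{GLP17} together with the linear control $V(x, t) \leq c(1 + \max\{t, 0\})$ recalled in Subsection \ref{subsec-statement results}, we get
\begin{align*}
\bb E[F;\, \tau_{x, t} > n] \leq \left( \frac{c}{\sqrt{\ell}}\|G\|_{\nu \otimes \Leb} + \frac{c}{\ell}\|G\|_{\scr H_\gamma} \right) \cdot \frac{c(1 + \max\{t, 0\})}{\sqrt{k}}.
\end{align*}
Since both $k$ and $\ell$ are of order $n$, this simplifies to exactly the asserted inequality.

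The only non-routine input here is the uniform (in the starting point $x$) persistence upper bound $\bb P(\tau_{x, t} > k) \leq c(1 + \max\{t, 0\})/\sqrt{k}$. This is the main obstacle to confirm, but it is essentially already contained in the theory of \cite{GLP17} and in the harmonic function framework recalled in the introduction; for $t \leq 0$ it follows by monotonicity from the case $t = 0$, and for $t \geq 0$ from the asymptotic together with the linear growth of $V$. Everything else reduces to the effective ordinary LLT of Section \ref{sec EffectiveLLT} and the Markov property; in particular, unlike the proofs of the main theorems, no analysis of the reversed walk is needed for this corollary.
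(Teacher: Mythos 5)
Your proof is correct and follows essentially the same route as the paper: split the trajectory by the Markov property at an intermediate time, bound the second half (unconditioned) via the effective LLT (Theorem \ref{Lem_LLT_Nonasm}), and bound the probability of survival of the first half by \eqref{Upper-bound-tau-xt}. The only immaterial difference is the split point ($n/2$ versus the paper's $[\sqrt{\ee}\,n]$, which the paper uses because it is recycling the machinery of Theorem \ref{t-A 001}), and the persistence bound you flag as the ``main obstacle'' is already recorded verbatim as \eqref{Upper-bound-tau-xt} in Theorem \ref{Thm-CCLT-001}, so there is no gap to fill.
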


Corollary \ref{Cor-Cara-Bound} will be applied to establish Theorem \ref{Thm-CLLT-cocycle-bound-001} as well as Corollary \ref{Cor-bound-n-001}. 





\subsection{Preparatory statements}
The normal density with variance $v > 0$ is given by 
\begin{align}\label{def-normal-density-v}
\phi_{v} (t) = \frac{1}{\sqrt{2 \pi v} } e^{- \frac{t^2}{2 v}},  \quad t \in \bb R,  
\end{align}
while the Rayleigh density with scale parameter $\sqrt{v}$ is denoted by 
\begin{align*}
\phi^+_{v}(t)=\frac{t}{v} e^{-\frac{t^2}{2 v}} \mathds 1_{\mathbb R_+} (t), \quad  t \in \mathbb R. 
\end{align*}
Recall that, when $v=1$, we have $\phi(t) = \phi_1(t)$ and $\phi^+(t) = \phi^+_{1}(t)$, $t \in \mathbb R$. 
The following lemma from \cite{GX2022IID}  
shows that when $v$ is small, the convolution $\phi_{v} * \phi^+_{1-v}$ closely approximates the Rayleigh density. 
\begin{lemma} \label{t-Aux lemma}
For any $v \in (0,1/2]$ and $t\in \bb R$, it holds that 
\begin{align*} 
 - |t| e^{- \frac{t^2}{2}} \mathds 1_{\{t < 0\}}
\leq  \phi_{v} * \phi^+_{1- v}(t) -  \sqrt{1-v} \phi^+(t)
\leq   \sqrt{v}  e^{ -\frac{t^2}{2v} } +   |t| e^{- \frac{t^2}{2}} \mathds 1_{\{t < 0\}}.   
\end{align*}
\end{lemma}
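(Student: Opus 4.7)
The plan is to compute the convolution $\phi_v * \phi^+_{1-v}(t)$ explicitly, which is tractable because both factors are Gaussian-type densities. Setting $w = 1-v$, we have
$$\phi_v * \phi^+_w(t) = \int_0^{\infty} \frac{1}{\sqrt{2\pi v}} e^{-(t-s)^2/(2v)} \cdot \frac{s}{w} e^{-s^2/(2w)} \, ds.$$
Since $v + w = 1$, a direct completion of the square yields $(t-s)^2/v + s^2/w = (s - wt)^2/(vw) + t^2$. Substituting $u = s - wt$ and writing $s = u + wt$ splits the remaining integrand into two pieces: the $u$-linear part integrates elementarily to $vw\, e^{-w t^2/(2v)}$, and the $wt$-constant part produces a Gaussian tail expressible via $\Phi$. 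Collecting the prefactors yields the closed form
$$\phi_v * \phi^+_w(t) = \sqrt{\frac{v}{2\pi}}\, e^{-t^2/(2v)} + \sqrt{w}\, t\, e^{-t^2/2} \, \Phi\!\bigl(t \sqrt{w/v}\bigr),$$
where $\Phi$ denotes the standard normal distribution function.

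From this identity, the difference $\phi_v * \phi^+_w(t) - \sqrt{w}\, \phi^+(t)$ equals
$$\sqrt{\frac{v}{2\pi}}\, e^{-t^2/(2v)} + \sqrt{w}\, t\, e^{-t^2/2} \bigl( \Phi(t \sqrt{w/v}) - \mathds 1_{\{t \geq 0\}} \bigr).$$
For $t < 0$ the bracket equals $\Phi(t\sqrt{w/v}) \in [0,1]$, while $t < 0$, so the second summand lies in $[-|t|e^{-t^2/2}, 0]$ (using $\sqrt{w} \leq 1$ and $\Phi \leq 1$). Dropping it gives the upper bound $\sqrt{v/(2\pi)}\, e^{-t^2/(2v)} \leq \sqrt{v}\, e^{-t^2/(2v)}$, and retaining it together with the trivial lower estimate yields $\geq -|t| e^{-t^2/2}$, which is the stated range on $\{t<0\}$.

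The only nontrivial step is showing the difference is nonnegative for $t \geq 0$. For $t \geq 0$ the bracket equals $-\Phi(-t\sqrt{w/v})$, so the upper bound $\sqrt{v}\, e^{-t^2/(2v)}$ is immediate by dropping the nonpositive term, and what remains is the inequality
$$\sqrt{w}\, t\, e^{-t^2/2}\, \Phi\!\bigl(-t \sqrt{w/v}\bigr) \leq \sqrt{\frac{v}{2\pi}}\, e^{-t^2/(2v)}.$$
I expect this to be the main (though mild) technical point, handled by the Mills ratio estimate $\Phi(-y) \leq \frac{1}{y\sqrt{2\pi}} e^{-y^2/2}$ for $y > 0$, applied with $y = t\sqrt{w/v}$ (the $t = 0$ case is trivial since both sides vanish). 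After this substitution the left-hand side simplifies, using $v+w = 1$, to exactly $\sqrt{v/(2\pi)}\, e^{-t^2(v+w)/(2v)} = \sqrt{v/(2\pi)}\, e^{-t^2/(2v)}$, matching the right-hand side and closing the argument.
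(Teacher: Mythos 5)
The paper does not supply a proof of this lemma---it is cited from \cite{GX2022IID}---so there is no local argument to compare against; I can only assess correctness. Your proof is complete and correct. The closed form
\[
\phi_{v}*\phi^{+}_{1-v}(t)=\sqrt{\tfrac{v}{2\pi}}\,e^{-t^{2}/(2v)}+\sqrt{1-v}\,t\,e^{-t^{2}/2}\,\Phi\!\left(t\sqrt{(1-v)/v}\right)
\]
follows exactly as you describe from completing the square, using $\tfrac1v+\tfrac1{1-v}=\tfrac1{v(1-v)}$ and $\tfrac{t^2}{v}(1-(1-v))=t^2$ so that the cross terms collapse to $\tfrac{(s-(1-v)t)^2}{v(1-v)}+t^2$. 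The sign split then works as claimed: for $t<0$ the bracket lies in $[0,1]$ and the Gaussian term gives an upper bound dominated by $\sqrt{v}\,e^{-t^2/(2v)}$; for $t\geq 0$ the Mills-ratio inequality applied with $y=t\sqrt{(1-v)/v}$ makes the two expressions match exactly after using $v+(1-v)=1$, so the difference is nonnegative. Your bounds are in fact slightly sharper than those stated in the lemma (you obtain the factor $1/\sqrt{2\pi}$ on the Gaussian term), which is harmless. One small observation: the restriction $v\leq 1/2$ in the statement plays no role in the argument---the identity and both bounds hold for all $v\in(0,1)$.
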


We shall need a generalization of Fuk's inequality for martingales due to Haeusler \cite[Lemma 1]{Hae84}, 
 which will play an important role in our analysis. 

\begin{lemma}\label{Lem_Fuk}
Let $\xi_1, \ldots, \xi_n$ be a martingale difference sequence with respect to the non-decreasing 
$\sigma$-fields $\mathscr F_0, \mathscr F_1, \ldots, \mathscr F_n$.
Then, for all $u, v, w > 0$,
\begin{align*} 
\bb P \left(  \max_{1 \leq k \leq n} \left| \sum_{i=1}^k \xi_i \right| \geq u \right)
& \leq  2 \exp \left\{ \frac{u}{v} \left( 1 - \log \frac{uv}{w} \right) \right\}
 \notag \\
& \quad  +   \sum_{i=1}^n \bb P \left( |\xi_i|  > v \right)  + 2 \bb P \left(  \sum_{i=1}^n \bb E \left( \xi_i^2 |  \mathscr F_{i-1} \right) > w \right). 
\end{align*}
\end{lemma}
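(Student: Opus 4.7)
The plan is to adapt the classical truncation-plus-exponential-martingale scheme to the martingale setting. First, I would fix the truncation threshold $v>0$ and decompose each increment as $\xi_i = \xi_i' + \xi_i''$, where $\xi_i' = \xi_i \mathds{1}_{\{|\xi_i| \leq v\}}$ is the bounded part and $\xi_i'' = \xi_i \mathds{1}_{\{|\xi_i| > v\}}$ captures the large jumps. A crude union bound yields $\bb P(\max_{1\leq i \leq n}|\xi_i| > v) \leq \sum_{i=1}^n \bb P(|\xi_i| > v)$, which directly accounts for the second term on the right-hand side; outside this exceptional event the partial sums $\sum_{i=1}^k \xi_i$ coincide with $\sum_{i=1}^k \xi_i'$ for every $k$, so it is enough to control the running maximum of the truncated sums.

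Since $(\xi_i)$ is a martingale difference sequence, recentering as $\tilde \xi_i := \xi_i' - \bb E(\xi_i' | \mathscr F_{i-1})$ again produces a martingale difference sequence bounded by $2v$ in absolute value. I would then estimate the running-maximum probability via Doob's submartingale inequality applied to the exponential $\exp(\lambda \sum_{i=1}^k \tilde \xi_i)$ for a parameter $\lambda>0$. The elementary inequality $e^x \leq 1 + x + (x^2/2) e^{|x|}$ gives, conditionally, $\bb E(e^{\lambda \tilde \xi_i} | \mathscr F_{i-1}) \leq \exp(\frac{\lambda^2}{2} e^{2\lambda v} \bb E(\tilde \xi_i^2 | \mathscr F_{i-1}))$, which turns $\exp(\lambda \sum_{i=1}^k \tilde \xi_i - \frac{\lambda^2}{2} e^{2\lambda v} \sum_{i=1}^k \bb E(\tilde \xi_i^2 | \mathscr F_{i-1}))$ into a supermartingale. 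Using $\bb E(\tilde \xi_i^2 | \mathscr F_{i-1}) \leq \bb E(\xi_i^2 | \mathscr F_{i-1})$, one obtains on the event $\{\sum_{i=1}^n \bb E(\xi_i^2 | \mathscr F_{i-1}) \leq w\}$ a clean one-sided exponential tail of the form $\exp(-\lambda u + \frac{\lambda^2}{2} e^{2\lambda v} w)$, while the complementary event yields the third term of the inequality.

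The final step is to optimize $\lambda$. Choosing $\lambda v \approx \log(uv/w)$ balances the linear term $-\lambda u$ against the quadratic contribution and produces precisely the Fuk factor $\exp\{(u/v)(1 - \log(uv/w))\}$, the prefactor $2$ accounting for the two symmetric tails obtained by repeating the argument for $-\tilde \xi_i$. The main technical obstacle, which distinguishes the martingale argument from the independent case, is handling the recentering compensators: since $(\xi_i)$ is a martingale difference we have $\bb E(\xi_i' | \mathscr F_{i-1}) = -\bb E(\xi_i'' | \mathscr F_{i-1})$, so one has to estimate the accumulated drift $\sum_{i=1}^n |\bb E(\xi_i'' | \mathscr F_{i-1})|$ by a conditional Markov inequality and fold it back into either the large-jumps term or the quadratic-variation term on the right-hand side. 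This bookkeeping step is the part I would expect to require the most attention, and it is the one carried out carefully in Haeusler's paper.
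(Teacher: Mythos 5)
The paper does not prove this lemma; it is quoted verbatim from Haeusler's paper (cited as \cite[Lemma 1]{Hae84}), so there is no proof in the present paper to compare your proposal against. Your sketch is a reasonable outline of the classical Fuk--Nagaev strategy that Haeusler himself follows: truncate at level $v$, pull out the large-jump tail as $\sum_{i}\bb P(|\xi_i|>v)$, recenter the truncated increments, run an exponential Doob/supermartingale bound against the event $\{\sum_i\bb E(\xi_i^2|\scr F_{i-1})>w\}$, and optimize $\lambda$.

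Two points deserve more care than your sketch allows for, and both affect whether you actually recover the stated constant rather than a weaker one. First, after recentering, $\tilde\xi_i=\xi_i'-\bb E(\xi_i'|\scr F_{i-1})$ is bounded by $2v$, not $v$; a naive application of the exponential bound with range $2v$ yields $\exp\{\frac{u}{2v}(1-\log\frac{2uv}{w})\}$, which is strictly weaker than the Fuk factor $\exp\{\frac{u}{v}(1-\log\frac{uv}{w})\}$ in the statement. Second, the drift $\sum_i\bb E(\xi_i'|\scr F_{i-1})=-\sum_i\bb E(\xi_i''|\scr F_{i-1})$ is not directly accounted for by either of the two remainder terms in the inequality; it must be absorbed into the shift of $u$ (using that on $\{\sum_i\bb E(\xi_i^2|\scr F_{i-1})\leq w\}$ one has $\sum_i|\bb E(\xi_i''|\scr F_{i-1})|\leq w/v$), and this shift again feeds back into the optimization of $\lambda$ and the resulting constants. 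You flag the second issue but do not resolve it, nor do you address the first; these are precisely the parts where Haeusler's argument departs from a textbook Bennett-type bound, so your proposal should be read as a plan of attack consistent with the literature rather than a self-contained proof of the stated inequality.
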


Using this lemma together with the spectral gap properties stated in Section \ref{Spectral gap theory}, 
we derive the following Fuk-type inequality that involves  
a target function on the Markov chain $(g_n \cdots g_1 x)_{n\geq 0}$.
This result will be used to control the remainder term in the proof of the lower bound \eqref{eqt-A 002} 
in the Caravenna-type conditioned local limit theorem. 

 \begin{lemma} \label{FK-joint-inequality}
Assume that $\Gamma_{\mu}$ is proximal and strongly irreducible, $\mu$ admits finite exponential moments
and that the Lyapunov exponent $\lambda_{\mu}$ is zero. 
Then, there exist constants $c, c', c_0>0$ such that, for any  $M > c_0$, $n\geq 1$, $x \in \bb P(\bb V)$ and any nonnegative function $\varphi \in \scr B_{\gamma}$, 
\begin{align*} 
 \bb E   \Big( \varphi \left( g_n \cdots g_1 x  \right);  \max_{1 \leq j \leq n } | \sigma(g_j \cdots g_1, x) |  \geq  M \sqrt{n}   \Big)  
  \leq    2 \nu(\varphi)  e^{ - c \sqrt{M} }  
   + c' e^{-cn^{1/6}} \| \varphi \|_{\scr B_{\gamma}}. 
\end{align*}
\end{lemma}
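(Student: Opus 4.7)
The plan is to decompose the event $\{\max_{1\leq j \leq n}|\sigma(g_j\cdots g_1, x)| \geq M\sqrt n\}$ according to the first passage time $T = \inf\{j \geq 1 : |\sigma(g_j \cdots g_1, x)| \geq M\sqrt n\}$ into an \emph{early exit} part $A_1 = \{T \leq n - m\}$ and a \emph{late exit} part $A_2 = \{n - m < T \leq n\}$, with the intermediate scale $m = \lceil n^{1/3}\rceil$. A basic ingredient is the Fuk-type tail bound $\bb P(\max_{1\leq j \leq n}|\sigma(g_j\cdots g_1, x)| \geq M\sqrt n) \leq C e^{-c\sqrt M}$ for $M > c_0$, which we obtain from Haeusler's inequality (Lemma~\ref{Lem_Fuk}) applied to the martingale--coboundary decomposition of the cocycle sum produced by the Poisson equation for $P$ (solvable thanks to the spectral gap in Lemma~\ref{Lem_Perturbation}). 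The choice $m = n^{1/3}$ is designed to balance the spectral-gap error $e^{-cm/n_0}$ against the Fuk deviation bound for the last $m$ steps; both turn out to fit inside the desired remainder $e^{-cn^{1/6}} \|\varphi\|_{\scr B_{\gamma}}$.

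For $A_1 \in \mathscr F_{n - m}$, the Markov property at time $n-m$ together with the spectral decomposition $P^m \varphi = \nu(\varphi)\cdot\mathbf 1 + N_0^m \varphi$ and the bound $\|N_0^m \varphi\|_\infty \leq C q^{m/n_0}\|\varphi\|_{\scr B_{\gamma}}$ yields
\begin{equation*}
\bb E[\varphi(g_n \cdots g_1 x); A_1] = \nu(\varphi)\bb P(A_1) + \bb E[(N_0^m \varphi)(g_{n-m}\cdots g_1 x); A_1] \leq 2\nu(\varphi) e^{-c\sqrt M} + C e^{-cn^{1/3}}\|\varphi\|_{\scr B_{\gamma}}.
\end{equation*}
For $A_2$, we condition on $\mathscr F_{n-m}$, set $y = g_{n-m}\cdots g_1 x$ and $s = \sigma(g_{n-m}\cdots g_1, x)$, and split further according to whether $|s| > M\sqrt n / 2$ or $|s| \leq M\sqrt n / 2$. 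In the first sub-case we drop the late-exceedance constraint and apply the Markov property at time $n-m$ exactly as above; since $\bb P(|s| > M\sqrt n /2) \leq C e^{-c\sqrt M}$ by Fuk, this again contributes only $\nu(\varphi) e^{-c\sqrt M} + e^{-cn^{1/3}}\|\varphi\|_{\scr B_{\gamma}}$. In the second sub-case, the triangle inequality forces the shifted cocycle $\sigma(g_{n-m+j'}\cdots g_{n-m+1}, y)$ to exceed $M\sqrt n/2$ within the next $m$ steps. Rescaling to the inner walk of length $m$, the effective threshold is $(M/2) n^{1/6}\sqrt m$, so Fuk applied conditionally on $\mathscr F_{n-m}$ gives a probability bound $C e^{-c \sqrt{M/2}\, n^{1/6}}$; taking $c_0$ large enough so that $\sqrt{c_0/2} \geq 1$, this contributes at most $\|\varphi\|_{\scr B_{\gamma}}\, e^{-cn^{1/6}}$.

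The main obstacle is precisely the late-exit case: a naive bound $\bb E[\varphi; A_2] \leq \|\varphi\|_\infty \bb P(A_2)$ together with the global Fuk estimate would produce a spurious $\|\varphi\|_{\scr B_{\gamma}} e^{-c\sqrt M}$ term, which is absent from the claim. The two-way split on $|s|$ resolves this asymmetry: either the walk is already large at time $n-m$, in which case the spectral gap over the remaining $m$ steps replaces $\varphi$ by $\nu(\varphi)$ and one recovers the sharper factor $\nu(\varphi)$ in front of $e^{-c\sqrt M}$; or the walk is small at time $n-m$, in which case the inner $m$-step excursion required to reach level $M\sqrt n /2$ is so atypical that its probability is $e^{-cn^{1/6}}$ independently of $M$. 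Summing the three contributions and adjusting constants yields the stated estimate.
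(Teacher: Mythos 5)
Your proof is correct and uses the same core tools as the paper's---the martingale--coboundary decomposition so that Fuk's inequality (Lemma~\ref{Lem_Fuk}) applies, and a Markov step at time $n-[n^{1/3}]$ combined with the spectral decomposition $P^m\varphi=\nu(\varphi)+N_0^m\varphi$---but with a different event decomposition. You split by the first passage time $T$ of level $M\sqrt n$, then within the late-exit part $\{n-m<T\leq n\}$ make a secondary case split on whether $|\sigma(g_{n-m}\cdots g_1,x)|$ is above or below $M\sqrt n/2$. The paper instead conditions on the auxiliary event $B_{n,x}$ that each increment $\sigma_0(g_k,\cdot)$, $p<k\leq n$, with $p=n-[n^{1/3}]$, is at most $\frac13Mn^{1/6}$; on that event the exceedance over $[1,n]$ directly forces an exceedance of $\frac12M\sqrt n$ over $[1,p]$, so a single Markov step and a single Fuk application suffice, while $\bb P(B_{n,x}^c)$ is absorbed into the $e^{-cn^{1/6}}$ remainder by a union bound and Chebyshev. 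Your diagnosis that the naive $\|\varphi\|_\infty\bb P(A_2)$ bound would leave a spurious $\|\varphi\|_{\scr B_\gamma}e^{-c\sqrt M}$ term is precisely the obstruction the paper's $B_{n,x}$ is designed to sidestep; the paper's route is one step shorter, yours is closer to the classical first-passage philosophy. One small arithmetic slip: since $\sqrt n/\sqrt m\approx n^{1/3}$, the effective level for the inner $m$-step walk is $(M/2)n^{1/3}\sqrt m$, not $(M/2)n^{1/6}\sqrt m$---your resulting Fuk exponent $e^{-c\sqrt{M/2}\,n^{1/6}}$ is nonetheless correct because $\sqrt{(M/2)n^{1/3}}=\sqrt{M/2}\,n^{1/6}$.
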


\begin{proof}
It follows from Lemma 10.18 in \cite{BQ16b} that the cocycle $\sigma$ can be centered. Specifically, 
there exists a continuous function 
$\psi_0$ on $\bb P(\bb V)$ such that, for any $x \in \bb P(\bb V)$, 
\begin{align} \label{lyapunov exp is 0-002}
\int_{\bb G } (\sigma(g, x) + \psi_0(g x) - \psi_0(x))  \mu(dg) = 0.
\end{align}
For $g \in \bb G$ and $x \in \bb P(\bb V)$, we set 
\begin{align*}
\sigma_0(g, x) = \sigma(g, x) + \psi_0(g x) - \psi_0(x). 
\end{align*}
Then, for any $x \in \bb P(\bb V)$, the sequence $(\sigma_0(g_n \cdots g_1, x))_{n \geq 1}$ forms a martingale with respect to the natural filtration,
and there exists a constant $c_0 >0$ such that, uniformly on $\Omega$, 
\begin{align*}
\sup_{n \geq 1} \sup_{x \in \bb P(\bb V)} |\sigma(g_n \cdots g_1, x) - \sigma_0(g_n \cdots g_1, x)| \leq c_0. 
\end{align*} 
Set $p=n-[n^{1/3}]$ and define 
\begin{align*}
B_{n,x} = \left\{ \max_{k \in [p+1, n]} \sigma_0(g_k, g_{k-1} \cdots g_1 x) \leq \frac{1}{3} M n^{1/6} \right\}. 
\end{align*}
On the set $B_{n,x}$, we have
\begin{align*}
\max_{1 \leq j \leq n } | \sigma(g_j \cdots g_1, x) |
& \leq \max_{1 \leq j \leq n } | \sigma_0(g_j \cdots g_1, x) | + c_0 \notag\\
&  \leq \max_{1 \leq j \leq p } | \sigma_0(g_j \cdots g_1, x) | + \frac{1}{3} M \sqrt{n} + c_0.  
\end{align*}
Therefore, for sufficiently large $M$ (so that $M > 6 c_0$), we get that, for any $x \in \bb P(\bb V)$, 
\begin{align}\label{Lower_F_ee_kkk-002}
 \bb E  \Big(   \varphi \left( g_n \cdots g_1 x  \right);  \max_{1 \leq j \leq n } | \sigma(g_j \cdots g_1, x) |  \geq  M \sqrt{n}   \Big)  
  \leq  J_n (x) + \|\varphi\|_{\infty} \bb P (B_{n,x}^c), 
\end{align}
where 
\begin{align*}
J_n (x) := 
\bb E   
 \left(   \varphi \left( g_n \cdots g_1 x  \right);  \max_{1 \leq j \leq p } | \sigma_0(g_j \cdots g_1, x) |  \geq \frac{1}{2} M \sqrt{n}   \right). 
\end{align*}
By the definition of $B_{n,x}$ and applying Chebyshev's inequality, we obtain 
\begin{align}\label{Fuk-martingale-000}
\bb P (B_{n,x}^c)
& \leq  \sum_{k = p+1}^n  \bb P \bigg( \sup_{x' \in \bb P(\bb V)} \sigma_0(g_k, x') > \frac{1}{3} M n^{1/6} \bigg) \notag\\
& \leq  e^{ - \frac{\alpha}{3} M n^{1/6}} \sum_{k = p+1}^n  \bb E \left( \|g_k\|^{\alpha} \right)
\leq  c n e^{ - 2 \alpha c_0 n^{1/6}}
\leq  c  e^{ -  \alpha c_0   n^{1/6}}, 
\end{align}
where $\alpha >0$ is a constant from \eqref{Exponential-moment}. 
Note that $P^k \varphi  (x) = \bb E \varphi ( g_k \cdots g_1 x)$ for $k\geq 1$.
By Lemma \ref{Lem_Perturbation} with $z=0$, there exist constants $c, c' >0$ such that for any $k\geq 1$,
$$
\sup_{x \in \bb P(\bb V) } \left| P^k \varphi  (x) -   \nu(\varphi) \right|  
\leq  c'  e^{-c k} \| \varphi \|_{\scr B_{\gamma}}. 
$$ 
Using this and the Markov property, we get that for any $n \geq 1$ and $x \in \bb P(\bb V)$, 
\begin{align}\label{KKK-markov property-aa}
J_n (x) 
& = \bb E \left(  P^{[n^{1/3}]} \varphi  \left( g_p \cdots g_1 x \right); \max_{1 \leq j \leq p } | \sigma_0(g_j \cdots g_1, x) |  \geq \frac{1}{2} M \sqrt{n}  \right) \notag\\
& \leq  
\nu(\varphi) 
 \bb P \left(  \max_{1 \leq j \leq p } | \sigma_0(g_j \cdots g_1, x) |  \geq \frac{1}{2} M \sqrt{n}   \right)   
+ c' e^{-c n^{1/3}} \| \varphi \|_{\scr B_{\gamma}}.
\end{align}
Applying Fuk's inequality for martingales (Lemma \ref{Lem_Fuk}) 
with $u = \frac{1}{2}M\sqrt{ n} $, $v =  \frac{1}{2} \sqrt{n}$ and $w=  \frac{1}{4 e^2} M n$ 
(so that $\frac{u}{v} = M$ and $\frac{uv}{w} = e^2$), 
we obtain 
\begin{align}\label{Fuk-martingale-001}
 \bb P \left(  \max_{1 \leq j \leq p } | \sigma_0(g_j \cdots g_1, x) |  \geq \frac{1}{2} M \sqrt{n}   \right) 
& \leq  \bb P \left(  \max_{1 \leq j \leq n } | \sigma_0(g_j \cdots g_1, x) |  \geq \frac{1}{2} M \sqrt{n}   \right)  \notag\\
& \leq  2 e^{- M} + n \sup_{x' \in \bb P(\bb V)} \bb P \Big( |\sigma(g_1, x')| > \frac{1}{2} \sqrt{n} \Big)  \notag\\
& \quad +  \bb P \bigg( \frac{1}{n} \sum_{i=1}^n \sup_{x' \in \bb P(\bb V)} |\sigma(g_i, x')|^2 >  \frac{1}{4 e^2} M   \bigg).
 \end{align} 
 For the second term on the right-hand side of \eqref{Fuk-martingale-001}, 
 by Chebyshev's inequality and the fact that $\mu$ has finite exponential moments (cf.\ \eqref{Exponential-moment}), we have 
 \begin{align}\label{Fuk-martingale-002}
n \sup_{x' \in \bb P(\bb V)} \bb P \Big( |\sigma(g_1, x')| > \frac{1}{2} \sqrt{n} \Big) 
\leq  n e^{ - \frac{\alpha}{2} \sqrt{n} } 
\leq  c e^{ - \frac{\alpha}{3} \sqrt{n} }. 
\end{align}
 For the last term on the right-hand side of \eqref{Fuk-martingale-001}, we use Chebyshev's inequality to get
 \begin{align*}
 \bb P \left( \frac{1}{n} \sum_{i=1}^n \sup_{x' \in \bb P(\bb V)} |\sigma(g_i, x')|^2 >  \frac{1}{4 e^2} M   \right) 
 \leq  e^{- \frac{\alpha}{2 e} \sqrt{M} }  \bb E  \exp \left( \alpha \sqrt{\frac{1}{n} \sum_{i=1}^n \sup_{x' \in \bb P(\bb V)} |\sigma(g_i, x')|^2} \right). 
\end{align*}
Note that the function $u \mapsto e^{\alpha \sqrt{u}}$ is convex on the interval $[\alpha^{-2}, \infty)$
and strictly increasing on the interval $[0, \infty)$.
In particular, on $[0, \infty)$, this function is dominated  by the convex function $u \mapsto \max\{e, e^{\alpha \sqrt{u}} \}$. 
Consequently, the exponential term within the latter expectation can be controlled as follows: 
\begin{align*}
 \exp \left( \alpha \sqrt{\frac{1}{n} \sum_{i=1}^n \sup_{x' \in \bb P(\bb V)} |\sigma(g_i, x')|^2} \right) 
& \leq  \frac{1}{n} \sum_{i=1}^n  \max\left\{ e,  \exp \left( \alpha \sup_{x' \in \bb P(\bb V)} |\sigma(g_i, x')| \right) \right\} \notag\\
& \leq \frac{1}{n} \sum_{i=1}^n  \max\left\{ e,  \|g_i\|^{\alpha} \right\}. 
\end{align*}
By the moment assumption \eqref{Exponential-moment}, the expectation of the above expression is bounded by a constant. 
Combining this with the inequalities \eqref{Lower_F_ee_kkk-002}, \eqref{Fuk-martingale-000},
 \eqref{KKK-markov property-aa}, \eqref{Fuk-martingale-001} and \eqref{Fuk-martingale-002} completes the proof of the lemma.
\end{proof}

We need an effective bound that will enable a precise comparison between $\sigma (g_n \cdots g_1, x)$ with $\log \|g_n \cdots g_1\|$.
This result will be frequently utilized to control the remainder terms when establishing the lower bound  
\eqref{eqt-A 002} of the Caravenna-type conditioned local limit theorem. 

\begin{lemma}\label{Lem-cocycle-norm-001}
Assume that $\Gamma_{\mu}$ is proximal and strongly irreducible, and $\mu$ admits finite exponential moments. 
Then, there exists a constant $\eta_0 > 0$ with the following property. 
For any $\eta \in (0, \eta_0)$, there exist constants $\beta, c >0$ such that, for any $1 \leq j \leq n$ and $x \in \bb P(\bb V)$, 
\begin{align}\label{inequa-sigma-xx-001}
\bb P \Big( \sigma (g_n \cdots g_1, x) - \log \|g_n \cdots g_1\| \leq - \eta j  \Big) 
\leq c e^{-\beta j}, 
\end{align}
and for any $1 \leq j \leq n$ and $x, x'  \in \bb P(\bb V)$, 
\begin{align}\label{inequa-sigma-xx-002}
\bb P \Big( | \sigma (g_n \cdots g_1, x) -  \sigma (g_n \cdots g_1, x')| \geq  \eta j  \Big)
\leq c e^{-\beta j}. 
\end{align}
\end{lemma}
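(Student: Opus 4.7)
The plan is to reduce both estimates to the geometric inequality
$$0 \leq \log \|g\| - \sigma(g, x) \leq -\log |\langle v, v_1(g)\rangle|,$$
valid for any $g \in \bb G$ and any $x = \bb R v \in \bb P(\bb V)$ with $\|v\|=1$, where $v_1(g)$ denotes the top right singular unit vector of $g$. This follows at once from the singular value decomposition $g = UDV^*$: decomposing $v$ in the orthonormal basis $v_1(g), \ldots, v_d(g)$ of right singular vectors gives $\|g v\|^2 = \sum_i \kappa_i(g)^2 |\langle v, v_i(g)\rangle|^2 \geq \|g\|^2 |\langle v, v_1(g)\rangle|^2$.

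Consequently, the event in \eqref{inequa-sigma-xx-001} is contained in $\{|\langle v, v_1(g_n \cdots g_1)\rangle| \leq e^{-\eta j}\}$, and it suffices to establish the following uniform-in-$n$ H\"older regularity bound: there exist constants $C, s > 0$ such that for every $n \geq 1$, every $x = \bb R v \in \bb P(\bb V)$ (with $\|v\|=1$) and every $\delta \in (0, 1]$,
$$\bb P \Big( |\langle v, v_1(g_n \cdots g_1)\rangle| \leq \delta \Big) \leq C \delta^s.$$
This is a classical consequence of proximality, strong irreducibility and finite exponential moments, derivable from the spectral gap properties of the transfer operator $P_z$ (Lemma \ref{Lem_Perturbation}) together with LePage's H\"older regularity of the stationary measure; see \cite{Boug-Lacr85, BQ16b}. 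Applying it with $\delta = e^{-\eta j}$ gives \eqref{inequa-sigma-xx-001} with $\beta = s\eta$, for any $\eta \in (0, \eta_0)$ with $\eta_0 > 0$ suitably small.

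The second estimate \eqref{inequa-sigma-xx-002} follows by a triangle inequality: since both $\log\|g\| - \sigma(g, x)$ and $\log\|g\| - \sigma(g, x')$ are non-negative,
$$|\sigma(g, x) - \sigma(g, x')| \leq \big(\log\|g\| - \sigma(g, x)\big) + \big(\log\|g\| - \sigma(g, x')\big).$$
Hence the event in \eqref{inequa-sigma-xx-002} is contained in the union of $\{\log\|g_n\cdots g_1\| - \sigma(g_n\cdots g_1, x) \geq \eta j/2\}$ and $\{\log\|g_n\cdots g_1\| - \sigma(g_n\cdots g_1, x') \geq \eta j/2\}$, both of which are controlled by \eqref{inequa-sigma-xx-001} applied with $\eta$ replaced by $\eta/2$.

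The genuine content, and the main obstacle, is the uniform-in-$n$ H\"older regularity stated above. A standard way to obtain it is to approximate $v_1(g_n \cdots g_1)$ by $g_n \cdots g_1 \cdot y$ for an arbitrary fixed $y \in \bb P(\bb V)$, the discrepancy being exponentially small in $n$ thanks to the spectral gap between the first two Lyapunov exponents (a consequence of proximality of $\Gamma_\mu$). The regularity of the law of $g_n \cdots g_1 \cdot y$ then follows from the exponential convergence of $P^n \delta_y$ to the stationary measure $\nu$ provided by Lemma \ref{Lem_Perturbation}, combined with LePage's H\"older regularity $\nu(\{z : d(z, H) \leq \delta\}) \leq C\delta^s$ uniformly over hyperplanes $H \subset \bb V$.
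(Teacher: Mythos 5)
Your opening reduction is correct and is, in substance, the same first step the paper takes: the pointwise bound
$$0 \le \log\|g\| - \sigma(g,x) \le -\log|\langle v, v_1(g)\rangle|,$$
with $v_1(g)$ the top \emph{right} (input) singular direction, is precisely the content of \cite[Lemma~14.2(i)]{BQ16b} phrased through the $\delta$-function and the density points $y^m_g$, and the triangle-inequality deduction of \eqref{inequa-sigma-xx-002} from \eqref{inequa-sigma-xx-001} is fine. Where the argument breaks down is in the technical claim you invoke to finish. The bound $\bb P\big(|\langle v, v_1(g_n\cdots g_1)\rangle| \le \delta\big) \le C\delta^s$ \emph{uniformly over $n\ge 1$ and $\delta\in(0,1]$} is false: for small $n$ the law of $v_1(g_n\cdots g_1)$ need not be regular at all (if $\mu$ charges a single matrix $g_0$ with positive mass, taking $v\perp v_1(g_0)$ gives $\bb P(|\langle v, v_1(g_1)\rangle|\le\delta)\ge\mu(\{g_0\})$ for every $\delta$, and strong irreducibility of $\Gamma_\mu$ does not rule this out). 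What is actually available, and what the paper uses, is regularity only at scales $\delta\ge e^{-\eta n}$ for $\eta$ smaller than an absolute threshold set by the Lyapunov gap and the rate of mixing; this is exactly the source of the restriction $\eta<\eta_0$ in the statement, which in your account plays no visible role — a symptom that the auxiliary claim has been overstated. The paper instead applies \cite[(14.5)]{BQ16b}, which is already phrased at scale $e^{-\eta j}$ with the smallness of $\eta$ built in, together with the $j$-step stabilization of $y^m_{g_n\cdots g_1}$ around $y^m_{g_j\cdots g_1}$ obtained by dualizing and reversing \cite[(14.6)]{BQ16b}.

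There is a second, independent error in the proposed approximation step. You propose to approximate $v_1(g_n\cdots g_1)$ by $g_n\cdots g_1\cdot y$ for a fixed $y\in\bb P(\bb V)$, but the forward orbit $g_n\cdots g_1\cdot y$ concentrates near the \emph{output} top singular direction $x^M_{g_n\cdots g_1}$, not the input direction $v_1(g_n\cdots g_1)$. Since $v_1(g)$ is the output direction of $g^*$, the correct approximation is via $g_1^*\cdots g_n^*\cdot y'$ in $\bb P(\bb V^*)$ — the \emph{dual, reversed} walk — whose law coincides (by exchangeability of the i.i.d.\ increments) with that of a forward walk on $\bb P(\bb V^*)$ driven by the transposed measure. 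This dualization and reversal is precisely the step the paper performs explicitly in its proof. As written, applying $P^n\delta_y\to\nu$ on $\bb P(\bb V)$ estimates the wrong singular direction and cannot be repaired without moving to the dual action.
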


\begin{proof}
We retain the notation from Subsection 14.1 in \cite{BQ16b}. 
We claim that there exist constants $\beta, \gamma, C >0$ such that for any $1 \leq j \leq n$, 
\begin{align}\label{inequa-density-point-001}
\bb P \left( d \left( y_{g_n \cdots g_1}^m, y_{g_j \cdots g_1}^m \right) \leq  e^{-\beta j} \right) > 1 -  C e^{-\gamma j}. 
\end{align}
Indeed, fix an arbitrary $x \in \bb P(\bb V)$. By (14.6) of Proposition 14.3 in \cite{BQ16b}, for any $1 \leq j \leq n$, 
we have simultaneously for some constants $\beta, \gamma, C >0$, 
\begin{align*}
& \bb P \left( d \left( g_n \cdots g_{1} x, x_{g_n \cdots g_{1}}^M \right) \leq  \frac{1}{2} e^{-\beta n} \right) > 1 - \frac{C}{2} e^{-\gamma n}, \notag\\
& \bb P \left( d \left( g_n \cdots g_{1} x, x_{g_n \cdots g_{n-j+1}}^M \right) \leq  \frac{1}{2} e^{-\beta j} \right) > 1 - \frac{C}{2}  e^{-\gamma j}. 
\end{align*}
By the triangle inequality, we get 
\begin{align*}
\bb P \left( d \left( x_{g_n \cdots g_{1}}^M, x_{g_n \cdots g_{n-j+1}}^M \right) \leq  e^{-\beta j} \right) > 1 -  C e^{-\gamma j}. 
\end{align*}
Applying this relation in the dual representation of $\Gamma_{\mu}$ in the dual space $\bb V^*$ of $\bb V$ yields
\begin{align*}
\bb P \left( d \left( y_{g_1 \cdots g_n}^m, y_{g_{n-j+1} \cdots g_n}^m \right) \leq  e^{-\beta j} \right) > 1 -  C e^{-\gamma j}. 
\end{align*}
Reversing the order of the variables, we derive \eqref{inequa-density-point-001}. 

To conclude the proof of the lemma, 
we apply inequality (14.5) in  \cite[Proposition 14.3]{BQ16b}, which gives, for any $x \in \bb P(\bb V)$
and $1 \leq j \leq n$, 
\begin{align*}
\bb P \left( \delta \left( x, y_{g_j \cdots g_1}^m \right) \geq   e^{-\eta j} \right) > 1 -  C e^{-c j}. 
\end{align*}
By \eqref{inequa-density-point-001}, we obtain 
\begin{align*}
\bb P \left( \delta \left( x, y_{g_n \cdots g_1}^m \right) \geq   e^{-\eta j} - e^{-\beta j} \right) > 1 -  C e^{-c j} -  C e^{-\gamma j}. 
\end{align*} 
Thus, inequality \eqref{inequa-sigma-xx-001} follows by the first inequality in \cite[Lemma 14.2 (i)]{BQ16b}. 
Finally, applying \eqref{inequa-sigma-xx-001} with $x = x'$ and using the triangle inequality, 
we get \eqref{inequa-sigma-xx-002}. 
\end{proof}

Below we will need the following classical interpolation inequality. 

\begin{lemma}\label{Lem-interpolation-inequ}
Let $X$ be a metric space, and let $f$ be a Lipschitz 
continuous function with Lipschitz constant $B>0$, such that $\sup |f|=A$. 
Then, for any $0< \gamma \leq 1$, $f$ is $\gamma$-H\"older continuous with 
H\"older constant at most $(2A)^{1 - \gamma} B^{\gamma}$. 
\end{lemma}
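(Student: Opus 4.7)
The plan is to prove the bound pointwise by combining the two obvious estimates on $|f(x)-f(y)|$: the Lipschitz bound $|f(x)-f(y)|\leq B\,d(x,y)$ and the trivial sup bound $|f(x)-f(y)|\leq 2A$. The standard trick is to split the absolute value as a product of two copies of itself, raised to powers $\gamma$ and $1-\gamma$, and then use the Lipschitz bound on the first factor and the sup bound on the second.

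More precisely, I would fix distinct $x,y\in X$ and set $d=d(x,y)$. Then I would write
\begin{align*}
|f(x)-f(y)|
= |f(x)-f(y)|^{\gamma}\,|f(x)-f(y)|^{1-\gamma}
\leq (B\,d)^{\gamma}\,(2A)^{1-\gamma}
= (2A)^{1-\gamma} B^{\gamma}\,d^{\gamma},
\end{align*}
where the inequality uses $|f(x)-f(y)|\leq Bd$ in the first factor and $|f(x)-f(y)|\leq 2A$ in the second (both factors are non-negative and $\gamma,1-\gamma\geq 0$, so monotonicity applies). Dividing by $d^{\gamma}$ yields
\begin{align*}
\frac{|f(x)-f(y)|}{d(x,y)^{\gamma}}\leq (2A)^{1-\gamma} B^{\gamma},
\end{align*}
and taking the supremum over $x\neq y$ gives the claimed bound on the $\gamma$-H\"older constant.

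The edge case $\gamma=1$ reduces to the Lipschitz hypothesis itself, and the case where $f$ is constant is trivial; no obstacle arises here. Indeed, this lemma is entirely elementary and involves no real difficulty, the only mild subtlety being the observation that the product-splitting trick automatically interpolates between the two a priori bounds in the correct way. I would present it in a few lines as above.
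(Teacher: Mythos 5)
Your proof is correct, but it uses a genuinely different (and arguably slicker) argument than the paper. You apply the product-splitting trick, writing $|f(x)-f(y)| = |f(x)-f(y)|^{\gamma}\,|f(x)-f(y)|^{1-\gamma}$ and bounding the two factors separately by the Lipschitz and the sup estimates. The paper instead fixes a threshold $u>0$, case-splits on whether $d(x,x')\leq u$ or $d(x,x')\geq u$, bounds $|f(x)-f(y)|$ in each regime by $B u^{1-\gamma} d(x,x')^{\gamma}$ and $2A\,u^{-\gamma} d(x,x')^{\gamma}$ respectively, and then optimizes by taking $u=2A/B$ to balance the two constants. Both are elementary and yield the exact constant $(2A)^{1-\gamma}B^{\gamma}$; your version avoids the explicit choice of threshold and does the interpolation in a single line, while the paper's version makes the "balancing" mechanism explicit and is perhaps slightly more transparent to a reader unfamiliar with the trick. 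No gap in your argument — the nonnegativity of $|f(x)-f(y)|$ and the monotonicity of $t\mapsto t^{\gamma}$ on $[0,\infty)$ are exactly what you need and you invoke them.
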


\begin{proof}
Fix $u >0$, whose value is to be determined later. 
For $x,x'$ in $X$, 
if $d(x,x')\leq u$, we have
$$|f(x)-f(x')|\leq B d(x,x')\leq B u \left( u^{-1} d(x,x') \right)
\leq  B u^{1-\gamma} d(x,x')^\gamma,$$
with the latter inequality following from the fact that $u^{-1} d(x,x')\leq 1$. 
On the other hand, if $d(x,x')\geq u$, we have $u^{\gamma} \leq d(x,x')^\gamma$, which gives
$$|f(x)-f(x')|\leq 2A \leq 2A u^{-\gamma} d(x,x')^\gamma.$$
The conclusion follows by taking $u = \frac{2A}{B}$.
\end{proof}

The following important lemma shows the contraction property of the random walk $(\sigma(g_n\cdots g_1, x))_{n \geq 0}$, 
which will be frequently used in subsequent analysis. 

\begin{lemma}\label{Lem-Holder-cocycle}
Assume that $\mu$ has finite exponential moments, and that $\Gamma_{\mu}$ is proximal and strongly irreducible. 
Then, for any $p \geq 1$, there exists a constant $\gamma_0 > 0$ with the following property.
For any $\gamma \in (0, \gamma_0)$,
there exists a constant $C>0$ such that, for any $x, x' \in \bb P(\bb V)$ and $n\geq 1$, we have 
\begin{align*}
\bb E^{1/p} \Big( |\sigma(g_n\cdots g_1,x)-\sigma(g_n\cdots g_1,x')|^p \Big)  \leq C  d(x,x')^{\gamma}
\end{align*}
and 
\begin{align*}
\bb E^{1/p} \Big( \max_{1 \leq j \leq n} |\sigma(g_j \cdots g_1,x)-\sigma(g_j\cdots g_1,x')|^p \Big) \leq C d(x,x')^{\gamma}. 
\end{align*}
\end{lemma}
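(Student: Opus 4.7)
The plan is to decompose the cocycle difference via the cocycle identity and then exploit the contraction of the Markov chain $(g_k \cdots g_1 x)_k$ on $\bb P(\bb V)$. Setting $y_k = g_k \cdots g_1 x$ and $y'_k = g_k \cdots g_1 x'$, the cocycle identity produces the telescoping
\begin{align*}
\sigma(g_j \cdots g_1, x) - \sigma(g_j \cdots g_1, x') = \sum_{k=1}^j \bigl[\sigma(g_k, y_{k-1}) - \sigma(g_k, y'_{k-1})\bigr]
\end{align*}
for every $1 \leq j \leq n$. Taking absolute values term by term gives a sequence that is non-decreasing in $j$, so both the fixed-$n$ assertion and the maximum-over-$j$ assertion will reduce to bounding the $L^p$ norm of $\sum_{k=1}^n |\sigma(g_k, y_{k-1}) - \sigma(g_k, y'_{k-1})|$.

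For the single-step estimate, I would combine the elementary Lipschitz bound $|\sigma(g, x) - \sigma(g, x')| \leq C\|g\|\|g^{-1}\|\, d(x,x')$ (which follows from the formula $\sigma(g, \bb R v) = \log(\|gv\|/\|v\|)$ together with $\|gv\| \geq \|g^{-1}\|^{-1}$ for unit $v$) with the trivial $L^\infty$ bound $|\sigma(g, x) - \sigma(g, x')| \leq 2 \log \max(\|g\|, \|g^{-1}\|)$. Applying the interpolation Lemma \ref{Lem-interpolation-inequ} with exponent $\gamma \in (0, 1)$ then yields
\begin{align*}
|\sigma(g, x) - \sigma(g, x')| \leq C\, \bigl(\log \max(\|g\|, \|g^{-1}\|)\bigr)^{1-\gamma} \bigl(\|g\|\|g^{-1}\|\bigr)^\gamma d(x,x')^\gamma.
\end{align*}
Conditioning on $\scr F_{k-1} = \sigma(g_1, \ldots, g_{k-1})$ and using the independence of $g_k$ from $\scr F_{k-1}$ together with the $\scr F_{k-1}$-measurability of $y_{k-1}, y'_{k-1}$, integration against $\mu$ gives
\begin{align*}
\bb E^{1/p}\!\bigl[|\sigma(g_k, y_{k-1}) - \sigma(g_k, y'_{k-1})|^p \,\big|\, \scr F_{k-1}\bigr] \leq C_\gamma\, d(y_{k-1}, y'_{k-1})^\gamma,
\end{align*}
where the constant $C_\gamma$ is finite as soon as $p\gamma$ is strictly smaller than the exponential moment exponent $\alpha$ appearing in \eqref{Exponential-moment}.

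To conclude, I would invoke the standard projective contraction estimate available under our assumptions (Bougerol--Lacroix \cite{Boug-Lacr85}, see also \cite[Chapter 12]{BQ16b}): there exist $\gamma_1, c, C > 0$ such that for every $\gamma' \in (0, \gamma_1]$, every $x, x' \in \bb P(\bb V)$ and every $k \geq 0$,
\begin{align*}
\bb E\, d(g_k\cdots g_1 x,\, g_k\cdots g_1 x')^{\gamma'} \leq C e^{-ck} d(x,x')^{\gamma'}.
\end{align*}
Choosing $\gamma$ small enough that both $p\gamma < \alpha$ and $p\gamma \leq \gamma_1$ hold, the tower property combined with the contraction estimate gives $\bb E^{1/p}|\sigma(g_k, y_{k-1}) - \sigma(g_k, y'_{k-1})|^p \leq C' e^{-c(k-1)/p} d(x,x')^\gamma$. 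Applying Minkowski's inequality to the telescoping sum and summing the resulting geometric series produces a bound of the form $C'' d(x,x')^\gamma$, uniformly in $n$ and in $j \leq n$, which is the desired conclusion for both assertions. The main difficulty I foresee is the joint balancing of parameters: the Hölder exponent $\gamma$ must be small enough for the interpolation constant $C_\gamma$ to have a finite $p$-th moment, while simultaneously lying in the regime where the projective contraction is available; this is what forces the constant $\gamma_0$ in the statement of the lemma to depend on $p$.
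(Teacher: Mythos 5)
Your proposal is correct and follows essentially the same route as the paper's proof: telescope the cocycle difference via the cocycle identity, interpolate the pointwise Lipschitz bound $|\sigma(g,x)-\sigma(g,x')|\leq C N(g)^2 d(x,x')$ (Lemma 13.2 of \cite{BQ16b}, which is what you call the ``elementary Lipschitz bound'') against the $L^\infty$ bound $|\sigma(g,\cdot)|\leq \log N(g)$ using Lemma \ref{Lem-interpolation-inequ}, apply the projective contraction estimate of Bougerol--Lacroix together with the Markov property, and close the argument with Minkowski's inequality and a geometric series. The only cosmetic differences are that the paper works with $N(g)^2$ rather than $\|g\|\|g^{-1}\|$ and phrases the conditioning step as the Markov property rather than explicitly naming the filtration, but these are the same estimates; your parameter-balancing remark at the end correctly identifies why $\gamma_0$ must depend on $p$.
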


\begin{proof}
We only prove the second inequality since it implies the first one. 
We shall apply Lemma \ref{Lem-interpolation-inequ} to the norm cocycle. For $g \in \bb G$, we set $N(g)=\max\{\|g\|,\|g^{-1}\| \}$. By the definition of the norm cocycle, we have that for any $g \in \bb G$ and $x \in \bb P(\bb V)$, 
\begin{align*}
|\sigma(g,x)|\leq\log N(g)
\end{align*}
and, by Lemma 13.2 in Subsection 13.1 of \cite{BQ16b}, 
there exists a constant $C>0$ such that for any $g \in \bb G$ and $x,x' \in \bb P(\bb V)$, 
\begin{align*}
|\sigma(g,x)-\sigma(g,x')|\leq CN(g)^2 d(x,x'). 
\end{align*}
Applying Lemma \ref{Lem-interpolation-inequ}, we obtain that, for any $g \in \bb G$, 
and $x, x' \in \bb P(\bb V)$ and $0< \gamma \leq 1$,
\begin{align*}
|\sigma(g,x)-\sigma(g,x')|\leq  C^{\gamma} (2\log N(g))^{1 - \gamma} N(g)^{2 \gamma} d(x,x')^{\gamma}. 
\end{align*}
In particular, since $\mu$ has finite exponential moments, for any $p \geq 1$ and small 
enough $\gamma > 0$, there exists a constant $C_{p, \gamma} >0$ such that, for any $x,x'$ in $\bb P(\bb V)$,
\begin{align*}
\mathbb E ( |\sigma(g_1,x)-\sigma(g_1,x')|^p)\leq  C_{p, \gamma} d(x,x')^{p \gamma}. 
\end{align*}
By the cocycle property, we get that, for any $1 \leq j \leq n$ and $x,x'$ in $\bb P(\bb V)$, 
\begin{align*}
 |\sigma(g_j \cdots g_1,x)-\sigma(g_j \cdots g_1,x')|
& \leq  \sum_{k = 1}^j \left| \sigma(g_k,g_{k-1}\cdots g_1x)-\sigma(g_k,g_{k-1}\cdots g_1x') \right| \notag\\
& \leq  \sum_{k = 1}^n \left| \sigma(g_k,g_{k-1}\cdots g_1x)-\sigma(g_k,g_{k-1}\cdots g_1x') \right|. 
\end{align*}
Now, for any $p \geq 1$, $n\geq 1$ and $x,x'$ in $\bb P(\bb V)$, 
we write, by Minkowski's inequality, 
\begin{align*}
& \bb E^{1/p} \left( \max_{1 \leq j \leq n} |\sigma(g_j \cdots g_1,x)-\sigma(g_j\cdots g_1,x')|^p \right) \notag\\
& \leq \sum_{k=1}^{n} \bb E^{1/p} \Big( |\sigma(g_k,g_{k-1}\cdots  g_1x) - \sigma(g_k,g_{k-1} \cdots g_1x')|^p \Big).
\end{align*}
Using the Markov property and the above bound, we get that, for any $1\leq k\leq n$, 
\begin{align*}
& \bb E \Big( |\sigma(g_k,g_{k-1}\cdots g_1x)-\sigma(g_k,g_{k-1}\cdots g_1x')|^p \Big) \notag\\
& \leq C_{p, \gamma} \mathbb E \Big( d(g_{k-1}\cdots g_1x,g_{k-1} \cdots  g_1x')^{p \gamma} \Big).
\end{align*}
Now, applying the contraction property of the random walk (see Bougerol and Lacroix \cite[Chapter V, Proposition 2.3]{Boug-Lacr85}), 
we know that, for any $p \geq 1$ and small enough $\gamma > 0$, there exist constants 
$\beta, C>0$ such that, for any $k\geq 1$ and $x,x'$ in $\bb P(\bb V)$,
\begin{align*}
\mathbb E \Big( d(g_{k-1}\cdots g_1x,g_{k-1}\cdots g_1x')^{p \gamma} \Big)
\leq  Ce^{-\beta k} d(x,x')^{p \gamma}.
\end{align*}
Gathering all the inequalities, we conclude that
\begin{align*}
 \bb E^{1/p} \left( \max_{1 \leq j \leq n} |\sigma(g_j \cdots g_1,x) - \sigma(g_j\cdots g_1,x')|^p \right)  
& \leq  \left( C C_{p, \gamma} \right)^{1/p} \sum_{k=1}^{n} e^{- \beta p^{-1} k}  d(x,x')^{\gamma}  \notag\\
& \leq  \frac{ \left( C C_{p, \gamma} \right)^{1/p} }{1 - e^{-\beta p^{-1}}} d(x,x')^{\gamma},
\end{align*}
as should be proved.
\end{proof}

The following result implies that the integral of the function $t \mapsto \bb P (t + \sigma (g_m \cdots g_1, x) \in [a, b])$
is bounded by a constant, for any $m \geq 1$ and $-\infty < a < b < \infty$. 
The proof  is based on Lemma \ref{Lem-cocycle-norm-001}, 
the main difficulty lying in proving the uniformity in the starting point $x \in \bb P(\bb V)$. 

\begin{lemma}\label{Lem-inte-H-sup}
Assume that $\Gamma_{\mu}$ is proximal and strongly irreducible, $\mu$ admits finite exponential moments
and that the Lyapunov exponent $\lambda_{\mu}$ is zero.  
Let $\kappa$ be a non-negative smooth compactly supported function on $\bb R$.
Then, there exists a constant $c>0$ such that for any function $H \in L^1(\bb R)$ and any $m \geq 1$,  
we have 
\begin{align*}
\int_{\bb R}   \sup_{x \in \bb P(\bb V)}  \bb E  \left[ H *  \kappa \Big( t + \sigma (g_m \cdots g_1, x) \Big) \right]  dt
\leq  c \int_{\bb R} |H(t)|dt.  
\end{align*}
\end{lemma}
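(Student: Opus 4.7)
The plan is to apply the effective ordinary local limit theorem to the target function $F(x', u) := (H * \kappa)(t + u)$, viewed as a function of $(x', u) \in \bb P(\bb V) \times \bb R$ that is \emph{constant in $x'$}. The crucial consequence is that the leading Gaussian term of the LLT is then independent of the starting point $x$, so the supremum in $x$ affects only the error term. I would first reduce to the case $H \geq 0$, using the pointwise bound $|(H * \kappa)(s)| \leq (|H| * \kappa)(s)$ which holds since $\kappa \geq 0$, and then approximate $H$ by a continuous compactly supported function so that $H * \kappa$ becomes smooth and compactly supported. Applying Theorem~\ref{Lem_LLT_Nonasm} to $F$ with a smooth $\ee$-dominator $G$ (also constant in $x'$) gives
\begin{align*}
\bb E\bigl[(H * \kappa)\bigl(t + \sigma(g_m \cdots g_1, x)\bigr)\bigr] \leq \frac{1}{\upsilon_\mu \sqrt m} \int_{\bb R} \phi\!\left(\frac{u}{\upsilon_\mu\sqrt m}\right) G(u)\,du + \frac{c\ee}{\sqrt m}\|G\|_1 + \frac{c_\ee}{m}\|G\|_{\mathscr H_\gamma}.
\end{align*}
The leading term is independent of $x$; integrating it over $t$, exchanging the order of integration and using Young's convolution inequality yields $\leq c \|H\|_1 \|\kappa\|_1$. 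Moreover, since $G$ is constant in $x'$, one has $\|G\|_{\mathscr H_\gamma} = \|G\|_1$, so both error terms carry only an $L^1$ norm of $G$.

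The error terms are uniform in $t$ and would integrate to infinity on $\bb R$, so the main task is to control them only on an effective bulk range. I would split the $t$-integration at $|t| \leq m^{\beta}$ and $|t| > m^{\beta}$ for some $\beta \in (1/2, 1)$. On the bulk range, the uniform errors contribute $O(\ee\, m^{\beta - 1/2} \|G\|_1)$ and $O(c_\ee\, m^{\beta - 1} \|G\|_1)$ respectively. On the tail $|t| > m^{\beta}$, the compact support of $H * \kappa$ combined with the Gaussian-type large-deviation estimate $\sup_x \bb P(|\sigma(g_m \cdots g_1, x)| \geq v) \leq C e^{-v^2/(Cm)}$ for $v \leq c' m$---which follows from the spectral-gap decomposition of Lemma~\ref{Lem_Perturbation} together with the expansion $\lambda_\alpha = 1 + \upsilon_\mu^2 \alpha^2/2 + O(\alpha^3)$ in \eqref{decomp-lambda001}, via a standard Chernoff optimization---yields a tail contribution of order $\sqrt m\, e^{-m^{2\beta - 1}/C}$, which is uniformly bounded for $m \geq 1$ as soon as $\beta > 1/2$.

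The main obstacle is to coordinate the choice of $\ee$, the truncation scale $\beta$, and the $\ee$-dominator $G$ so that the growth of $c_\ee$ as $\ee \to 0$ does not spoil the gain $m^{\beta-1}$ from the second bulk error. Provided that $c_\ee$ grows at most polynomially in $1/\ee$ (which is expected from a mollifier-based construction of smooth $\ee$-dominators), a choice such as $\ee = m^{1/2 - \beta}$ with $\beta$ sufficiently close to $1/2$ balances all three error contributions and delivers a uniform-in-$m$ bound of the form $c \|H\|_1$, with the constant $c$ depending only on $\kappa$. Finally, the restriction to compactly supported $H$ is removed by monotone convergence applied to $|H| \mathds{1}_{[-N,N]} \uparrow |H|$, since the final constant does not depend on $N$.
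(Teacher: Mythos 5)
Your approach is genuinely different from the paper's, and it contains a gap that I don't think can be bridged with the tools as stated. The crux of your plan is to take the smoothing parameter $\ee = \ee_m = m^{1/2-\beta}$ depending on $m$, and to let the factor $c_{\ee_m}$ in the second error term of Theorem~\ref{Lem_LLT_Nonasm} be absorbed by the gain $m^{\beta-1}$ from integrating over the bulk range $|t|\leq m^{\beta}$. This requires that $c_{\ee}$ grow at most polynomially in $1/\ee$ with a controlled exponent. But Theorem~\ref{Lem_LLT_Nonasm} asserts only the existence of a constant $c_{\ee}$ depending on $\ee$; its growth rate is nowhere quantified in the paper, and nothing in the statement forbids it from blowing up much faster than polynomially. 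You flag this yourself as the ``main obstacle,'' but invoking ``a mollifier-based construction'' does not close the gap: tracking the $\ee$-dependence of that constant through the spectral-gap machinery would be a nontrivial additional task, essentially amounting to a new effective estimate. As written, your argument relies on an unproved hypothesis rather than on the results available in the paper.

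The paper's own proof sidesteps the LLT entirely and is both more elementary and more robust. It performs the $x$-independent change of variable $t\mapsto t'-\log\|g_m\cdots g_1\|$, which removes the dependence on the starting point from the translation inside the integral, and then controls the remaining offset $\sigma(g_m\cdots g_1,x)-\log\|g_m\cdots g_1\|$ using the exponential concentration of Lemma~\ref{Lem-cocycle-norm-001}. Decomposing according to the dyadic-type events $B_{x,m,j}$ (where this offset lies in $(-j\eta,-(j-1)\eta]$), each slice contributes $O(e^{-\beta j})\cdot O(j)\cdot\|H\|_1$ after the $t$-integration and the supremum over $x$ is harmless because the translation by $\log\|g_m\cdots g_1\|$ no longer sees $x$; the remaining tail event is killed via Cauchy--Schwarz and the exponential moments of $\|g_m\cdots g_1\|$. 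This gives a non-asymptotic, $m$-uniform bound without any local limit theorem and without having to quantify any smoothing constant. If you want to rescue your LLT-based strategy, you would first need to prove that the constant $c_\ee$ in Theorem~\ref{Lem_LLT_Nonasm} is $O(\ee^{-a})$ for some explicit $a$, which is a substantive result in its own right.
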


\begin{proof}
Without loss of generality, we assume that the function $H$ is non-negative on $\bb R$. 
Let $\eta >0$ be a fixed constant such that Lemma \ref{Lem-cocycle-norm-001} holds. 
For $x \in \bb P(\bb V)$ and $1 \leq j \leq m$, we set 
\begin{align*}
B_{x, m, j} = \Big\{ \sigma (g_m \cdots g_1, x) - \log \| g_m \cdots g_1\| \in (- j \eta, - (j-1) \eta] \Big\}.
\end{align*}
By Lemma \ref{Lem-cocycle-norm-001}, there exist constants $\beta, c >0$ such that, for any $1 \leq j \leq m$ and $x \in \bb P(\bb V)$, 
\begin{align*}
\bb P (B_{x, m, j})  \leq c e^{-\beta j}. 
\end{align*}
By the change of variable $t = t' -  \log \| g_m \cdots g_1\|$, we get 
\begin{align}\label{decom-Integral-sup-001}
&  \int_{\bb R}   \sup_{x \in \bb P(\bb V)}  \bb E  \left[ H *  \kappa \Big( t + \sigma (g_m \cdots g_1, x) \Big) \right]  dt \notag\\
& = \int_{\bb R}   \sup_{x \in \bb P(\bb V)}  \bb E  \left[ H *  \kappa \Big( t + \sigma (g_m \cdots g_1, x) -  \log \| g_m \cdots g_1\|  \Big) \right]  dt \notag\\
& =  \sum_{j =1}^{m} \int_{\bb R}   \sup_{x \in \bb P(\bb V)} 
\bb E  \left[ H *  \kappa \Big( t + \sigma (g_m \cdots g_1, x) - \log \| g_m \cdots g_1\| \Big); B_{x, m, j}  \right] dt \notag\\
& \quad +  \int_{\bb R}   \sup_{x \in \bb P(\bb V)} 
\bb E  \Big[ H *  \kappa \Big( t + \sigma (g_m \cdots g_1, x)  \Big);  
 \sigma (g_m \cdots g_1, x) - \log \| g_m \cdots g_1\| \leq - m \eta \Big] dt \notag\\
& =: J_1 + J_2. 
\end{align}
%
%
Let $a > 0$ be such that $\kappa$ has support in $[-a, a]$. Thus, for any $u \in \bb R$,
we have $\kappa(u) \leq \|\kappa\|_{\infty} \mathds 1_{\{|u| \leq a\}}$. 
Consequently, for the first term $J_1$,  we derive that for any $1 \leq j \leq m$, 
\begin{align}\label{decom-Integral-sup-002}
 J_1  
 & \leq  \|\kappa\|_{\infty}  \sum_{j =1}^{m} \sup_{x \in \bb P(\bb V)}  \bb P \left( B_{x, m, j} \right) \int_{\bb R}   H * \mathds 1_{[-a - \eta, a + \eta] } (t) dt   
 \notag\\
&  \leq c (a + \eta) \|\kappa\|_{\infty} \sum_{j =1}^{m} e^{-\beta j}  \int_{\bb R}   H(t)dt 
 \leq c \int_{\bb R}   H(t)dt. 
\end{align}
For the second term $J_2$, we use Fubini's theorem and the Cauchy-Schwarz inequality to get that, 
for any $x \in \bb P(\bb V)$, $t \in \bb R$ and $m \geq 1$, 
\begin{align*}
& \bb E  \left[ H *  \kappa \Big( t + \sigma (g_m \cdots g_1, x)  \Big); \sigma (g_m \cdots g_1, x) - \log \| g_m \cdots g_1\| \leq - m \eta \right]  \notag\\
& =  \int_{\bb R} H(s) \bb E \Big[ \kappa  \Big( t+ \sigma (g_m \cdots g_1, x) - s  \Big);  
 \sigma (g_m \cdots g_1, x) - \log \| g_m \cdots g_1\| \leq - m \eta \Big] ds  \notag\\ 
& \leq  \bb P^{1/2} \Big( \sigma (g_m \cdots g_1, x) - \log \| g_m \cdots g_1\| \leq - m \eta \Big)  \notag\\
& \qquad  \times 
  \int_{\bb R} H(s)   \bb E^{1/2} \left[ \kappa^2  \Big( t+ \sigma (g_m \cdots g_1, x) - s  \Big) \right]   ds. 
\end{align*}
By Lemma \ref{Lem-cocycle-norm-001} and the fact that $\kappa$ is supported in $[-a, a]$, this leads to 
\begin{align*}
& \bb E  \left[ H *  \kappa \Big( t + \sigma (g_m \cdots g_1, x)  \Big); \sigma (g_m \cdots g_1, x) - \log \| g_m \cdots g_1\| \leq - m \eta \right]  \notag\\
& \leq c e^{-\frac{\beta}{2} m}  \|\kappa\|_{\infty}  \int_{\bb R} H(s)   \bb P^{1/2} \left(  |t+ \sigma (g_m \cdots g_1, x) - s| \leq a  \right) ds \notag\\
& = c e^{-\frac{\beta}{2} m}  \|\kappa\|_{\infty}   \int_{\bb R} H(s) 
  \bb P^{1/2} \Big(  s-t \in [ \sigma (g_m \cdots g_1, x) -a, \sigma (g_m \cdots g_1, x)+a]  \Big) ds
\notag\\
& \leq c e^{-\frac{\beta}{2} m}  \|\kappa\|_{\infty}  
 \int_{\bb R} H(s)  
 \bb P^{1/2} \left(  s-t \in \left[ -\log \|(g_m \cdots g_1)^{-1}\| -a, \log \|g_m \cdots g_1\| +a \right]  \right) ds. 
\end{align*}
Integrating over $t \in \bb R$, we get 
\begin{align*}
& \int_{\bb R} 
\int_{\bb R} H(s)   \bb P^{1/2} \left(  s-t \in \left[ -\log \|(g_m \cdots g_1)^{-1}\| -a, \log \|g_m \cdots g_1\| +a \right]  \right) ds dt  \notag\\
& = \int_{\bb R} H(s) ds  \int_{\bb R} 
   \bb P^{1/2} \left(  u \in \left[ -\log \|(g_m \cdots g_1)^{-1}\| -a, \log \|g_m \cdots g_1\| +a \right]  \right) du \notag\\
& \leq  \int_{\bb R} H(s) ds  
  \int_{\bb R}   
\bb P^{1/2} \left( |u| \leq \max \left\{ \log \|g_m \cdots g_1\|, \log \|(g_m \cdots g_1)^{-1}\|  \right\} + a \right) du  \notag\\
& \leq  2 \int_{\bb R} H(s) ds  
 \int_{0}^{\infty} 
 \bb P^{1/2} \left( u \leq \max \left\{ \log \|g_m \cdots g_1\|, \log \|(g_m \cdots g_1)^{-1}\|  \right\} + a \right) du.  
\end{align*}
We fix $\alpha > 0$, whose value is to be chosen later. 
By the exponential Chebyshev inequality, for $u \geq 1$, we have 
\begin{align}\label{Chebyshev-log-Ng}
& \bb P \left( u \leq \max \left\{ \log \|g_m \cdots g_1\|, \log \|(g_m \cdots g_1)^{-1}\|  \right\} + a \right) \notag\\
& \leq  e^{- \alpha (u-a)}  \bb E  \left(  \max  \left\{  \|g_m \cdots g_1\|, \|(g_m \cdots g_1)^{-1}\|  \right\}^{\alpha}  \right) \notag\\
& \leq c e^{- \alpha (u-a)}  \left(  \bb E  \left(  \max \{  \|g_1\|, \|g_1^{-1}\|  \}^{\alpha}  \right) \right)^m. 
\end{align}
Therefore, by taking $\alpha>0$ to be small enough, we obtain 
\begin{align}\label{decom-Integral-sup-003} 
J_2 & \leq c e^{-\frac{\beta}{2} m}  \|\kappa\|_{\infty}  
\int_{\bb R} H(s) ds  \int_{0}^{\infty}  e^{- \frac{\alpha}{2} (u-a)} du
 \left(  \bb E  \left(  \max \{  \|g_1\|, \|g_1^{-1}\|  \}^{\alpha}  \right) \right)^{m/2} \notag\\
 & \leq c' e^{-\frac{\beta}{4} m} \int_{\bb R} H(s) ds. 
\end{align}
Substituting \eqref{decom-Integral-sup-002} and \eqref{decom-Integral-sup-003} into \eqref{decom-Integral-sup-001} gives the desired result. 
\end{proof}

To manage certain natural quantities that arise in the proof of Theorem \ref{t-A 001},
we first introduce the following definitions. 
For $\ee  >0$, define  
\begin{align}\label{Def_chiee}
\chi_{\ee} (t) = 0  \  \mbox{for} \ t \leq -\ee,  
\   \chi_{\ee} (t) = \frac{t + \ee}{\ee}   \  \mbox{for} \  t  \in (-\ee,0),
\   \chi_{\ee} (t) = 1  \  \mbox{for} \  t \geq 0.  
\end{align}
Denote $\overline\chi_{\ee}(t) = 1 - \chi_{\ee}(t)$ for $t \in \bb R$, and note that
\begin{align} \label{bounds-reversedindicators-001} 
\chi_{\ee}  \left( t-\ee \right) \leq  \mathds 1_{(0,\infty)} \left( t \right) \leq \chi_{\ee}  \left( t\right), 
\quad
\overline\chi_{\ee}  \left( t \right) \leq \mathds 1_{(-\infty,0]} \left( t \right) \leq \overline\chi_{\ee}  \left( t-\ee \right).
\end{align}
The subsequent proposition, which provides an important perspective on the behavior of
a family of linear operators 
$(A_m)_{m \geq 1}$ acting on the space $\scr H_{\gamma}$,
plays a crucial role in the proof of Theorem \ref{t-A 001}. 
The proof of this proposition, while technically intricate, draws primarily on the key insights provided by Lemmas
\ref{Lem-cocycle-norm-001}, \ref{Lem-Holder-cocycle} and \ref{Lem-inte-H-sup}, 
highlighting the difficulties in computing the associated $\gamma$-H\"older norms. 
In this spirit, a similar study will be undertaken in Section \ref{Proof of main Theorem}. 


\begin{proposition}\label{Lem_Inequality_Aoverline}
Assume that $\Gamma_{\mu}$ is proximal and strongly irreducible, $\mu$ admits finite exponential moments
and that the Lyapunov exponent $\lambda_{\mu}$ is zero.  
Let $\kappa$ be a non-negative smooth compactly supported function on $\bb R$. 
Then, for any small enough $\gamma >0$, 
 there exists a constant $c>0$ such that for any $\ee \in (0, 1)$, $G \in \scr H_{\gamma}$ and any $m \geq 1$,  
the function $A_m G$ defined on $\bb P(\bb V) \times \bb R$ by 
\begin{align*}
 A_{m} G (x, t):  =   \bb E    \bigg[ G *  \kappa \Big( g_m \cdots g_1 x, t + \sigma (g_m \cdots g_1, x) \Big)   
  \overline\chi_{\ee}  \left( t- 2 \ee + \min_{1 \leq  j \leq m}  \sigma (g_j \cdots g_1, x) \right) \bigg], 
\end{align*}
belongs to $\scr H_{\gamma}$ and satisfies 
\begin{align}\label{bounds-AmG-001}
\|  A_{m} G \|_{\nu \otimes \Leb }  \leq   \int_{\bb R} \kappa(t) dt  \| G \|_{\nu \otimes \Leb },  
\qquad 
\| A_{m} G \|_{ \scr H_{\gamma} }  \leq \frac{c}{\ee}  \| G\|_{ \scr H_{\gamma} }.  
\end{align}
\end{proposition}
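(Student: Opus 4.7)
The plan is to prove both stated bounds directly; membership $A_m G \in \scr H_\gamma$ then follows from the measurability of $A_m G$ (inherited from $G$ by dominated convergence) and the finiteness of $\|A_m G\|_{\scr H_\gamma}$. For the first ($\nu \otimes \Leb$) inequality, since $|\overline\chi_\ee| \leq 1$, Fubini's theorem and translation invariance of Lebesgue measure in $t$ (applied pointwise in $\omega$) yield $\|A_m G\|_{\nu \otimes \Leb} \leq \int_{\bb P(\bb V)} \bb E \int_{\bb R} |G * \kappa(g_m \cdots g_1 x, t)| \, dt \, \nu(dx)$. Young's convolution inequality applied to the inner integral, together with the $\mu$-stationarity of $\nu$, then gives the bound $\int \kappa(s)\,ds \cdot \|G\|_{\nu \otimes \Leb}$, as required.

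For the $\scr H_\gamma$ bound I control $\int \sup_x |A_m G(x, t)|\, dt$ and $\int \omega_\gamma(A_m G(\cdot, t))\, dt$ separately. The sup-norm integral is at most $\int \sup_x \bb E[F_G * \kappa(t + \sigma(g_m \cdots g_1, x))]\, dt$, where $F_G(s) := \|G(\cdot, s)\|_{\scr B_\gamma}$, which is bounded by $c \|G\|_{\scr H_\gamma}$ via Lemma \ref{Lem-inte-H-sup}, with constant independent of $\ee$. For the H\"older constant, set $\sigma_m(y) := \sigma(g_m \cdots g_1, y)$ and abbreviate $\overline\chi_\ee(y) := \overline\chi_\ee(t - 2\ee + \min_{1 \leq j \leq m} \sigma(g_j \cdots g_1, y))$. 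I split the random integrand defining $A_m G(x, t)$ into three natural pieces: (i) the variation of the first argument of $G * \kappa$ from $g_m \cdots g_1 x$ to $g_m \cdots g_1 x'$, keeping the rest fixed; (ii) the variation of the second argument from $t + \sigma_m(x)$ to $t + \sigma_m(x')$; and (iii) the variation of the factor $\overline\chi_\ee$.

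Piece (i) is handled via the subadditivity $\omega_\gamma(G * \kappa(\cdot, s)) \leq \int \omega_\gamma(G(\cdot, s - u)) \kappa(u)\,du$, Fubini, translation in $t$, and the random-walk contraction estimate $\bb E\, d(g_m \cdots g_1 x, g_m \cdots g_1 x')^\gamma \leq C\, d(x, x')^\gamma$ from Bougerol--Lacroix (as used in the proof of Lemma \ref{Lem-Holder-cocycle}); this contributes an amount of order $\|G\|_{\scr H_\gamma}\, d(x, x')^\gamma$ with constant independent of $\ee$. Piece (ii) uses the shift-convolution identity $G * \kappa(y, \cdot + \Delta) - G * \kappa(y, \cdot) = G * (\kappa_\Delta - \kappa)(y, \cdot)$ with $\kappa_\Delta(u) := \kappa(u - \Delta)$, together with Young's inequality and the bound $\|\kappa_\Delta - \kappa\|_{L^1} \leq |\Delta|\, \|\kappa'\|_{L^1}$; specializing $\Delta = \sigma_m(x) - \sigma_m(x')$ and applying Lemma \ref{Lem-Holder-cocycle} to the resulting factor $|\sigma_m(x) - \sigma_m(x')|$ again gives a contribution of order $\|G\|_{\scr H_\gamma}\, d(x, x')^\gamma$, independent of $\ee$. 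Piece (iii) exploits the Lipschitz continuity of $\overline\chi_\ee$ (constant $1/\ee$) and the inequality $|\min_j \sigma_j(x) - \min_j \sigma_j(x')| \leq \max_j |\sigma_j(x) - \sigma_j(x')|$, combined with the second (maximal) estimate of Lemma \ref{Lem-Holder-cocycle}; this is the sole source of the $1/\ee$ factor and yields a contribution of order $(1/\ee) \|G\|_{\scr H_\gamma}\, d(x, x')^\gamma$. Summing the three pieces gives the claimed bound $\| A_m G \|_{\scr H_\gamma} \leq (c/\ee) \| G \|_{\scr H_\gamma}$.

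The main obstacle is the clean separation in piece (iii): the decomposition must isolate the $\overline\chi_\ee$ variation so that pieces (i) and (ii) remain $\ee$-independent, otherwise the bound would degenerate to order $\ee^{-2}$. Piece (ii) is also delicate, as $G$ has a priori only measurable (not Lipschitz) regularity in $t$: the smoothness in $t$ must be transferred from the mollifier $\kappa$ to the convolution $G * \kappa$ through the shift-convolution identity, so that the linear (rather than $\gamma$-th) power of $|\sigma_m(x) - \sigma_m(x')|$ can be absorbed using Lemma \ref{Lem-Holder-cocycle}.
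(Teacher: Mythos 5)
Your decomposition of $A_m G(x,t) - A_m G(x',t)$ into three pieces (variation of the projective argument, of the $\sigma$-argument, and of the $\overline\chi_\ee$-factor) is exactly the paper's decomposition into $I_2$, $I_1$, $I_3$; and your two supporting observations — that the $t$-smoothness must be borrowed from $\kappa$ via the convolution, and that the $1/\ee$ comes only from the $\overline\chi_\ee$-piece — are correct and match the paper's strategy. The $\nu\otimes\Leb$ bound and the sup-norm part of $\|A_mG\|_{\scr H_\gamma}$ are also handled as in the paper.

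The gap is in how you propose to close each of the three pieces. The quantity to be controlled is $\int_{\bb R}\sup_{x\neq x'}d(x,x')^{-\gamma}|\cdot|\,dt$, with the sup \emph{inside} the $t$-integral. In every piece you are left, before any $t$-integration, with an expectation of a product of two random factors: an $(x,x')$-dependent weight such as $|\sigma_m(x)-\sigma_m(x')|$, $\max_j|\sigma_j(x)-\sigma_j(x')|$, or $d(g_m\cdots g_1x,g_m\cdots g_1x')^\gamma$, against a $t$-shifted factor such as $H*\kappa(t+\sigma_m(x'))$ or $H*\mathds 1_{[-a-|\Delta|,a+|\Delta|]}(t+\sigma_m(x'))$ (in the piece using the shift-convolution identity, the window width $|\Delta|=|\sigma_m(x)-\sigma_m(x')|$ is itself random). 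Because the sup over $(x,x')$ sits inside the $t$-integral, you cannot change variables $t\mapsto t+\sigma_m(x')$ without destroying the sup, and a blind Hölder inequality leaves a random window whose width still depends on $(x,x')$; so the step "Fubini, translation in $t$, then Lemma~\ref{Lem-Holder-cocycle}" does not produce a bound of order $\|G\|_{\scr H_\gamma}$ uniformly. You already used Lemma~\ref{Lem-inte-H-sup} for the sup-part; the same issue is what that lemma is solving, and it cannot simply be cited again once the extra $(x,x')$-dependent weight and the random window width are present.

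What is actually needed — and what the paper's proof supplies, separately for $I_1$, $I_2$, $I_3$ and already inside the proof of Lemma~\ref{Lem-inte-H-sup} — is the decomposition over the dyadic events $B_{x,m,j}=\{\sigma_m(x)-\log\|g_m\cdots g_1\|\in(-j\eta,-(j-1)\eta]\}$ (and the analogous $B_{x',m,j'}$), plus a tail event where $\sigma_m-\log\|g_m\cdots g_1\|\leq -m\eta$. On $B_{x,m,j}\cap B_{x',m,j'}$ the change of variable $t\mapsto t+\log\|g_m\cdots g_1\|$ is admissible because it is independent of $(x,x')$, and the window widths become deterministic, of size $\lesssim(|j-j'|+2)\eta+a$; the probabilities $\bb P(B_{x,m,j})$ decay exponentially in $j$ by Lemma~\ref{Lem-cocycle-norm-001}, which restores summability after a Hölder step, and the tail event is disposed of via the exponential moment of $\mu$ (as in \eqref{Chebyshev-log-Ng}--\eqref{decom-Integral-sup-003}). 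Without this device your bound for each piece is not substantiated. You have also inverted the difficulty: the telescoping separation into three pieces is automatic and not an obstacle, whereas the $t$-integral/$\sup$ coupling — which you do not address — is the crux of the proof.
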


\begin{proof}
As in the proof of Lemma \ref{Lem-inte-H-sup}, 
we let $a > 0$ be such that $\kappa$ has support in $[-a, a]$. Thus, for any $u \in \bb R$,
we have $\kappa(u) \leq \|\kappa\|_{\infty} \mathds 1_{\{|u| \leq a\}}$. 

To show the first inequality in \eqref{bounds-AmG-001}, since $|\overline\chi_{\ee}| \leq 1$, we have 
\begin{align*}
|A_{m} G (x, t) | \leq \bb E    \left[ |G *  \kappa| \Big( g_m \cdots g_1 x, t + \sigma (g_m \cdots g_1, x) \Big)  \right]. 
\end{align*}
As the Lebesgue measure is invariant under translation and the measure $\nu$ is $\mu$-stationary, 
by \eqref{def-norm-H-gamma}, we get that for any $m \geq 1$, 
\begin{align*}
\|  A_{m} G \|_{\nu \otimes \Leb }
& \leq  \bb E   \int_{\bb P(\bb V) \times \bb R}     \left[ |G *  \kappa| \Big( g_m \cdots g_1 x, t + \sigma (g_m \cdots g_1, x) \Big)  \right]  \nu(dx) dt  \notag\\
& =  \bb E   \int_{\bb P(\bb V) \times \bb R}     \left[ |G *  \kappa| \Big( g_m \cdots g_1 x, t'  \Big)  \right]  \nu(dx) dt'  \notag\\
& =   \int_{\bb P(\bb V) \times \bb R}    | G *  \kappa |  \left(  x, t \right)  \nu(dx) dt   \notag\\
& \leq  \int_{\bb R} \kappa(t) dt   \| G \|_{\nu \otimes \Leb }.  
\end{align*}
This finishes the proof of the first inequality in \eqref{bounds-AmG-001}.  

To prove the second inequality in \eqref{bounds-AmG-001}, 
we first recall that, by \eqref{def-norm-H-gamma}, 
\begin{align*} 
\|  A_{m} G \|_{\scr H_{\gamma}}
 =  \int_{\bb R}   \sup_{x \in \bb P(\bb V)}  | A_{m} G \left(x, t \right)|  dt 
  +  \int_{\bb R} \sup_{x, x' \in \bb P(\bb V): x \neq x'}  
    \frac{| A_{m} G \left(x, t \right)  - A_{m} G \left(x', t \right)  |}{d(x, x')^{\gamma}} dt. 
\end{align*}
For $t \in \bb R$, we set $H(t) = \sup_{x \in \bb P(\bb V)} |G(x, t)|$. 
By Lemma \ref{Lem-inte-H-sup}, there exists a constant $c>0$ such that for any $m \geq 1$, 
\begin{align}\label{Bound-Am-Norm-001}
 \int_{\bb R}   \sup_{x \in \bb P(\bb V)}  | A_{m} G \left(x, t \right)|  dt 
& \leq  \int_{\bb R}   \sup_{x \in \bb P(\bb V)}  \bb E  \left[ H *  \kappa \Big( t + \sigma (g_m \cdots g_1, x) \Big) \right]  dt  \notag\\
&  \leq  c \int_{\bb R} H(t) dt 
\leq c \| G\|_{ \scr H_{\gamma} }. 
\end{align}

Now we dominate the second term in the norm  $\|  A_{m}G \|_{\scr H_{\gamma}}$. 
We will split this term into three parts,  denoted by $I_1(x,x',t)$, $I_2(x,x',t)$ and $I_3(x,x',t)$,
which will be precisely defined below. 
 First, we define, for $x, x' \in \bb P(\bb V)$ and $t \in \bb R$, 
\begin{align}
& I_1(x,x',t)  : = \bigg| A_{m} G (x,t)   -  \bb E    \bigg[ G *  \kappa \Big( g_m \cdots g_1 x, t + \sigma (g_m \cdots g_1, x') \Big)    \notag\\ 
& \qquad\qquad\qquad\qquad\qquad\qquad
  \times \overline\chi_{\ee}  \left( t - 2 \ee + \min_{1 \leq  j \leq m}  \sigma (g_j \cdots g_1, x) \right) \bigg]  \bigg|.  \label{def-I1-x1-x2-t} 
\end{align}
Using again the fact that $|\overline\chi_{\ee}| \leq 1$, we get
\begin{align}\label{inequa-I1-Delta1}
 I_1(x,x',t) \leq \bb E   \Delta_1(x, x', m, t), 
\end{align}
where 
\begin{align*}
\Delta_1(x, x', m, t) &= \bigg| G *  \kappa \Big( g_m \cdots g_1 x, t + \sigma (g_m \cdots g_1, x) \Big)  \notag \\
   & \qquad -   G *  \kappa \Big( g_m \cdots g_1 x, t + \sigma (g_m \cdots g_1, x') \Big)  \bigg|. 
\end{align*}
We fix a constant $\eta >0$ such that Lemma \ref{Lem-cocycle-norm-001} holds. 
As in the proof of Lemma \ref{Lem-inte-H-sup}, for $x \in \bb P(\bb V)$ and $1 \leq j \leq m$, set 
\begin{align}\label{def-B-xmj}
B_{x, m, j} = \Big\{ \sigma (g_m \cdots g_1, x) - \log \| g_m \cdots g_1\| \in (- j \eta, - (j-1) \eta] \Big\}.
\end{align}
By Lemma \ref{Lem-cocycle-norm-001}, there exist constants $\beta, c >0$ such that for any $1 \leq j \leq m$ and $x \in \bb P(\bb V)$, 
\begin{align}\label{proba-B-xmj}
\bb P (B_{x, m, j})  \leq c e^{-\beta j}. 
\end{align} 
By \eqref{inequa-I1-Delta1} and \eqref{def-B-xmj}, it holds that 
\begin{align}\label{decomposi-I1-I123}
  I_1(x,x',t)  
& \leq  \sum_{1 \leq j, j' \leq m} 
\bb E  \Big( \Delta_1(x, x', m, t); B_{x, m, j} \cap B_{x', m, j'}  \Big)  \notag\\
& \quad +  
\bb E  \Big( \Delta_1(x, x', m, t); \sigma (g_m \cdots g_1, x) - \log \| g_m \cdots g_1\| \leq - m \eta \Big) \notag\\
& \quad +  
\bb E  \Big( \Delta_1(x, x', m, t); \sigma (g_m \cdots g_1, x') - \log \| g_m \cdots g_1\| \leq - m \eta \Big) \notag\\
& =: I_{11}(x, x', t) + I_{12}(x, x', t) + I_{13}(x, x', t). 
\end{align}
Now we deal with the first term $I_{11}(x, x', t)$. 
Recall that the function $\kappa$ has support in $[-a, a]$.  
For $t, t' \in \bb R$, $b \geq 0$ and $x \in \bb P(\bb V)$, if $|t - t'| \leq b$, then we derive that
\begin{align}\label{Lipschitz-G-xt}
| G * \kappa(x,t) - G * \kappa(x,t') | \leq  \|\kappa'\|_{\infty} |t-t'|  H * \mathds 1_{[-a - b, a + b]} (t), 
\end{align}
where $\kappa'$ is the first derivative of $\kappa$ and, 
as above, $H(t) = \sup_{x \in \bb P(\bb V)} |G(x, t)|$ for $t \in \bb R$.  
Therefore, by \eqref{Lipschitz-G-xt}, 
\begin{align*}
& \int_{\bb R} \sup_{x, x' \in \bb P(\bb V): x \neq x'} \frac{I_{11}(x, x', t)}{ d(x, x')^{\gamma} } dt \notag\\
& \leq  \|\kappa'\|_{\infty}   \sum_{1 \leq j, j' \leq m}   \int_{\bb R} \sup_{x, x' \in \bb P(\bb V): x \neq x'}  d(x, x')^{-\gamma}  
  \bb E  \bigg[ |\sigma (g_m \cdots g_1, x) - \sigma (g_m \cdots g_1, x')|  \notag\\
& \qquad  H * \mathds 1_{[-a - (|j' - j| +1) \eta,  a + (|j' - j| +1) \eta]} (t + \sigma (g_m \cdots g_1, x));  B_{x, m, j} \cap B_{x', m, j'}  \bigg] dt \\
&=:   \|\kappa'\|_{\infty}   \sum_{1 \leq j, j' \leq m} J_m(j,j').
\end{align*}
By the change of variable $t= t' - \log \|g_m \cdots g_1\|$, we have
\begin{align*}
& J_m(j,j') 
 =  \int_{\bb R} \sup_{x, x' \in \bb P(\bb V): x \neq x'}  d(x, x')^{-\gamma}  
 \bb E  \bigg[ |\sigma (g_m \cdots g_1, x) - \sigma (g_m \cdots g_1, x')|  \notag\\
&  
 \times  H * \mathds 1_{[-a - (|j' - j| +1) \eta, a + (|j' - j| +1) \eta]} \Big( t  + \sigma (g_m \cdots g_1, x) -  \log \|g_m \cdots g_1\| \Big);  \notag \\
& \qquad \qquad\qquad\qquad\qquad B_{x, m, j} \cap B_{x', m, j'}  \bigg] dt. 
\end{align*}
By the definition of $B_{x, m, j}$ and $B_{x', m, j'}$ (cf.\  \eqref{def-B-xmj}), it follows that
\begin{align*}
& J_m(j,j') 
 \leq   \int_{\bb R} \sup_{x, x' \in \bb P(\bb V): x \neq x'}  d(x, x')^{-\gamma}  
 \bb E  \bigg[ |\sigma (g_m \cdots g_1, x) - \sigma (g_m \cdots g_1, x')|  \notag\\
& \qquad \times  H * \mathds 1_{[-a - (|j' - j| +2) \eta, a + (|j' - j| +2) \eta]} (t - j \eta);  B_{x, m, j} \cap B_{x', m, j'}  \bigg] dt  \notag\\
& =      \sup_{x, x' \in \bb P(\bb V): x \neq x'}  
 \frac{\bb E  \Big[ |\sigma (g_m \cdots g_1, x) - \sigma (g_m \cdots g_1, x')|;  B_{x, m, j} \cap B_{x', m, j'}  \Big] }{ d(x, x')^{\gamma}   } \notag\\
& \qquad\qquad \times    \int_{\bb R} H * \mathds 1_{[-a - (|j' - j| +2) \eta, a + (|j' - j| +2) \eta]} (t) dt  \notag\\
& =      \sup_{x, x' \in \bb P(\bb V): x \neq x'}    
 \frac{\bb E  \Big[ |\sigma (g_m \cdots g_1, x) - \sigma (g_m \cdots g_1, x')|;  B_{x, m, j} \cap B_{x', m, j'}  \Big]}{ d(x, x')^{\gamma} }  \notag\\
& \qquad\qquad \times    \int_{\bb R} H(t) dt  \,   \Big( 2 a + 2 (|j' - j| +2) \eta \Big). 
\end{align*}
Applying H\"older's inequality, Lemma \ref{Lem-Holder-cocycle} and \eqref{proba-B-xmj}, we get that there exist 
constants $\beta, C>0$ such that, for any $x, x' \in \bb P(\bb V)$ and $1 \leq j, j' \leq m$, 
\begin{align*}
& \bb E  \Big( |\sigma (g_m \cdots g_1, x) - \sigma (g_m \cdots g_1, x')|;  B_{x, m, j} \cap B_{x', m, j'}  \Big)  \notag\\
& \leq \bb E^{1/3}  \Big( |\sigma (g_m \cdots g_1, x) - \sigma (g_m \cdots g_1, x')|^3  \Big) \bb P^{1/3} \left( B_{x, m, j} \right) \bb P^{1/3} \left( B_{x', m, j'} \right)
\notag\\
& \leq C d(x,x')^{\gamma} e^{- \frac{\beta}{3} (j + j')}. 
\end{align*}
Consequently, using the fact that $\|\kappa'\|_{\infty} \leq c$ and $\int_{\bb R} H(t) dt \leq \| G\|_{ \scr H_{\gamma} }$, we obtain 
\begin{align}\label{Bound-Am-Norm-002}
  \int_{\bb R} \sup_{x, x' \in \bb P(\bb V): x \neq x'} \frac{I_{11}(x, x', t)}{ d(x, x')^{\gamma} } dt 
& \leq  C  \int_{\bb R} H(t) dt   \sum_{1 \leq j, j' \leq m}    e^{- \frac{\beta}{3} (j + j')}   \Big( 2 a + 2 (|j' - j| +2) \eta \Big) \notag\\
& \leq C'  \| G\|_{ \scr H_{\gamma} }. 
\end{align}

Now we deal with the second term $I_{12}(x, x', t)$ defined by \eqref{decomposi-I1-I123}. 
For $g \in \bb G$, we set $N(g)=\max\{\|g\|,\|g^{-1}\| \}$. 
Using the fact that $|\sigma (g_m \cdots g_1, x)| \leq \log N(g_m \cdots g_1)$
and applying \eqref{Lipschitz-G-xt} with $b = 2 \log N(g_m \cdots g_1)$, 
we obtain that, for any $m \geq 1$, $x, x' \in \bb P(\bb V)$ and $t \in \bb R$, 
\begin{align*}
&\Delta_1(x, x', m, t) =  \Big| G * \kappa \Big( g_m \cdots g_1 x, t + \sigma (g_m \cdots g_1, x) \Big)   \notag\\
& \qquad\qquad\qquad\qquad\qquad - G * \kappa \Big( g_m \cdots g_1 x, t + \sigma (g_m \cdots g_1, x') \Big)  \Big|  \notag\\
& \leq  \|\kappa'\|_{\infty}  \Big| \sigma (g_m \cdots g_1, x) - \sigma (g_m \cdots g_1, x') \Big| \notag\\ 
&\qquad \times  H * \mathds 1_{[-a - 2 \log N(g_m \cdots g_1), a + 2 \log N(g_m \cdots g_1)]} \Big( t + \sigma (g_m \cdots g_1, x) \Big) \notag\\
& \leq  \|\kappa'\|_{\infty} \Big| \sigma (g_m \cdots g_1, x) - \sigma (g_m \cdots g_1, x') \Big|   \notag\\
& \qquad \times H * \mathds 1_{[-a - 3 \log N(g_m \cdots g_1), a + 3 \log N(g_m \cdots g_1)]} (t). 
\end{align*}
Substituting this bound into $I_{12}(x, x', t)$ (cf.\ \eqref{decomposi-I1-I123}) leads to 
\begin{align*}
& \int_{\bb R} \sup_{x, x' \in \bb P(\bb V): x \neq x'} \frac{I_{12}(x, x', t)}{ d(x, x')^{\gamma} } dt \notag\\
& \leq  \|\kappa'\|_{\infty}     \int_{\bb R} \sup_{x, x' \in \bb P(\bb V): x \neq x'}  d(x, x')^{-\gamma}  
 \bb E  \bigg[ |\sigma (g_m \cdots g_1, x) - \sigma (g_m \cdots g_1, x')|  \notag\\
& \qquad\qquad \times  H * \mathds 1_{[-a - 3 \log N(g_m \cdots g_1), a + 3 \log N(g_m \cdots g_1)]} (t);  \notag\\
& \qquad\qquad\qquad\qquad   \sigma (g_m \cdots g_1, x) - \log \| g_m \cdots g_1\| \leq - m \eta   \bigg] dt \notag\\
& = \|\kappa'\|_{\infty}     \int_{\bb R} \sup_{x, x' \in \bb P(\bb V): x \neq x'}  d(x, x')^{-\gamma}   
 \int_{\bb R} H(s) 
 \bb E  \bigg[ |\sigma (g_m \cdots g_1, x) - \sigma (g_m \cdots g_1, x')|  \notag\\
& \qquad\qquad \times  \mathds 1_{[-a - 3 \log N(g_m \cdots g_1), a + 3 \log N(g_m \cdots g_1)]} (t-s);  \notag\\
& \qquad\qquad\qquad\qquad\qquad  \sigma (g_m \cdots g_1, x) - \log \| g_m \cdots g_1\| \leq - m \eta   \bigg] ds dt, 
\end{align*}
where in the last equality we use the definition of $H * \mathds 1_{[a', b']}$. 
By H\"older's inequality and Lemmas \ref{Lem-Holder-cocycle} and \ref{Lem-cocycle-norm-001}, 
there exist constants $\beta, c', c >0$ such that for any $m \geq 1$, 
\begin{align}\label{Bound-Am-Norm-003}
& \int_{\bb R} \sup_{x, x' \in \bb P(\bb V): x \neq x'} \frac{I_{12}(x, x', t)}{ d(x, x')^{\gamma} } dt \notag\\ 
& \leq  \|\kappa'\|_{\infty}     \int_{\bb R} \sup_{x, x' \in \bb P(\bb V): x \neq x'}  d(x, x')^{-\gamma}  \notag\\ 
& \quad \times \int_{\bb R} H(s) 
 \bb E^{1/3}  \left( \left| \sigma (g_m \cdots g_1, x) - \sigma (g_m \cdots g_1, x') \right|^3 \right)  \notag\\
& \quad \times \bb P^{1/3} \Big(  -a - 3 \log N(g_m \cdots g_1) \leq t-s \leq  a + 3 \log N(g_m \cdots g_1)  \Big) \notag\\
& \quad \times  \bb P^{1/3}  \Big( \sigma (g_m \cdots g_1, x) - \log \| g_m \cdots g_1\| \leq - m \eta   \Big) ds dt  \notag\\
& \leq c'  e^{- \frac{\beta}{3} m} 
  \int_{\bb R}  \int_{\bb R} H(s)  
   \bb P^{1/3} \Big(  -a - 3 \log N(g_m \cdots g_1) \leq t-s \leq  a + 3 \log N(g_m \cdots g_1)  \Big) 
  ds dt \notag\\
 & =  c'  e^{- \frac{\beta}{3} m}   \int_{\bb R} H(s) ds   
  \int_{\bb R} \bb P^{1/3} \Big( |u|  \leq  a + 3 \log N(g_m \cdots g_1)  \Big) du \notag\\
 & \leq  c e^{- \frac{\beta}{6} m}   \int_{\bb R} H(s) ds, 
\end{align}
where in the last inequality we have adapted the proof of \eqref{Chebyshev-log-Ng} and \eqref{decom-Integral-sup-003}. 

For the third term $I_{13}(x, x', t)$ (cf.\  \eqref{decomposi-I1-I123}),
proceeding in the same way as in the proof of the second term $I_{12}(x', x, t)$, 
one has 
\begin{align}\label{Bound-Am-Norm-003-sym}
\int_{\bb R} \sup_{x, x' \in \bb P(\bb V): x \neq x'} \frac{I_{13}(x, x', t)}{ d(x, x')^{\gamma} } dt
\leq c e^{- \frac{\beta}{6} m}   \int_{\bb R} H(s) ds. 
\end{align}
Combining \eqref{decomposi-I1-I123}, \eqref{Bound-Am-Norm-002},
\eqref{Bound-Am-Norm-003} and \eqref{Bound-Am-Norm-003-sym}, 
we get 
\begin{align}\label{bound-I1-x1-x2-first}
\int_{\bb R} \sup_{x, x' \in \bb P(\bb V): x \neq x'} \frac{I_{1}(x, x', t)}{ d(x, x')^{\gamma} } dt
\leq  c \| G\|_{ \scr H_{\gamma} }. 
\end{align}


Next, we handle the second term in the decomposition of the norm  $\|  A_{m}G \|_{\scr H_{\gamma}}$,
 denoted by $I_2(x,x',t)$ and defined as follows:  
for any $x,x' \in \bb P(\bb V)$ and $t \in \bb R$, 
\begin{align*}
 I_2(x,x',t) & : = \bigg|   \bb E  \bigg[  \overline\chi_{\ee}  \left( t- 2\ee + \min_{1 \leq  j \leq m}  \sigma (g_j \cdots g_1, x) \right)  \notag\\
 & \qquad \times  \bigg(  G *  \kappa \left( g_m \cdots g_1 x, t + \sigma (g_m \cdots g_1, x') \right)    \notag\\
 &  \qquad\qquad  -  G *  \kappa \left( g_m \cdots g_1 x', t + \sigma (g_m \cdots g_1, x') \right)  \bigg)  \bigg]  \bigg|. 
\end{align*}
As the function $G$ is in the space $\scr H_{\gamma}$, the function 
\begin{align*}
L(t) = \sup_{x, x' \in \bb P(\bb V): x \neq x'}   \frac{|G(x,t) - G(x',t)|}{ d(x, x')^{\gamma} }, \quad t \in \bb R 
\end{align*}
is integrable on $\bb R$.
Therefore, 
using the fact that $|\overline\chi_{\ee}| \leq 1$, 
we get that, for any $x,x' \in \bb P(\bb V)$ and $t \in \bb R$, 
\begin{align}\label{Bound_I2zzt}
 I_2(x,x',t) 
  \leq  d(x, x')^{\gamma}  \bb E   \Big( L * \kappa \left(  t + \sigma (g_m \cdots g_1, x')  \right) \Big).
\end{align}
By Lemma \ref{Lem-inte-H-sup}, there exists a constant $c>0$ such that for any $m \geq 1$,  
\begin{align}\label{Bound-Am-Norm-004}
 \int_{\bb R} \sup_{x, x' \in \bb P(\bb V): x \neq x'} \frac{I_{2}(x, x', t)}{ d(x, x')^{\gamma} } dt 
& \leq  \int_{\bb R} \sup_{x' \in \bb P(\bb V)}  \bb E \Big(  L * \kappa \left(  t + \sigma (g_m \cdots g_1, x')  \right)  \Big)  dt \notag\\ 
& \leq  c   \int_{\bb R} L(t) dt \leq c \| G\|_{ \scr H_{\gamma} }. 
\end{align}

Finally, we manage the third term in the decomposition of the norm  $\|  A_{m}G \|_{\scr H_{\gamma}}$,
which is denoted by $I_3(x,x',t)$ and defined by, for $x,x' \in \bb P(\bb V)$ and $t \in \bb R$,   
\begin{align*}
& I_3(x,x',t)  : = \bigg|   \bb E    
     G *  \kappa \Big( g_m \cdots g_1 x', t + \sigma (g_m \cdots g_1, x') \Big)   \notag\\
 &  \times
 \bigg[ \overline\chi_{\ee}  \Big( t- 2 \ee + \min_{1 \leq  k \leq m}   \sigma (g_k \cdots g_1, x) \Big) 
  -  \overline\chi_{\ee}  \Big( t-\ee + \min_{1 \leq  k \leq m}   \sigma (g_k \cdots g_1, x')  \Big)  \bigg]   \bigg|. 
\end{align*}
By \eqref{Def_chiee} and recalling that $\overline\chi_{\ee}(t) = 1 - \chi_{\ee}(t)$ for $t \in \bb R$, 
we have that $\overline\chi_{\ee}$ is $1/\ee$-Lipschitz continuous on $\bb R$. 
By reasoning in the same way as in \eqref{decom-Integral-sup-001}, we get
\begin{align}\label{Bound_I3zzt}
& I_3(x,x',t)  
 \leq \frac{c}{\ee}  \bb E  \Big[  H *  \kappa \Big(  t + \sigma (g_m \cdots g_1, x') \Big)  
 \max_{1 \leq k \leq m} \Big| \sigma (g_k \cdots g_1, x) - \sigma (g_k \cdots g_1, x') \Big|  \Big] \notag\\
 & = \frac{c}{\ee}  \sum_{j = 1}^m \bb E  \Big[  H *  \kappa \Big(  t + \sigma (g_m \cdots g_1, x') \Big)  
 \max_{1 \leq k \leq m} \Big| \sigma (g_k \cdots g_1, x) - \sigma (g_k \cdots g_1, x') \Big|;  B_{x', m, j} \Big] \notag\\
 & \quad + \frac{c}{\ee}   \bb E  \bigg[  H *  \kappa \Big(  t + \sigma (g_m \cdots g_1, x') \Big) 
 \max_{1 \leq k \leq m} \Big| \sigma (g_k \cdots g_1, x) - \sigma (g_k \cdots g_1, x') \Big|;  \notag\\
 & \qquad\qquad \sigma (g_m \cdots g_1, x') - \log\|g_m \cdots g_1\| \leq - m \eta \bigg]  \notag\\
&  =: I_{31}(x,x',t) + I_{32}(x,x',t), 
\end{align}
where $B_{x', m, j}$ is defined by \eqref{def-B-xmj}. 
For $I_{31}(x,x',t)$, using the facts that $\kappa$ has support in $[-a, a]$ and 
that $|\sigma (g_m \cdots g_1, x') - \log \| g_m \cdots g_1\| + j \eta | \leq \eta$ on the set $B_{x', m, j}$, 
we get 
\begin{align*}
 \Big| H *  \kappa \Big(  t + \sigma (g_m \cdots g_1, x') \Big)  \Big|  
 \leq  \|\kappa\|_{\infty} 
H *  \mathds 1_{[-a -\eta, a + \eta]} \Big(  t + \log\|g_m \cdots g_1\| - j \eta \Big), 
\end{align*}
so that 
\begin{align*}
& \int_{\bb R} \sup_{x, x' \in \bb P(\bb V): x \neq x'} \frac{I_{31}(x, x', t)}{ d(x, x')^{\gamma} } dt  \notag\\
& \leq \frac{c}{\ee} \|\kappa\|_{\infty} \sum_{j = 1}^m   \int_{\bb R} \sup_{x, x' \in \bb P(\bb V): x \neq x'} d(x, x')^{-\gamma}  
 \bb E  \bigg[  H *  \mathds 1_{[-a -\eta, a + \eta]} \Big(  t + \log\|g_m \cdots g_1\| - j \eta \Big)  \notag\\
& \qquad\qquad \times  \max_{1 \leq k \leq m}  \Big| \sigma (g_k \cdots g_1, x) - \sigma (g_k \cdots g_1, x') \Big|; B_{x', m, j} \bigg] dt \notag\\
& = \frac{c}{\ee} \|\kappa\|_{\infty} \sum_{j = 1}^m   \int_{\bb R} \sup_{x, x' \in \bb P(\bb V): x \neq x'} d(x, x')^{-\gamma}
 \bb E  \bigg[  H *  \mathds 1_{[-a -\eta, a + \eta]} \left(  t \right)  \notag\\
& \qquad \times \max_{1 \leq k \leq m} \Big| \sigma (g_k \cdots g_1, x) - \sigma (g_k \cdots g_1, x') \Big|; B_{x', m, j} \bigg] dt \notag\\
& = \frac{c}{\ee} 2 (a + \eta) \|\kappa\|_{\infty} \int_{\bb R} H(t) dt  \sum_{j = 1}^m \sup_{x, x' \in \bb P(\bb V): x \neq x'} d(x, x')^{-\gamma} \notag\\
& \qquad \times \bb E  \bigg(   \max_{1 \leq k \leq m} \Big| \sigma (g_k \cdots g_1, x) - \sigma (g_k \cdots g_1, x') \Big|; B_{x', m, j} \bigg), 
\end{align*}
where the first equality holds by using the change of variable $t' = t + \log\|g_m \cdots g_1\| - j \eta$. 
By the Cauchy-Schwarz inequality, Lemma \ref{Lem-Holder-cocycle} and \eqref{proba-B-xmj},  we conclude that
\begin{align}\label{Bound-Am-Norm-005}
\int_{\bb R} \sup_{x, x' \in \bb P(\bb V): x \neq x'} \frac{I_{31}(x, x', t)}{ d(x, x')^{\gamma} } dt
\leq  \frac{c}{\ee} \int_{\bb R} H(t) dt \leq  \frac{c}{\ee} \| G\|_{ \scr H_{\gamma} }. 
\end{align}
For $I_{32}(x,x',t)$ defined by \eqref{Bound_I3zzt}, note that, as the function $\kappa$ has support in $[-a, a]$
and $|\sigma (g_m \cdots g_1, x)| \leq \log N(g_m \cdots g_1)$, we have
\begin{align*}
H *  \kappa \Big(  t + \sigma (g_m \cdots g_1, x') \Big) 
\leq  H *  \mathds 1_{[-a - \log N(g_m \cdots g_1), a + \log N(g_m \cdots g_1)]}  (t). 
\end{align*}
Therefore, by H\"older's inequality, 
there exist constants $\beta, c, c' > 0$ such that for any $x,x' \in \bb P(\bb V)$ and $t \in \bb R$, 
\begin{align*}
& I_{32}(x,x',t)   \leq \frac{c}{\ee}  \|\kappa\|_{\infty}  
\bb E  \bigg[  H *  \mathds 1_{[-a - \log N(g_m \cdots g_1), a + \log N(g_m \cdots g_1)]}  (t) 
  \notag\\
 & \qquad\qquad\qquad\qquad\qquad \times \max_{1 \leq k \leq m} \Big| \sigma (g_k \cdots g_1, x) - \sigma (g_k \cdots g_1, x') \Big|;  \notag\\
 & \qquad\qquad\qquad\qquad\qquad\qquad  \sigma (g_m \cdots g_1, x') - \log\|g_m \cdots g_1\| \leq - m \eta \bigg]  \notag\\
  & = \frac{c}{\ee}  \|\kappa\|_{\infty}  \int_{\bb R}  H(s) \bb E  \bigg[  
\mathds 1_{[-a - \log N(g_m \cdots g_1), a + \log N(g_m \cdots g_1)]} \left(  t - s \right) 
  \notag\\
 & \qquad\qquad\qquad\qquad\qquad \times \max_{1 \leq k \leq m} \Big| \sigma (g_k \cdots g_1, x) - \sigma (g_k \cdots g_1, x') \Big|;   \notag\\
 & \qquad\qquad\qquad\qquad\qquad\qquad \sigma (g_m \cdots g_1, x') - \log\|g_m \cdots g_1\| \leq - m \eta \bigg] ds \notag\\
 & \leq \frac{c}{\ee}  \|\kappa\|_{\infty}  \int_{\bb R}  H(s) \bb P^{1/3}  \Big( |t - s| \leq  a + \log N(g_m \cdots g_1) \Big) ds 
  \notag\\
 &\quad \times \bb E^{1/3} \left( \max_{1 \leq k \leq m} \Big| \sigma (g_k \cdots g_1, x) - \sigma (g_k \cdots g_1, x') \Big|^3 \right) \\
&\quad \times \bb P^{1/3}  \Big( \sigma (g_m \cdots g_1, x') - \log\|g_m \cdots g_1\| \leq - m \eta \Big)  \notag\\
 & \leq  \frac{c'}{\ee} e^{-\frac{\beta}{3} m} d(x, x')^{\gamma} \int_{\bb R}  H(s) \bb P^{1/3}  \Big( |t - s| \leq  a + \log N(g_m \cdots g_1) \Big) ds,
\end{align*}
where in the last inequality we have used Lemmas \ref{Lem-Holder-cocycle} and \ref{Lem-cocycle-norm-001}. 
Consequently, 
\begin{align}\label{Bound-Am-Norm-006}
 \int_{\bb R} \sup_{x, x' \in \bb P(\bb V): x \neq x'} \frac{I_{32}(x, x', t)}{ d(x, x')^{\gamma} } dt 
& \leq  \frac{c}{\ee} e^{-\frac{\beta}{3} m} \int_{\bb R} \bb P^{1/3}  \Big( |u| \leq  a + \log N(g_m \cdots g_1) \Big) du \int_{\bb R} H(t) dt  \notag\\
& \leq  \frac{c}{\ee} \| G\|_{ \scr H_{\gamma} }, 
\end{align}
where the last inequality is obtained by using the same method as in the proof of
 \eqref{Chebyshev-log-Ng} and \eqref{decom-Integral-sup-003}.

Putting together \eqref{Bound-Am-Norm-001}, 
\eqref{bound-I1-x1-x2-first},  \eqref{Bound-Am-Norm-004}, \eqref{Bound-Am-Norm-005} 
 and \eqref{Bound-Am-Norm-006} yields the second inequality in \eqref{bounds-AmG-001}.  
 This completes of the proof of Proposition \ref{Lem_Inequality_Aoverline}. 
\end{proof}


\subsection{Conditioned central limit estimates}
In order to establish Theorem \ref{t-A 001}, 
we will make use of the following central limit type estimates for conditioned random walks.
We denote by $\Phi^+$ the standard Rayleigh distribution function:  
\begin{align*} 
 \Phi^+(t) = \left(1- e^{-t^2/2}\right) \mathds 1_{\{ t \geq 0 \}},  
\quad  t \in \bb R.  
\end{align*}
It is clear that $\Phi^+(t) = \int_{-\infty}^t \phi^+(u) du$ for $t \in \bb R$,
 where $\phi^+$ is defined by \eqref{Rayleigh law-001}. 

\begin{theorem}\label{Thm-CCLT-001}
Assume that $\Gamma_{\mu}$ is proximal and strongly irreducible, $\mu$ admits finite exponential moments
and that the Lyapunov exponent $\lambda_{\mu}$ is zero. 
Then, there exists a constant $\eta >0$ with the following properties. For any $\beta>0$, 
there exists a constant $c_{\beta} >0$ such that for any $n \geq 1$, $x \in \bb P(\bb V)$, $s \geq 0$ and $t \leq n^{1/2 - \beta}$, 
\begin{align}\label{CCLT-effective-version-001}
 \left| \bb P \left( \frac{t + \sigma (g_n \cdots g_1, x)}{\upsilon_{\mu} \sqrt{n}}  \leq  s,  \tau_{x, t} > n \right)
 -  \frac{2 V(x, t)}{ \sqrt{2 \pi n} \upsilon_{\mu} }  \Phi^+ (s)  \right| 
 \leq c_{\beta}  \frac{(1 + \max\{t, 0\})}{n^{1/2 + \eta} }. 
\end{align}
Moreover, there exists a constant $c >0$ such that for any $n \geq 1$, $x \in \bb P(\bb V)$ and $t \in \bb R$, 
\begin{align}\label{Upper-bound-tau-xt}
\bb P \left(  \tau_{x, t} > n \right) \leq c \frac{1 + \max\{t, 0\}}{\sqrt{n}}. 
\end{align}
\end{theorem}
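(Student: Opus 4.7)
The plan is to establish \eqref{CCLT-effective-version-001} by combining a martingale strong-approximation argument with the conditioned central limit theorem of \cite{GLP17} refined to provide an explicit rate. First, I would exploit the cocycle centering from \eqref{lyapunov exp is 0-002}: writing
\[
\sigma(g_n \cdots g_1, x) = M_n(x) + \psi_0(x) - \psi_0(g_n \cdots g_1 x),
\]
where $M_n(x)$ is a martingale with uniformly bounded and $\gamma$-Hölder increments after suitable truncation (using Lemma \ref{Lem-Holder-cocycle} and the exponential moment of $\mu$). The correction term $\psi_0(x) - \psi_0(g_n \cdots g_1 x)$ is bounded, which will produce only an $O(1)$ perturbation of the starting and ending levels, absorbable into the $(1+\max\{t,0\})$ factor.

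Second, I would apply a Komlós–Major–Tusnády-type coupling (or Skorokhod embedding for martingales with bounded increments) to construct, on a possibly enlarged probability space, a Brownian motion $(B_u)_{u \geq 0}$ with variance $\upsilon_\mu^2$ such that
\[
\max_{1 \le k \le n} |M_k - B_k| = O(\log n)
\]
with overwhelming probability. For Brownian motion, the joint law of $(B_n, \min_{u \le n} (t + B_u) \ge 0)$ is explicit via the reflection principle, yielding exactly the Rayleigh distribution $\Phi^+$ with the linear harmonic function $V^B(t) = t$. Comparing the exit events for the martingale with those for Brownian motion at an $O(\log n)$ tolerance introduces a $t \mapsto t \pm \log n$ shift, which can be controlled by Fuk-type concentration (Lemma \ref{Lem_Fuk}) and by the fact that the density $\phi^+$ is Lipschitz.

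Third, to replace the linear function $t$ by the true harmonic function $V(x,t)$, I would split the walk at an intermediate time $m = n^{1-2\eta}$. On the interval $[0,m]$, I apply the non-effective conditioned CLT from \cite{GLP17}, which, together with the harmonicity $QV = V$, implies
\[
\bb E\!\left[V(g_m \cdots g_1 x, t + \sigma(g_m \cdots g_1, x));\, \tau_{x,t} > m\right] = V(x,t),
\]
with the endpoint approximately distributed like $\upsilon_\mu \sqrt{m}$ times a Rayleigh variable. On the remaining interval $[m,n]$, the Brownian approximation gives the $\Phi^+$ factor. The resulting convolution, combined with the uniform bound $V(x,t) \le C(1 + \max\{t,0\})$ from \eqref{harm function GLPP-001}, produces the claimed error of order $n^{-1/2-\eta}(1+\max\{t,0\})$. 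The main obstacle is the quantitative matching of the two halves: one must show that the non-effective asymptotic on $[0,m]$ yields an $m^{-\eta'}$ quantitative error when paired with a Lipschitz test function of the endpoint, which requires a careful Edgeworth-type refinement of the GLP17 argument, accessible via the spectral gap of $P_z$ in Lemma \ref{Lem_Perturbation}.

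Finally, the uniform bound \eqref{Upper-bound-tau-xt} follows by taking $s \to \infty$ in \eqref{CCLT-effective-version-001} for $t \le n^{1/2-\beta}$, using $\Phi^+(\infty) = 1$ and the sublinear growth of $V$. For $t \in [n^{1/2-\beta}, \sqrt{n}\,]$ the estimate can be proved directly by bounding $\bb P(\tau_{x,t} > n)$ by the probability that the unconditioned walk stays in $[-t, \infty)$, itself handled by a Fuk-type maximal inequality (Lemma \ref{Lem_Fuk}) combined with the martingale decomposition. For $t \ge \sqrt n$, the bound is trivial since $c(1+t)/\sqrt{n} \ge 1 \ge \bb P(\tau_{x,t} > n)$.
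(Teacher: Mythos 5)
The paper's proof of this theorem is essentially a citation: equation \eqref{CCLT-effective-version-001} is deduced directly from Theorems~2.4 and 2.5 of~\cite{GLL20} (applicable here by~\cite{GLL24}), \eqref{Upper-bound-tau-xt} follows from Theorem~2.3 of~\cite{GLP17}, the extension to $t<0$ is noted to be routine, and the extension from $s\in[0,s_0]$ to $s\in\bb R_+$ is handled via the approach of~\cite{GX2022IID}. You, by contrast, rebuild the proof from first principles: martingale centering of the cocycle via~\eqref{lyapunov exp is 0-002}, strong approximation by Brownian motion, the Brownian reflection principle to identify $\Phi^+$, a time-split at $m=n^{1-2\eta}$ combining the conditioned CLT with the $Q$-harmonicity of $V$, and a quantitative refinement via the spectral gap. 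This is genuinely a different route, and it is in broad strokes the proof that actually appears in the cited references — so it is correct in strategy, and has the advantage of making the argument visible rather than imported.

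One technical point is wrong, however. You claim a Komlós--Major--Tusnády-type coupling with $\max_{1\le k\le n}|M_k-B_k|=O(\log n)$ with overwhelming probability. For the martingale $M_n(x)$ arising here, whose increments $\sigma_0(g_k,g_{k-1}\cdots g_1 x)$ are neither i.i.d.\ nor bounded (they have exponential moments, but are a functional of the Markov chain $g_{k-1}\cdots g_1 x$ on $\bb P(\bb V)$), the best available strong approximation rate is of order $n^{1/2-\varepsilon}$ for some small $\varepsilon>0$ — this is precisely the rate the paper itself assumes in~\eqref{KMTbound001} in Appendix~\ref{sec invar func}, and it is what is proved in~\cite{GLP17} (see their (6.5)) and in~\cite{GLL18Ann}. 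A logarithmic rate is not known in this dependent, unbounded setting, and your route does not supply one. Fortunately, the rest of your argument is not sensitive to this: using the rate $n^{1/2-\varepsilon}$ in place of $\log n$ costs you a worse but still positive $\eta$, since the Lipschitz comparison of exit events and the shift $t\mapsto t\pm n^{1/2-\varepsilon}$ still produce an error of order $n^{-1/2-\eta}(1+\max\{t,0\})$ after optimizing exponents. You should also be a bit more careful with the restriction of the uniformity in $s$: the comparison with Brownian motion gives uniformity on compacts for free, but proving uniformity over all $s\ge 0$ requires an additional tail estimate (this is exactly why the paper invokes~\cite{GX2022IID}), which your sketch folds silently into the Brownian step.

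Your argument for \eqref{Upper-bound-tau-xt} is sound and matches the known proof: for $t$ in the moderate-deviation range one takes $s\to\infty$ in~\eqref{CCLT-effective-version-001} using the linear growth of $V$, and for large $t$ the bound is trivial or follows from a maximal inequality; the only caution is to carry the $\psi_0$ correction through, which you already noted is bounded.
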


The asymptotic \eqref{CCLT-effective-version-001} can be obtained by Theorems 2.4 and 2.5 of \cite{GLL20}, 
which can be applied to our situation thanks to \cite{GLL24}. 
The bound \eqref{Upper-bound-tau-xt} follows from Theorem 2.3 of \cite{GLP17}. 
Note that these result are only stated for $t \geq 0$, but the case of $t<0$ can be dealt with in the same way.
In \cite{GLP17} the result is uniform over $s \in [0, s_0]$, for some constant $s_0>0$, but using the approach from
\cite{GX2022IID}, we can show that it is uniform in $s\in \bb R_+$. 


Actually, from Theorem \ref{Thm-CCLT-001} 
we will deduce a more general version
with target functions, which will be used in the sequel. 


\begin{corollary}\label{Cor-CCLT-target-function}
Assume that $\Gamma_{\mu}$ is proximal and strongly irreducible, $\mu$ admits finite exponential moments
and that the Lyapunov exponent $\lambda_{\mu}$ is zero. 
Then, there exists a constant $\eta >0$ with the following property. 
For any $\beta>0$, there exists constants $c_{\beta}>0$ 
such that, for any 
$n \geq 1$, $x \in \bb P(\bb V)$ and $t \leq n^{1/2 - \beta}$, 
and any smooth function $\varphi$ with integrable derivative on $\bb R$ vanishing at $\pm \infty$, 
\begin{align*}
&  \left| \bb E \left( \varphi \left( \frac{t + \sigma (g_n \cdots g_1, x)}{\upsilon_{\mu} \sqrt{n}} \right);   \tau_{x, t} > n \right)
 -  \frac{2 V(x, t)}{ \sqrt{2 \pi n} \upsilon_{\mu} }  \int_{\bb R_+}  \varphi(s) \phi^+ (s) ds  \right|   \notag\\
& \qquad\qquad\qquad\qquad\qquad\qquad\qquad\qquad
  \leq  \frac{c_{\beta}  (1 + \max\{t, 0\}) }{n^{1/2+\eta}} \int_{\bb R_+}  |\varphi'(s)| ds. 
\end{align*}
\end{corollary}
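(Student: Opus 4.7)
The plan is to deduce the corollary from Theorem \ref{Thm-CCLT-001} by a standard integration-by-parts device that converts a smooth test function against the conditioned distribution into an integral of its derivative against the complementary distribution function. Set
\[
Y_{n,x,t} = \frac{t + \sigma(g_n \cdots g_1, x)}{\upsilon_{\mu}\sqrt{n}},
\]
and observe that on the event $\{\tau_{x,t} > n\}$ one has $Y_{n,x,t} \geq 0$, so that the asymptotic \eqref{CCLT-effective-version-001} in fact holds uniformly for all $s \in \bb R$ (for $s<0$ both sides vanish trivially).

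First, I would pass from the distribution function to the tail. Sending $s \to \infty$ in \eqref{CCLT-effective-version-001} and using $\Phi^+(s) \to 1$, one obtains
\[
\left| \bb P(\tau_{x,t} > n) - \frac{2 V(x,t)}{\sqrt{2\pi n}\,\upsilon_{\mu}} \right| \leq \frac{c_{\beta}(1+\max\{t,0\})}{n^{1/2+\eta}},
\]
and subtracting \eqref{CCLT-effective-version-001} gives, uniformly in $s \in \bb R$,
\[
\bb P(Y_{n,x,t} > s,\, \tau_{x,t} > n) = \frac{2 V(x,t)}{\sqrt{2\pi n}\,\upsilon_{\mu}}\bigl(1 - \Phi^+(s)\bigr) + R_n(s),
\qquad |R_n(s)| \leq \frac{2c_{\beta}(1+\max\{t,0\})}{n^{1/2+\eta}}.
\]

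Next, since $\varphi$ is smooth with $\varphi(\pm\infty)=0$ and $\varphi' \in L^1(\bb R)$, one has the representation $\varphi(y) = \int_{\bb R} \varphi'(s) \mathds 1_{\{s < y\}}\, ds$. By Fubini's theorem,
\[
\bb E\Bigl(\varphi(Y_{n,x,t});\, \tau_{x,t} > n\Bigr)
= \int_{\bb R} \varphi'(s)\, \bb P\bigl(Y_{n,x,t} > s,\, \tau_{x,t} > n\bigr)\, ds.
\]
Substituting the previous display and integrating by parts in the main term, using $\varphi(\pm\infty)=0$ together with $1-\Phi^+(s) \to 0$ at $+\infty$, the boundary contributions vanish and
\[
\int_{\bb R} \varphi'(s)(1-\Phi^+(s))\, ds = \int_{\bb R} \varphi(s)\phi^+(s)\, ds = \int_{\bb R_+} \varphi(s)\phi^+(s)\, ds,
\]
the last equality because $\phi^+$ is supported on $\bb R_+$. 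The error term contributes at most $\|R_n\|_{\infty}\int_{\bb R}|\varphi'(s)|\,ds$, which is of the required form.

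Combining these two computations yields the claimed estimate with the same constant $\eta>0$ as in Theorem \ref{Thm-CCLT-001} and a suitable $c_{\beta}$. There is no real obstacle here: the only points requiring care are the uniformity of the tail estimate (handled by the trivial extension to $s<0$) and the vanishing of boundary terms in the integration by parts (guaranteed by the hypothesis that $\varphi$ vanishes at $\pm\infty$).
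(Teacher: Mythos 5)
Your argument follows essentially the same route as the paper: both combine the effective CDF estimate of Theorem \ref{Thm-CCLT-001} with an integration-by-parts (equivalently, Fubini) identity, and the only input of substance is \eqref{CCLT-effective-version-001}. The paper writes the difference directly as $-\int_0^\infty[\,\cdot\,]\varphi'(s)\,ds$ (using that $Y_{n,x,t}\geq 0$ on $\{\tau_{x,t}>n\}$), whereas you pass to the tail $\bb P(Y_{n,x,t}>s,\tau_{x,t}>n)$ and integrate over all of $\bb R$; these are the same calculation.

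One small discrepancy: your final error bound is $\|R_n\|_\infty\int_{\bb R}|\varphi'(s)|\,ds$, but the corollary asserts the bound with $\int_{\bb R_+}|\varphi'(s)|\,ds$, which for general $\varphi$ can be strictly smaller, so as written you prove a slightly weaker statement. The fix is immediate from your own decomposition: for $s<0$ the remainder $R_n(s)$ equals the constant $C_n:=\bb P(\tau_{x,t}>n)-\frac{2V(x,t)}{\sqrt{2\pi n}\,\upsilon_\mu}$, so
\[
\int_{-\infty}^0\varphi'(s)R_n(s)\,ds
= C_n\,\varphi(0)
= -C_n\int_0^\infty\varphi'(s)\,ds,
\]
where the last equality uses $\varphi(+\infty)=0$. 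Therefore $\bigl|\int_{\bb R}\varphi'(s)R_n(s)\,ds\bigr|\leq 2\,\|R_n\|_\infty\int_{\bb R_+}|\varphi'(s)|\,ds$, and the stated form is recovered. With that one-line observation your proof is complete and matches the paper's.
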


\begin{proof}
By integration by parts and the fact that $\varphi(s) \to 0$ as $s \to \infty$, we derive that for any 
$n \geq 1$, $x \in \bb P(\bb V)$ and $t \in \bb R$, 
\begin{align*}
& \bb E \left( \varphi \left( \frac{t + \sigma (g_n \cdots g_1, x)}{\upsilon_{\mu} \sqrt{n}} \right);   \tau_{x, t} > n \right)
 - \frac{2 V(x, t)}{ \sqrt{2 \pi n} \upsilon_{\mu} }  \int_{\bb R_+}  \varphi(s) \phi^+ (s) ds  \notag\\
& = - \int_{0}^{\infty}  \left[ \bb P \left( \frac{t + \sigma (g_n \cdots g_1, x)}{\upsilon_{\mu} \sqrt{n}}  \leq  s,  \tau_{x, t} > n \right) 
- \frac{2 V(x, t)}{ \sqrt{2 \pi n} \upsilon_{\mu} }  \Phi^+ (s)  \right] \varphi'(s) ds. 
\end{align*}
By Theorem \ref{Thm-CCLT-001}, 
for any $\beta>0$, 
there exists a constant $c_{\beta} >0$ such that for any $n \geq 1$, $x \in \bb P(\bb V)$ and $t \leq n^{1/2 - \beta}$, 
\begin{align*}
\Bigg| \int_{0}^{\infty} &  \Bigg[ \bb P \left( \frac{t + \sigma (g_n \cdots g_1, x)}{\upsilon_{\mu} \sqrt{n}}  \leq  s,   \tau_{x, t} > n \right) 
- \frac{2 V(x, t)}{ \sqrt{2 \pi n} \upsilon_{\mu} }  \Phi^+ (s)  \Bigg] \varphi'(s) ds \Bigg| \\
& \qquad\qquad\qquad\qquad\qquad\qquad\qquad \leq 
\frac{c_{\beta} (1 + \max\{t, 0\})}{n^{1/2+\eta}}  \int_{0}^{\infty}  |\varphi'(s)| ds. 
\end{align*}
This concludes the proof of the corollary. 
\end{proof}


\subsection{Proof of Theorem \ref{t-A 001}: upper bound} 
To goal of this subsetion is to establish the inequality \eqref{eqt-A 001} in Theorem \ref{t-A 001}. 
It is enough to prove \eqref{eqt-A 001} only for sufficiently large $n > n_0 := n_0(\ee)$, 
where $n_0(\ee)$ depends on $\ee$; 
otherwise, the bound becomes trivial.

Let $\ee\in (0,\frac{1}{8})$. With $\delta = \sqrt{\ee}$, set
$m=\left[ \delta n \right]$ and $k = n-m$, where $\left[ \delta n \right]$ denotes the integrer part of $\delta n$. 
Note that $\frac{1}{2}\delta \leq \frac{m}{k} \leq \frac{\delta}{1-\delta}$ for $n \geq \frac{2}{\sqrt{\ee}}$. 
Denote, for $n \geq 1$, $x \in \bb P(\bb V)$ and $t \in \bb R$,  
\begin{align}\label{def-Psi-x-t-001}
\Psi_n (x,t) = \bb E \Big[  F \Big( g_n \cdots g_1 x, t + \sigma (g_n \cdots g_1, x) \Big);  \tau_{x, t} > n   \Big], 
\end{align}
where $F$ is a non-negative measurable function on $\bb P(\bb V) \times \bb R$. 
Applying the Markov property gives that for any $x \in \bb P(\bb V)$ and $t \in \bb R$, 
\begin{align}\label{JJJ-markov property}
\Psi_n (x,t) = \bb E \Big[ \Psi_m \Big( g_k \cdots g_1 x, t + \sigma (g_k \cdots g_1, x) \Big);  \tau_{x, t} > k  \Big]. 
\end{align}
By bounding the indicator function $\mathds 1_{ \{\tau_{x, t} > n\} }$ 
by $\mathds 1_{ \left\{ t + \sigma (g_m \cdots g_1, x) \geq 0 \right\}} $ 
in the definition of $\Psi_m$, we get
\begin{align}\label{JJJJJ-1111-000}
\Psi_m (x,t) 
  & \leq  \bb E \Big[ F \Big( g_m \cdots g_1 x, t + \sigma (g_m \cdots g_1, x) \Big); t + \sigma (g_m \cdots g_1, x) \geq 0  \Big]   \notag\\
 & =: J_{m}(x,t). 
\end{align}
Let $G_{\ee} (x, t) = G (x, t) \chi_{\ee} (t - \ee)$ for $x \in \bb P(\bb V)$ and $t \in \bb R$, 
where $G$ is a non-negative measurable function on $\bb P(\bb V) \times \bb R$,
$\ee \in (0, \frac{1}{8})$ and $\chi_\ee$ is defined in \eqref{Def_chiee}. 
By \eqref{def-norm-H-gamma}, it is straightforward to verify that $\| G_\ee \|_{\nu \otimes \Leb} \leq \| G \|_{\nu \otimes \Leb}$ 
and  $ \| G_\ee \|_{\scr H_{\gamma}} \leq \| G \|_{\scr H_{\gamma}}$. 
Given the assumption $F \leq_{\ee} G$, it follows that $F \mathds 1_{[0, \infty)} \leq_{\ee} G_{\ee}$. 
Consequently, applying the effecitive version of the local limit theorem (cf.\  \eqref{LLT_Upper_aa} of Theorem \ref{Lem_LLT_Nonasm}), 
we obtain that there exist constants $c, c_{\ee} >0$ such that, 
for any $m \geq 1$, $x \in \bb P(\bb V)$ and $t \in \bb R$, 
\begin{align} \label{JJJJJ-1111-001}
 J_{m}(x, t) 
& \leq H_m(t) +  \frac{ c\ee }{\sqrt{m}} \| G_\ee \|_{\nu \otimes \Leb}   
+ \frac{c_{\ee}}{m}  \| G_\ee  \|_{\scr H_{\gamma}}  \notag\\
& \leq H_m(t) +  \frac{ c\ee }{\sqrt{m}} \| G\|_{\nu \otimes \Leb}   
+ \frac{c_{\ee}}{m}  \| G \|_{\scr H_{\gamma}}, 
\end{align}
where for brevity we set, for $m \geq 1$ and $t \in \bb R$, 
\begin{align}\label{JJJ005}
H_m(t) = \int_{\bb P(\bb V)}  \int_{\bb R}    
  G_\ee (x', u)  \frac{1}{\upsilon_{\mu} \sqrt{m}} \phi \bigg( \frac{u - t}{\upsilon_{\mu} \sqrt{m}} \bigg)  du  \nu(dx'). 
\end{align}
From \eqref{JJJ-markov property}, \eqref{JJJJJ-1111-000} and \eqref{JJJJJ-1111-001}, it follows that 
\begin{align*}
& \Psi_m \Big( g_k \cdots g_1 x, t + \sigma (g_k \cdots g_1, x) \Big)
 \leq  J_m \Big( g_k \cdots g_1 x, t + \sigma (g_k \cdots g_1, x) \Big)  \notag\\
& \leq  H_m \Big( t + \sigma (g_k \cdots g_1, x) \Big) +  \frac{ c\ee }{\sqrt{m}} \| G\|_{\nu \otimes \Leb}   
+ \frac{c_{\ee}}{m}  \| G \|_{\scr H_{\gamma}}. 
\end{align*}
Substituting this into \eqref{JJJ-markov property} and using the bound \eqref{Upper-bound-tau-xt}, 
we derive that, there exist constants $c, c_{\ee} >0$ such that 
for any $n \geq n_0$, $x \in \bb P(\bb V)$ and $t \in \bb R$, 
\begin{align} \label{JJJ004}
& \Psi_n (x,t)
  \leq  \bb E \Big[   H_m \Big( t + \sigma (g_k \cdots g_1, x)  \Big);  \tau_{x, t} > k  \Big]  \notag \\  
& \qquad\quad +   \frac{c\ee (1 + \max\{t, 0\})}{\sqrt{km}}  \| G \|_{\nu \otimes \Leb}
     +  \frac{c_{\ee} (1 + \max\{t, 0\}) }{m \sqrt{k} }  \| G \|_{\scr H_{\gamma}}. 
\end{align}
Now we apply the conditioned central limit theorem with target functions (Corollary \ref{Cor-CCLT-target-function})
to handle the first term on the right-hand side of \eqref{JJJ004}. 
Denote $L_m(s) = H_m(\upsilon_{\mu} \sqrt{k} s)$ for $s \in \mathbb R$. 
By \eqref{JJJ005} and the change of variable $u = \upsilon_{\mu} \sqrt{k} u'$, we have 
\begin{align} \label{JJJ006}
L_m(s) 
& = \int_{\bb P(\bb V)}  \int_{\bb R}    
  G_\ee (x', u)  \frac{1}{\upsilon_{\mu} \sqrt{m}} 
   \phi \bigg( \frac{ \upsilon_{\mu} \sqrt{k} s - u}{\upsilon_{\mu} \sqrt{m}} \bigg)  du  \nu(dx')  \notag\\
& =  \int_{\bb P(\bb V)}  \int_{\bb R}  G_\ee \left(x', \upsilon_{\mu} \sqrt{k} u' \right)
 \frac{1}{ \sqrt{m/k}} \phi \bigg( \frac{s - u'}{ \sqrt{m/k}} \bigg)  du'  \nu(dx').
\end{align}
Since the function $s \mapsto L_m(s)$ is differentiable on $\bb R$ and vanishes as $s \to \pm \infty$, 
we are able to apply Corollary \ref{Cor-CCLT-target-function} to provide an upper bound
for the first term on the right-hand side of \eqref{JJJ004}. 
Specifically, there exists a constant $\eta>0$ with the following property.
For any $\beta>0$, there exists a constant $c_{\beta} >0$ such that, 
for any $n \geq n_0$, $x \in \bb P(\bb V)$ and $t \leq n^{1/2 - \beta}$, 
\begin{align} \label{ApplCondLT-001}
&  \bb E \Big[  H_m \Big( t + \sigma (g_k \cdots g_1, x) \Big);  \tau_{x, t} > k  \Big] 
 =  \bb E \bigg[ L_m \bigg( \frac{t + \sigma (g_k \cdots g_1, x)}{ \upsilon_{\mu} \sqrt{k}} \bigg);  \tau_{x, t} > k   \bigg]   \notag \\
& \leq  \frac{2 V(x, t)}{ \sqrt{2 \pi k} \upsilon_{\mu} }  \int_{\bb R_+}  L_m(s) \phi^+ (s) ds  
 +   \frac{c_{\beta} (1 + \max\{t, 0\})}{k^{1/2+\eta}} \int_{\bb R_+}  |L_m'(s)| ds. 
 \end{align}
where $\phi^+$ is the standard Rayleigh density function defined by \eqref{Rayleigh law-001}.
The last term on the right-hand side of \eqref{ApplCondLT-001} can be bounded as follows: 
by \eqref{JJJ006}, Fubini's theorem and the change of variables $u' = \frac{u}{\sqrt{m/k}}$ and $s' = \frac{s}{\sqrt{m/k}}$, 
there exist constants $c, c'>0$ such that for any $m \geq 1$, 
\begin{align} \label{JJJ-20001}
 \int_{\mathbb{R}_{+}}  | L'_m(s) | ds  
& =  \int_{\bb P(\bb V)}  \int_{\bb R_+} 
\bigg[ \int_{\mathbb R}   G_\ee \bigg(x', \upsilon_{\mu} \sqrt{m} \frac{u}{\sqrt{m/k}} \bigg) 
   \phi' \bigg( \frac{s - u}{\sqrt{m/k}} \bigg)
  \frac{du}{\sqrt{m/k}} \bigg] \frac{ds}{\sqrt{m/k}}  \nu(dx') \notag\\ 
& =   \int_{\bb P(\bb V)}  \int_{\bb R}  \bigg[ \int_{\bb R_+}
  G_\ee \left(x',  \upsilon_{\mu} \sqrt{m} u' \right)     
  \phi'\left( s' - u' \right)  ds'  \bigg]
  du' \nu(dx')  \notag\\ 
& \leq c  \int_{\bb P(\bb V)}  \int_{\bb R}  G_\ee \left(x',  \upsilon_{\mu} \sqrt{m} u' \right)  du' \nu(dx')  \notag\\
& = \frac{c}{\upsilon_{\mu} \sqrt{m}}  \| G_\ee \|_{\nu \otimes \Leb}
\leq \frac{c'}{\sqrt{m}}  \| G \|_{\nu \otimes \Leb}, 
\end{align}
where in the last inequality we used $\| G_\ee \|_{\nu \otimes \Leb} \leq \| G \|_{\nu \otimes \Leb}$. 
For the first term on the right-hand side of \eqref{ApplCondLT-001}, 
by the change of variable $s = \frac{s'}{\sqrt{k}}$ and the fact that $L_m(s) = H_m(\upsilon_{\mu} \sqrt{k} s)$, we get 
\begin{align}\label{def-I-mk-Lm}
I_{m,k} 
: = \frac{1}{\sqrt{k}}  \int_{\bb R_+}  L_m(s) \phi^+ (s) ds  
=  \int_{\mathbb{R}_{+}}  H_m(\upsilon_{\mu} s) \phi^+ \bigg( \frac{s}{\sqrt{k}} \bigg) \frac{ds}{k}.
\end{align}
Substituting \eqref{def-I-mk-Lm} and \eqref{JJJ-20001} into \eqref{ApplCondLT-001}, 
we obtain that there exist constants $c_{\beta}, \eta>0$ such that, 
for any $n \geq n_0$, $x \in \bb P(\bb V)$ and $t \leq n^{1/2 - \beta}$, 
\begin{align} \label{Integ_3}
 \bb E \Big[  H_m \Big( t + \sigma (g_k \cdots g_1, x) \Big);  \tau_{x, t} > k  \Big]  
 \leq \frac{2 V(x, t)}{\sqrt{2\pi } \upsilon_{\mu} }   I_{m,k}  
  +  \frac{c_{\beta} (1 + \max\{t, 0\})}{ k^{\eta} \sqrt{km} }  \| G \|_{\nu \otimes \Leb}.  
\end{align}
By implementing this bound into \eqref{JJJ004}, 
we find that there exist constants $c_{\beta}, c_{\ee}, \eta >0$ such that 
for any $n \geq n_0$, $x \in \bb P(\bb V)$ and $t \leq n^{1/2 - \beta}$, 
\begin{align}\label{JJJ201aaa}
\Psi_n (x,t) 
& \leq  \frac{2 V(x, t)}{\sqrt{2\pi } \upsilon_{\mu} }  I_{m,k}  
  +   \frac{ c_{\beta}(\ee + k^{-\eta}) (1 + \max\{t, 0\}) }{\sqrt{k m }}  \| G \|_{\nu \otimes \Leb}   \notag\\
 & \quad  +  \frac{c_{\ee} (1 + \max\{t, 0\}) }{m\sqrt{k} }  \| G \|_{\scr H_{\gamma}}, 
\end{align}
where $I_{m,k}$ is defined in \eqref{def-I-mk-Lm}. 
Next, we will derive an upper bound for $I_{m,k}$ by using the convolution technique along with Lemma \ref{t-Aux lemma}. 
By the definition of $H_m$ (cf.\ \eqref{JJJ005}) and Fubini's theorem, we get 
\begin{align*}
I_{m,k} 
& =  \int_{\mathbb{R}_{+}}  
\bigg[ \int_{\bb P(\bb V)}  \int_{\bb R}    
  G_\ee \left(x', u\right)  \frac{1}{\upsilon_{\mu} \sqrt{m}} 
   \phi \bigg( \frac{u - \upsilon_{\mu} s}{\upsilon_{\mu} \sqrt{m}} \bigg)  du \, \nu(dx') \bigg] 
  \phi^+ \bigg( \frac{s}{\sqrt{k}} \bigg) \frac{ds}{k} \notag\\
& =  \int_{\mathbb{R}_{+}} \bigg[ \int_{\bb P(\bb V)}  \int_{\bb R}  
  G_\ee \left(x', \upsilon_{\mu} u\right)  \phi_{\sqrt{m}} (u - s)    du \,  \nu(dx') 
   \bigg]  \phi^+ \bigg( \frac{s}{\sqrt{k}} \bigg)  \frac{ds}{k} \nonumber \\
& =  \int_{\bb P(\bb V)}  \int_{\mathbb R}  G_\ee (x', \upsilon_{\mu} u)
\bigg[ \int_{\mathbb{R}_{+}}  \phi_{\sqrt{m}} (u - s)
       \phi^+ \left( \frac{s}{\sqrt{k}} \right)  \frac{ds}{k} \bigg] du \,  \nu(dx'), 
\end{align*}
where $\phi_{\sqrt{m}}$ is the normal density with variance $\sqrt{m}$, defined by \eqref{def-normal-density-v}. 
Denote $\delta_n = \frac{m}{n} = \frac{[\delta n]}{n}$. 
By a change of variable, it follows that 
\begin{align}\label{Equality-Imk-abc}
I_{m,k} &= \frac{1}{\sqrt{n}} \int_{\bb P(\bb V)}  \int_{\mathbb R}  G_\ee (x', \upsilon_{\mu} \sqrt{n} u)
\bigg[ \int_{\mathbb{R}_{+}}
  \phi_{\delta_n} (u - s)
\phi^+_{1 - \delta_n} \left( s \right)  ds \bigg] du \,  \nu(dx')  \notag\\
& =  \frac{1}{\sqrt{n}}  \int_{\bb P(\bb V)}  \int_{\mathbb R}  G_\ee (x', \upsilon_{\mu} \sqrt{n} u)
\phi_{\delta_n}*\phi_{1-\delta_n}^+(u) du \,  \nu(dx')   \notag\\
& = \frac{ 1 }{\upsilon_{\mu} n} 
  \int_{\bb P(\bb V)}  \int_{\mathbb R}  G_\ee (x', u)
\phi_{\delta_n}*\phi_{1-\delta_n}^+ \bigg( \frac{u}{\upsilon_{\mu} \sqrt{n}} \bigg)  du \,  \nu(dx')  \notag\\  
& = \frac{ 1 }{\upsilon_{\mu} n} 
  \int_{\bb P(\bb V)}  \int_{-\ee}^\infty  G_\ee (x', u)
\phi_{\delta_n}*\phi_{1-\delta_n}^+ \bigg( \frac{u}{\upsilon_{\mu} \sqrt{n}} \bigg)  du \,  \nu(dx'),  
\end{align}
where in the last equality we used the fact that $G_{\ee}(x', u)=0$ for any $x' \in \bb P(\bb V)$ and $u\leq -\ee$,
since $G_{\ee} (x, u) = G (x, u) \chi_{\ee} (u - \ee)$.
Now, we handle the convolution $\phi_{\delta_n}*\phi_{1-\delta_n}^+$ 
using Lemma \ref{t-Aux lemma}  together with the fact that $\delta_n=\frac{m}{n}$, $1 - \delta_n = \frac{k}{n}$
and $u \geq -\ee$:
\begin{align*} 
 \phi_{\delta_n} * \phi^+_{1- \delta_n} \bigg( \frac{u}{\upsilon_{\mu} \sqrt{n}} \bigg)  
&\leq  \sqrt{1-\delta_n} \phi^+ \bigg( \frac{u}{\upsilon_{\mu} \sqrt{n}} \bigg) +  \sqrt{\delta_n}  e^{ -\frac{u^2}{2 \upsilon_{\mu}^2 n \delta_n} } 
+  \bigg| \frac{u}{\upsilon_{\mu} \sqrt{n}} \bigg| e^{- \frac{u^2}{2 \upsilon_{\mu}^2 n}} \mathds 1_{\{ u < 0\}} \\
&=  \sqrt{\frac{k}{n}} \phi^+ \bigg( \frac{u}{\upsilon_{\mu} \sqrt{n}} \bigg) +  \sqrt{\frac{m}{n}}  e^{ -\frac{u^2}{2 \upsilon_{\mu}^2 m} } 
 +  \bigg| \frac{u}{\upsilon_{\mu} \sqrt{n}} \bigg| e^{- \frac{u^2}{2 \upsilon_{\mu}^2 n}} \mathds 1_{\{ u < 0\}} \\ 
&\leq  \sqrt{\frac{k}{n}} \phi^+ \bigg( \frac{u}{\upsilon_{\mu} \sqrt{n}} \bigg) 
 +  \sqrt{\frac{m}{n}} + \frac{\ee}{\upsilon_{\mu} \sqrt{n}} \mathds 1_{\{ u < 0\}}. 
\end{align*}
Substituting this into \eqref{Equality-Imk-abc}, and using the fact that $G_{\ee} \leq G$ and $\phi^+(u) = 0$ for $u \leq 0$, 
we arrive at  
\begin{align}\label{final-bound-I-mk}
I_{m,k} &  \leq   \frac{ \sqrt{k} }{\upsilon_{\mu} n^{3/2}}   \int_{\bb P(\bb V)}  \int_{\mathbb R_+}  G(x', u)
\phi^+ \bigg( \frac{u}{\upsilon_{\mu} \sqrt{n}} \bigg) du \,  \nu(dx')   \notag\\
& \quad  +   \frac{\sqrt{m}}{ \upsilon_{\mu} n^{3/2}} 
  \int_{\bb P(\bb V)}  \int_{\mathbb R}  G(x', u)  du \,  \nu(dx')  \notag\\
& \quad   +    
  \frac{\ee}{ \upsilon_{\mu} n^{3/2}}  \int_{\bb P(\bb V)}  \int_{-\infty}^0  G(x', u)  du \nu(dx')   
  \notag\\
 & \leq   \frac{ \sqrt{k} }{\upsilon_{\mu} n^{3/2}}
  \int_{\bb P(\bb V)}  \int_{\mathbb R_+}  G(x', u) \phi^+ \bigg( \frac{u}{\upsilon_{\mu} \sqrt{n}} \bigg) du \nu(dx')
  +  \frac{c \ee^{1/4}}{ n}  \| G \|_{\nu \otimes \Leb},
\end{align}
where in the last inequality we used the fact that $\sqrt{\frac{m}{n}}  = \sqrt{ \frac{ [\ee^{1/2} n] }{ n } } \leq \ee^{1/4}$. 
Note that it is shown in \cite[Theorem 2.2]{GLP17} that,
there exists a constant $c>0$ such that $V(x, t) \leq c (1 + \max\{t, 0\})$ for any $x \in \bb P(\bb V)$ and $t \in \bb R$. 
Using this inequality and substituting the bound \eqref{final-bound-I-mk}
 into \eqref{JJJ201aaa}, we obtain that 
 there exist constants $c_{\beta}, c_{\ee}, \eta >0$ such that,
 for any $n \geq n_0$, $x \in \bb P(\bb V)$ and $t \in \bb R$,  
\begin{align*}
 \Psi_n (x,t) 
& \leq  \frac{2 V(x, t)}{\sqrt{2\pi} \upsilon_{\mu}^2 }  \frac{ \sqrt{k} }{n^{3/2}}
  \int_{\bb P(\bb V)}  \int_{\mathbb R_+}  G(x', u)  \phi^+ \bigg( \frac{u}{\upsilon_{\mu} \sqrt{n}} \bigg) du \nu(dx')   \notag\\
& \quad + c_{\beta} (1 + \max\{t, 0\}) \bigg(  \frac{ \ee + k^{-\eta}}{ \sqrt{km} }  + \frac{\ee^{1/4}}{n} \bigg)   \| G \|_{\nu \otimes \Leb}  
  +  \frac{c_{\ee} (1 + \max\{t, 0\})}{ m \sqrt{k} }  \| G \|_{\scr H_{\gamma}}  \notag\\
& \leq  \frac{2 V(x, t)}{\sqrt{2\pi} \upsilon_{\mu}^2 n}  
  \int_{\bb P(\bb V)}  \int_{\mathbb R_+}  G(x', u)  \phi^+ \bigg( \frac{u}{\upsilon_{\mu} \sqrt{n}} \bigg) du \nu(dx')   \notag\\
& \quad +  \frac{c_{\beta} (1 + \max\{t, 0\}) }{n}  \left( \ee^{1/4}  +   \ee^{-1/4} n^{-\eta} \right)    \| G \|_{\nu \otimes \Leb}  
    +  \frac{c_{\ee} (1 + \max\{t, 0\}) }{ n^{3/2} }  \| G \|_{\scr H_{\gamma}}, 
\end{align*}
where in the last inequality we used the fact that 
$\ee^{1/2} n \geq  m\geq \frac{1}{2}\ee^{1/2} n$ and $n > k\geq \frac{1}{2}n$. 
This finishes the proof of the upper bound \eqref{eqt-A 001}. 

\subsection{Proof of Corollary \ref{Cor-Cara-Bound}}
We can employ the same argument as in the proof of the upper bound \eqref{eqt-A 001},
so we only outline the main differences. 
Since $\phi$ is bounded, by the inequality $\| G_\ee \|_{\nu \otimes \Leb} \leq \| G \|_{\nu \otimes \Leb}$,
 the function $H_m$ defined in \eqref{JJJ005} is dominated by $\frac{ c}{\sqrt{m}} \| G \|_{\nu \otimes \Leb}$. 
Therefore, 
 Corollary \ref{Cor-Cara-Bound} is obtained from the inequalities \eqref{Upper-bound-tau-xt} and \eqref{JJJ004}, 
 by following the same steps as for the bound \eqref{eqt-A 001}. 


\subsection{Proof of Theorem \ref{t-A 001}: lower bound}
We now proceed to establish the second assertion \eqref{eqt-A 002} of Theorem \ref{t-A 001}. 
This proof is more intricate than that of the upper bound \eqref{eqt-A 001}
and will rely heavily on the inequalities established in Proposition \ref{Lem_Inequality_Aoverline}. 
We shall use the same notation as that in the proof of the upper bound. 
Recall that $\ee\in (0,\frac{1}{8})$, $\delta=\sqrt{\ee}$, $m=[\delta n]$ and $k=n-m$. 
It suffices to prove \eqref{eqt-A 002} only for sufficiently large $n > n_0 := n_0(\ee)$, 
where $n_0(\ee)$ depends on $\ee$; 
otherwise, the bound becomes trivial. 
As stated in \eqref{JJJ-markov property}, by the Markov property, we have that for any $x \in \bb P(\bb V)$ and $t \in \bb R$, 
\begin{align}\label{KKK-markov property} 
\Psi_n (x,t) = \bb E \Big[ \Psi_m \Big( g_k \cdots g_1 x, t + \sigma (g_k \cdots g_1, x) \Big);  \tau_{x, t} > k  \Big], 
\end{align}
where we have denoted, for $m \geq 1$, $x \in \bb P(\bb V)$ and $t \in \bb R$,
\begin{align*}
\Psi_m (x,t) = \bb E \Big[  F \Big( g_m \cdots g_1 x, t + \sigma (g_m \cdots g_1, x) \Big);  \tau_{x, t} > m   \Big]. 
\end{align*} 
To provide a lower bound for $\Psi_m \left( g_k \cdots g_1 x, t + \sigma (g_k \cdots g_1, x) \right)$ 
in \eqref{KKK-markov property}, our strategy is to 
decompose the function $\Psi_m$ into two terms as follows: 
for any $m \geq 1$, $x \in \bb P(\bb V)$ and $t \in \bb R$, 
\begin{align}\label{KKK-1111-000}
\Psi_m (x,t)  = A_{m} (x,t) - \overline A_{m} (x,t), 
\end{align}
where 
\begin{align}
A_{m} (x,t)   &  =  \bb E \Big[  F \Big( g_m \cdots g_1 x, t + \sigma (g_m \cdots g_1, x) \Big)   \Big], \label{decompos-psi001}  \\
\overline A_{m} (x,t)  
& =  \bb E \Big[  F \Big( g_m \cdots g_1 x, t + \sigma (g_m \cdots g_1, x) \Big);  \tau_{x, t} \leq m   \Big]. \label{decompos-psi002}
\end{align}
Substituting \eqref{KKK-1111-000} into \eqref{KKK-markov property} implies that,
for any $m \geq 1$, $x \in \bb P(\bb V)$ and $t \in \bb R$, 
\begin{align} \label{Psi_nee_Decompo}
 \Psi_n(x,t)= J_n(x,t)-K_n(x,t), 
\end{align}
where  
\begin{align}
J_n(x,t)& := \bb E \Big[  A_{m} \Big( g_k \cdots g_1 x, t + \sigma (g_k \cdots g_1, x) \Big);  \tau_{x, t} > k  \Big], 
 \label{Psi_nee_Decompo001} \\
K_n(x,t)&:=  \bb E \Big[ \overline A_{m} \Big( g_k \cdots g_1 x, t + \sigma (g_k \cdots g_1, x) \Big); \tau_{x, t} > k  \Big].  \label{Psi_nee_Decompo002}
\end{align}
Below, we will demonstrate that $J_n(x,t)$ is the dominant term, while $K_n(x,t)$ is negligible in comparison. 

We proceed to utilize the effective local limit theorem (Theorem \ref{Lem_LLT_Nonasm}) 
in conjunction with the conditioned central limit theorem (Corollary \ref{Cor-CCLT-target-function}) 
to give a lower bound for the first term $J_n(x,t)$ in \eqref{Psi_nee_Decompo}.
While this can be approached similarly to the case of the upper bound, 
the situation here is more complex. According to the local limit theorem 
(see the lower bound \eqref{LLT_Lower_aa} of Theorem \ref{Lem_LLT_Nonasm}), 
there exist constants $c, c_{\ee} >0$ such that 
for any  $m \geq 1$, $x \in \bb P(\bb V)$ and $t \in \bb R$, 
and any non-negative measurable functions $F, G, H: \bb P(\bb V) \times \bb R \to \bb R$ satisfying $H \leq_{\ee} F \leq_{\ee} G$, 
\begin{align} \label{JJJJJ-1111-001B}
 A_{m}(x, t) 
 \geq  B_m(t)
     -   \frac{c \ee}{\sqrt{m}}  \| G \|_{\nu \otimes \Leb}   
       -  \frac{c_{\ee}}{ m }   \left(  \| G \|_{\scr H_{\gamma}}   +  \| H \|_{\scr H_{\gamma}} \right), 
\end{align}
where for brevity we set, for $m \geq 1$ and $t \in \bb R$, 
\begin{align}\label{JJJ005B}
B_m(t) = \int_{\bb P(\bb V)}  \int_{\bb R} H(x', u)
 \frac{1}{\upsilon_{\mu} \sqrt{m}} \phi \bigg( \frac{u - t}{\upsilon_{\mu} \sqrt{m}} \bigg)  
    du \,  \nu(dx').
\end{align}
Substituting \eqref{JJJJJ-1111-001B} into \eqref{Psi_nee_Decompo001} and using the bound \eqref{Upper-bound-tau-xt},  
we derive that there exist constants $c, c_{\ee} >0$ such that 
for any  $n \geq n_0$, $x \in \bb P(\bb V)$ and $t \in \bb R$, 
\begin{align} \label{JJJ004B}
  J_n (x,t)
& \geq   \bb E \Big[ B_{m} \Big( t + \sigma (g_k \cdots g_1, x) \Big); \tau_{x, t} > k  \Big]  \notag \\  
& \quad  -  \frac{c \ee (1 + \max\{t, 0\}) }{\sqrt{km}}   \| G \|_{\nu \otimes \Leb}  
   -  \frac{c_{\ee} (1 + \max\{t, 0\}) }{\sqrt{k} m }  \left(  \| G \|_{\scr H_{\gamma}}   +  \| H \|_{\scr H_{\gamma}} \right).  
\end{align}
For the first term on the right-hand side of \eqref{JJJ004B}, 
by following the same approach as in the proof of \eqref{Integ_3} and utilizing the lower bound in the conditioned central limit theorem
(Corollary \ref{Cor-CCLT-target-function}), 
one can verify that, there exist constants $c_{\beta}, \eta>0$ such that, 
for any $n \geq n_0$, $x \in \bb P(\bb V)$ and $t \leq n^{1/2 - \beta}$, 
\begin{align} \label{Integ_3B}
&  \bb E \Big[ B_{m} \Big( t + \sigma (g_k \cdots g_1, x) \Big); \tau_{x, t} > k  \Big]  \notag\\
& \geq \frac{2 V(x, t)}{\sqrt{2\pi } \upsilon_{\mu} } 
\int_{\mathbb{R}_{+}}  B_m(\upsilon_{\mu} s) \phi^+ \bigg( \frac{s}{ \sqrt{k}} \bigg) \frac{ds}{k}  
   -  \frac{c_{\beta} (1 + \max\{t, 0\})}{ k^{\eta}\sqrt{km} } \| H \|_{\nu \otimes \Leb}.
\end{align}
where $B_m$ is defined by \eqref{JJJ005B}. 
Substituting \eqref{Integ_3B} into \eqref{JJJ004B} 
and using the fact that $\| H \|_{\nu \otimes \Leb} \leq \| G \|_{\nu \otimes \Leb}$,
we get that there exist constants $c_{\beta}, c_{\ee}, \eta>0$ such that, 
for any $n \geq n_0$, $x \in \bb P(\bb V)$ and $t \leq n^{1/2 - \beta}$, 
\begin{align}
\label{JJJ201aaaB}
  J_n (x,t) 
&  \geq  \frac{2 V(x, t)}{\sqrt{2\pi} \upsilon_{\mu}}  
\int_{\mathbb{R}_{+}}  B_m(\upsilon_{\mu} s) \phi^+ \bigg( \frac{s}{\sqrt{k}} \bigg) \frac{ds}{k}   
 -  \frac{c_{\beta} (\ee + k^{-\eta}) (1 + \max\{t, 0\}) }{\sqrt{km}}   \| G \|_{\nu \otimes \Leb}   \notag\\
& \quad 
     -  \frac{c_{\ee} (1 + \max\{t, 0\}) }{\sqrt{k} m }  \left(  \| G \|_{\scr H_{\gamma}}   +  \| H \|_{\scr H_{\gamma}} \right).  
\end{align}
In a manner similar to the proof of \eqref{Equality-Imk-abc}, we have
\begin{align*}
 \int_{\mathbb{R}_{+}}  B_m(\upsilon_{\mu} s) \phi^+ \bigg( \frac{s}{\sqrt{k}} \bigg) \frac{ds}{k}  
& =   \frac{ 1 }{\upsilon_{\mu} n} 
  \int_{\bb P(\bb V)}  \int_{\mathbb R}  H(x', u)
 \phi_{\delta_n}*\phi_{1-\delta_n}^+ \bigg( \frac{u}{\upsilon_{\mu} \sqrt{n}} \bigg)  du \,  \nu(dx')  \notag\\
& \geq  \frac{ 1 }{\upsilon_{\mu} n}
  \int_{\bb P(\bb V)}  \int_{\bb R_+} H(x', u)
 \phi_{\delta_n}*\phi_{1-\delta_n}^+ \bigg( \frac{u}{\upsilon_{\mu} \sqrt{n}} \bigg)  du \,  \nu(dx'),  
\end{align*}
where $\delta_n = \frac{m}{n} = \frac{[\delta n]}{n}$. 
Applying the lower bound in Lemma \ref{t-Aux lemma}, we get that 
$\phi_{\delta_n}*\phi_{1-\delta_n}^+ (t) 
\geq \sqrt{1 - \delta_n} \phi^+ (t) = \sqrt{\frac{k}{n}} \phi^+ (t)$
for any $t \geq 0$. 
It follows that 
\begin{align*}
\int_{\mathbb{R}_{+}}  B_m(\upsilon_{\mu} s) \phi^+ \bigg( \frac{s}{\sqrt{k}} \bigg) \frac{ds}{k} 
 \geq  \frac{ \sqrt{k} }{\upsilon_{\mu} n^{3/2}}
  \int_{\bb P(\bb V)}  \int_{\mathbb R_+}  H(x', u)
\phi^+ \bigg( \frac{u}{\upsilon_{\mu} \sqrt{n}} \bigg) du \, \nu(dx'). 
\end{align*}
Substituting this into \eqref{JJJ201aaaB} and using the fact that 
$V(x, t) \leq c (1 + \max\{t, 0\})$ for any $x \in \bb P(\bb V)$ and $t \in \bb R$ (\cite[Theorem 2.2]{GLP17}), 
we get that there exist constants $c_{\beta}, c_{\ee}, \eta>0$ such that, 
for any $n \geq n_0$, $x \in \bb P(\bb V)$ and $t \leq n^{1/2 - \beta}$, 
\begin{align*}
  J_n (x,t) 
& \geq  \frac{2 V(x, t)}{\sqrt{2\pi} \upsilon_{\mu}^2 }  \frac{ \sqrt{k} }{n^{3/2}}
  \int_{\bb P(\bb V)}  \int_{\mathbb R_+}  H(x', u)
\phi^+ \bigg( \frac{u}{\upsilon_{\mu} \sqrt{n}} \bigg) du \,  \nu(dx')    \notag\\
& \quad  -  \frac{ c_{\beta} (\ee + k^{-\eta}) (1 + \max\{t, 0\}) }{\sqrt{km}}   \| G \|_{\nu \otimes \Leb}  \notag\\
 &  \quad   -  \frac{c_{\ee} (1 + \max\{t, 0\}) }{\sqrt{k} m }  \left(  \| G \|_{\scr H_{\gamma}}   +  \| H \|_{\scr H_{\gamma}} \right). 
\end{align*}
Since  $\sqrt{\frac{n}{k}}\leq 1+ c \ee^{1/4}$, $m\geq \frac{1}{2} \ee^{1/2} n$, $k\geq \frac{1}{2}n$ and $H \leq_{\ee} G$, 
by the fact that 
$V(x, t) \leq c (1 + \max\{t, 0\})$ for any $x \in \bb P(\bb V)$ and $t \in \bb R$ (\cite[Theorem 2.2]{GLP17}), 
there exist constants $c_{\beta}, c_{\ee}, \eta>0$ such that, 
for any $n \geq n_0$, $x \in \bb P(\bb V)$ and $t \leq n^{1/2 - \beta}$, 
\begin{align} \label{final bounf Jm(x)}
J_n (x,t)
& \geq  \frac{2 V(x, t)}{\sqrt{2\pi} \upsilon_{\mu}^2 n} 
  \int_{\bb P(\bb V)}  \int_{\mathbb R_+}  H(x', u)
\phi^+ \bigg( \frac{u}{\upsilon_{\mu} \sqrt{n}} \bigg) du \nu(dx')  \notag\\
&  \quad  -  \frac{c_{\beta} (\ee^{1/4} + n^{-\eta}) (1 + \max\{t, 0\}) }{n}    \| G \|_{\nu \otimes \Leb}   \notag\\
 & \quad  -  \frac{c_{\ee} (1 + \max\{t, 0\}) }{ n^{3/2} }  \left(  \| G \|_{\scr H_{\gamma}}   +  \| H \|_{\scr H_{\gamma}} \right), 
\end{align}
which concludes the proof of the lower bound for the first term $J_n (x,t)$ defined in \eqref{Psi_nee_Decompo001}.

Next, we shall provide an upper bound for the second term $K_n(x,t)$ defined by \eqref{Psi_nee_Decompo002}. 
Indeed, 
bounding $K_n(x,t)$ is one of the difficult parts of the paper 
and requires the application of Lemma \ref{FK-joint-inequality}, Proposition \ref{Lem_Inequality_Aoverline}
and Theorems \ref{Lem_LLT_Nonasm} and \ref{Thm-CCLT-001}. 
We start by splitting $K_n(x,t)$ into two parts 
according to whether the values of $t + \sigma (g_k \cdots g_1, x)$ 
are less than or greater than $\ee^{1/6} \upsilon_{\mu} \sqrt{n}$: for $x \in \bb P(\bb V)$ and $t \in \bb R$, 
\begin{align}\label{KKK-111-001}
K_n(x,t) = K_{n,1}(x,t) + K_{n,2}(x,t), 
\end{align}
where
\begin{align*} 
K_{n,1}(x,t)
& = \bb E \Big[  \overline A_{m} \Big( g_k \cdots g_1 x, t + \sigma (g_k \cdots g_1, x) \Big);   
  t + \sigma (g_k \cdots g_1, x) \leq \ee^{1/6} \upsilon_{\mu} \sqrt{n},  \tau_{x, t} > k   \Big],   \notag\\
K_{n,2}(x,t) 
& = \bb E \Big[  \overline A_{m} \Big( g_k \cdots g_1 x, t + \sigma (g_k \cdots g_1, x) \Big);    t + \sigma (g_k \cdots g_1, x) > \ee^{1/6} \upsilon_{\mu} \sqrt{n},  \tau_{x, t} > k   \Big]. 
\end{align*}
For $K_{n,1}(x,t)$,  applying the upper bound in the local limit theorem 
(cf.\ Theorem \ref{Lem_LLT_Nonasm}) to the function $\overline A_{m} (x, t)$ defined in \eqref{decompos-psi002},
 and taking into account that $\phi \leq 1$,  we get
\begin{align*}
\overline A_{m} (x, t) 
\leq \bb E \Big[  F \Big( g_m \cdots g_1 x, t + \sigma (g_m \cdots g_1, x) \Big)   \Big] 
\leq  \frac{L_m(\ee)}{\sqrt{m}}, 
\end{align*}
where 
\begin{align*}
L_m(\ee) =  c \| G \|_{\nu \otimes \Leb}   +  \frac{c_{\ee}}{ \sqrt{m} }  \| G \|_{\scr H_{\gamma}}. 
\end{align*}
Substituting this into $K_{n,1}(x,t)$ and using the fact that $\sqrt{\frac{n}{k}} \leq 2$, we get 
\begin{align*} 
K_{n,1}(x,t)  & \leq  \frac{L_m(\ee)}{\sqrt{m}} 
    \bb P \left(  t + \sigma (g_k \cdots g_1, x)  \leq \ee^{1/6} \upsilon_{\mu} \sqrt{n},   \tau_{x, t} > k \right)    \notag\\
& \leq  \frac{L_m(\ee)}{\sqrt{m}} \,  \bb P \bigg(  \frac{ t + \sigma (g_k \cdots g_1, x) }{\upsilon_{\mu} \sqrt{k}} 
 \leq 2 \ee^{1/6}, \tau_{x, t} > k \bigg). 
\end{align*}
By Theorem \ref{Thm-CCLT-001} and the fact that $m=[\ee^{1/2}n]$
and $V(x, t) \leq c (1 + \max\{t, 0\})$ for any $x \in \bb P(\bb V)$ and $t \in \bb R$, 
there exists a constant $\eta >0$ such that the following holds. 
For any $\beta>0$, 
there exists a constant $c_{\beta} >0$ such that for any $n \geq n_0$, $x \in \bb P(\bb V)$ and $t \leq n^{1/2 - \beta}$, 
\begin{align} \label{K1-final bound}
K_{n,1}(x,t)
& \leq  \frac{L_m(\ee)}{\sqrt{m}}  
\bigg( \frac{2 V(x, t)}{ \sqrt{2\pi k} \upsilon_{\mu}} 
  \int_{0}^{ 2 \ee^{1/6}} \phi^+ (t') dt' 
  +  \frac{c_{\beta} (1 + \max\{t, 0\}) }{k^{1/2+\eta}} \bigg) \notag \\
&\leq  \frac{c(1 + \max\{t, 0\})}{\ee^{\frac{1}{4}} n}  
 \Big(  \| G \|_{\nu \otimes \Leb}     +  \frac{c_{\ee}}{ \sqrt{m} }  \| G \|_{\scr H_{\gamma}} \Big)
 \Big(\ee^{1/3}  +  c_{\beta} n^{-\eta} \Big)  \notag \\
& \leq \frac{c (1 + \max\{t, 0\})}{n}  \Big(  \| G \|_{\nu \otimes \Leb}   +  \frac{c_{\ee}}{ \sqrt{m} }  \| G \|_{\scr H_{\gamma}} \Big)  
  \Big(c\ee^{\frac{1}{12}}  +  c_{\beta}  \ee^{-\frac{1}{4}}  n^{-\eta} \Big) \notag\\
&\leq  \frac{c (1 + \max\{t, 0\})}{n} \Big( \ee^{\frac{1}{12}} +  c_{\beta} \ee^{-\frac{1}{4}}  n^{-\eta}  \Big) \| G \|_{\nu \otimes \Leb}  \notag\\
& \quad +   \frac{c_{\beta, \ee} (1 + \max\{t, 0\})}{ n^{3/2} }  \| G \|_{\scr H_{\gamma}}.
\end{align}

We proceed to apply the upper bound \eqref{eqt-A 001}
to provide an estimate for $K_{n,2}(x,t)$, as defined in \eqref{KKK-111-001}.
Recall that the function $(x,t) \mapsto \overline A_{m} (x,t)$, which is involved in the definition of $K_{n,2}(x,t)$,
is defined by \eqref{decompos-psi002} 
and does not in general belong to the space $\scr H_{\gamma}$.  
To address this, we begin by smoothing the indicator function in 
\eqref{decompos-psi002}. 
Let $\kappa$ be a smooth and non-negative  function compactly supported in $[-1,1]$ such that
$\int_{-1}^1 \kappa(t)dt=1$ 
and set $\kappa_{\ee}(t) = \frac{1}{\ee} \kappa(\frac{t}{\ee})$ for $t \in \bb R$.
For $m \geq 1$, $x \in \bb P(\bb V)$ and $t \in \bb R$, 
we define
\begin{align*}
\overline A_{m, \ee}  (x,t) & :=   \bb E \Big[   
     G *  \kappa_{\ee/2} \Big( g_m \cdots g_1 x, t + \sigma (g_m \cdots g_1, x) \Big)  
       \overline\chi_{\ee}  \Big( t- 2 \ee + \min_{1 \leq  j \leq m}  \sigma (g_j \cdots g_1, x) \Big)  \Big],
\end{align*}
where $\chi_{\ee}$ is defined as in \eqref{Def_chiee} and $\overline\chi_{\ee} = 1 - \chi_{\ee}$. 
Note that the function $F$ is $\ee/2$-dominated by the function $G * \kappa_{\ee/2}$. 
Using the identity
\begin{align*} 
 \left\{ \tau_{x, t} > m \right\} 
=  \left\{ t + \min_{1 \leq j \leq m}  \sigma (g_j \cdots g_1, x) \geq 0 \right\},   
\end{align*}  
along with the inequalities \eqref{bounds-reversedindicators-001} and $F(x, \cdot) \leq  G * \kappa_{\ee/2} (x, \cdot)$,  
 we conclude that
 the function $\overline A_{m}$ defined by \eqref{decompos-psi002}  is $\ee/2$-dominated by 
 $\overline A_{m, \ee}$. 
Moreover, by Proposition \ref{Lem_Inequality_Aoverline}, 
there exists a constant $c_{\ee} >0$ such that for any $m \geq 1$, 
the function $\overline A_{m, \ee}$ belongs to the space $\scr H_{\gamma}$ and satisfies the following inequalities: 
\begin{align}\label{Inequalities-Am-ee}
\| \overline A_{m, \ee} \|_{ \scr H_{\gamma} }  \leq c_{\ee}  \| G\|_{ \scr H_{\gamma} },
\qquad  
\| \overline A_{m, \ee} \|_{\nu \otimes \Leb }  \leq   \| G \|_{\nu \otimes \Leb }. 
\end{align}
For $m \geq 1$, $x \in \bb P(\bb V)$ and $t \in \bb R$, we denote 
\begin{align}\label{Def_m_zt}
W_{m,\ee} (x, t) = \overline A_{m, \ee} \left( x, t  \right)   \mathds 1_{ \{ t \geq \frac{1}{2} \ee^{1/6} \upsilon_{\mu} \sqrt{n} \} }. 
\end{align}
Applying the upper bound \eqref{eqt-A 001}, and using the fact that $\phi^+ \leq 1$, $k \geq \frac{1}{2}n$
and 
$V(x, t) \leq c (1 + \max\{t, 0\})$ for any $x \in \bb P(\bb V)$ and $t \in \bb R$,  
we obtain that for any $\ee \in (0,\frac{1}{8})$ and $\beta >0$, 
there exist constants $c_{\beta}, c_{\ee} > 0$ such that for any $n \geq n_0$, $x \in \bb P(\bb V)$ and $t \leq n^{1/2 - \beta}$,  
\begin{align}\label{eqt-A 001_low_bb}
 K_{n,2}(x,t) 
& \leq  \bigg(\frac{2 V(x, t)}{\sqrt{2\pi} \upsilon_{\mu}^2 k}  
  + \frac{c_{\beta} (1 + \max\{t, 0\})}{k} \Big( \ee^{1/4}  +   \ee^{-1/4}n^{-\eta} \Big) \bigg)
   \| W_{m,\ee} \|_{\nu \otimes \Leb }  \notag\\
& \quad  +   \frac{c_{\ee} (1 + \max\{t, 0\})}{ \sqrt{n} k}  \|  W_{m,\ee} \|_{\scr H_{\gamma}} \notag\\
& \leq  \frac{c_{\beta} (1 + \max\{t, 0\})}{n} \left( 1  +   \ee^{-1/4}n^{-\eta} \right) 
   \| W_{m,\ee} \|_{\nu \otimes \Leb }  \notag\\
& \quad   +   \frac{c_{\ee} (1 + \max\{t, 0\})}{ n^{3/2} }  \|  W_{m,\ee} \|_{\scr H_{\gamma}}. 
\end{align}
For the first term on the right-hand side of \eqref{eqt-A 001_low_bb}, 
by the definition of $W_{m,\ee}$ and Fubini's theorem, we have
\begin{align} \label{first norm-001}
 \| W_{m,\ee} \|_{\nu \otimes \Leb } 
 &=   \int_{\mathbb R}  \int_{\bb P(\bb V)}  W_{m,\ee} \left(x', t \right)  \nu(dx') dt    \notag\\ 
&\leq  \int_{\bb P(\bb V)}  \int_{ \frac{1}{2} \ee^{1/6} \upsilon_{\mu} \sqrt{n}}^{\infty} 
 \bb E  \bigg[   G *  \kappa_{\ee/2} \Big(  g_m \cdots g_1 x', t + \sigma (g_m \cdots g_1, x') \Big);  \notag\\
& \qquad   t + \min_{1 \leq  j \leq m}  \sigma (g_j \cdots g_1, x')  \leq 2 \ee   \bigg]   dt  \, \nu(dx')  =: U_{m,n}. 
\end{align}
Using the change of variable $t' = t + \sigma (g_m \cdots g_1, x')$, we get
\begin{align*}
U_{m,n}  & =  \int_{\bb P(\bb V)}  \int_{\mathbb R}  
 \bb E  \bigg[   G *  \kappa_{\ee/2} \left(  g_m \cdots g_1 x',  t'  \right);  \notag\\
& \qquad\qquad\qquad  t' - \sigma (g_m \cdots g_1, x') +  \min_{1 \leq j \leq m }  \sigma (g_j \cdots g_1, x')   \leq 2 \ee,  \notag\\ 
& \qquad\qquad\qquad   t' - \sigma (g_m \cdots g_1, x') \geq  \frac{1}{2} \ee^{1/6} \upsilon_{\mu} \sqrt{n}  \bigg] dt'  \nu(dx')   \notag\\
& \leq \int_{\bb P(\bb V)}  \int_{\mathbb R}  
 \bb E  \bigg[   G *  \kappa_{\ee/2} \left(  g_m \cdots g_1 x',  t'  \right);  \notag\\
& \qquad\qquad\qquad \frac{1}{2} \ee^{1/6} \upsilon_{\mu} \sqrt{n} +  \min_{1 \leq j \leq m }  \sigma (g_j \cdots g_1, x')   \leq  2 \ee  \bigg] dt'  \nu(dx')   \notag\\
 & \leq  \int_{\bb P(\bb V)}  \int_{\mathbb R}  
 \bb E  \bigg[   G *  \kappa_{\ee/2} \left(  g_m \cdots g_1 x',  t'  \right);   \notag\\
 & \qquad\qquad\qquad   \max_{1 \leq j \leq m }  |\sigma (g_j \cdots g_1, x')|   \geq  \frac{1}{3} \ee^{1/6}  \upsilon_{\mu} \sqrt{n}  \bigg] dt'  \nu(dx'),
\end{align*}
where in the last inequality we used 
$\frac{1}{2} \ee^{1/6} \upsilon_{\mu} \sqrt{n} - 2 \ee \geq \frac{1}{3} \ee^{1/6}  \upsilon_{\mu} \sqrt{n} $. 
Applying Lemma \ref{FK-joint-inequality}, 
we derive that there exist  constants $c, c'>0$ such that, for any $m \geq 1$ and $x' \in \bb P(\bb V)$, 
\begin{align*}
& \int_{\mathbb R} \bb E  \bigg[   G *  \kappa_{\ee/2} \left(  g_m \cdots g_1 x',  t'  \right);  
    \max_{1 \leq j \leq m }  |\sigma (g_j \cdots g_1, x')|   \geq  \frac{1}{3} \ee^{1/6}  \upsilon_{\mu} \sqrt{n}  \bigg]  dt'  \notag\\
 &  \leq 2   \exp\left( - c \ee^{-1/6} \right) \| G *  \kappa_{\ee/2}  \|_{\nu\otimes \Leb}  + c' e^{-c \ee^{1/12} n^{1/6}} \| G *  \kappa_{\ee/2} \|_{\scr H_{\gamma}} \notag\\
 &  \leq 2   \exp\left( - c \ee^{-1/6} \right) \| G \|_{\nu\otimes \Leb}  + c' e^{-c \ee^{1/12} n^{1/6}} \| G \|_{\scr H_{\gamma}}. 
\end{align*}
where the last inequality holds since $\kappa$ is a smooth and non-negative function with support in $[-1,1]$, 
and  $\kappa_{\ee}(t) = \frac{1}{\ee} \kappa(\frac{t}{\ee})$.  
Consequently, there exist constants $c, c'>0$ such that, for any $m \geq 1$, 
\begin{align}\label{Bound_n_U-002}
\| W_{m,\ee} \|_{\nu \otimes \Leb }  \leq 2   \exp\left( - c \ee^{-1/6} \right) \| G \|_{\nu\otimes \Leb}  + c' e^{-c \ee^{1/12} n^{1/6}} \| G \|_{\scr H_{\gamma}}. 
\end{align}

 The norm $\|  W_{m,\ee} \|_{\scr H_{\gamma}}$ 
in the second term on the right-hand side of \eqref{eqt-A 001_low_bb}
is dominated by applying Proposition \ref{Lem_Inequality_Aoverline}. Taking into account \eqref{Def_m_zt} and \eqref{Inequalities-Am-ee},  we get
\begin{align*}
\|  W_{m,\ee} \|_{\scr H_{\gamma}}  
\leq \| \overline A_{m, \ee} \|_{ \scr H_{\gamma} }
\leq  c_\ee \|  G \|_{\scr H_{\gamma}}. 
\end{align*}
Therefore, substituting this and \eqref{Bound_n_U-002} into \eqref{eqt-A 001_low_bb},  
we derive the upper bound for $K_{n,2}(x,t)$: 
there exist constants $c_{\beta}, c_{\ee} > 0$ such that for any $n \geq n_0$, $x \in \bb P(\bb V)$ and $t \leq n^{1/2 - \beta}$,  
\begin{align}\label{Final_Bound_K2}
K_{n,2}(x,t)
& \leq c_{\beta} \frac{ (1 + \max\{t, 0\})}{n} \left( 1  +   \ee^{-1/4}n^{-\eta} \right) 
 \exp\left(- c \ee^{-1/6} \right)    \| G  \|_{\nu\otimes \Leb}   
+   \frac{c_{\ee}}{ n^{3/2} }  \|  G \|_{\scr H_{\gamma}}
 \notag \\  
& \leq c_{\beta} \frac{  \ee (1 + \max\{t, 0\})}{n} 
\| G  \|_{\nu\otimes \Leb}  +   \frac{c_{\ee}}{ n^{3/2} }  \|  G \|_{\scr H_{\gamma}}.  
\end{align}
Combining \eqref{Psi_nee_Decompo}, \eqref{final bounf Jm(x)}, \eqref{KKK-111-001}, \eqref{K1-final bound} and \eqref{Final_Bound_K2} completes the proof of the lower bound \eqref{eqt-A 002}.

\subsection{Consequences}
\begin{proof}[Proof of Theorem \ref{t-B 001}]
This follows from the more precise Theorem \ref{t-A 001}. 
\end{proof}

\begin{proof}[Proof of Corollary \ref{Cor-bound-n-001}]
This follows directly by applying Corollary \ref{Cor-Cara-Bound} to the indicator function $F = \mathds 1_{\bb P(\bb V) \times [a, b] }$
and $G = \mathds 1_{\bb P(\bb V) \times [a- \ee, b+ \ee] }$, where $a < b$ and $\ee \in (0, \frac{1}{8})$. 
\end{proof}


\section{The reversed random walk} \label{sec: reversed random walk}

In this section, we aim to establish two indispensable results which are announced in Section \ref{subsec-statement results}, 
namely Theorems \ref{Cor-CLLT-cocycle-001} and \ref{Thm-CLLT-cocycle-bound-001}.
To accomplish this, we will employ the reversal techniques that were developed in the preliminary paper \cite{GQX24a}. 
These techniques provide a powerful framework for analyzing the underlying reversed random walk that we encounter,  
the details of which will be presented in Subsections \ref{Sec-duality} and \ref{sec-reversed random walk-001}. 
Additionally, we will make use of the conditioned central limit theorems for the reversed walk 
(cf.\ Theorems \ref{Thm-CCLT-limit-000} and \ref{Thm-inte-exit-time-001}). 
The proofs of these theorems are technically complex and can be found in Appendix \ref{sec invar func}. 

\subsection{Duality}\label{Sec-duality}
The aforementioned reversal techniques rely on the use of the linear duality of group actions. In the present subsection,  
we introduce the dual cocycle
and formulate the cohomological equation which relates the dual cocycle to the original norm cocycle.

Let $\Omega = \bb G^{\bb N^*}$ be equipped with the Borel $\sigma$-algebra $\scr A$ and with the  
probability measure $\bb P = \mu^{\otimes \bb N^*}$. 
A typical element of $\Omega$ is written as $\omega = (g_1, g_2, \ldots)$.
The sequence of coordinate maps, $\omega \mapsto g_k$, $k=1,2,\ldots$ on the probability space 
$(\Omega,\mathscr A, \bb P)$ forms a sequence of independent and identically distributed elements of $\bb G$ with law $\mu$.
Let $T: \Omega \to \Omega$ be the shift map defined by $\omega = (g_1, g_2, \ldots) \mapsto T \omega = (g_2, g_3, \ldots)$.
We also define the shift map $\widetilde T $ on $\Omega\times \bb P(\bb V^*)$ as follows: 
for $\omega=(g_1, g_2, \ldots) \in \Omega$ and $y \in \bb P(\bb V^*)$, 
\begin{align} \label{def-tilde-T}
\widetilde T(\omega, y)= ( (g_2, g_3, \ldots), g^{-1}_{1} y).
\end{align}
Recall that $\Gamma_{\mu}$ is the closed subsemigroup of $\bb G$ spanned by the support of $\mu$. 
Since $\Gamma_{\mu}$ contains some proximal element and the action of $\Gamma_{\mu}$ on $\bb V$ is strongly irreducible,   
the space $\bb P(\bb V)$ carries a unique $\mu$-stationary probability measure $\nu$. 
By a classical result of Furstenberg, 
there exists a unique measurable map $\xi: \Omega \to \bb P(\bb V)$ satisfying the following equivariance property:  
for $\bb P$-almost every $\omega = ( g_1, g_2, \ldots) \in \Omega$, 
\begin{align}\label{equivariance-xi}
\xi(\omega) = g_1 \xi(T\omega),  
\end{align}
and hence, by iteration, for any $p\geq 1$,
\begin{align}\label{equivariance-xi-002}
\xi(\omega) = g_1\cdots g_p \xi(T^p \omega). 
\end{align}
Moreover, the law of the random point $\xi$ in $\bb P(\bb V)$ is the stationary measure $\nu$.
We consider the dual vector space $\bb V^*$ of $\bb V$ and denote by $\bb P(\bb V^*)$ the projective space of $\bb V^*$.
The group $\bb G$ acts on $\bb V^*$ and $\bb P(\bb V^*)$ as follows:   
for any $g \in \bb G$, $v \in \bb V$ and $\varphi \in \bb V^*$, 
\begin{align*}
g \varphi (v) = \varphi (g^{-1} v). 
\end{align*}
Recall that we have chosen a Euclidean norm $\| \cdot \|$ on $V$.  
The space $\bb V^*$ is equipped  with the Euclidean norm  which is dual to the norm $\|\cdot\|$ of  $\bb V$:
for $\varphi \in \bb V^*$, 
\begin{align*}
\| \varphi \| = \sup_{v \in \bb V \smallsetminus \{0\}} \frac{| \varphi(v) |}{\|v\|}. 
\end{align*}
We introduce a dual cocycle $\sigma^*: \bb G \times \bb P(\bb V^*) \to \bb R$ by setting, 
for any $g \in \bb G$ and $y = \bb R \varphi \in \bb P(\bb V^*)$, 
\begin{align}\label{dual cocycle-001}
\sigma^*(g, y) = \log \frac{\| g \varphi \|}{\| \varphi \|}. 
\end{align}
Consider the set 
\begin{align*} 
\Delta: = \Big\{ (x, y) \in \bb P(\bb V) \times \bb P(\bb V^*):  x = \bb R v, y = \bb R \varphi, \varphi(v) \neq 0  \Big\}. 
\end{align*}
For any $(x, y) \in \Delta$ 
with $x = \bb R v \in \bb P(\bb V)$ and $y = \bb R \varphi \in \bb P(\bb V^*)$, 
let
\begin{align} \label{def of delta func-001}
\delta(x, y) = - \log \frac{ |\varphi(v)| }{ \| \varphi \| \| v\| } \geq 0. 
\end{align}
The ideal perturbation function $f$ is defined as follows: 
for $\omega \in \Omega$ and $y\in \bb P(\bb V^*)$,  
\begin{align} \label{perturbed function in bb Y-001}
f(\omega,y) = \delta ( \xi (\omega), y ). 
\end{align}
For any $y\in \bb P(\bb V^*)$, the function $\omega \mapsto f(\omega, y)$ 
is finite almost everywhere with respect to $\bb P$ on $\Omega$.

The function $\delta$ and the cocycles $\sigma$, $\sigma^*$ are involved in the following cohomological equation:
for any $g \in \bb G$ and $(x, y) \in \Delta$, 
\begin{align*}
\delta(gx, gy) = \delta(x, y) + \sigma(g, x) + \sigma^*(g, y). 
\end{align*}
This formula can be restated in the following more intuitive way: 
\begin{align} \label{cohomological eq -vers 002}
\sigma(g, x) - \delta(gx, y)  = \sigma^*(g^{-1}, y) - \delta (x, g^{-1}y), 
\end{align}
which will be applied to establish reversal identities in the next subsection. 

\subsection{The reversal lemma} \label{sec-reversed random walk-001}
In this subsection, we shall present a reversal lemma which enables us to convert problems involving the random walks 
$(\sigma(g_n\cdots g_1,x))_{n \geq 1}$, conditioned to stay non-negative, 
into corresponding problems for the random walk $-\sigma^*(g^{-1}_n\cdots g^{-1}_1,y)$ with some perturbations depending on the future coordinates. 
This step is essential in the proofs of Theorems \ref{Cor-CLLT-cocycle-001} and \ref{Thm-CLLT-cocycle-bound-001}.
It is also necessary when applying the excursion decomposition approach in the proof of 
Theorem \ref{Thm-CLLT-cocycle} which states our conditioned local limit theorem. 

Since the semigroup $\Gamma_{\mu}^{-1}$ is proximal and strongly irreducible in $\bb V^*$, 
the probability measure $\mu^{-1}$ on $\Gamma_{\mu}^{-1}$ admits a unique stationary probability measure $\nu^*$ on $\bb P(\bb V^*)$. 
For $x \in \bb P(\bb V)$, denote
\begin{align*}
\Delta_x: = \Big\{ y \in \bb P(\bb V^*):  \bb P( \delta( g_n \cdots g_1 x, y) < \infty ) = 1, \  \forall n \geq 1  \Big\}. 
\end{align*}
By Lemma 5.1 of \cite{GQX24a}, we have that $\nu^*(\Delta_x)  =1$ for any $x \in \bb P(\bb V)$. 

In order to establish Theorem \ref{Cor-CLLT-cocycle-001}, 
we will first relate the integral on the left-hand side of \eqref{Asym-rho-001} 
to the equivalent expression in terms of the following array of reversed random walks:  
for $\omega=(g_1,g_2,\ldots) \in \Omega,$  
 $x\in\bb P(\bb V)$, $y \in \Delta_x$ 
and $0 \leq n \leq m$, 
\begin{align} \label{reversed RWfor products-001}
 \tilde S^{x,m}_{n}(\omega, y) 
  = -\sigma^*( g^{-1}_{n}\cdots g^{-1}_1,y)  
 +   \delta(g_{n+1}\cdots g_m x, g^{-1}_{n} \cdots g^{-1}_{1}y) - \delta(g_1\cdots g_m x,y), 
\end{align}
where $\sigma^*$ and $\delta$ are defined by \eqref{dual cocycle-001} and \eqref{def of delta func-001}, repectively. 
Besides, we adopt the convention that 
\begin{align} \label{reversed RWfor products-001bbb}
 \tilde S^{x,m}_{0}(\omega, y) = 0
\end{align}
and 
\begin{align}\label{convention-S-xn-y}
 \tilde S^{x,n}_{n}(\omega, y) 
 & = -\sigma^*( g^{-1}_{n}\cdots g^{-1}_1,y) +   \delta(x, g^{-1}_{n} \cdots g^{-1}_{1}y) - \delta(g_1\cdots g_n x,y) \notag\\
 & = - \sigma(g_1 \cdots g_n, x), 
\end{align}
where the last equality holds due to \eqref{cohomological eq -vers 002}. 
In these definitions, the additional parameter $m$ is interpreted as the range of dependence, 
indicating how far into the future the perturbation of the random walk $-\sigma^*( g^{-1}_{n}\ldots g^{-1}_1,y)$
can extend. 
For $\omega=(g_1,g_2,\ldots) \in \Omega,$  
 $y \in\bb P(\bb V^*)$ 
and $n \geq 1$,  we also define 
\begin{align*}
\tilde S_n(\omega,y) = - \sigma^*( g^{-1}_n\cdots g^{-1}_1, y)  + f\circ \widetilde{T}^n (\omega, y) - f(\omega, y). 
\end{align*}
where $f$ and $\widetilde{T}$ are defined by \eqref{perturbed function in bb Y-001} and \eqref{def-tilde-T}, respectively. 
Formally, this corresponds to the definition in \eqref{reversed RWfor products-001} in the case where $m = \infty$. 

The 
left-hand side of \eqref{Asym-rho-001} is connected to
the array $(\tilde S^{x,n}_{k}(\cdot, y))_{1\leq k\leq n}$ through the following reversal lemma, in analogy to Lemma 3.3 of \cite{GQX24a}.

\begin{lemma} \label{lemma-duality-for-product-001}
Let $x\in \bb P(\bb V)$ and $y\in \Delta_x$. For any $n\geq 1$, any non-negative measurable function $h$ on
$\bb P(\bb V)\times \bb R$
and any non-negative measurable function $\psi$ on $\bb R$, we have 
\begin{align} \label{duality id-002}
& \int_{\bb R}  \bb E \Big[ h\Big( g_n\cdots g_1 x, t+ \sigma (g_n\cdots g_1,x) \Big);  \tau_{x,t} > n - 1 \Big] \psi(t) dt \notag\\
 &= \int_{\bb R} \bb E\Big[    h(g_1\cdots g_n x, t) \psi \Big( t + \tilde S^{x,n}_{n}(\omega, y)  \Big);  
  t+ \tilde S^{x,n}_{k}(\cdot,y) \geq 0, 1 \leq k \leq n-1  \Big] dt 
\end{align}
and 
\begin{align} \label{duality id-001}
& \int_{\bb R}  \bb E \left[ h\Big( g_n\cdots g_1 x, t+ \sigma (g_n\cdots g_1,x) \Big);  \tau_{x,t} > n \right] \psi(t) dt \notag\\
 &= \int_{\bb R_+} \bb E \Big[   h(g_1\cdots g_n x, t) \psi \Big( t + \tilde S^{x,n}_{n}(\omega, y)  \Big);  
   t+ \tilde S^{x,n}_{k}(\cdot,y) \geq 0, 1 \leq k \leq n-1  \Big]  dt. 
\end{align}
\end{lemma}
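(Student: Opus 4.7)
The plan is to prove both identities by starting from the right-hand side and applying two successive transformations: a cohomological rewriting that eliminates all $y$-dependent perturbations, followed by the i.i.d.\ reversal symmetry of the increments. The key miracle is that the $\delta$-terms appearing in $\tilde S^{x,n}_k(\omega,y)$ exactly cancel against terms produced by the cocycle identity \eqref{cohomological eq -vers 002}, leaving only expressions involving the original cocycle $\sigma$.

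The central algebraic step is the pointwise identity
\[
\sigma(g_1\cdots g_n,x) + \tilde S^{x,n}_k(\omega,y) = \sigma(g_{k+1}\cdots g_n,x), \quad 0\leq k\leq n,
\]
with the empty product read as the identity. For $k=0$ and $k=n$ this follows from the conventions \eqref{reversed RWfor products-001bbb} and \eqref{convention-S-xn-y}. For $1\leq k\leq n-1$, I would split $\sigma(g_1\cdots g_n,x)=\sigma(g_1\cdots g_k,g_{k+1}\cdots g_n x)+\sigma(g_{k+1}\cdots g_n,x)$ using the cocycle property of $\sigma$, then apply the cohomological equation \eqref{cohomological eq -vers 002} with $g=g_1\cdots g_k$ and $x'=g_{k+1}\cdots g_n x$ to rewrite $\sigma(g_1\cdots g_k,g_{k+1}\cdots g_n x)=\sigma^*(g_k^{-1}\cdots g_1^{-1},y)-\delta(g_{k+1}\cdots g_n x,g_k^{-1}\cdots g_1^{-1} y)+\delta(g_1\cdots g_n x,y)$. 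Substituting this into the definition \eqref{reversed RWfor products-001} of $\tilde S^{x,n}_k$ produces the claimed cancellation. The hypothesis $y\in\Delta_x$ guarantees that every $\delta$-term is $\bb P$-a.s.\ finite, so the manipulation is legitimate.

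With this identity in hand, I would turn to \eqref{duality id-001}. Using $\tilde S^{x,n}_n(\omega,y)=-\sigma(g_1\cdots g_n,x)$, I would perform the change of variable $t=s+\sigma(g_1\cdots g_n,x)$ inside the integrand. Then $\psi(t+\tilde S^{x,n}_n)$ becomes $\psi(s)$, the constraint $t\geq 0$ (encoded by $\int_{\bb R_+}dt$) becomes $s+\sigma(g_1\cdots g_n,x)\geq 0$, and each $t+\tilde S^{x,n}_k\geq 0$ becomes $s+\sigma(g_{k+1}\cdots g_n,x)\geq 0$. Collecting everything, I would obtain $s+\sigma(g_j\cdots g_n,x)\geq 0$ for all $1\leq j\leq n$. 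Finally, the permutation $g_i\mapsto g_{n-i+1}$ preserves $\bb P=\mu^{\otimes\bb N^*}$; under this permutation every product $g_j\cdots g_n$ becomes $g_{n-j+1}\cdots g_1$, so the collection of constraints becomes $s+\sigma(g_j\cdots g_1,x)\geq 0$ for $1\leq j\leq n$, which is exactly $\tau_{x,s}>n$, and $h(g_1\cdots g_n x, s+\sigma(g_1\cdots g_n,x))$ becomes $h(g_n\cdots g_1 x, s+\sigma(g_n\cdots g_1,x))$. This produces \eqref{duality id-001}. The proof of \eqref{duality id-002} is identical except that $\int_{\bb R}dt$ replaces $\int_{\bb R_+}dt$, so the constraint corresponding to $k=0$ is absent, matching the weaker condition $\tau_{x,t}>n-1$ on the left-hand side.

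The main obstacle will be the index bookkeeping through the cocycle splitting, the cohomological rewriting, and the final i.i.d.\ reversal. The non-commutativity of $\bb G$ makes it essential to track precisely which partial product, which inverse, and which base point appear in each term so that the $\delta$-cancellations line up exactly; and the fact that the reversed walk $\tilde S^{x,n}_k$ is defined with a future-dependent perturbation through $\delta(g_{k+1}\cdots g_n x,\cdot)$ is exactly what makes the final reversal compatible with the original stopping-time condition on the left-hand side.
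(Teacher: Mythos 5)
Your proof is correct and is essentially the paper's proof run in reverse: the paper starts from the left-hand side, changes variables to $u = t + \sigma(g_n\cdots g_1,x)$, uses the cohomological equation \eqref{cohomological eq -vers 002} to turn each constraint $u - (S^x_n - S^x_k) \geq 0$ into the $\sigma^*$-plus-$\delta$ form, and then invokes exchangeability; you start from the right-hand side, isolate the same cohomological cancellation as the clean pointwise identity $\sigma(g_1\cdots g_n,x) + \tilde S^{x,n}_k(\omega,y) = \sigma(g_{k+1}\cdots g_n,x)$, change variables, and then apply the same exchangeability. The intermediate identity you extract is a tidy way to package the cancellation, but the three ingredients (translation change of variable, cohomological equation, i.i.d.\ reversal) are the same as in the paper.
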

\begin{proof}
The identity \eqref{duality id-001} follows directly from \eqref{duality id-002}, so we will only prove \eqref{duality id-002}. 

For brevity, we set $S^x_0=0$ and $S^x_n= \sigma(g_n\cdots g_1, x)$, for $x\in \bb P(\bb V)$ and $n\geq 1$. 
Using the change of variable $u = t + S^x_n$, the left-hand side of \eqref{duality id-002} 
can be expressed as follows: 
\begin{align} \label{After a change of variable-001}
J&:=\int_{\bb R}  \bb E \Big[ h  \Big( g_n\cdots g_1 x, t + S^x_n \Big);  \tau_{x,t} > n - 1 \Big] \psi(t) dt \notag\\
&= \int_{\bb R}  \bb E \Big[ h  \Big(g_n\cdots g_1 x, t + S^x_n\Big) \psi(t);  
t +S^x_k \geq 0, 1\leq k\leq n-1  \Big]  dt \notag\\
 &= \int_{\bb R} \bb E\Big[   h(g_n\cdots g_1 x, u) \psi(u - S^x_n);    u - S^x_n + S^x_k \geq 0, 1 \leq k\leq n - 1 \Big] du.
\end{align}
By the cocycle property, we have that for any $x\in \bb P(\bb V)$ and $1 \leq k\leq n-1$,
\begin{align*} 
S^x_n - S^x_k = \sigma(g_{n}\cdots g_{k+1}, g_{k}\cdots g_{1} x ). 
\end{align*}
We shall make use of the following cohomological equation, which is obtained from \eqref{cohomological eq -vers 002}:
for $g \in \bb G$, $a \in \bb P(\bb V)$ and $b \in \bb P(\bb V^*)$, 
\begin{align*} 
\sigma(g, a)    =  \sigma^*(g^{-1}, b)  - \delta (a, g^{-1}b) +  \delta(g a, b). 
\end{align*}
Applying this with $g =g_{n}\cdots g_{k+1}$, $a =g_{k}\cdots g_{1} x$ and $b=y$, we get that, for any $1\leq k\leq n-1$,
$x\in \bb P(\bb V)$ and $y\in \Delta_x$, 
\begin{align} \label{cohomological eq -vers 004}
S^x_n - S^x_k 
& = \sigma(g_{n}\cdots g_{k+1}, g_{k}\cdots g_{1} x )  \notag \\
& = \sigma^*((g_{n}\cdots g_{k+1})^{-1}, y)  
 - \delta (g_{k}\cdots g_{1} x, (g_{n}\cdots g_{k+1})^{-1}y)  +  \delta(g_{n}\cdots g_{1} x, y).
\end{align}
Substituting the identity \eqref{cohomological eq -vers 004} into \eqref{After a change of variable-001},
 we obtain
\begin{align*} 
& J= \int_{\bb R} \bb E\Big[    h(g_n\cdots g_1 x, u) \psi( u - \sigma(g_n \cdots g_1, x) );  \notag\\
&  u -\sigma^*( g^{-1}_{k+1}\cdots g^{-1}_n,y) +  \delta(g_k\cdots g_1 x, g^{-1}_{k+1} \cdots g^{-1}_{n}y)-\delta(g_n\cdots g_1x,y)
\geq 0,  \notag\\
&\qquad\qquad\qquad\qquad 1 \leq k\leq n-1  \Big] du.
\end{align*}
The random elements  $g_1,\ldots,g_n$ are exchangable, which is justified by the assumption that these  elements 
are independent and identically distributed. 
Therefore, we can reverse the order of elements $g_1,\ldots,g_n$, 
which gives 
\begin{align*} 
J  &= \int_{\bb R}  \bb E\Big[ 
   h(g_1\cdots g_n x, u) \psi( u - \sigma(g_1 \cdots g_n, x) );  \notag \\
&  u -\sigma^*( g^{-1}_{k}\cdots g^{-1}_1,y) 
 +   \delta(g_{k+1}\cdots g_n x, g^{-1}_{k} \cdots g^{-1}_{1}y) - \delta(g_1\cdots g_nx,y) \geq 0, 
\notag\\
&\qquad\qquad\qquad\qquad  1 \leq k\leq n-1  \Big] du \notag\\ 
 &= \int_{\bb R} \bb E \Big[ 
   h(g_1\cdots g_n x, u)  \psi \left( u +  \tilde S^{x,n}_{n}(\omega, y) \right);  
   u + \tilde S^{x,n}_{k}(\cdot, y) \geq 0,  
   1 \leq k \leq n -1  \Big] du, 
\end{align*}
where in the last equality we used \eqref{reversed RWfor products-001} and \eqref{convention-S-xn-y}. 
This finishes the proof of \eqref{duality id-002}. 
\end{proof}

From Lemma \ref{lemma-duality-for-product-001}, by integrating with respect to $\nu^*(dy)$ and using Fubini's theorem, we obtain
the following alternative representation of the integral on the left-hand side of \eqref{duality id-001}:
\begin{align} \label{duality id-integr-version-002}
& \int_{\bb R}  \bb E \Big[ h\Big( g_n\cdots g_1 x, t+ \sigma (g_n\cdots g_1,x) \Big);  \tau_{x,t} > n \Big] dt   \notag\\
 &=  \int_{\bb R_+} \int_{\bb P(\bb V^*)} \bb E\Big[ 
 h(g_1\cdots g_n x, t);   t+ \tilde S^{x,n}_{k}(\cdot,y) \geq 0, 1\leq k\leq n - 1 \Big] \nu^*(dy) dt.
\end{align}
To apply the results from Appendix \ref{sec invar func},
we need to rewrite the perturbation functions in the reversed random walk \eqref{reversed RWfor products-001} 
in a more convenient form.
For any $\omega = (g_1, g_2, \ldots) \in \Omega$, $x \in \bb P(\bb V)$ and $y \in \Delta_x$, we denote 
\begin{align} \label{more general perturb-001}
f_n^{x, m}(\omega, y) 
= \delta(g_1 \cdots g_{m-n} x, y),\ 0 \leq n \leq m, 
\end{align}
where $\delta$ is defined by \eqref{def of delta func-001}. 
From \eqref{more general perturb-001} and \eqref{def-tilde-T}, 
we get that, for any $0 \leq n \leq m$, $\omega = (g_1, g_2, \ldots) \in \Omega$, $x \in \bb P(\bb V)$ and $y \in \Delta_x$, 
\begin{align*} 
f_n^{x, m} \circ \widetilde T^n(\omega, y)  = \delta \left( g_{n+1}\cdots g_m x, g^{-1}_{n} \cdots g^{-1}_{1}y \right) 
\end{align*}
with the convention $f_m^{x, m} \circ \widetilde T^m(\omega, y)  = \delta (x, g^{-1}_{m} \cdots g^{-1}_{1}y )$ 
and $f_0^{x, m} (\omega, y) = \delta (g_{1}\cdots g_m x, y )$. 
With this notation, we can rewrite 
$(\tilde S^{x,m}_{n}(\cdot,y))_{1\leq n \leq m}$ defined by \eqref{reversed RWfor products-001} as 
\begin{align} \label{another form of the perturbRW-001}
\tilde S^{x,m}_{n}(\omega,y) 
= -\sigma^*( g^{-1}_{n}\cdots g^{-1}_1,y) +  f_n^{x, m} \circ \widetilde T^n(\omega, y)  - f_0^{x, m} \left(\omega, y \right),
\end{align}
where $\omega$ stands for the infinite sequence $(g_1, g_2, \ldots) \in \Omega$.

\subsection{Proof of Theorem \ref{Cor-CLLT-cocycle-001}} \label{sec proof T1.6}

Observe that the second assertion \eqref{Asym-rho-001} follows from the first one \eqref{Asym-rho-002} by applying it to the function
$(x, t) \mapsto  h(x, t) \mathds 1_{\{t \geq 0\}}$, along with an approximation argument to eliminate the discontinuity at $0$. 


Conversely, the first assertion \eqref{Asym-rho-002} is a consequence of the second one \eqref{Asym-rho-001}, the definition 
\eqref{operator R 001} of the operator $R$, 
and the invariance property of the Radon measure $\rho$ (see Corollary 1.4 of \cite{GQX24a}): 
\begin{align*}
\int_{\bb P(\bb V) \times \bb R_+}  \bb E h \Big( g_1 x, t + \sigma(g_1, x)  \Big)  \rho(dx, dt)
&= \int_{\bb P(\bb V) \times \bb R}  R h(x, t)  \rho(dx, dt) \\
 &= \int_{\bb P(\bb V) \times \bb R}  h(x, t)  \rho(dx, dt). 
\end{align*}
Therefore, we only need to prove the second assertion \eqref{Asym-rho-001}.

At this point, we assume that $h(x, t) = \varphi(x) \psi(t)$ for $x \in \bb P(\bb V)$ and $t \in \bb R$, 
where $\varphi$ is a non-negative H\"older continuous function on $\bb P(\bb V)$
and $\psi$ is a compactly supported non-negative continuous function on $\bb R$. 
For $x \in \bb P(\bb V)$, $t\in \bb R$ and $1 \leq n \leq m$, we set 
\begin{align} \label{def of U^xm_n with phi-001}
U^{x,m,\varphi}_{n}(t) 
& =  \int_{\bb P(\bb V^*)}
\bb E \Big[  \Big( t +\tilde S^{x,m}_{n}(\cdot,y) \Big) \varphi(g_1\cdots g_m x);    t + \tilde S^{x,m}_{k}(\cdot,y) \geq 0, 1\leq k\leq n  \Big]  \nu^*(dy), 
\end{align}
where $( \tilde S^{x,m}_{k}(\cdot,y) )_{1\leq k\leq m}$ is given in \eqref{another form of the perturbRW-001}. 
By Corollary 2.3, Proposition 2.4 in \cite{GQX24a} 
and the strong approximation result (6.5) of \cite{GLP17},  the assumptions of Theorem \ref{Thm-CCLT-limit-000} are satisfied
by the sequence of perturbations $\mathfrak f^{x, n} = (f^{x, n}_k)_{0 \leq k \leq n}$
and the twist function $\omega = (g_1, g_2, \ldots) \mapsto \varphi(g_1\cdots g_n x)$. 
As in Subsection \ref{subsec invar func fixed}, 
for any $y \in \bb P(\bb V^*)$ and $t\in \bb R$, denote by $\tau_{y, t}^{\mathfrak f^{x, n}}$ the first time
when the perturbed random walk $( t + \tilde S^{x,n}_{k}(\cdot, y) ) _{k \geq 1}$ exits $\mathbb{R}_{+}= [ 0,\infty),$ 
\begin{align} \label{def-stop time with preturb-001-bb}
\tau_{y,t}^{\mathfrak f^{x, n}} = \min \left\{ k\geq 1: t + \tilde S^{x,n}_{k}(\cdot, y) < 0\right\}. 
\end{align}
Therefore, applying Theorem 4.1 from \cite{GQX24a} and Theorem \ref{Thm-CCLT-limit-000} from Appendix \ref{sec invar func}, 
we obtain that uniformly in $x \in \bb P(\bb V)$, 
\begin{align*}
& \lim_{n\to\infty} \bigg( \frac{ \upsilon_{\mu} \sqrt{2 \pi n} }{ 2 } 
\int_{\bb P(\bb V^*)}  \int_{\bb R_+} \psi(t)
\bb E \left( \varphi(g_1\cdots g_n x);   \tau_{y, t}^{\mathfrak f^{x, n}} >n - 1   \right)   dt \,  \nu^*(dy)  \notag\\
& \qquad\qquad\qquad  -  \int_{\bb R_+} \psi(t) U^{x,n,\varphi}_{[n/2]}(t)  dt \bigg) = 0.  
\end{align*}
Using Proposition 3.1 and Corollary 3.6 from \cite{GQX24a}, we get that uniformly in $x \in \bb P(\bb V)$, 
\begin{align*}
\lim_{n\to\infty} \int_{\bb R_+} \psi(t) U^{x,n,\varphi}_{[n/2]}(t)  dt  
 = \lim_{n\to\infty} \int_{\bb R_+} \psi(t) U^{\varphi}_{[n/2]}(t)  dt  = \int_{\bb P(\bb V) \times \bb R_+}  \varphi(x) \psi(t) \rho(dx,dt) 
\end{align*}
where $U^{\varphi}_{[n/2]}$ is defined by (3.3) in \cite{GQX24a}, and 
 $0$ does not belong to the support of $\psi$. If $0$ actually belongs to the support of $\psi$, the above convergence property
can also be shown to hold by using standard approximation arguments 
alongside the uniform bound provided by Corollary 4.3 of \cite{GQX24a}. 
Therefore, by applying Lemma \ref{lemma-duality-for-product-001}, the assertion \eqref{Asym-rho-001} follows
for functions of the form $h(x, t) = \varphi(x) \psi(t)$,
with $\varphi$ non-negative H\"older continuous on $\bb P(\bb V)$
and $\psi$ compactly supported non-negative continuous on $\bb R$.
Using again Corollary 4.3 of  \cite{GQX24a}, 
we can extend this result to any continuous compactly supported function
 $h$ on $\bb P(\bb V) \times \bb R$ through standard approximation techniques. 


\subsection{Proof of Theorem \ref{Thm-CLLT-cocycle-bound-001}} \label{sec proof of Th1.8}
The proof relies on the conditioned local limit theorem of order $\frac{1}{n}$ (Corollary \ref{Cor-Cara-Bound}), 
bounds for H\"older norms (Proposition \ref{Lem_Inequality_Aoverline}), 
the reversal lemma (Lemma \ref{lemma-duality-for-product-001}) and estimates for random walks with perturbations depending on future coordinates 
(Theorem \ref{Thm-inte-exit-time-001}). 

We fix a constant $\ee>0$ and a non-negative smooth compactly supported function $\kappa$ on $\bb R$ such that 
$\kappa = 1$ on $[-\ee, \ee]$. 
Let $a<b$ be real numbers, and set $F = \mathds 1_{[a, b]}$ and $G = F * \kappa$ so that in particular, $F \leq_{\ee} G$. 
For $n \geq 1$,  $x \in \bb P(\bb V)$ and $t \in \bb R$, 
we set 
\begin{align*}
\Psi_n (x,t): & = \bb P \Big(  t + \sigma(g_n \cdots g_1, x) \in [a, b],   \tau_{x, t} > n -1 \Big) \notag\\
& = \bb E \Big(  F \left( t + \sigma(g_n \cdots g_1, x)  \right);   \tau_{x, t} > n -1 \Big). 
\end{align*}
Set $m=\left[ n/2 \right]$ and $k = n-m.$   
By the Markov property, we get that for any $x \in \bb P(\bb V)$ and $t \in \bb R$, 
\begin{align*}
\Psi_n (x,t) = \bb E \Big[ \Psi_m \Big( g_k \cdots g_1 x, t + \sigma(g_k \cdots g_1, x) \Big);  \tau_{x, t} > k   \Big]. 
\end{align*}
We also define, for any $m \geq 1$, $x' \in \bb P(\bb V)$ and $t' \in \bb R$, 
\begin{align}\label{def-Psi-x-t-00a}
\overline \Psi_m(x', t')  :  = 
\bb E \Big[ G \Big(  t' + \sigma(g_m \cdots g_1, x') \Big)  
  \chi_{\ee}  \Big( t' + \ee + \min_{1 \leq  j \leq m-1}  \sigma(g_j \cdots g_1, x')  \Big) \Big], 
\end{align}
where $\chi_{\ee}$ is defined by \eqref{Def_chiee}. 
Since $F \leq_{\ee} G$, it follows that $\Psi_m \leq_{\ee}  \overline \Psi_m$. 
By reasoning in the same way as in the proof of Proposition \ref{Lem_Inequality_Aoverline}, 
one can verify that the function $\overline \Psi_m$ belongs to the space $\scr H_{\gamma}$,
and that $\| \overline \Psi_m \|_{ \scr H_{\gamma} }  \leq \frac{c}{\ee}  \| G\|_{ \scr H_{\gamma} } \leq  c'(b-a+1)$. 
Therefore, by Corollary \ref{Cor-Cara-Bound}, 
there exists a constant $c>0$ such that for any $n \geq 2$, $x \in \bb P(\bb V)$, $t \in \bb R$ and $a<b$, 
\begin{align}\label{Cara-Bound008}
& \Psi_n (x,t)  = \bb E \left[ \Psi_m \Big( g_k \cdots g_1 x, t + \sigma(g_k \cdots g_1, x)  \Big) 
  \mathds 1_{ \left\{ \tau_{x, t} > k \right\}}  \right]  \notag\\
 & \leq  \frac{ c}{n} \left( 1 + \max\{t, 0\} \right) 
\bigg( \int_{\bb P(\bb V)}  \int_{\mathbb R}  \overline \Psi_m \left(x', t' \right)  dt' \nu(dx') 
    +   \frac{b-a +1}{ \sqrt{n}}  \bigg).  
\end{align}
Now we provide an upper bound for the integral in \eqref{Cara-Bound008}. 
By \eqref{def-Psi-x-t-00a} and \eqref{Def_chiee}, we have 
\begin{align*}
\overline \Psi_m \left(x', t' \right) 
\leq \bb E \Big[ G \Big( t' + \sigma(g_m \cdots g_1, x') \Big);   
  \tau_{x', t' + 2 \ee} > m - 1 \Big]. 
\end{align*}
Using a change of variable and the reversal identity \eqref{duality id-002} of Lemma \ref{lemma-duality-for-product-001}, 
we derive that there exists a constant $c>0$ such that for any $m \geq 1$, $x' \in \bb P(\bb V)$ and $y\in \Delta_{x'}$, 
\begin{align*}
\int_{\mathbb R}  \overline \Psi_m \left(x', t' \right)  dt'  
& \leq  \int_{\mathbb R}  \bb E  \Big[ G \Big( t' - 2 \ee + \sigma(g_m \cdots g_1, x') \Big);   
  \tau_{x', t'} > m - 1 \Big]   dt'  \notag\\
& = \int_{\bb R} \bb E\Big[    G(u - 2 \ee);   u + \tilde S^{x',m}_{j}(\cdot,y) \geq 0, 1 \leq j \leq m-1  \Big] du  \notag\\
& \leq c \int_{a - c}^{b+c} \bb P \left(  u + \tilde S^{x',m}_{j}(\cdot,y) \geq 0, 1 \leq j \leq m-1  \right)  du. 
\end{align*}
Integrating over $\bb P(\bb V) \times \bb P(\bb V^*)$ with respect to $\nu \otimes \nu^*$,
we get 
\begin{align*}
& \int_{\bb P(\bb V)} \int_{\mathbb R}  \overline \Psi_m \left(x', t' \right)  dt'   \nu(dx') \notag\\
& \leq c \int_{\bb P(\bb V)}  \int_{a - c}^{b+c} 
\int_{\bb P(\bb V^*)}  \bb P \Big(  u + \tilde S^{x',m}_{j}(\cdot,y) \geq 0, 1 \leq j \leq m-1  \Big)  \nu^*(dy) du  \nu(dx'). 
\end{align*}
Applying Corollary 2.3 and Proposition 2.4 in \cite{GQX24a}, 
and the strong approximation result (6.5) of \cite{GLP17},  the assumptions of Theorem \ref{Thm-inte-exit-time-001} are satisfied
by the sequence of perturbations $\mathfrak f^{x, n} = (f^{x, n}_m)_{0 \leq m \leq n}$.  
By Theorem \ref{Thm-inte-exit-time-001}, it follows that there exist constants $c', c'' >0$ such that for any $m \geq 1$ and $a<b$, 
\begin{align*}
\int_{\bb P(\bb V)} \int_{\mathbb R}  \overline \Psi_m \left(x', t' \right)  dt'   \nu(dx')
 \leq  c'  \int_{a - c}^{b+c}  \frac{1 + \max\{u, 0\}}{\sqrt{n}} du 
  \leq  \frac{c''}{\sqrt{n}} (1+ b - a) (1 + \max\{b, 0\}). 
\end{align*}
Substituting this into \eqref{Cara-Bound008} concludes the proof of Theorem \ref{Thm-CLLT-cocycle-bound-001}.

\subsection{Proof of Corollary \ref{Corol-local prob for exit time}}
For $x \in \bb P(\bb V)$ and $t\in \bb R$, set
$F(x,t) = \bb P\left( t+\sigma(g_1,x)<0 \right).$
By Markov's inequality and the moment assumption \eqref{Exponential-moment}, 
there exist constants $\alpha, c > 0$ such that for any $x \in \bb P(\bb V)$ and $t \in \bb R$, 
\begin{align*} 
F(x,t) \leq c e^{-\alpha t}. 
\end{align*}
Using the Markov property, we derive that, for any $n\geq 2$, $x \in \bb P(\bb V)$ and $t\in \bb R$,
\begin{align*} 
 \bb P\left( \tau_{x,t}=n \right) 
&= \int_0^{\infty} \bb E \Big[ F(g_{n-1}\cdots g_1 x, t');  t+\sigma(g_{n-1}\cdots g_1,x)\in dt', \tau_{x,t} > n-1 \Big] \notag\\
&\leq c\int_0^{\infty} e^{-\alpha t'} \bb P\Big(  t+\sigma(g_{n-1}\cdots g_1,x)\in dt', \tau_{x,t} > n-1 \Big) \notag\\
&\leq  c \sum_{k=0}^{\infty} e^{-\alpha k} \bb P\Big(  t+\sigma(g_{n-1}\cdots g_1,x)\in [k,k+1), \tau_{x,t} > n-1 \Big) \notag\\
&\leq c \sum_{k=0}^{\infty} e^{-\alpha k} (1+\max\{t,0\})(k+1)(n-1)^{-3/2} \notag\\
&\leq c (1+\max\{t,0\}) n^{-3/2},
\end{align*}
where in the fourth line we used Theorem \ref{Thm-CLLT-cocycle-bound-001}.
The conclusion of the corollary follows.


\section{Proof of Theorem \ref{Thm-CLLT-cocycle}} \label{Proof of main Theorem} 

In this final section of the paper, we establish our main results, Theorem \ref{Thm-CLLT-cocycle} and its Corollary \ref{Thm-CLLT-cocycle-002}.

The main steps of the proof are outlined as follows.  
We shall prove a version of Theorem \ref{Thm-CLLT-cocycle} that provides upper and lower bounds 
instead of the asymptotic formulation, utilizing the excursion decomposition approach. 
To this aim, the first half of the trajectory is controlled by using the Caravenna-type local limit theorem stated in Theorem \ref{t-A 001}.
The second half of the trajectory is reversed by means of Lemma \ref{lemma-duality-for-product-001} and then 
analyzed through the conditioned central limit theorems for random walks with perturbations developed in Theorem \ref{Thm-CCLT-limit}.

\subsection{Regularity bounds for the expectations}

In the proof of Theorem \ref{Thm-CLLT-cocycle}, we will make use of a technical result stated below, which 
is similar to Proposition \ref{Lem_Inequality_Aoverline}. 
This result will allow us to smooth certain functions appearing in the proof of Theorem \ref{Thm-CLLT-cocycle}
in order to be able to apply Theorem \ref{t-A 001}. 

For $\gamma \in (0,1]$, we say that a function $G$ on $\bb P(\bb V) \times \bb R$
is $\gamma$-regular if there is a constant $c$ such that for any $(x, t)$ and $(x', t')$ in $\bb P(\bb V) \times \bb R$,
we have $|G(x,t) - G(x',t')| \leq c( |t-t'| + d(x,x')^{\gamma} )$. 
In other words, a function is $\gamma$-regular if and only if it is Lipschitz continuous on $\bb P(\bb V) \times \bb R$
when $\bb R$ is equipped with the standard distance and $\bb P(\bb V)$ is equipped with the distance 
$(x, x') \mapsto d(x,x')^{\gamma}$. 
Recall that the function $\chi_{\ee}$ is defined by \eqref{Def_chiee}. 

\begin{proposition}\label{Lem_HolderNormPsi}
Assume that $\Gamma_{\mu}$ is proximal and strongly irreducible, $\mu$ admits finite exponential moments
and that the Lyapunov exponent $\lambda_{\mu}$ is zero. 
For any small enough $\gamma >0$, there exist constants $\beta, c >0$ with the following property. 
Let $G$ be a $\gamma$-regular function with compact support on $\bb P(\bb V) \times \bb R$.
For $(x,t) \in \bb P(\bb V) \times \bb R$, $m \geq 1$ and $\ee \in (0, 1)$, define 
\begin{align*}
\overline \Psi_{m,\ee}(x, t)  :  = \bb E \Big[   G \Big( g_m \cdots g_1 x, t + \sigma (g_m \cdots g_1, x) \Big) 
  \times  \chi_{\ee}  \Big( t + \ee + \min_{1 \leq  j \leq m-1}  \sigma (g_j \cdots g_1, x) \Big)  \Big]. 
\end{align*}
Then we have $\overline \Psi_{m,\ee} \in \scr H_{\gamma}$ and  $\| \overline \Psi_{m,\ee} \|_{\scr H_{\gamma}} \leq \frac{c}{\ee m^{\beta}}.$
\end{proposition}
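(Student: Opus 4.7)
The plan is to bound separately the two contributions to $\| \overline\Psi_{m,\ee} \|_{\scr H_\gamma}$, namely $\int_{\bb R} \sup_{x \in \bb P(\bb V)} |\overline\Psi_{m,\ee}(x,t)|\, dt$ and $\int_\bb R \sup_{x \neq x'} |\overline\Psi_{m,\ee}(x,t)-\overline\Psi_{m,\ee}(x',t)|/d(x,x')^\gamma \, dt$, and then sum. The polynomial gain in $m$ will come from the interplay between the positivity constraint carried by $\chi_\ee$ and the compactness of $\supp(G)$, while the $\ee^{-1}$ factor originates from the Lipschitz constant of $\chi_\ee$.

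For the sup-norm part, I would first observe that $\chi_\ee \leq \mathds 1_{[-\ee,\infty)}$, whence, writing $[A,B]$ for a compact interval containing the projection of $\supp(G)$ onto $\bb R$,
\begin{align*}
\sup_{x \in \bb P(\bb V)} |\overline\Psi_{m,\ee}(x,t)| \leq \|G\|_\infty \, \sup_{x \in \bb P(\bb V)} \bb P \Big( t+\sigma(g_m\cdots g_1,x) \in [A,B],\ \tau_{x,t+2\ee} > m-1 \Big).
\end{align*}
Theorem \ref{Thm-CLLT-cocycle-bound-001}, whose proof does not rely on the present proposition, provides a uniform bound $c_G (1+\max\{t,0\})m^{-3/2}$ valid on all of $\bb R$. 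Integrating this on $|t| \leq \sqrt{m\log m}$ already gives an $O(m^{-1/2}\log m)$ contribution. For $t > \sqrt{m\log m}$, the required $\sigma(g_m\cdots g_1,x) \leq B-t$ is a Gaussian-regime large deviation, and a Chernoff estimate based on the spectral decomposition of $P_z$ from Lemma \ref{Lem_Perturbation} together with \eqref{decomp-lambda001} gives a tail $\exp(-ct^2/m)$; for $t < -2\ee$, the constraint $\min_{1 \leq j \leq m-1}\sigma(g_j\cdots g_1,x) \geq -t-2\ee$ in particular forces $\sigma(g_1,x) \geq -t-2\ee$, a tail $Ce^{-\alpha|t|}$ by \eqref{Exponential-moment}. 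Summing these three regimes yields $\int_{\bb R} \sup_x |\overline\Psi_{m,\ee}(x,t)|\,dt \leq c\, m^{-\beta}$ for some $\beta > 0$.

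For the H\"older-seminorm part, I would follow the three-piece split used in the proof of Proposition \ref{Lem_Inequality_Aoverline}, writing $\overline\Psi_{m,\ee}(x,t)-\overline\Psi_{m,\ee}(x',t) = I_1+I_2+I_3$. The term $I_1$ replaces $\sigma(g_m\cdots g_1,x)$ by $\sigma(g_m\cdots g_1,x')$ in the $t$-argument of $G$ and is controlled by the Lipschitz continuity of $G$ in $t$ (from $\gamma$-regularity) together with Lemma \ref{Lem-Holder-cocycle}; the term $I_2$ replaces $g_m\cdots g_1 x$ by $g_m\cdots g_1 x'$ in the projective argument of $G$ and is controlled by the $\gamma$-H\"older continuity of $G$ in that variable together with the exponential contraction of the random walk on $\bb P(\bb V)$ from \cite[Chapter V, Proposition 2.3]{Boug-Lacr85} (this step in fact produces an extra exponential-in-$m$ factor); the term $I_3$ accounts for the difference of the two $\chi_\ee$ factors, and here the $\frac{1}{\ee}$-Lipschitz continuity of $\chi_\ee$ combined with Lemma \ref{Lem-Holder-cocycle} is the source of the $\ee^{-1}$ factor in the target bound. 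Once the pointwise H\"older bound is produced, the remaining integral in $t$ has the same structure as in the sup-norm analysis (endpoint constrained to $[A,B]$ and walk above $-2\ee$), so the same three-regime splitting gives the same $m^{-\beta}$ decay.

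The main obstacle is to extract a genuine polynomial decay in $m$ rather than a bound of order $O(1)$. The uniform estimate of Theorem \ref{Thm-CLLT-cocycle-bound-001} is only useful on the scale $|t| \lesssim \sqrt m$; outside this range, the bound $(1+|t|)m^{-3/2}$ fails to be integrable and must be replaced by true large-deviation input, namely Chernoff for $t$ large positive and the exponential-moment bound for $t$ large negative. Moreover, as in the proof of Proposition \ref{Lem_Inequality_Aoverline}, the unboundedness of the cocycle $\sigma$ and the lack of strong projective contractivity force a careful truncation along the events $\{\sigma(g_m\cdots g_1,x) - \log\|g_m\cdots g_1\| \in (-j\eta,-(j-1)\eta]\}$ controlled by Lemma \ref{Lem-cocycle-norm-001}, both in the sup-norm analysis and in each of the three H\"older pieces, in order to keep every error term integrable in $t$ while preserving the required $\ee^{-1}m^{-\beta}$ dependence.
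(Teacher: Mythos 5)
Your overall decomposition into a sup-norm part and a H\"older-seminorm part, the identification of the $\ee^{-1}$ factor (Lipschitz constant of $\chi_\ee$), and the three-regime $t$-integral argument based on Theorem~\ref{Thm-CLLT-cocycle-bound-001} are exactly what the paper does; the $t$-integral is packaged as the stand-alone Lemma~\ref{Lem-inte-prob-001}, with only cosmetic differences from your sketch (Chebyshev plus Burkholder's inequality for the large-positive tail instead of your Chernoff estimate, and a bound on $\bb P^{1/p}$ with $p<2$ so it can be fed into H\"older's inequality in the seminorm analysis). The pieces you list for the seminorm are also the right ones.

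The one place your sketch would actually fail is the truncation you propose for the seminorm pieces, namely re-using the events $B_{x,m,j}=\{\sigma(g_m\cdots g_1,x)-\log\|g_m\cdots g_1\|\in(-j\eta,-(j-1)\eta]\}$ from the proof of Proposition~\ref{Lem_Inequality_Aoverline}. In that proposition those events serve to make the $t$-integral finite after the change of variable $t\mapsto t-\log\|g_m\cdots g_1\|$, at the cost of discarding the $\overline\chi_\ee$ factor; there no decay in $m$ is asked for. Here, however, the $\chi_\ee$-constraint together with the compact support of $G$ is the only source of the required $m^{-\beta}$ decay, and that change of variable would discard it, leaving a bound of order $O(\ee^{-1})$ with no decay. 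The paper instead introduces $D_{x,x',m}=\{\max_{1\leq k\leq m}|\sigma(g_k\cdots g_1,x)-\sigma(g_k\cdots g_1,x')|\leq m^{\varkappa}\}$ for a small $\varkappa>0$: on this event the exit-time constraint $\tau_{x',t+2\ee}>m-1$ produced by the $\gamma$-regularity bound for $G$ can be replaced by $\tau_{x,\,t+2\ee+m^\varkappa}>m-1$, so that after H\"older's inequality Lemma~\ref{Lem-inte-prob-001} applies (its range $b\leq n^\varkappa$ is chosen precisely to absorb this shift without spoiling the decay); while $\bb P(D_{x,x',m}^c)\leq cm^{-p\varkappa}d(x,x')^{2\gamma}$ by Chebyshev and Lemma~\ref{Lem-Holder-cocycle} supplies both the $d(x,x')^\gamma$ needed for the seminorm and a polynomial factor that, combined with exponential-moment bounds on $\log N(g_m\cdots g_1)$, keeps the complementary contribution integrable in $t$ with polynomial decay in $m$.
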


In the proof of Proposition \ref{Lem_HolderNormPsi}, 
we will use the following lemma, the proof of which is based on Theorem \ref{Thm-CLLT-cocycle-bound-001}.

\begin{lemma}\label{Lem-inte-prob-001}
Assume that $\Gamma_{\mu}$ is proximal and strongly irreducible, $\mu$ admits finite exponential moments
and that the Lyapunov exponent $\lambda_{\mu}$ is zero. 
Then, for every $1 \leq p < 2$, there exist constants $c, \eta, \varkappa >0$ such that for any $n \geq 1$ and $- \infty < a <b \leq n^{\varkappa}$, 
\begin{align*}
\int_{\bb R} \sup_{x \in \bb P(\bb V)} \bb P^{1/p} \Big( t + \sigma(g_n \cdots g_1, x) \in [a, b], \tau_{x, t} > n-1 \Big) dt
\leq \frac{c}{n^{\eta}} ( 1+ b-a )^{1/p}.  
\end{align*} 
\end{lemma}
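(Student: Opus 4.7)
The plan is to combine the pointwise bound of Theorem \ref{Thm-CLLT-cocycle-bound-001} with elementary tail estimates for the cocycle, splitting the integration over $t \in \bb R$ into three regions: a left tail $t \leq -M = -c_1 \log n$, a bulk $-M \leq t \leq T = n^{1/2+\delta}$ for small $\delta > 0$, and a right tail $t \geq T$. The left and right tails will be made negligible, so that the bulk contribution, estimated directly from Theorem \ref{Thm-CLLT-cocycle-bound-001}, determines the exponents.

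The key step is the bulk range, where Theorem \ref{Thm-CLLT-cocycle-bound-001} gives the integrand bounded by $c(1+\max\{t,0\})(1+b-a)(1+b) n^{-3/2}$. Taking the $(1/p)$-power and integrating over $[-M,T]$ produces a factor of order $T^{(p+1)/p}$ from $(1+\max\{t,0\})^{1/p}$, while the constraint $b \leq n^{\varkappa}$ absorbs the factor $(1+b)^{1/p}$ into $n^{\varkappa/p}$. The resulting $n$-exponent equals $\varkappa/p + (p-2)/(2p) + \delta (p+1)/p$. The crucial feature is that $(p-2)/(2p) < 0$ for $p < 2$, leaving a positive margin that permits choosing $\varkappa, \delta > 0$ small enough to make this exponent $\leq -\eta$ for some $\eta > 0$, producing a contribution of the required form $c n^{-\eta}(1+b-a)^{1/p}$. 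For the left tail, since $\{\tau_{x,t}>n-1\} \subset \{\sigma(g_1,x) \geq -t\}$, Markov's inequality together with \eqref{Exponential-moment} yields $\bb P \leq c e^{\alpha t}$, and integrating gives a contribution of order $n^{-c_1 \alpha/p}$, arbitrarily small by taking $c_1$ large. For the right tail I would use $\{t+\sigma(g_n\cdots g_1,x) \in [a,b]\} \subset \{\sigma(g_n\cdots g_1,x) \leq b-t\}$ with $t-b \geq T/2$, and apply Chebyshev's inequality with a high moment: by the martingale centering of $\sigma$ used in the proof of Lemma \ref{FK-joint-inequality} combined with Burkholder-Rosenthal-type bounds under \eqref{Exponential-moment}, we obtain $\sup_x \bb E|\sigma(g_n \cdots g_1,x)|^k \leq c_k n^{k/2}$ for every integer $k \geq 2$, whence $\bb P \leq c_k n^{k/2}/(t-b)^k$ and the integrated contribution is of order $n^{1/2 + \delta - k\delta/p}$, which is $\leq n^{-\eta}$ once $k$ is chosen large.

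The main obstacle is the precise bookkeeping of the parameters $\varkappa$, $\delta$, $M$, $T$, and $k$: they must be chosen compatibly so that all three contributions are simultaneously bounded by $c n^{-\eta}(1+b-a)^{1/p}$. The assumption $p < 2$ is essential, since it produces the strictly positive gap $(2-p)/2$ between the polynomial-in-$t$ growth of the pointwise bound of Theorem \ref{Thm-CLLT-cocycle-bound-001} and the $n^{-3/2}$ decay; without this gap, no positive exponent $\eta$ could be extracted from the bulk contribution, and the method would break down at $p = 2$.
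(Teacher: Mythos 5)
Your proposal is correct and follows essentially the same route as the paper's proof: a three-way split of the $t$-integral (exponential-moment argument on the far left, Theorem \ref{Thm-CLLT-cocycle-bound-001} in the bulk with a cutoff near $n^{1/2}$ whose exact exponent is tuned using the margin $(2-p)/2>0$, and a Chebyshev/Burkholder high-moment bound on the far right). The only differences are cosmetic—the paper uses a single threshold $n^\beta$ with $\tfrac12<\beta<\tfrac{3}{2(p+1)}$ for both tails while you use $c_1\log n$ on the left and $n^{1/2+\delta}$ on the right—and both identify $p<2$ as the source of the positive gap.
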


\begin{proof}
We choose $\frac{1}{2} < \beta <\frac{3}{2(p+1)}$, which is possible since $p<2$. Then, we write
\begin{align*}
 \int_{\bb R} \sup_{x \in \bb P(\bb V)} \bb P^{1/p} \Big( t + \sigma(g_n \cdots g_1, x) \in [a, b], \tau_{x, t} > n-1 \Big) dt  
 =: I_1(n) + I_2(n) + I_3(n), 
\end{align*}
where 
\begin{align*}
I_1(n) & = \int_{-\infty}^{-n^\beta} \sup_{x \in \bb P(\bb V)} \bb P^{1/p} \Big( t + \sigma(g_n \cdots g_1, x) \in [a, b], \tau_{x, t} > n-1 \Big) dt, \notag\\
I_2(n) & = \int_{-n^\beta}^{n^\beta}  \sup_{x \in \bb P(\bb V)} \bb P^{1/p} \Big( t + \sigma(g_n \cdots g_1, x) \in [a, b], \tau_{x, t} > n-1 \Big) dt, \notag\\ 
I_3(n) & = \int_{n^\beta}^{\infty} \sup_{x \in \bb P(\bb V)} \bb P^{1/p} \Big( t + \sigma(g_n \cdots g_1, x) \in [a, b], \tau_{x, t} > n-1 \Big) dt. 
\end{align*}
For the first term $I_1(n)$, we use the following domination: for $n \geq 2$, $x \in \bb P(\bb V)$ and $t<0$,
\begin{align*}
\mathbb P(\tau_{x,t}>n-1)\leq \mathbb P(\sigma(g_1,x)\geq -t)\leq 
\mathbb P(\log\|g_1\|\geq -t). 
\end{align*}
Hence, as the measure $\mu$ has finite exponential moments, there exist constants $\alpha, \eta, c>0$ such that for any $n \geq 2$ and $a<b$, 
\begin{align*}
I_1(n) \leq c \int_{-\infty}^{-n^\beta} e^{\alpha t/p}dt 
\leq c n^{-\eta}. 
\end{align*}
For the second term $I_2(n)$, we dominate it by using Theorem \ref{Thm-CLLT-cocycle-bound-001}, which gives
that there exists a constant $c>0$ such that for any $n \geq 1$ and $a<b$, 
\begin{align*}
I_2(n) & \leq  c n^{-3/2p} ( 1+ b-a )^{1/p} (1 + \max\{b, 0\} )^{1/p}
\int_{-n^\beta}^{n^\beta}(1+\max\{t, 0 \} )^{1/p}dt \notag\\
&\leq 
c ( 1+ b-a )^{1/p} (1+\max\{b, 0\})^{1/p} n^{\beta(1+1/p)-3/2p}. 
\end{align*}
Since $\beta < \frac{3}{2(p+1)}$, by choosing $\varkappa >0$ small enough, the right-hand side of the above bound 
is dominated by $c ( 1+ b-a )^{1/p} n^{-\eta}$ as $b \leq n^{\varkappa}$ for some constant $\eta >0$. 


Finally, for the last term $I_3(n)$, we choose some $q> 1$ to be determined later. 
Using Chebyshev's  inequality, when $t\geq  n^{\beta}> b+1$, we write
\begin{align*}
\mathbb P \Big( t+\sigma(g_n\cdots g_1,x) \in [a, b] \Big)
 \leq (t-b)^{-q} \mathbb E \Big( |\sigma(g_n\cdots g_1,x)|^{q} \Big) 
 \leq c t^{-q} \mathbb E \Big( |\sigma(g_n\cdots g_1,x)|^{q} \Big). 
\end{align*}
By Lemma 10.18 in \cite{BQ16b}, the sequence $(\sigma(g_n\cdots g_1,x))_{n \geq 1}$
is the sum of a martingale and a bounded term, 
so by Burkholder's inequality, we obtain 
\begin{align*}
\mathbb P \Big( t+\sigma(g_n\cdots g_1,x) \in [a, b] \Big) \leq  c t^{-q}n^{q/2}. 
\end{align*}
Hence, for $p<q$,
$$I_3(n) \leq c n^{q/2p}\int_{n^\beta}^\infty \frac{dt}{t^{q/p}}\leq 
c n^{q/2p}n^{\beta (1-q/p)}.$$
Since $\beta > 1/2$, by taking $q$ large enough, we have 
$q/2p+\beta (1-q/p)<0$, which in turn implies that $I_3(n)$ is dominated by $c n^{-\eta}$ for some constant $\eta>0$.
The assertion follows. 
\end{proof}

\begin{proof}[Proof of Proposition \ref{Lem_HolderNormPsi}]
The proof is based on Lemma \ref{Lem-inte-prob-001} and the contraction property of the random walk (Lemma \ref{Lem-Holder-cocycle}). 
It is enough to prove the proposition for a non-negative function $G$. Recall that 
\begin{align}\label{Psi-m-H-alpha-001}
\|  \overline \Psi_{m,\ee} \|_{\scr H_{\gamma}}
  =  \int_{\bb R}   \sup_{x \in \bb P(\bb V)}  | \overline \Psi_{m,\ee} \left(x, t \right)|  dt 
 +  \int_{\bb R} \sup_{x, x' \in \bb P(\bb V): x \neq x'}  
    \frac{| \overline \Psi_{m,\ee} \left(x, t \right)  - \overline \Psi_{m,\ee} \left(x', t \right)  |}{ d(x,x')^{\gamma} } dt. 
\end{align}

We start to bound the first term in \eqref{Psi-m-H-alpha-001}.
For $t \in \bb R$, set $H(t) = \sup_{x \in \bb P(\bb V)} |G(x, t)|$. 
We fix some interval $[a, b]$ such that the function $H$ has support in $[a, b]$. 
Then, by Lemma \ref{Lem-inte-prob-001}, there exist constants $c, c', \eta >0$ such that for any $m \geq 1$, 
\begin{align}\label{decom-Integral-sup-001-aa}
& \int_{\bb R}   \sup_{x \in \bb P(\bb V)}  | \overline \Psi_{m,\ee} \left(x, t \right)|  dt \notag\\
& \leq  \int_{\bb R}   \sup_{x \in \bb P(\bb V)}  \bb E \Big[   H \Big(t + \sigma (g_m \cdots g_1, x) \Big)  
   \chi_{\ee}  \Big( t + \ee + \min_{1 \leq  j \leq m-1}  \sigma (g_j \cdots g_1, x) \Big)  \Big] dt \notag\\
& \leq c \int_{\bb R}   \sup_{x \in \bb P(\bb V)}  \bb P \Big(   t + \sigma (g_m \cdots g_1, x) \in [a, b], \tau_{x, t + 2\ee} > m -1 \Big) dt \notag\\
& \leq c' m^{-\eta}. 
\end{align}

For the second term in \eqref{Psi-m-H-alpha-001}, 
we define, for $m \geq 1$, $x, x' \in \bb P(\bb V)$ and $t \in \bb R$, 
\begin{align*}
& I_1(x, x', m, t) := \bb E \bigg[   G \Big( g_m \cdots g_1 x, t + \sigma (g_m \cdots g_1, x) \Big)      \notag\\
 & \qquad  \times  \bigg| \chi_{\ee}  \Big( t + \ee + \min_{1 \leq  j \leq m-1}  \sigma (g_j \cdots g_1, x) \Big)  
  - \chi_{\ee}  \Big( t + \ee + \min_{1 \leq  j \leq m-1}  \sigma (g_j \cdots g_1, x') \Big) \bigg|  \bigg],   \notag\\
 & I_2(x, x', m, t) := \bb E \bigg[  \chi_{\ee}  \Big( t + \ee + \min_{1 \leq  j \leq m-1}  \sigma (g_j \cdots g_1, x') \Big)  \notag\\
 & \qquad \times   \Big| G \Big( g_m \cdots g_1 x, t + \sigma (g_m \cdots g_1, x) \Big)  
  - G \Big( g_m \cdots g_1 x', t + \sigma (g_m \cdots g_1, x') \Big) \Big|     \bigg]. 
\end{align*}
For $I_1(x, x', m, t)$, 
we let $\varkappa>0$ as in Lemma \ref{Lem-inte-prob-001}, and define, 
for $x, x' \in \bb P(\bb V)$ and $m \geq 1$, 
\begin{align}\label{def-D-x1-x2-m}
D_{x, x', m} = \Big\{ \Big| \sigma (g_k \cdots g_1, x) - \sigma (g_k \cdots g_1, x') \Big| \leq m^{\varkappa}, \forall 1 \leq k \leq m  \Big\}. 
\end{align}
Note that, 
 for any $x, x' \in \bb P(\bb V)$, $t \in \bb R$ and $m \geq 1$, on the set $D_{x, x', m}$,  
 we have that if 
 $$t  + 2\ee + \min_{1 \leq  j \leq m-1}  \sigma (g_j \cdots g_1, x) < - m^{\varkappa},$$ then 
 $t  + 2\ee + \min_{1 \leq  j \leq m-1}  \sigma (g_j \cdots g_1, x') < 0$
 and therefore, 
 \begin{align*}
 \chi_{\ee}  \Big( t + \ee + \min_{1 \leq  j \leq m-1}  \sigma (g_j \cdots g_1, x) \Big) 
 = \chi_{\ee}  \Big( t  + \ee+ \min_{1 \leq  j \leq m-1}  \sigma (g_j \cdots g_1, x') \Big) = 0. 
\end{align*}
In other words, for any $x, x' \in \bb P(\bb V)$, $t \in \bb R$ and $m \geq 1$, on the set $D_{x, x', m}$, 
\begin{align*}
& \left| \chi_{\ee}  \Big( t + \ee + \min_{1 \leq  j \leq m-1}  \sigma (g_j \cdots g_1, x) \Big) 
  - \chi_{\ee}  \Big( t  + \ee+ \min_{1 \leq  j \leq m-1}  \sigma (g_j \cdots g_1, x') \Big)  \right|  \notag\\
& = \left|   \chi_{\ee}  \Big( t + \ee + \min_{1 \leq  j \leq m-1}  \sigma (g_j \cdots g_1, x) \Big) 
  - \chi_{\ee}  \Big( t  + \ee+ \min_{1 \leq  j \leq m-1}  \sigma (g_j \cdots g_1, x') \Big)  \right|  \notag\\
& \qquad \times  \mathds 1_{\{ t  + 2\ee + \min_{1 \leq  j \leq m-1}  \sigma (g_j \cdots g_1, x) \geq - m^{\varkappa} \}}   \notag\\
& \leq \frac{1}{\ee} 
\left|  \min_{1 \leq  j \leq m-1}  \sigma (g_j \cdots g_1, x) - \min_{1 \leq  j \leq m-1}  \sigma (g_j \cdots g_1, x') \right| 
\mathds 1_{\{ \tau_{x, t  + 2 \ee + m^{\varkappa}} > m-1  \}}  \notag\\ 
& \leq \frac{1}{\ee} 
 \max_{1 \leq  j \leq m-1} \Big|    \sigma (g_j \cdots g_1, x) -  \sigma (g_j \cdots g_1, x') \Big| 
\mathds 1_{\{ \tau_{x, t  + 2 \ee + m^{\varkappa}} > m-1  \}}
\end{align*}
where the first inequality holds since $\chi_{\ee}$ is $1/\ee$-Lipschitz continuous on $\bb R$, by \eqref{Def_chiee}. 
Since $|\chi_{\ee}| \leq 1$ and $H(t) = \sup_{x \in \bb P(\bb V)} |G(x, t)|$ for $t \in \bb R$, 
it follows that
\begin{align}\label{expect-G-Holder-contin-001}
I_1(x, x', m, t)   
 & \leq  \bb E \bigg[   H \Big( t + \sigma (g_m \cdots g_1, x) \Big)   
  \bigg| \chi_{\ee}  \Big( t + \ee + \min_{1 \leq  j \leq m-1}  \sigma (g_j \cdots g_1, x) \Big)   \notag\\
 & \qquad\qquad\qquad\qquad\qquad  - 
 \chi_{\ee}  \Big( t + \ee + \min_{1 \leq  j \leq m-1}  \sigma (g_j \cdots g_1, x') \Big) \bigg|; D_{x, x', m}  \bigg]   \notag\\
  & \quad + 2  \bb E \Big[   H \Big( t + \sigma (g_m \cdots g_1, x) \Big);  D_{x, x', m}^c  \Big]   \notag\\
& \leq I_{11}(x, x', m, t) + I_{12}(x, x', m, t),  
\end{align}
where 
\begin{align*}
& I_{11}(x, x', m, t)  =  \frac{1}{\ee}  \bb E \bigg[   H \Big( t + \sigma (g_m \cdots g_1, x) \Big)  \notag\\
& \qquad
 \times \max_{1 \leq  j \leq m-1}  \Big|   \sigma (g_j \cdots g_1, x) -   \sigma (g_j \cdots g_1, x') \Big|; 
\tau_{x, t  + 2 \ee + m^{\varkappa}} > m-1   \bigg] \notag\\
& I_{12}(x, x', m, t)  =  2  \bb E \Big[   H \Big( t + \sigma (g_m \cdots g_1, x) \Big);  D_{x, x', m}^c  \Big]. 
\end{align*}
We choose $1 < q <2$ and set $q' = \frac{q}{q-1}$. Recall also that the function $H$ has support in $[a, b]$. 
For the first term in \eqref{expect-G-Holder-contin-001}, 
we apply H\"older's inequality and Lemma \ref{Lem-Holder-cocycle} to get 
\begin{align*}
 I_{11}(x, x', m, t)  
& \leq  \frac{1}{\ee}  \bb E^{1/q} \bigg[   H \Big( t + \sigma (g_m \cdots g_1, x) \Big)^q;
\tau_{x, t  + 2 \ee + m^{\varkappa}} > m-1   \bigg] \notag\\
& \quad \times \bb E^{1/q'} \left( \max_{1 \leq  j \leq m-1} \left|    \sigma (g_j \cdots g_1, x) -   \sigma (g_j \cdots g_1, x') \right|^{q'} \right)
\notag\\
& \leq  \frac{c}{\ee}  \bb P^{1/q} \Big(   t + \sigma (g_m \cdots g_1, x) \in [a, b],  \tau_{x, t  + 2 \ee + m^{\varkappa}} > m-1   \Big)   \notag\\
& \quad \times
 \bb E^{1/q'} \left(  \max_{1 \leq  j \leq m-1}  \left|  \sigma (g_j \cdots g_1, x) -  \sigma (g_j \cdots g_1, x') \right|^{q'} \right)  \notag\\
 & \leq  \frac{c}{\ee}  d(x, x')^{\gamma} \bb P^{1/q} \Big(   t + \sigma (g_m \cdots g_1, x) \in [a, b],  \tau_{x, t  + 2 \ee + m^{\varkappa}} > m -1  \Big). 
\end{align*}
Applying Lemma \ref{Lem-inte-prob-001}, we derive that there exists a constant $\eta >0$ such that for any $m \geq 1$, 
\begin{align}\label{bound-I-11-x-m-t}
\int_{\bb R} \sup_{x, x' \in \bb P(\bb V): x \neq x'}  \frac{I_{11}(x, x', m, t) }{d(x, x')^{\gamma}} dt
\leq  \frac{c}{\ee} m^{-\eta}. 
\end{align}
For the second term in \eqref{expect-G-Holder-contin-001}, we fix $p > \max\{1, \frac{6}{\varkappa} \}$. 
Then, by \eqref{def-D-x1-x2-m} and Lemma \ref{Lem-Holder-cocycle}, 
we get that, for any small enough $\gamma > 0$, 
\begin{align*}
\bb P \left( D_{x, x', m}^c \right) 
& \leq m^{- p \varkappa} \bb E \left( \max_{1 \leq j \leq m-1} |\sigma(g_j \cdots g_1,x)-\sigma(g_j\cdots g_1,x')|^p \right) \notag\\
& \leq   c m^{- p \varkappa} d(x,x')^{2 \gamma}. 
\end{align*}
Since the function $H$ has support in $[a, b]$, we have
\begin{align*}
 & \bb E^{1/2} \left[ H \Big(t + \sigma (g_m \cdots g_1, x) \Big)^{2} \right] \notag\\
 & \leq \|H\|_{\infty} \bb P^{1/2} \Big(  |t + \sigma (g_m \cdots g_1, x)| \leq |a| + |b| \Big) \notag\\
 & \leq c \bb P^{1/2} \Big(  t \in \Big[ -\log \|g_m \cdots g_1\| - |a| - |b|, \log \|(g_m \cdots g_1)^{-1}\| + |a| + |b| \Big]  \Big). 
\end{align*}
Integrating over $t \in \bb R$ and using the Cauchy-Schwarz inequality,  we get 
\begin{align*}
& \int_{\bb R}  \sup_{x, x' \in \bb P(\bb V): x \neq x'}  \frac{I_{12}(x, x', m, t) }{d(x, x')^{\gamma}}  dt  \notag\\
& = \int_{\bb R}   \sup_{x, x' \in \bb P(\bb V): x \neq x'} 
\frac{1}{d(x, x')^{\gamma}}  \bb E \Big[   H \Big(t + \sigma (g_m \cdots g_1, x) \Big);  D_{x, x', m}^c \Big] dt \notag\\
& \leq  c m^{- p\varkappa/2}  
 \int_{\bb R} \bb P^{1/2} \Big(  t \in \Big[ -\log \|g_m \cdots g_1\| - |a| - |b|, \log \|(g_m \cdots g_1)^{-1}\| + |a| + |b| \Big]  \Big) dt \notag\\
& \leq  c m^{- p\varkappa/2}   
 \int_0^{\infty}  \bb P^{1/2} \Big( |a| + |b| + \max \Big\{ \log \|g_m \cdots g_1\|,  \log \|(g_m \cdots g_1)^{-1}\|  \Big\}  \geq t \Big) dt. 
\end{align*}
By Chebyshev's and Minkowski's inequalities, for $t \geq 2 ( |a| + |b|)$, we have 
\begin{align*}
& \bb P \left( |a| + |b|  + \max \left\{ \log \|g_m \cdots g_1\|,  \log \|(g_m \cdots g_1)^{-1}\|  \right\}  \geq t \right) \notag\\
& \leq  (t- |a| - |b| )^{- 3}  \bb E  \left(  \max \left\{ \log \|g_m \cdots g_1\|, \log \|(g_m \cdots g_1)^{-1}\|  \right\}^{3}  \right) \notag\\
& \leq c m^3 (t- |a| - |b| )^{- 3}. 
\end{align*}
Since $p >\frac{6}{\varkappa}$, we obtain 
\begin{align}\label{decom-Integral-sup-003-aa} 
& \int_{\bb R}  \sup_{x, x' \in \bb P(\bb V): x \neq x'}  \frac{I_{12}(x, x', m, t) }{d(x, x')^{\gamma}}  dt   \notag\\
 & \leq  c m^{- p \varkappa/2} \bigg( 2 (|a| + |b|) + m^{3/2} \int_{2 (|a| + |b|)}^{\infty}  (t- |a| - |b|)^{- 3/2} dt \bigg) \notag\\
 & \leq c' m^{- 3/2}. 
\end{align}
Substituting \eqref{bound-I-11-x-m-t} and \eqref{decom-Integral-sup-003-aa} into \eqref{expect-G-Holder-contin-001}, we get
\begin{align}\label{bound-I-1-x-m-t}
\int_{\bb R}  \sup_{x, x' \in \bb P(\bb V): x \neq x'}  \frac{I_{1}(x, x', m, t) }{d(x, x')^{\gamma}}  dt
\leq \frac{c}{\ee} m^{- \eta} +  c' m^{- 3/2}  \leq  \frac{c''}{\ee} m^{- \eta}. 
\end{align} 

For $I_2(x, x', m, t)$, 
we note that, since $G$ is $\gamma$-regular,
we may find a constant $c>0$ such that, for any $x, x' \in \bb P(\bb V)$ and $t, t' \in \bb R$, 
\begin{align*}
|G(x,t) - G(x',t')| \leq c( |t-t'| + d(x,x')^{\gamma} ) (\mathds 1_{ [a, b] }(t) + \mathds 1_{ [a, b] }(t')). 
\end{align*}
Note also that $\chi_{\ee}  ( t + \ee + \min_{1 \leq  j \leq m-1}  \sigma (g_j \cdots g_1, x') ) 
\leq \mathds 1_{ \{  \tau_{x', t + 2\ee} > m -1 \} }$. 
Now we write 
\begin{align*}
 I_2(x, x', m, t) 
 & \leq \bb E \bigg[   \Big| G \Big( g_m \cdots g_1 x, t + \sigma (g_m \cdots g_1, x)  \Big)  
  \notag \\  
&\qquad\quad  
 - G \Big( g_m \cdots g_1 x', t + \sigma (g_m \cdots g_1, x') \Big) \Big|;   \tau_{x', t + 2\ee} > m -1   \bigg] \notag\\
 & \leq  
 \bb E \bigg[  \Big| \sigma (g_m \cdots g_1, x) -   \sigma (g_m \cdots g_1, x') \Big|
  + d(g_m \cdots g_1 x, g_m \cdots g_1 x')^{\gamma};  \notag\\
& \qquad\qquad\qquad  t + \sigma (g_m \cdots g_1, x') \in [a, b],    \tau_{x', t + 2\ee} > m-1    \bigg]  \notag\\
& \quad + \bb E \bigg[  \Big| \sigma (g_m \cdots g_1, x) -   \sigma (g_m \cdots g_1, x') \Big| 
 + d(g_m \cdots g_1 x, g_m \cdots g_1 x')^{\gamma};  \notag\\
& \qquad\qquad\qquad  t + \sigma (g_m \cdots g_1, x) \in [a, b],    \tau_{x', t + 2\ee} > m-1    \bigg] \notag\\
& =: I_{21}(x, x', m, t) + I_{22}(x, x', m, t). 
\end{align*}
For $I_{21}(x, x', m, t)$, recall that $1 < q <2$ and $q' = \frac{q}{q-1}$. 
We use H\"older's inequality, Lemma \ref{Lem-Holder-cocycle}
and the contraction property of the random walk (see Bougerol and Lacroix \cite[Chapter V, Proposition 2.3]{Boug-Lacr85}) to get 
\begin{align*}
 I_{21}(x, x', m, t) 
& \leq \bb E^{1/q'}  \Big(  \Big| \sigma (g_m \cdots g_1, x) -   \sigma (g_m \cdots g_1, x') \Big| 
+ d(g_m \cdots g_1 x, g_m \cdots g_1 x')^{\gamma}  \Big)^{q'} 
\notag\\
&  \quad \times \bb P^{1/q} \bigg( t + \sigma (g_m \cdots g_1, x') \in [a, b],    \tau_{x', t + 2\ee} > m-1    \bigg) \notag\\
& \leq c d(x, x')^{\gamma}  \bb P^{1/q} \Big( t + \sigma (g_m \cdots g_1, x') \in [a, b],    \tau_{x', t + 2\ee} > m -1   \Big). 
\end{align*}
By Lemma \ref{Lem-inte-prob-001}, there exist constants $c, \eta >0$ such that for any $m \geq 1$, 
\begin{align}\label{bound-I-21-x-m-t}
\int_{\bb R}  \sup_{x, x' \in \bb P(\bb V): x \neq x'}  \frac{I_{21}(x, x', m, t) }{d(x, x')^{\gamma}}  dt
\leq c m^{-\eta}.  
\end{align}
For $I_{22}(x, x', m, t)$, we still use H\"older's inequality with $1 < q <2$ and $q' = \frac{q}{q-1}$, 
Lemma \ref{Lem-Holder-cocycle} and the contraction property of the random walk (\cite[Chapter V, Proposition 2.3]{Boug-Lacr85}) to get 
\begin{align*}
 I_{22}(x, x', m, t) 
& \leq \bb E^{1/q'}  \Big(  \Big| \sigma (g_m \cdots g_1, x) -   
 \sigma (g_m \cdots g_1, x') \Big| 
 + d(g_m \cdots g_1 x, g_m \cdots g_1 x')^{\gamma} \Big)^{q'} 
\notag\\
&  \quad \times \bb P^{1/q} \bigg( t + \sigma (g_m \cdots g_1, x) \in [a, b],    \tau_{x', t + 2\ee} > m-1    \bigg) \notag\\
& \leq c d(x, x')^{\gamma}  \bb P^{1/q} \Big( t + \sigma (g_m \cdots g_1, x) \in [a, b],    \tau_{x', t + 2\ee} > m-1    \Big) \notag\\
& = c d(x, x')^{\gamma}  \bb P^{1/q} \Big( t + \sigma (g_m \cdots g_1, x) \in [a, b],    \tau_{x', t + 2\ee} > m-1, D_{x, x', m}    \Big)  \notag\\
& \quad +  c d(x, x')^{\gamma}  \bb P^{1/q} \Big( t + \sigma (g_m \cdots g_1, x) \in [a, b],    \tau_{x', t + 2\ee} > m-1, D_{x, x', m}^c   \Big) \notag\\
& \leq c d(x, x')^{\gamma}  \bb P^{1/q} \Big( t + \sigma (g_m \cdots g_1, x) \in [a, b],    \tau_{x, t + m^{\varkappa} + 2\ee} > m-1   \Big)  \notag\\
& \quad +  c d(x, x')^{\gamma}  \bb P^{1/q} \Big( t + \sigma (g_m \cdots g_1, x) \in [a, b],   D_{x, x', m}^c    \Big), 
\end{align*}
where in the last inequality we used the definition of $D_{x, x', m}$ (see \eqref{def-D-x1-x2-m}). 
By Lemma \ref{Lem-inte-prob-001}, we have 
\begin{align*}
\int_{\bb R} \sup_{x \in \bb P(\bb V)} 
\bb P^{1/q} \Big( t + \sigma (g_m \cdots g_1, x) \in [a, b],    \tau_{x, t + m^{\varkappa} + 2\ee} > m -1  \Big) dt
\leq c m^{-\eta}.  
\end{align*}
Besides, following the reasoning used to dominate $I_{12}(x, x', m, t)$, we get
\begin{align*}
\int_{\bb R} \sup_{x, x' \in \bb P(\bb V)} 
\bb P^{1/q} \Big( t + \sigma (g_m \cdots g_1, x) \in [a, b],   D_{x, x', m}^c    \Big) dt 
\leq c m^{-\frac{3}{2q}}.  
\end{align*}
Therefore, we obtain
\begin{align}\label{bound-I-22-x-m-t}
\int_{\bb R}  \sup_{x, x' \in \bb P(\bb V): x \neq x'}  \frac{I_{22}(x, x', m, t) }{d(x, x')^{\gamma}}  dt
\leq c m^{- \eta} +  c m^{-\frac{3}{2q}}  \leq  c' m^{- \eta}. 
\end{align} 
The proposition is established by combining \eqref{decom-Integral-sup-001-aa},
\eqref{bound-I-1-x-m-t}, \eqref{bound-I-21-x-m-t} and \eqref{bound-I-22-x-m-t}. 
\end{proof}

\subsection{A technical version of Theorem \ref{Thm-CLLT-cocycle}} \label{sec a technical version}

Now we present a version of Theorem \ref{Thm-CLLT-cocycle} which is stated in terms of upper and lower bounds. 
The proof is involved and paricularly relies on Theorem \ref{Cor-CLLT-cocycle-001}, Theorem \ref{t-A 001}, 
Lemma \ref{lemma-duality-for-product-001}, 
Proposition \ref{Lem_HolderNormPsi}, 
 Theorem \ref{Thm-CCLT-limit} as well as preparatory results in \cite{GQX24a}. 

\begin{proposition} \label{t-BB001}
Assume that $\Gamma_{\mu}$ is proximal and strongly irreducible, 
the measure $\mu$ admits an exponential moment and the Lyapunov exponent $\lambda_{\mu}$ is zero. 
Then, there exists $\gamma_0>0$ such that, 
for any $\gamma \in (0, \gamma_0)$, $\ee \in (0, \frac{1}{8})$ and $t \in \bb R$,  
and for any non-negative  function $F$ 
and non-negative $\gamma$-regular compactly supported functions $G, H$ 
satisfying $H \leq_{\ee} F \leq_{\ee} G$, 
we have that, uniformly in $x \in \bb P(\bb V)$, 
\begin{align}\label{eqt-BB001}
& \limsup_{n \to \infty} n^{3/2} \bb E \Big[ F \Big(g_n \cdots g_1 x,  t + \sigma(g_n \cdots g_1, x) \Big);  \tau_{x, t} > n -1 \Big]  \nonumber\\
& \qquad\qquad\qquad\qquad \leq   \frac{2 V(x, t)}{ \sqrt{2\pi} \upsilon_{\mu}^3 }  
  \int_{\bb P(\bb V) \times \bb R} G (x',t') \rho(dx',dt')  
\end{align}
and
\begin{align} \label{eqt-BB002}
&  \liminf_{n \to \infty}  n^{3/2} \bb E \Big[ F \Big( g_n \cdots g_1 x,  t + \sigma(g_n \cdots g_1, x) \Big);  \tau_{x, t} > n -1 \Big]  \nonumber\\
& \qquad\qquad\qquad\qquad \geq   \frac{2 V(x, t)}{ \sqrt{2\pi} \upsilon_{\mu}^3}  
  \int_{\bb P(\bb V) \times \bb R} H (x',t' + 2\ee) \rho(dx',dt'). 
\end{align}
\end{proposition}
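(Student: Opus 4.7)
The plan is to prove Proposition \ref{t-BB001} by combining the Caravenna-type estimate of Theorem \ref{t-A 001} with the reversal-based characterization of the harmonic measure $\rho$ supplied by the defining property \eqref{exist of measure rho-001} (and its sharp version Theorem \ref{Cor-CLLT-cocycle-001}). Split $n = k + m$ with $m = m(n) \to \infty$, and use the Markov property to write
\begin{align*}
E_n(x,t) := \bb E\Big[F(g_n\cdots g_1 x, t+\sigma(g_n\cdots g_1,x)); \tau_{x,t} > n-1\Big] = \bb E\Big[\Phi_m\big(g_k\cdots g_1 x, t+\sigma(g_k\cdots g_1,x)\big); \tau_{x,t} > k\Big],
\end{align*}
where $\Phi_m(x',t') := \bb E[F(g_m\cdots g_1 x', t'+\sigma(g_m\cdots g_1,x')); \tau_{x',t'} > m-1]$. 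The strategy is to apply Theorem \ref{t-A 001} to the outer expectation with $\Phi_m$, or rather smoothed envelopes of it, as target. Using $H \leq_\ee F \leq_\ee G$, I would sandwich $\Phi_m$ between an upper envelope $\overline\Psi_{m,\ee}$ built from $G$ and the soft cutoff $\chi_\ee$ (as in Proposition \ref{Lem_HolderNormPsi}) and a lower envelope $\underline\Psi_{m,\ee}$ built from $H$ and $1 - \chi_\ee$; by Proposition \ref{Lem_HolderNormPsi} both envelopes lie in $\scr H_\gamma$ with norms $O((\ee m^\beta)^{-1})$, which is the regularity required by Theorem \ref{t-A 001}.

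Applying Theorem \ref{t-A 001} with $k$ in place of $n$ and these envelopes as target, and using $k \sim n$, yields
\begin{align*}
n^{3/2} E_n(x,t) \leq \frac{2V(x,t)\sqrt{n}}{\sqrt{2\pi}\,\upsilon_\mu^2} \int_{\bb P(\bb V)\times\bb R} \overline\Psi_{m,\ee}(x',t')\, \phi^+\!\left(\frac{t'}{\upsilon_\mu\sqrt{k}}\right) dt'\, \nu(dx') + \mathcal R_n(\ee,m),
\end{align*}
with remainder $\mathcal R_n(\ee,m)$ of order $\ee^{1/4}\sqrt{n}\,\|\overline\Psi_{m,\ee}\|_{\nu\otimes\Leb} + c_\ee/m^\beta$, and a symmetric lower bound for $\underline\Psi_{m,\ee}$. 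By Theorem \ref{Cor-CLLT-cocycle-001}, $\|\overline\Psi_{m,\ee}\|_{\nu\otimes\Leb} = O(m^{-1/2})$, so choosing $m = [\alpha n]$ with $\alpha$ small keeps the first part of $\mathcal R_n$ of order $\ee^{1/4}/\sqrt{\alpha}$ and the second $o(1)$. The key mechanism for generating the $n^{-3/2}$ rate is the linearization $\phi^+(u) = u + O(u^3)$ near the origin, valid on the effective support of $\overline\Psi_{m,\ee}$ with the tail $|t'| \gg \sqrt{m}$ controlled via Theorem \ref{Thm-CLLT-cocycle-bound-001}; this turns the leading term into $\frac{2V(x,t)}{\sqrt{2\pi}\,\upsilon_\mu^3}\int t'\, \overline\Psi_{m,\ee}(x',t')\, dt'\, \nu(dx') + o(1)$.

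It remains to identify the weighted integral. The idea is to compare $\overline\Psi_{m,\ee}$ with $\Phi_m^G(x',t') := \bb E[G(g_m\cdots g_1 x', t'+\sigma); \tau_{x',t'} > m-1]$, controlling the discrepancy coming from the soft cutoff $\chi_\ee$ (supported on an $\ee$-neighborhood whose contribution decays with $m$, thanks to the absolute continuity of the $\bb R$-marginal of $\rho$ from Corollary 1.3 of \cite{GQX24a}). Then \eqref{exist of measure rho-001}, applied uniformly in $x'$ and integrated against $\nu(dx')$, gives $\int t'\, \Phi_m^G(x',t')\, dt'\, \nu(dx') \to \int G\, d\rho$ as $m \to \infty$; the same argument with the lower envelope $\underline\Psi_{m,\ee}$ yields convergence to $\int H(x', t'+2\ee)\, \rho(dx',dt')$, the $+2\ee$ shift arising from the slack in the hypothesis $H \leq_\ee F$. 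Passing to the limit $n \to \infty$ with $m = [\alpha n]$, then $\alpha \to 0$, delivers \eqref{eqt-BB001} and \eqref{eqt-BB002}.

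The hardest part will be the quantitative coordination of the three parameters $n, m, \ee$ inside the remainder $\mathcal R_n$. The term $\ee^{1/4}\sqrt{n}\,\|\overline\Psi_{m,\ee}\|_{\nu\otimes\Leb}$ forces $m$ proportional to $n$, while the identification of the limit with $\int G\, d\rho$ demands the $\rho$-convergence to be robust enough to survive along the scaling $m = [\alpha n]$ and uniform enough in $x'$ to commute with the $\nu(dx')$-integration. The second delicate point is showing that the $\chi_\ee$-smoothing contributes only at the level of the $2\ee$-shift (and, on the upper side, disappears altogether), which relies on Corollary 1.3 of \cite{GQX24a}; managing these two issues simultaneously, while keeping track of the $\gamma$-regularity norms of the envelopes via Proposition \ref{Lem_HolderNormPsi}, is the main technical burden.
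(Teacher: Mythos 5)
Your plan reproduces the paper's outer scaffolding exactly — the split $n=k+m$, the Markov property, the soft-cutoff envelopes controlled by Proposition \ref{Lem_HolderNormPsi}, and the application of Theorem \ref{t-A 001} to the outer excursion — but you then handle the Rayleigh factor in a genuinely different way. The paper takes $m=[n/2]$ so that $m\asymp k$, keeps $\phi^+\!\big(t'/(\upsilon_\mu\sqrt{k})\big)$ intact, reverses the inner excursion via Lemma \ref{lemma-duality-for-product-001}, and invokes the conditioned CLT for the reversed walk (Theorem \ref{Thm-CCLT-limit} together with Theorem 4.1, Proposition 3.1, Corollaries 3.6 and 4.3 of \cite{GQX24a}); the limit then involves $\int_{\bb R_+}(\phi^+)^2=\sqrt{\pi}/4$, and the constants recombine. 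You instead take $m=[\alpha n]$ so $\sqrt{m}\ll\sqrt{k}$, linearize $\phi^+(u)\approx u$ on the effective support $t'\lesssim\sqrt{m}$ (the relative error being $O(m/k)=O(\alpha)$), and read off the integral $\int t'\,\overline\Psi_m\,dt'\,\nu(dx')$ from the $\rho$-convergence \eqref{exist of measure rho-001}/Theorem \ref{Cor-CLLT-cocycle-001} directly. This bypasses the most delicate piece of machinery (the reversed-walk conditioned CLT of Appendix A); it only uses the reversal in the pre-packaged form of the $\rho$-convergence. That is a real simplification if it closes.

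There is, however, a concrete gap in the way you close. With $m=[\alpha n]$ the Caravenna remainder from Theorem \ref{t-A 001} becomes of order $\ee^{1/4}/\sqrt{\alpha}$, and you propose to pass $n\to\infty$ and then $\alpha\to 0$ at fixed $\ee$. For fixed $\ee$ that limit does \emph{not} vanish — it blows up. The statement of Proposition \ref{t-BB001} has no error term, so this must be fixed. The repair (used explicitly at the end of the paper's proof) is to observe that $F\leq_\ee G$ implies $F\leq_{\ee'}G$ for every $\ee'\leq\ee$, apply the whole argument at scale $\ee'$, and send $\ee'\to 0$ before (or jointly with, say $\ee'\ll\alpha^2$) sending $\alpha\to 0$. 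You identify the coordination of $n,m,\ee$ as "the hardest part" but do not resolve it, and the order of limits you write down is incomplete. Note also that once $\ee'\to 0$ is taken, the lower bound naturally delivers $\int H\,d\rho$ rather than the $\int H(\cdot,t'+2\ee)\,d\rho$ you claim; since the density $W$ is non-decreasing the latter dominates the former, so your sandwich argument actually proves a slightly weaker (though still sufficient for Theorem \ref{Thm-CLLT-cocycle}) lower bound than the one in the Proposition as stated. Finally, the linearization error requires the moment bound $\int t'^3\,\overline\Psi_m(\cdot,t')\,dt'\,\nu(dx')\lesssim m$, which forces you to control $\overline\Psi_m(\cdot,t')$ for $t'\gg\sqrt m$ via a moderate-deviation tail; this is plausible under the exponential-moment hypothesis, but it is a new estimate that the cited theorems do not give and that the paper's route avoids entirely.
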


\begin{proof}
We first prove \eqref{eqt-BB001}.  
As in \eqref{def-Psi-x-t-001}, denote, for $n \geq 2$, $x \in \bb P(\bb V)$ and $t \in \bb R$,  
\begin{align*}
\Psi_n (x,t) =  \bb E \Big[  F \Big( g_n \cdots g_1 x, t + \sigma (g_n \cdots g_1, x) \Big);  \tau_{x, t} > n -1    \Big]. 
\end{align*}
Set $m=\left[ n/2 \right]$ and $k = n-m.$   
By the Markov property, we have that for any $x \in \bb P(\bb V)$ and $t \in \bb R$, 
\begin{align*}
\Psi_n (x,t) = \bb E \Big[ \Psi_m \Big( g_k \cdots g_1 x, t + \sigma (g_k \cdots g_1, x) \Big);  \tau_{x, t} > k  \Big]. 
\end{align*}
For any $x' \in \bb P(\bb V)$ and $t' \in \bb R$, 
we set 
\begin{align*} 
\overline \Psi_m(x', t')   :  = 
\bb E \bigg[ G \Big( g_m \cdots g_1 x', t' + \sigma(g_m \cdots g_1, x') \Big)   
 \times \chi_{\ee}  \Big( t' + \ee + \min_{1 \leq  j \leq m-1} \sigma(g_j \cdots g_1, x')  \Big)  \bigg], 
\end{align*}
where $\chi_{\ee}$ is defined by \eqref{Def_chiee}. 
By using $F \leq_{\ee} G$, we get that $\Psi_m \leq_{\ee}  \overline \Psi_m$.
Note that, by Proposition \ref{Lem_HolderNormPsi}, 
the function $\overline \Psi_m$ belongs to the space $\scr H_{\gamma}$,
so that we are exactly in the setting of Theorem \ref{t-A 001}.  
Therefore, applying the upper bound \eqref{eqt-A 001} of Theorem \ref{t-A 001}, we get that, uniformly in $x \in \bb P(\bb V)$, 
\begin{align*}
\Psi_n (x,t) 
& \leq  \frac{2 V(x, t)}{ \sqrt{2\pi} \upsilon_{\mu}^2 k } 
  \int_{\bb P(\bb V)}  \int_{\mathbb R}  \overline \Psi_m (x', t')
\phi^+ \bigg( \frac{t'}{\upsilon_{\mu} \sqrt{k}} \bigg) dt' \nu(dx')   \nonumber\\
& \quad + \frac{c}{k} \left( \ee^{1/4}  +  \ee^{-1/4} k^{-\eta} \right)  \| \overline \Psi_m \|_{\nu \otimes \Leb }  
    +   \frac{c_{\ee}}{ k^{3/2} }  \|  \overline \Psi_m \|_{\scr H_{\gamma}}   \notag\\
&  =: J_1 + J_2 + J_3. 
\end{align*}

We shall apply the duality lemma (Lemma \ref{lemma-duality-for-product-001}), Theorem \ref{Thm-CCLT-limit}
and results in \cite{GQX24a} to handle $J_1$. 
Note that $\chi_{\ee}  ( t' + \ee + \min_{1 \leq  j \leq m-1} \sigma(g_j \cdots g_1, x') ) 
\leq \mathds 1_{\{\tau_{x', t' + 2\ee} > m - 1\}}$. 
Using \eqref{duality id-002} of Lemma \ref{lemma-duality-for-product-001} 
and integrating over $y \in \bb P(\bb V^*)$ with respect to $\nu^*$, we deduce that
\begin{align*}
&  \int_{\bb P(\bb V)}  \int_{\mathbb R}  \overline \Psi_m(x', t')
\phi^+ \bigg( \frac{t'}{\upsilon_{\mu} \sqrt{k}} \bigg) dt' \nu(dx')  \\
& \leq   \int_{\bb P(\bb V)} \int_{\bb R}
\bb E \bigg[ G \Big( g_m \cdots g_1 x', t' + \sigma(g_m \cdots g_1, x') \Big); \tau_{x', t' + 2\ee} > m - 1 \bigg]   
  \phi^+ \bigg( \frac{t'}{\upsilon_{\mu} \sqrt{k}} \bigg)  dt' \nu(dx') \notag\\
  & =  \int_{\bb P(\bb V)} \int_{\bb P(\bb V^*)} \int_{\bb R} 
  \bb E \bigg[ G \left( g_1 \cdots g_m x',  u - 2\ee \right) 
   \phi^+ \bigg( \frac{u + \tilde S^{x,m}_{m}(\omega, y) - 2\ee }{\upsilon_{\mu} \sqrt{k}} \bigg);  
   \tau^{\mathfrak f^{x, m}}_{y, u} > m-1 \bigg]  \notag\\
 & \qquad\qquad\qquad\qquad\qquad    du \,  \nu^*(dy) \nu(dx'), 
\end{align*}
where $\tilde S^{x,m}_{m}(\omega, y)$ and $\tau^{\mathfrak f^{x, m}}_{y, u}$ 
are defined by \eqref{convention-S-xn-y} and \eqref{def-stop time with preturb-001-bb}, respectively. 
At this point, we assume that $G(x, t) = \varphi(x) \psi(t)$ for $x \in \bb P(\bb V)$ and $t \in \bb R$, 
where $\varphi$ is a non-negative $\gamma$-H\"older continuous function on $\bb P(\bb V)$
and $\psi$ is a compactly supported non-negative Lipschitz continuous function on $\bb R$. 
By Corollary 2.3 and Proposition 2.4 in \cite{GQX24a}, 
and the strong approximation result (6.5) from \cite{GLP17},  the assumptions of Theorem \ref{Thm-CCLT-limit} are satisfied
by the sequence of perturbations $\mathfrak f^{x, m} = (f^{x, m}_k)_{0 \leq k \leq m}$
and the twist function $\omega = (g_1, g_2, \ldots) \mapsto \varphi(g_1\cdots g_m x)$. 
Therefore, applying Theorem 4.1 of \cite{GQX24a} and Theorem \ref{Thm-CCLT-limit}, 
we obtain that, uniformly in $x' \in \bb P(\bb V)$, 
\begin{align*}
& \lim_{m\to\infty} \Bigg( \frac{ \upsilon_{\mu} \sqrt{2 \pi m} }{ 2 } 
\int_{\bb P(\bb V^*)} \int_{\bb R} 
  \bb E \bigg[ G \left( g_1 \cdots g_m x',  u - 2\ee \right)   \notag \\  
&\qquad\qquad\qquad\qquad  \times 
\phi^+ \bigg( \frac{u + \tilde S^{x,m}_{m}(\omega, y) - 2\ee }{\upsilon_{\mu} \sqrt{k}} \bigg);
   \tau^{\mathfrak f^{x, m}}_{y, u} > m-1 \bigg]   du  \nu^*(dy) \notag\\
& \qquad\qquad  -    \int_{\bb R} \psi(u-2\ee) U^{x',m,\varphi}_{[m/2]}(u) du  \int_{\bb R_+} (\phi^+(u'))^2  du' \Bigg) = 0,   
\end{align*}
where $U^{x',m,\varphi}_{[m/2]}$ is defined by \eqref{def of U^xm_n with phi-001}. 
Using Proposition 3.1 and Corollary 3.6 of \cite{GQX24a}, we get that, uniformly in $x' \in \bb P(\bb V)$, 
\begin{align*}
\lim_{m\to\infty} \int_{\bb R} \psi(u-2\ee) U^{x',m,\varphi}_{[m/2]}(u) du  
& = \lim_{m\to\infty} \int_{\bb R} \psi(u -2 \ee) U^{\varphi}_{[m/2]}(u)  du \notag\\
& = \int_{\bb P(\bb V) \times \bb R}  \varphi(x) \psi(u-2\ee) \rho(dx,du) \notag\\
& = \int_{\bb P(\bb V) \times \bb R}  G(x, u-2\ee) \rho(dx,du),  
\end{align*}
where $U^{\varphi}_{[n/2]}$ is defined by (3.3) in \cite{GQX24a}, 
and $\rho$ is a Radon measure on $\bb P(\bb V) \times \bb R$, as given in \eqref{exist of measure rho-001}. 
Note that, by \eqref{Rayleigh law-001}, we have $\int_{\bb R_+} (\phi^+(u'))^2  du' = \frac{\sqrt{\pi}}{4}$.
Therefore, we conclude that, when the function $G$ has the product form $(x, t) \mapsto \varphi(x) \psi(t)$, 
uniformly in $x' \in \bb P(\bb V)$, 
\begin{align*}
& \lim_{m\to\infty}  \frac{ \upsilon_{\mu} \sqrt{2 \pi m} }{ 2 } 
\int_{\bb P(\bb V^*)} \int_{\bb R} 
  \bb E \bigg[  G \left( g_1 \cdots g_m x',  u - 2\ee \right)  
   \notag\\
  & \qquad\qquad\qquad\quad \times 
  \phi^+ \bigg( \frac{u + \tilde S^{x,m}_{m}(\omega, y) - 2\ee }{\upsilon_{\mu} \sqrt{k}} \bigg);
   \tau^{\mathfrak f^{x, m}}_{y, u} > m-1 \bigg]   du  \,  \nu^*(dy) \notag\\
& =  \frac{\sqrt{\pi}}{4}  \int_{\bb P(\bb V) \times \bb R}  G(x, u-2\ee) \rho(dx,du).  
\end{align*}
Using again Corollary 4.3 of  \cite{GQX24a}, 
 the extension to any continuous compactly supported function $G$ on $\bb P(\bb V) \times \bb R$
 is made by standard approximation arguments. Thus, we obtain that, uniformly in $x' \in \bb P(\bb V)$, 
\begin{align*}
\lim_{n \to \infty} n^{3/2} J_1
=  \frac{2 V(x, t)}{ \sqrt{2\pi}  \upsilon_{\mu}^3}  
  \int_{\bb P(\bb V) \times \bb R}  G(x, u-2\ee) \rho(dx,du). 
\end{align*}

For $J_2$, noting that $\chi_{\ee}  ( t' + \ee + \min_{1 \leq  j \leq m-1} \sigma(g_j \cdots g_1, x') ) 
\leq \mathds 1_{\{\tau_{x', t' + 2\ee} > m - 1\}}$
and applying Theorem \ref{Cor-CLLT-cocycle-001}, we get that there exists a constant $c>0$ such that for any $m \geq 1$,  
\begin{align*} 
 \| \overline \Psi_m \|_{\nu \otimes \Leb }  
& \leq \int_{\bb P(\bb V)} \int_{\bb R} \bb E \Big[ G \Big( g_m \cdots g_1 x', t' + \sigma(g_m \cdots g_1, x') \Big);  
 \tau_{x', t' + 2\ee} > m-1  \Big] dt' \nu(dx')
\notag\\
& \leq \frac{c}{\sqrt{m}}. 
\end{align*}
Therefore, taking into account that $m=[n/2]$ and $k=n-m, $ we obtain $\limsup_{n \to \infty} n^{3/2} J_2   \leq c \ee^{1/4}.$

For $J_3$, by Proposition \ref{Lem_HolderNormPsi}, there exist constants $\beta, c >0$ such that
$\| \overline \Psi_{m,\ee} \|_{\scr H_{\gamma}} \leq \frac{c}{\ee m^{\beta}}.$
Thus, we have $\lim_{n \to \infty} n^{3/2} J_3 = 0$. 

Consequently, combining the above estiamtes, we obtain that, uniformly in $x \in \bb P(\bb V)$, 
\begin{align*}
& \limsup_{n \to \infty} n^{3/2} \bb E \Big[ F \Big( g_n \cdots g_1 x,  t + \sigma(g_n \cdots g_1, x) \Big);  \tau_{x, t} > n -1 \Big]  \nonumber\\
& \qquad\qquad\qquad \leq   \frac{2 V(x, t)}{ \sqrt{2\pi} \upsilon_{\mu}^3 }  
  \int_{\bb P(\bb V) \times \bb R} G (x',t' - 2 \ee) \rho(dx',dt') + c \ee^{1/4}. 
\end{align*}
Applying the above with $\ee' \in (0, \ee)$ instead of $\ee$,
and taking $\ee' \to 0$ yields the upper bound \eqref{eqt-BB001}. 
The proof of the lower bound \eqref{eqt-BB002} can be carried out in the same way. 
\end{proof}

\subsection{Proofs of Theorem \ref{Thm-CLLT-cocycle} and Corollary \ref{Thm-CLLT-cocycle-002}}

From Proposition \ref{t-BB001}, we obtain Theorem \ref{Thm-CLLT-cocycle} through a standard approximation procedure. 
\begin{lemma}\label{Lem_Approximation_FGH}
Fix $\gamma \in (0,1)$. 
Let $F$ be a non-negative continuous compactly supported function on $\bb P(\bb V) \times \bb R$.
Then, there exist a decreasing sequence $(G_k)_{k \geq 1}$ and an increasing sequence $(H_k)_{k \geq 1}$
of compactly supported $\gamma$-regular functions
such that $H_k \leq_{1/k} F \leq_{1/k} G_k$ for any $k \geq 1$, and 
$G_k$ and $H_k$ converge uniformly to $F$ as $k \to \infty$. 
\end{lemma}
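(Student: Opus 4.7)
The plan is to construct $G_k$ and $H_k$ by a sup/inf-convolution (Moreau--Yosida-type regularization) of $F$ with a penalty that combines the $\gamma$-H\"older distance on $\bb P(\bb V)$ with the standard distance on $\bb R$, but that remains \emph{flat} on a ball of radius $2/k$ centered at each point. Concretely, I would set, for $(x,t)\in\bb P(\bb V)\times\bb R$,
\begin{align*}
G_k(x,t) &= \sup_{(y,s)\in \bb P(\bb V)\times\bb R} \Big[F(y,s) - k \max\big(0,\, d(y,x)^{\gamma}+|s-t| - 2/k\big)\Big], \\
H_k(x,t) &= \inf_{(y,s)\in \bb P(\bb V)\times\bb R} \Big[F(y,s) + k \max\big(0,\, d(y,x)^{\gamma}+|s-t| - 2/k\big)\Big].
\end{align*}
The flat zone of radius $2/k$ is precisely what will deliver the $\leq_{1/k}$ relation with room to spare, while the linear penalty outside that zone provides the $\gamma$-regularity.

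The first step is to check $H_k \leq_{1/k} F \leq_{1/k} G_k$: for any $|v|\leq 1/k$, inserting $(y,s)=(x,t+v)$ into the sup (resp.\ the inf) makes the penalty vanish, so $G_k(x,t)\geq F(x,t+v)$ and $H_k(x,t)\leq F(x,t+v)$. The second step is monotonicity: writing the penalty as $\max(0,\,kd-2)$ with $d=d(y,x)^{\gamma}+|s-t|$ shows that it is nondecreasing in $k$ for each fixed $d$, so $G_k$ is nonincreasing and $H_k$ is nondecreasing in $k$. The third step is $\gamma$-regularity: using the elementary inequality $\big|d(y,x_1)^{\gamma}-d(y,x_2)^{\gamma}\big|\leq d(x_1,x_2)^{\gamma}$ together with the triangle inequality on $\bb R$, comparing the sup in the definition of $G_k(x_1,t_1)$ with the one for $G_k(x_2,t_2)$ gives a Lipschitz-type bound of the form $|G_k(x_1,t_1)-G_k(x_2,t_2)|\leq k\big(d(x_1,x_2)^{\gamma}+|t_1-t_2|\big)$, and similarly for $H_k$; this is exactly the definition of $\gamma$-regularity.

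Next I would check compact support. Since $F\geq 0$ and $F$ is bounded by $M:=\|F\|_{\infty}$, the sup in the definition of $G_k(x,t)$ is effectively restricted to $(y,s)$ with $d(y,x)^{\gamma}+|s-t|\leq (2+M)/k$, because for $d$ larger than this the bracket is already negative while the choice $(y,s)=(x,t)$ gives $F(x,t)\geq 0$. Hence $G_k$ vanishes outside the $(2+M)/k$-enlargement of $\supp F$, and in particular outside the $(2+M)$-enlargement for every $k\geq 1$; this gives a single compact set containing all the $G_k$. For $H_k$, the bound $H_k(x,t)\leq F(x,t)$ (take $(y,s)=(x,t)$) combined with $H_k\geq 0$ (both $F$ and the penalty are nonnegative) already forces $H_k$ to be supported in $\supp F$.

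Finally, uniform convergence $G_k,H_k\to F$ follows from the uniform continuity of $F$ on its compact support: given $\epsilon>0$, one chooses $\delta$ such that $|F(y,s)-F(x,t)|\leq \epsilon$ whenever $d(y,x)^{\gamma}+|s-t|\leq\delta$, and then for $k$ large enough that $(2+M)/k<\delta$ the above localization of the sup/inf yields $0\leq G_k(x,t)-F(x,t)\leq \epsilon$ and $0\leq F(x,t)-H_k(x,t)\leq \epsilon$ uniformly in $(x,t)$. The only mildly delicate point in the whole argument is the third step: matching the scales of the flat zone (radius $2/k$) and of the linear penalty (slope $k$) so that the $\leq_{1/k}$ relation, the $\gamma$-regularity (with a constant depending on $k$), and the uniform convergence all hold simultaneously. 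Everything else is bookkeeping.
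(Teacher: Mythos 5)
Your construction is correct. Note that the paper explicitly declares this lemma ``standard'' and gives no proof, so there is no in-paper argument to compare against; your Moreau--Yosida-style sup/inf-convolution, with a penalty that is flat on the ball $\{d(y,x)^{\gamma}+|s-t|\leq 2/k\}$ and linear with slope $k$ outside it, supplies a clean explicit proof. All five steps go through exactly as written: the flat zone of radius $2/k$ absorbs any shift $|v|\leq 1/k$ and gives $H_k\leq_{1/k}F\leq_{1/k}G_k$; the reformulation $k\max(0,d-2/k)=\max(0,kd-2)$ makes monotonicity in $k$ immediate; the $k$-Lipschitz property of the penalty together with $\big|d(y,x_1)^{\gamma}-d(y,x_2)^{\gamma}\big|\leq d(x_1,x_2)^{\gamma}$ yields the $\gamma$-regularity constant $k$; the localization of the sup/inf to $\{d(y,x)^{\gamma}+|s-t|\leq (2+\|F\|_{\infty})/k\}$ gives both compact support (uniformly in $k$) and, combined with uniform continuity of $F$, the uniform convergence. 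The only facts you use without proof are that $d$ is an honest metric on $\bb P(\bb V)$ (true, since $d=\sin\theta$ with $\theta$ the angular metric and $\sin$ nondecreasing and subadditive on $[0,\pi/2]$) and the subadditivity of $t\mapsto t^{\gamma}$ on $\bb R_+$ for $\gamma\in(0,1)$, which underlies the displayed H\"older estimate; both are standard and you may simply state them.
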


The proof of this lemma is standard and therefore will not be detailed here.

\begin{proof}[Proof of Theorem \ref{Thm-CLLT-cocycle}]
This follows directly from Proposition \ref{t-BB001} and Lemma \ref{Lem_Approximation_FGH}.  
\end{proof}

\begin{proof}[Proof of Corollary \ref{Thm-CLLT-cocycle-002}]
Let $\varphi$ be a non-negative continuous function on $\bb P(\bb V)$.  Let $t\in \bb R$.
On the one hand, due to Corollary \ref{Coro-CLLT-cocycle-001}, 
for any $a\geq 0$, we have that, uniformly in $x \in \bb P(\bb V)$, 
\begin{align}\label{lower-bound-pf-Thm1}
&\liminf_{n\rightarrow\infty}n^{3/2} \bb E \Big( \varphi(g_n\cdots g_1 x); \tau_{x,t}=n \Big) \notag\\
&\geq \lim_{n\rightarrow\infty} n^{3/2} \bb E \Big(\varphi(g_n\cdots g_1 x); t+\sigma(g_n\cdots g_1,x)\in [-a,0), \tau_{x,t}=n \Big) \notag\\
&= \frac{2 V(x, t)}{ \sqrt{2 \pi} \upsilon_{\mu}^3 } \int_{\mathbb P(\mathbb V) \times [-a,0)} \varphi(x') \rho(dx',dt').
\end{align}
Since this is true for all $a\geq 0$, we obtain the following lower bound: uniformly in $x \in \bb P(\bb V)$, 
\begin{align}\label{lower-bound-Thm1-00a}
 \liminf_{n\rightarrow\infty}n^{3/2}\mathbb E \Big( \varphi(g_n\cdots g_1 x); \tau_{x,t}=n \Big) 
 \geq \frac{2 V(x,t)}{\sqrt{2\pi}\upsilon_\mu^3}
\int_{\mathbb P(\mathbb V) \times (-\infty,0)} \varphi(x') \rho(dx',dt').
\end{align}

To conclude the proof of the upper bound, we first show that, for any $\ee>0$, there exists $a>0$ such that, for any $x$ in $\mathbb P(\mathbb V)$ and $t\in \mathbb R$, 
we have
\begin{align}\label{bound-n-32-ee}
\mathbb P \Big( t+\sigma(g_n\cdots g_1,x) \in (-\infty,-a],\tau_{x,t}=n \Big)
\leq \frac{c \ee}{n^{3/2}} (1+\max\{t,0\}).
\end{align}
Indeed, for $n \geq 1$, $x \in \bb P(\bb V)$ and $t \in \bb R$, 
denote by $F_{n,x,t}$ the function defined for $s$ in $\mathbb R_+$: 
\begin{align*}
F_{n,x,t}(s) & : = \mathbb P \Big( t+\sigma(g_n\cdots g_1,x)\in [0, s], \tau_{x,t}>n \Big)  \notag\\
& \leq c n^{-3/2}(1+s)^2(1 + \max\{t,0\}),
\end{align*}
where the latter inequality holds due to Theorem \ref{Thm-CLLT-cocycle-bound-001}.
We also set, still for $x \in \bb P(\bb V)$ and $s\geq 0$, 
\begin{align*}
H_{x}(s): = \mathbb P\Big( \sigma(g_1,x)\leq -s \Big)\leq ce^{-\alpha s}, 
\end{align*}
where in the last inequality we used the exponential moment assumption \eqref{Exponential-moment}. 
Therefore, we derive that, by the Markov property, and then by integration by parts,
\begin{align*}
&\mathbb P \Big( t+\sigma(g_n\cdots g_1,x)\in (-\infty,-a], \tau_{x,t}=n \Big) \\
&=\mathbb E \Big( H_{g_{n-1}\cdots g_1 x} \big( a+t+\sigma(g_{n-1}\cdots g_1,x) \big); \tau_{x,t}>n-1 \Big) \\
&\leq c \, \mathbb E \Big[ \exp \Big( -\alpha(a+t+\sigma(g_{n-1}\cdots g_1,x)) \Big); \tau_{x,t}>n-1 \Big]\\
&= c\int_{[0,\infty)} e^{-\alpha (a+s)}dF_{n-1,x,t}(s)\\
&= c \alpha \int_0^\infty e^{-\alpha(a+s)}F_{n-1,x,t}(s) ds\\
&\leq cn^{-3/2}(1+\max\{t, 0\}) \int_0^\infty e^{-\alpha(a+s)}(1+s)^2 ds
\end{align*}
and the inequality \eqref{bound-n-32-ee} follows.

Now, we conclude the argument by showing that, uniformly in $x \in \bb P(\bb V)$, 
\begin{align}\label{upper-bound-Thm1-00a}
 \limsup_{n\rightarrow\infty} n^{3/2} \mathbb E \Big( \varphi(g_n\cdots g_1 x); \tau_{x,t}=n \Big) 
 \leq 
\frac{2 V(x,t)}{\sqrt{2\pi}\upsilon_\mu^3}\int_{\mathbb P(\mathbb V) \times (-\infty,0)} \varphi(x') \rho(dx',dt').
\end{align}
We fix $\ee>0$ and choose $a >0$ as above. Then, using \eqref{lower-bound-pf-Thm1} and \eqref{bound-n-32-ee}, 
we obtain that, uniformly in $x \in \bb P(\bb V)$,
\begin{align*} 
&\limsup_{n\rightarrow\infty}n^{3/2}\mathbb E \Big( \varphi(g_n\cdots g_1 x); \tau_{x,t}=n \Big) \\
&\leq \lim_{n\rightarrow\infty}n^{3/2} \mathbb E \Big( \varphi(g_n\cdots g_1 x);  t+\sigma(g_n\cdots g_1,x)\in [-a,0),\tau_{x,t}=n) \Big)\\
&\quad + \| \varphi \|_{\infty} \limsup_{n\rightarrow\infty}n^{3/2}\mathbb P \Big( t+\sigma(g_n\cdots g_1,x) \in (-\infty,-a),\tau_{x,t}=n \Big)\\
&\leq \frac{2 V(x,t) }{\sqrt{2\pi}\upsilon_\mu^3}
\int_{\mathbb P(\mathbb V) \times [-a,0)} \varphi(x') \rho(dx',dt') 
+  c \ee (1+\max\{t,0\}) \| \varphi \|_{\infty}\\
&\leq  \frac{2 V(x,t)}{\sqrt{2\pi}\upsilon_\mu^3}
\int_{\mathbb P(\mathbb V) \times (-\infty,0)} \varphi(x') \rho(dx',dt')
+   c \ee (1+\max\{t,0\}) \| \varphi \|_{\infty}.
\end{align*}
Since this inequality holds for all $\ee>0$, we get \eqref{upper-bound-Thm1-00a}. 
Combining \eqref{lower-bound-Thm1-00a} and \eqref{upper-bound-Thm1-00a} 
concludes the proof of Corollary \ref{Thm-CLLT-cocycle-002}. 
\end{proof}

\appendix

\section{Random walks with perturbations depending on the future}\label{sec invar func}

In this section, we establish several conditioned limit theorems 
within the abstract framework which we introduced in our preliminary paper \cite[Sections 4-7]{GQX24a}.
This framework enables us to analyze the behavior of random walks with perturbations that depend on the future, 
providing a comprehensive understanding of their limiting behavior. 
The results established in this Appendix are utilized 
 in the proofs of Theorems \ref{Thm-CLLT-cocycle}, \ref{Cor-CLLT-cocycle-001} and \ref{Thm-CLLT-cocycle-bound-001}
through the reversal Lemma \ref{lemma-duality-for-product-001}.

\subsection{Setting and assumptions}\label{subsec set assump}
We now briefly recall the setting of random walks with perturbations from Section 4 in \cite{GQX24a}.
Assume that $\bb G$ is a general second countable locally compact group and $\mu $ is a Borel probability measure on $\bb G$.
We endow the measurable space $\Omega= \bb G^{\bb N^*}$ with the product measure $\bb P = \mu^{\otimes \bb N^*}$.
The sequence $g_1,g_2,\ldots$ represents the coordinate maps on $\Omega$, 
forming a sequence of  independent identically distributed elements of $\bb G$ with law  $\mu$.
The expectation corresponding to $\bb P$ is denoted by $\bb E$. 
We fix a second countable locally compact space $\bb X$ equipped with a continuous action of the group $\bb G$,
along with a $\mu$-stationary Borel probability measure $\nu$ on $\bb X$. 
Let $\sigma: \bb G \times \bb X \to \bb R$ be a continuous cocycle satisfying that, 
for any $g_1, g_2 \in \bb G$ and $x\in \bb X$,
\begin{align*} 
\sigma(g_2 g_1, x) = \sigma(g_2,  g_1 x) + \sigma(g_1, x). 
\end{align*}
Assume that $\sigma$ has an exponential moment with respect to $\mu$: there exists $\alpha>0$ such that 
\begin{align} \label{exp mom for f 001}
\int_{\bb G} e^ {\alpha \sup_{x\in \bb X} | \sigma (g,x) |} \mu (dg) < \infty. 
\end{align}
Moreover, assume that $\sigma$ is centered in the following strong sense: for any $x\in \bb X$,
\begin{align} \label{centering-001}
\int_{\bb G} \sigma (g,x) \mu (dg) = 0. 
\end{align}
For any $x\in \bb X$, consider the random walk defined by
\begin{align*} 
S^x_n: = \sigma(g_n  \cdots  g_1, x )= \sum_{i=1}^n \sigma\left(g_i, g_{i-1}\cdots g_1 x\right), \quad n\geq 1.
\end{align*}
The space $\Omega\times \bb X$ is endowed with the shift map $T$ defined as follows: 
for $x\in \bb X$ and  $\omega = (g_1, g_2, \ldots) =(g_{i})_{i \geq 1} \in \Omega$,
\begin{align*}
T(\omega,x): = ((g_2, g_3, \ldots), g_1x) = ((g_{i+1})_{i \geq 1}, g_1x).
\end{align*}
We will deal with the random walk $S_n^x$ with perturbations that depend on the future. 
Specifically,  given a sequence $\mathfrak f = (f_n)_{n \geq 0}$ of 
measurable functions $f_n: \Omega\times \bb X \to \bb R$, we will study 
the properties of the perturbed random walk
\begin{align} \label{the perturbed random walk 001}
S^x_n(\omega) + f_n \circ T^n(\omega, x) - f_0(\omega, x), \quad n\geq 1.  
\end{align}
As in \cite{GQX24a}, the challenge arises from the fact that the functions 
$f_n$ may depend on the whole sequence $\omega=(g_{i})_{i \geq 1}$, including the future coordinates $(g_{i})_{ i \geq n }$.
To manage this dependence, we will introduce a finite-dimensional approximation property, which asserts that 
functions depending on infinitely many coordinates 
can be effectively approximated by functions relying on finitely many coordinates.
This strategy is one of the crucial aspects of the paper's approach. 

First, we assume that the sequence $\mathfrak f = (f_n)_{n \geq 0}$ of 
measurable functions $f_n: \Omega\times \bb X \to \bb R$ admits a uniform exponential moment: 
there exists a constant $\alpha > 0$ such that 
\begin{align} \label{exp mom for g 002} 
C_{\alpha}(\mathfrak f) 
= \sup_{n \in \bb N} \int_{\bb X} \int_{\Omega} e^{ \alpha |f_n(\omega,x)| } \bb P(d\omega)  \nu(dx)
< \infty.
\end{align}
For any integer $p\geq 0$, let $\mathscr A_p$ be the $\sigma$-algebra on $\Omega$ 
spanned by the coordinate functions $\omega \in \Omega  \mapsto \omega_{i} \in \bb G$ for $1\leq i \leq p$. 
The finite-dimensional approximation property of the sequence $\mathfrak f$ is stated as follows: 
 there are constants $\alpha, \beta >0$ such that 
\begin{align} \label{approxim rate for gp-002} 
D_{\alpha,\beta}(\mathfrak f) 
 = \sup_{n \in \bb N} \sup_{p \in \bb N}   e^{\beta p} 
\left( \int_{\bb X}  \int_{\Omega}  
 e^{\alpha |f_n(\omega, x) - \bb E (f_n(\cdot, x) | \scr A_p)(\omega) |} \bb P(d\omega) \nu(dx)  - 1\right)  
  <\infty.  
\end{align}
We have already seen that this property is satisfied in the case of the random walks with preturbations considered
in Subsections \ref{sec proof T1.6}, \ref{sec proof of Th1.8} and \ref{sec a technical version}.

We will make the following strong approximation assumption for the process $(S^x_{[nt]})_{t \in [0, 1]}$:
up to enlarging the probability space $(\Omega, \scr F, \bb P)$, 
there exist a standard Wiener process $(B_t)_{t \geq 0}$ on $(\Omega, \scr F, \bb P)$ and 
a constant $\bf{v} > 0$ with the following property. 
For any $\ee \in (0, \frac{1}{4})$, there exists $c_{\ee} > 0$ such that, for any $x\in \mathbb{X}$ and $n\geq 1$,  
\begin{align}\label{KMTbound001}
\mathbb{P} \bigg( \sup_{0\leq t\leq 1} \left\vert S^x_{\left[ nt\right] } - \bf{v} B_{nt}\right\vert >  n^{1/2 - \ee } \bigg) 
\leq c_{\ee}  n^{- \ee}. 
\end{align}

\subsection{Main results}\label{subsec invar func fixed}
In order to state our main results, 
we need to introduce some additional notation.
For $x\in \bb X$ and $t\in \bb R$, denote by $\tau^{\mathfrak f}_{x,t}$ the first time
when the perturbed random walk $( t + S^x_{k} + f_k\circ T^k -f_0 ) _{k\geq 1}$ defined by \eqref{the perturbed random walk 001}
 exits $\mathbb{R}_{+}= [ 0,\infty),$ 
\begin{align} \label{def-stop time with preturb-001}
\tau_{x,t}^{\mathfrak f} = \min \left\{ k\geq 1: t+S^x_{k} + f_k( T^k(\omega,x)) - f_0(\omega,x) < 0\right\}. 
\end{align}
For $n \geq 1$ and $t \in \bb R$, let
\begin{align}\label{def-U-varphi-001}
U^{\mathfrak f}_n(t) 
= \int_{\bb X} \bb E \left( t + S^{x}_n + f_n(T^n (\omega,x))- f_0(\omega,x);  \tau^{\mathfrak f}_{x,t} > n \right) \nu(dx). 
\end{align}
In the preliminary paper \cite{GQX24a}, we have established 
asymptotic bounds for $U^{\mathfrak f}_n(t)$. 
Actually we also gave a control over a twisted version of this function to be defined below.
For an integrable and non-negative function $\theta$ on $\Omega$, 
 denote the essential supremum as $\|\theta\|_{\infty} = {\rm esssup} |\theta|$. 
Assume that $\theta$ satisfies the following condition: 
there exists a constant $\beta >0$ such that 
\begin{align} \label{approx property of theta-001}
N_{\beta}(\theta) : = \sup_{p \geq 0} e^{\beta p} \bb E | \theta -  \bb E(\theta | \scr A_p) | < \infty. 
\end{align}
In analogy with \eqref{def-U-varphi-001},
for $t \in \bb R$ and $n \geq 1$, we set
\begin{align}\label{def-U-f-n-t-theta-001}
U^{\mathfrak f, \theta}_n(t)
= \int_{\bb X} \bb E \left((t + S^{x}_n + f_n(T^n (\omega,x))- f_0(\omega,x))  \theta(\omega) ;  \tau^{\mathfrak f}_{x,t} > n \right) \nu(dx). 
\end{align}
It follows from \cite[Corollary 4.2]{GQX24a} 
that there exists an increasing function $U^{\mathfrak f, \theta}: \bb R \to \bb R_+$ such that,
for any continuous compactly supported function $\psi$ on $\bb R$, we have 
\begin{align}\label{def-U-f-theta-001}
\lim_{n \to \infty} \int_{\bb R} U^{\mathfrak f, \theta}_n(t) \psi(t) dt = \int_{\bb R} U^{\mathfrak f, \theta}(t) \psi(t) dt. 
\end{align} 
Our goal in this section is to utilize the function $U^{\mathfrak f, \theta}$ to prove the following conditioned limit theorems. 
Specifically, when $\theta=1$, our first theorem provides an estimate for the persistence probability $\bb P(\tau_{x, t}^{\mathfrak f} >n)$. 

\begin{theorem}\label{Thm-CCLT-limit-000}
Suppose that the cocycle $\sigma$ admits finite exponential moments \eqref{exp mom for f 001}
and is centered \eqref{centering-001}. 
We also suppose that the strong approximation property \eqref{KMTbound001}  is satisfied.
Fix $\beta > 0$, $B \geq 1$ and a continuous compactly supported test function $\varphi$ on $\bb R$. 
Then, uniformly over the sequence $\mathfrak f = (f_n)_{n \geq 0}$ of  measurable functions on $\Omega \times \bb X$
satisfying the moment condition \eqref{exp mom for g 002} and the approximation property \eqref{approxim rate for gp-002}
with $C_{\alpha}(\mathfrak f) \leq B$ and $D_{\alpha,\beta}(\mathfrak f) \leq B$, 
and uniformly for non-negative function 
$\theta \in L^{\infty}(\Omega, \bb P)$ with $\|\theta\|_{\infty} \leq B$ and $N_{\beta}(\theta) \leq B$, 
we have
\begin{align*}
\lim_{n\to\infty} \frac{ \bf{v} \sqrt{2 \pi n} }{ 2 } 
\int_{\bb X}  \int_{\bb R} \varphi(t)
\bb E \left( \theta(\omega);   \tau_{x, t}^{\mathfrak f} >n   \right)  \nu(dx) dt  
=   \int_{\bb R} \varphi(t) U^{\mathfrak f, \theta}(t) dt. 
\end{align*}
\end{theorem}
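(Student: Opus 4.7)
The plan is to establish the asymptotic by comparing
\begin{equation*}
\int_{\bb X} \int_{\bb R} \varphi(t)\, \bb E\bigl(\theta(\omega);\, \tau^{\mathfrak f}_{x,t} > n\bigr)\, \nu(dx)\, dt
\end{equation*}
with the integrated quantity $\int_{\bb R}\varphi(t)\, U^{\mathfrak f,\theta}_n(t)\, dt$ defined in \eqref{def-U-f-n-t-theta-001}, whose limit is already identified via \eqref{def-U-f-theta-001}. The bridge between the two is the classical ratio $\bb P(\tau > n)/\bb E(t+S^x_n;\tau > n) \sim 2/(\bf v\sqrt{2\pi n})$, which I would obtain via a Brownian meander asymptotic coming from \eqref{KMTbound001}, adapted to the perturbed walk \eqref{the perturbed random walk 001}.

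First I would perform a finite-dimensional reduction. Choose $p_n = \lceil A\log n\rceil$ with $A$ large enough that $e^{-\beta p_n} = o(n^{-1})$, and replace each $f_k$ by $\tilde f_k := \bb E(f_k\mid \scr A_{p_n})$ and $\theta$ by $\tilde\theta := \bb E(\theta \mid \scr A_{p_n})$. By \eqref{approxim rate for gp-002}, \eqref{approx property of theta-001}, H\"older's inequality and the exponential moment bounds, the total replacement error contributed to the quantity of interest is $o(n^{-1/2})$, uniformly in the bounds on $C_\alpha(\mathfrak f),\, D_{\alpha,\beta}(\mathfrak f),\, \|\theta\|_\infty,\, N_\beta(\theta)$. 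After this reduction, the perturbation $\tilde f_k\circ T^k$ depends only on $g_{k+1},\ldots,g_{k+p_n}$, so the future-dependence is restricted to a short window of length $p_n$.

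Next, set $m = m_n := \lfloor n^{1/2-\delta}\rfloor$ for some small $\delta > 0$, so that $p_n \ll m_n \ll \sqrt n$, and split the trajectory at time $n-m$. Using the Markov property on the block $[0, n-m-p_n]$, whose coordinates are independent of those controlling the last block and its future overlap, I would apply the strong approximation \eqref{KMTbound001} to couple the initial walk $(S^x_{k})_{k \leq n-m}$ with a Brownian path $\bf v B$. The classical Brownian meander estimate gives, for the drifted starting value $s \geq 0$,
\begin{equation*}
\bb P\bigl(\bf v B_u + s \geq 0,\ \forall u \in [0, n-m]\bigr) \sim \frac{2 s}{\bf v\sqrt{2\pi n}},
\end{equation*}
and the tightness of the rescaled meander implies that, conditionally on survival, the terminal value $(t+S^x_{n-m})/(\bf v\sqrt n)$ converges in law to the Rayleigh distribution. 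After re-inserting the perturbations and the terminal block $[n-m, n]$, where the law of the perturbed walk is controlled by $U^{\mathfrak f,\theta}_m$, I would obtain after integration against $\varphi(t)\,\nu(dx)\,dt$ and using \eqref{def-U-f-theta-001} (to identify $\lim_m U^{\mathfrak f,\theta}_m$ in the integrated sense) that the target limit equals $\int_{\bb R}\varphi(t)\, U^{\mathfrak f,\theta}(t)\, dt$.

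The main obstacle will be controlling the entanglement caused by the future-dependence: even after the finite-dimensional reduction, the conditioning on $\{\tau^{\tilde{\mathfrak f}}_{x,t}>n\}$ couples the coordinates in the overlap window $[n-m-p_n,\, n+p_n]$ with those in the initial segment $[0,\, n-m-p_n]$. I expect to handle this by conditioning on the $\sigma$-algebra generated by the two short blocks of $2p_n$ coordinates straddling times $n-m$ and $n$, treating them as a quenched boundary condition, and exploiting the exponential moments to bound the resulting cross terms by $o(n^{-1/2})$. The uniformity over $\mathfrak f$ and $\theta$ in the statement reduces to checking that every estimate above depends only on the constants $B,\alpha,\beta$ appearing in the hypotheses, which is automatic from the quantitative nature of \eqref{exp mom for f 001}--\eqref{KMTbound001} and \eqref{approx property of theta-001}.
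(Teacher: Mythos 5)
There is a genuine gap, and it is the location and orientation of the excursion decomposition. You split the trajectory at the \emph{late} time $n-m$ with $m=\lfloor n^{1/2-\delta}\rfloor$ and then apply the Brownian meander estimate to the long initial block $[0,n-m]$, asserting that the survival probability over that block is $\sim 2s/(\bf{v}\sqrt{2\pi n})$ with $s$ the (order-one) starting value. This cannot work as stated: the coupling \eqref{KMTbound001} only controls the walk up to an error of $n^{1/2-\ee}$, which dwarfs an order-one starting value, so the Brownian meander formula gives no information at all near the boundary. In fact even in the i.i.d. case the exact asymptotic of $\bb P(\tau_{x,t}>n)$ is $2V(t)/\sqrt{2\pi n}\upsilon$ with $V$ the harmonic function, not $2t/\sqrt{2\pi n}\upsilon$; and here the correct constant is $U^{\mathfrak f,\theta}(t)$, not $t$. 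Your argument never produces this correction — instead it tries to introduce $U^{\mathfrak f,\theta}_m$ through the terminal block $[n-m,n]$. But that block starts from a position $\asymp\sqrt n$ and runs for only $m\ll\sqrt n$ steps, so it survives with probability $1-o(1)$ and has nothing to do with $U^{\mathfrak f,\theta}_m(t)$, which by its definition \eqref{def-U-f-n-t-theta-001} is a small-start quantity. The roles of the two blocks are simply reversed in your plan.

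The fix, and what the paper actually does (Propositions \ref{Prop-CCLT-001-a} and \ref{Prop-CCLT-bound-001-a}), is to cut at the \emph{random} stopping time $\tilde\nu_{n,t}$ when the walk first exits the strip $[-2n^{1/2-\ee},\,2n^{1/2-\ee}]$, which happens around time $n^{1-2\ee}$. Before $\tilde\nu_{n,t}$ the walk is near the boundary, the Brownian coupling is useless, and the correct contribution is captured by the truncated preharmonic quantity $E_n(a,t)$ (defined in \eqref{def-En-at-00b}), which is compared to $U^{\mathfrak f,\theta}_n(t)$ via the optional-stopping monotonicity property in Proposition \ref{Prop-U-f-t-increase-001}. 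After $\tilde\nu_{n,t}$ the start is of order $n^{1/2-\ee}$, the coupling error is now subleading, and Lemma \ref{Lemma Weak Conv-a} (which crucially requires $t\geq n^{1/2-\ee}$) gives the factor $2s/(\bf{v}\sqrt{2\pi n})$. Your finite-dimensional reduction idea is fine (the paper uses $p=[n^\ee]$, you use $\lceil A\log n\rceil$, both give error $o(n^{-1/2})$), but the core decomposition needs to be reoriented: isolate the escape from the boundary as the first block, where $U^{\mathfrak f,\theta}$ enters, and reserve the Brownian meander estimate for the remaining trajectory where it is actually valid.
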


We also obtain the following bound, which holds uniformly for all $n \geq 1$ and $t\in \bb R$,
and which does not follow from the previous asymptotic result.  

\begin{theorem}\label{Thm-inte-exit-time-001}
Suppose that the cocycle $\sigma$ admits finite exponential moments \eqref{exp mom for f 001}
and is centered \eqref{centering-001}. 
We also suppose that the strong approximation property \eqref{KMTbound001}  is satisfied.
Fix $\beta > 0$ and $B \geq 1$. 
Then, there exists a constant $c>0$ such that for any $t \in \bb R$,
uniformly over the sequence $\mathfrak f = (f_n)_{n \geq 0}$ of measurable functions on $\Omega \times \bb X$
satisfying the moment condition \eqref{exp mom for g 002} and the approximation property \eqref{approxim rate for gp-002}
with $C_{\alpha}(\mathfrak f) \leq B$ and $D_{\alpha,\beta}(\mathfrak f) \leq B$, 
we have
\begin{align*}
\int_{\bb X}   \bb P \left(   \tau_{x, t}^{\mathfrak f} >n   \right)  \nu(dx) 
\leq c \frac{1 + \max\{t, 0\}}{\sqrt{n}}. 
\end{align*}
\end{theorem}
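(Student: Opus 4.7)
The plan is to combine the uniform upper bound $U_n^{\mathfrak f}(t) \leq c\bigl(1+\max\{t,0\}\bigr)$ from Corollary 4.3 of \cite{GQX24a} (applied with the trivial twist $\theta \equiv 1$) with a Chebyshev-type split of the exit probability at scale $\sqrt{n}$. Setting
\begin{align*}
\tilde S_k^{\mathfrak f}(\omega, x) := t + S_k^x + f_k\bigl(T^k(\omega, x)\bigr) - f_0(\omega, x),
\end{align*}
one has $\tilde S_n^{\mathfrak f} \geq 0$ on $\{\tau_{x,t}^{\mathfrak f} > n\}$, so the decomposition
\begin{align*}
\bb P(\tau_{x,t}^{\mathfrak f} > n)
= \bb P\bigl(\tau_{x,t}^{\mathfrak f} > n,\ \tilde S_n^{\mathfrak f} > \sqrt{n}\bigr)
+ \bb P\bigl(\tau_{x,t}^{\mathfrak f} > n,\ 0 \leq \tilde S_n^{\mathfrak f} \leq \sqrt{n}\bigr)
\end{align*}
reduces the problem to estimating each piece separately.

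For the large-value piece, Markov's inequality applied to the non-negative quantity $\tilde S_n^{\mathfrak f} \mathds 1_{\{\tau_{x,t}^{\mathfrak f} > n\}}$, followed by integration over $\nu(dx)$, will yield
\begin{align*}
\int_{\bb X} \bb P\bigl(\tau_{x,t}^{\mathfrak f} > n,\ \tilde S_n^{\mathfrak f} > \sqrt{n}\bigr)\, \nu(dx)
\leq \frac{U_n^{\mathfrak f}(t)}{\sqrt{n}}
\leq \frac{c\bigl(1+\max\{t,0\}\bigr)}{\sqrt{n}}.
\end{align*}
The constraints $C_\alpha(\mathfrak f),\ D_{\alpha,\beta}(\mathfrak f) \leq B$ are exactly what is required for the constant in Corollary 4.3 of \cite{GQX24a} to be uniform over the family $\mathfrak f$, and this is the reason the result is stated uniformly in $\mathfrak f$ rather than pointwise.

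The main obstacle will be the small-value piece $I := \int_{\bb X} \bb P\bigl(\tau_{x,t}^{\mathfrak f} > n,\ 0 \leq \tilde S_n^{\mathfrak f} \leq \sqrt{n}\bigr)\, \nu(dx)$. The plan is to split the trajectory at $m = [n/2]$, apply a Markov-type decomposition, and use the finite-dimensional approximation \eqref{approxim rate for gp-002} to replace $f_0$ and $f_n \circ T^n$ by their conditional expectations given $\scr A_p$ with $p$ moderate, thereby decoupling the two halves up to an error that is summable in $n$. Conditional on the first $m$ steps, the tail of the trajectory is a random walk from $g_m \cdots g_1 x$ with starting position $\tilde S_m^{\mathfrak f} \geq 0$ and shifted perturbations still satisfying the same exponential and approximation bounds with the same constant $B$. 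After truncating the perturbation at scale $\log n$ (negligible by \eqref{exp mom for g 002}) and applying the strong approximation \eqref{KMTbound001} over the second half, the task will reduce to estimating the probability that a Brownian motion started at $s \geq 0$ stays non-negative over time $n/2$ and ends in $[0, \sqrt{n}]$; by the reflection-principle computation this probability is of order $sA^2/L^{3/2} = O(s/\sqrt{n})$ for $s \lesssim \sqrt{n}$, and exponentially small for larger $s$. Taking expectation over the first half and invoking Corollary 4.3 of \cite{GQX24a} once more will yield $I \leq c\, U_m^{\mathfrak f}(t)/\sqrt{n} \leq c'(1+\max\{t,0\})/\sqrt{n}$, matching the desired rate. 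The delicate part will be that the window $\sqrt{n}$ sits just above the KMT resolution $n^{1/2-\varepsilon}$; widening the window by the factor $(1+n^{-\varepsilon})$ only multiplies the Brownian bridge estimate by the same factor, so a small fixed $\varepsilon > 0$ makes the approximation harmless. Finally, the case $t < 0$ reduces to $t=0$: the event $\{\tau_{x,t}^{\mathfrak f} > n\}$ forces the perturbed increment at time $1$ to exceed $-t$, which by \eqref{exp mom for f 001}--\eqref{exp mom for g 002} has probability decaying exponentially in $|t|$.
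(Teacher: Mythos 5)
Your first step is correct and clean: on $\{\tau_{x,t}^{\mathfrak f}>n\}$ the perturbed walk $\tilde S_n^{\mathfrak f}$ is non-negative, and Markov's inequality together with Corollary~4.3 of \cite{GQX24a} immediately gives $\int_{\bb X}\bb P(\tau_{x,t}^{\mathfrak f}>n,\tilde S_n^{\mathfrak f}>\sqrt n)\nu(dx)\le U_n^{\mathfrak f}(t)/\sqrt n\le c(1+\max\{t,0\})/\sqrt n$. That part is simpler and more direct than the paper's argument.

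However, your treatment of the small-value piece has a genuine gap, located precisely where you declare it to be merely ``delicate''. After splitting at $m=[n/2]$, the conditional starting point $s=\tilde S_m^{\mathfrak f}$ for the second half can be arbitrarily small, and for $s\lesssim n^{1/2-\ee}$ the strong approximation error dominates: the KMT event \eqref{KMTbound001} fails with probability of order $n^{-\ee}$, while the Brownian reflection-principle bound you want to use is of order $s/\sqrt n\lesssim n^{-\ee}$. What one can actually conclude is $\bb P(\text{second half persists},\ \tilde S_n^{\mathfrak f}\le\sqrt n\ |\ \tilde S_m^{\mathfrak f}=s)\lesssim s/\sqrt n+n^{-\ee}$, and the integrated $n^{-\ee}$ term contributes roughly $n^{-\ee}\int_{\bb X}\bb P(\tau_{x,t}^{\mathfrak f}>m)\nu(dx)$, which is \emph{not} $O((1+\max\{t,0\})/\sqrt n)$ without already knowing the bound you are trying to prove. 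Your observation about widening the window $[0,\sqrt n]$ by a factor $(1+n^{-\ee})$ fixes the horizontal extent of the endpoint constraint, but it does nothing for this error in the starting height of the second-half trajectory, which is the real obstacle.

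This is exactly the issue the paper's decomposition is built to avoid. Instead of a deterministic split at $n/2$, Proposition~\ref{Prop-CCLT-001-a} and Corollary~\ref{Cor-CCLT-001-a} stop the walk at $\tilde\nu_{n,t}$, the first time the (unperturbed, shifted) walk leaves $[-2n^{1/2-\ee},2n^{1/2-\ee}]$; Lemma 7.3 of \cite{GQX24a} gives $\tilde\nu_{n,t}\le n^{1-\ee}$ outside an exponentially small event, and on $\{\tau>n\}$ the exit is necessarily upward, so the restarted trajectory always begins at a level $\gtrsim n^{1/2-\ee}$ where Lemma~\ref{Lemma Weak Conv-a} applies and the KMT error is absorbed into the $ct^2/n$ term. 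Your argument could be salvaged by running a finite recursion $\int_{\bb X}\bb P(\tau>n)\nu(dx)\le c_1(1+\max\{t,0\})/\sqrt n+c_2\,n^{-\ee}\int_{\bb X}\bb P(\tau>n/2)\nu(dx)$ and iterating $O(1/\ee)$ times, but as written the proposal does not do this and does not control the small-$\tilde S_m^{\mathfrak f}$ regime.
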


When $\theta=1$, our final result in this section 
is a central limit theorem for the process $t + \sigma(g_n \cdots g_1, x)$ conditioned on the event $\{ \tau_{x, t}^{\mathfrak f} >n \}$.
Recall that $\phi^+$ is the standard Rayleigh density function on $\bb R$, as defined by \eqref{Rayleigh law-001}. 

\begin{theorem}\label{Thm-CCLT-limit}
Suppose that the cocycle $\sigma$ admits finite exponential moments \eqref{exp mom for f 001}
and is centered \eqref{centering-001}. 
We also suppose that the strong approximation property \eqref{KMTbound001}  is satisfied.
Fix $\beta > 0$, $B \geq 1$, $b_0 >0$ and a continuous compactly supported test function $\varphi$ on $\bb R$. 
Then, uniformly for $b \leq b_0$, and for a sequence $\mathfrak f = (f_n)_{n \geq 0}$ of  measurable functions on $\Omega \times \bb X$
satisfying the moment condition \eqref{exp mom for g 002} and the approximation property \eqref{approxim rate for gp-002}
with $C_{\alpha}(\mathfrak f) \leq B$ and $D_{\alpha,\beta}(\mathfrak f) \leq B$; 
and uniformly for non-negative function $\theta \in L^{\infty}(\Omega, \bb P)$ with $\|\theta\|_{\infty} \leq B$ and $N_{\beta}(\theta) \leq B$, 
we have
\begin{align*}
 \lim_{n\to\infty} \frac{ \bf{v} \sqrt{2 \pi n} }{ 2 } 
\int_{\bb X}  \int_{\bb R} \varphi(t) 
& \bb E \left( \theta(\omega);  \frac{t + \sigma(g_n \cdots g_1, x)}{ \bf{v} \sqrt{n}} \leq b,  \tau_{x, t}^{\mathfrak f} >n   \right) 
\nu(dx) dt
\nonumber \\
& \qquad 
=   \int_{\bb R} \varphi(t) U^{\mathfrak f, \theta}(t) dt  \int_{0}^{\max\{b, 0\}} \phi^+(t')  dt'. 
\end{align*}
\end{theorem}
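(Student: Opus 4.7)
The result refines Theorem \ref{Thm-CCLT-limit-000} by inserting the endpoint constraint $\frac{t+\sigma(g_n\cdots g_1,x)}{\bf{v}\sqrt n} \leq b$, which in the limit contributes the Rayleigh factor $\int_0^{\max\{b,0\}}\phi^+(t')\,dt'$. My plan is to revisit the architecture of the proof of Theorem \ref{Thm-CCLT-limit-000} (which is established earlier in this appendix, via the finite-dimensional approximation \eqref{approxim rate for gp-002} of the perturbations combined with the strong Brownian coupling \eqref{KMTbound001}) and to track how this extra indicator propagates through every step. The Rayleigh factor will emerge naturally from the reflection principle for Brownian motion.

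I would first dispose of the case $b \leq 0$: on the event $\{\tau^{\mathfrak f}_{x,t}>n\}$, typical fluctuations of $t+\sigma_n$ are of order $\sqrt n$ in the positive direction, so the additional constraint $\frac{t+\sigma_n}{\bf{v}\sqrt n}\leq b$ is asymptotically incompatible with positivity of the entire trajectory; a direct Chebyshev-type bound combined with Theorem \ref{Thm-CCLT-limit-000} then shows that both sides are $o(n^{-1/2})$, matching $\int_0^{\max\{b,0\}}\phi^+=0$. For $b>0$, I would truncate $\theta$ and $\{f_n\}$ to their $\scr A_p$-conditional expectations with $p=[\log n]$. By \eqref{approxim rate for gp-002} and \eqref{approx property of theta-001} together with the hypotheses $D_{\alpha,\beta}(\mathfrak f) \leq B$ and $N_{\beta}(\theta) \leq B$, the incurred errors are polynomially small and negligible against the target scale $n^{-1/2}$. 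Conditioning on $\scr A_p$ and applying the Markov property at time $p$, the tail event factorizes into an $\scr A_p$-measurable piece---the positivity of the first $p$ perturbed increments, weighted by the truncated $\theta^{(p)}$---and the residual probability
\[
\bb P\Big(\min_{0\leq k\leq n-p}(t_p+S^{x_p}_k)\geq 0,\ t_p+S^{x_p}_{n-p}\leq b\bf{v}\sqrt n-(t-t_p)\Big),
\]
where $x_p=g_p\cdots g_1 x$ and $t_p=t+S^x_p+f_p^{(p)}\circ T^p-f_0^{(p)}$. The latter involves no future-dependent perturbation and is a purely classical random-walk object.

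To evaluate this residual probability, I would invoke \eqref{KMTbound001} to couple $(S^{x_p}_k)$ with $\bf{v}B_k$ within error $n^{1/2-\ee}$, and then apply the classical reflection-principle asymptotic for Brownian motion
\[
\bb P\Big(\min_{s\leq n} W_s>-u,\ W_n\leq \beta\sqrt n\Big)\sim \frac{2u}{\sqrt{2\pi n}}\int_0^{\max\{\beta,0\}}\phi^+(y)\,dy\qquad (n\to\infty,\ u>0 \text{ fixed}),
\]
so as to extract the leading term $\frac{2t_p/\bf{v}}{\sqrt{2\pi(n-p)}}\int_0^{\max\{b,0\}}\phi^+(y)\,dy$ with $O(n^{-\ee})$ relative error (the replacement of $b-(t-t_p)/(\bf{v}\sqrt n)$ by $b$ and of $n-p$ by $n$ each cost $O(n^{-\ee})$). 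Multiplying by $\theta^{(p)}$ and integrating against $\nu(dx)\varphi(t)\,dt$, the definition \eqref{def-U-f-n-t-theta-001} of $U^{\mathfrak f,\theta}_p$ identifies the result as $\frac{2}{\bf{v}\sqrt{2\pi n}}\int_0^{\max\{b,0\}}\phi^+(y)\,dy\cdot\int_{\bb R}\varphi(t)U^{\mathfrak f,\theta}_p(t)\,dt\,(1+o(1))$; applying \eqref{def-U-f-theta-001} to pass to the limit as $p\to\infty$ completes the argument. The main obstacle will be ensuring uniformity in $b\leq b_0$ and over the class of admissible perturbations and twists: the former follows from the explicit monotone continuous dependence of the Brownian-meander asymptotic on $b$, and the latter reduces to the uniformity already achieved in the proof of Theorem \ref{Thm-CCLT-limit-000}, since inserting the endpoint indicator commutes with both the Markov splitting and the truncation steps.
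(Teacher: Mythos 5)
Your plan of ``insert the endpoint constraint and track it through a Markov splitting, then read off the Rayleigh factor from the Brownian reflection principle'' is the right intuition, but the point at which you perform the Markov split is where the argument breaks. You choose a fixed time $p=[\log n]$ and condition on $\scr A_p$. Two problems arise. First, even after truncating $\mathfrak f$ to $\scr A_p$-measurable functions $f_k^{(p)}$, the quantity $f_k^{(p)}\circ T^k$ that perturbs the walk at time $k$ is $\scr A_{k+p}$-measurable (it depends on the coordinates $g_{k+1},\dots,g_{k+p}$), not $\scr A_p$-measurable. Thus conditioning on $\scr A_p$ does \emph{not} freeze the perturbations over $k\in(p,n]$, and the residual event is not $\{\min_{0\le k\le n-p}(t_p+S^{x_p}_k)\ge 0\}$; you have silently dropped the future-dependent perturbation from the second block, which is the entire difficulty the framework is designed to handle. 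Second, even ignoring the perturbation, the Brownian coupling \eqref{KMTbound001} has error $n^{1/2-\ee}$. If you couple starting from time $p$ with $t_p=O(\log n)$, the coupling error dwarfs $t_p$, so the reflection-principle asymptotic $\bb P(\min_{s\le n}W_s>-u,\,W_n\le\beta\sqrt n)\sim\frac{2u}{\sqrt{2\pi n}}\int_0^{\max\{\beta,0\}}\phi^+$ with $u$ ``fixed'' cannot be extracted: you cannot resolve a signal of order $\log n$ through noise of order $n^{1/2-\ee}$.

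The paper resolves both obstructions simultaneously by applying the Markov property not at a deterministic time but at the stopping time $\tilde\nu_{n,t}$ of \eqref{nu n02}, the first instant the unperturbed walk exits $[-2n^{1/2-\ee},2n^{1/2-\ee}]$. By that moment (which happens by time $n^{1-\ee}$ with overwhelming probability -- Lemma \ref{Lem-Bnt-001}) the walk has magnitude $\asymp n^{1/2-\ee}$, so the coupling error $\tfrac12 n^{1/2-2\ee}$ and the perturbation bound $|\tilde f(\xi_k)|\le n^{\ee}$ are both subdominant; this is exactly the scale regime covered by Lemma \ref{Lemma Weak Conv}, which requires $n^{1/2-\ee}\le t\le n^{1/2}$ for the starting value. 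The contribution of the initial block $\{1,\dots,\tilde\nu_{n,t}\}$ is then collected into a functional of the form $U^{\mathfrak f,\theta}_{\eta_{n,t}}(t)$ and compared to $U^{\mathfrak f,\theta}_n(t)$ via the monotonicity of Proposition \ref{Prop-U-f-t-increase-001}, which is an optional-stopping argument that your proposal lacks. To repair your argument you would essentially have to rediscover this stopping-time decomposition, at which point you are rederiving Proposition \ref{Prop-CCLT-001}.
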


As in \cite[Sections 4, 6 and 7]{GQX24a}, 
we will first establish these theorems for perturbations that depend on finitely many coordinates, 
utilizing the theory of Markov chains developed in \cite{GQX24a}. 
The proofs will be modeled on \cite{GLP17}, for which we need to make the statements effective. 
The general case will be addressed through an approximation procedure. 

\subsection{Perturbation depending on finitely many coordinates}\label{susec: perturb finite-001}

In this subsection, we focus on the scenario where 
the perturbations $\mathfrak f$ depend only on finitely many coordinates. 
We recall the relevant concepts introduced for this case, as detailed in \cite[Section 6.1]{GQX24a}. 

Assume that $\mathfrak f=(f_n)_{n\geq0}$ is a sequence of measurable functions
$f_n: \bb G^{\{1,\ldots,p \}}\times \bb X \to \bb R$. 
In particular, for any $n\geq 0$ and  $x\in \bb X$,  
the function $f_{n}(\cdot, x)$ 
depends only on the first $p$ coordinates $(g_{1},\ldots, g_{p})$. 
This implies that $f_{n}(\cdot, x)$ is 
$\mathscr A_p$ measurable, where 
 $\mathscr A_p$ 
is the $\sigma$-algebra generated by $g_{1},\ldots,g_{p}$. 
We use the same notation $f_n$ to denote 
the function on $\Omega \times \bb X$
defined by $(\omega,x) \mapsto f_n(g_1,\ldots, g_p,x)$.
A similar remark applies to the function $(g_0,\ldots, g_p,x) \mapsto f_n(g_1,\ldots, g_p,x)$ 
defined on $\bb G^{\{0,\ldots,p \}} \times \bb X$.

We now define a Markov chain.
For any $p \geq 0$, set $\bb A_{p} =\bb G^{\{0,\ldots,p \}} \times \bb X \times \bb N$. 
Consider the family of transition probabilities $\{ P_a: a \in \bb A_p \}$ defined as follows: for any $a=(g_0,\ldots,g_{p},x,q)\in \bb A_p$
and any nonnegative measurable function $\varphi: \bb A_p \to \bb R$, 
\begin{align*} 
P_a (\varphi) = \int_{\bb G} \varphi \left( g_{1},\ldots, g_{p}, g, g_1 x, q+1 \right) \mu(dg).  
\end{align*}
For any $a\in \bb A_p$, we write $\bb P_a$ for the probability measure 
on $\Omega'_p = \bb A_p^{\bb N}$ associated with the transition probability $P_a$.
The expectation with respect to $\bb P_a$ is denoted by $\bb E_a$.
 For any 
 $\omega' \in \bb A_p^{\bb N}$ and $i \geq 0$, 
 we denote by $\xi_i(\omega')$ the $i$-th coordinate map of $\omega'$ on $\bb A_p^{\bb N}$.
%
%
Consider the $\sigma$-algebras $\mathscr G_n= \sigma \{ \xi_0,\ddd,\xi_{n} \}$ for $n\geq 0$.
Additionally, we define the function $\sigma_p:  \bb A_p \to  \bb R$ by setting, for any $a=(g_0,\ldots,g_{p},x,q) \in \bb A_p$, 
\begin{align}\label{def-sigma-p-00a}
\sigma_p (g_0, \ldots, g_{p}, x,q) = \sigma(g_0,g_0^{-1}x)=-\sigma(g_0^{-1},x) . 
\end{align}

With the definitions introduced above, we can express $\sigma (g_n \cdots g_1,x)$ as an additive functional
of the Markov chain $(\xi_i)_{i \geq 0}$ as follows:   
for any $a=(g_0,\ldots,g_{p},x,q) \in \bb A_p$ and any $n\geq 1$,
for $\bb P_a$-almost every $\xi=(\xi_0,\xi_1,\ldots)\in \Omega'_p = \bb A_p^{\bb N}$, we have
\begin{align} \label{sigma-Gn-001}
\sigma (g_n \cdots g_1,x) = S_n^x := \sum_{i=1}^{n} \sigma_p(\xi_i),  
\end{align}
where, for  $j>p$, the element $g_j$ is the unique element of $\bb G$ such that 
\begin{align*} 
\xi_{j-p} = (g_{j-p},\ldots, g_j, g_{j-p} \cdots g_1 x, q+ j-p).
\end{align*}
Define a measurable perturbation function $\tilde f$ on $\bb A_p$ by setting, for any $a=(g_0,\ldots,g_{p},x,q) \in \bb A_p$, 
\begin{align*} 
\tilde f(a) = f_q(g_1,\ldots, g_p,x).
\end{align*}
This function captures the dependence of the perturbation 
on the specified finite number of coordinates in the Markov chain framework.
For any $k \in \bb N$ and $a \in \bb A_p$, we set 
\begin{align}\label{def-Fka}
\mathcal F_k(a) = \bb E_a e^{\alpha |\tilde f(\xi_k) |},
\end{align}
where $\alpha>0$ is a constant given in \eqref{exp mom for g 002}. 
It follows from \eqref{exp mom for g 002} that for any $q \in \bb N$, 
we have $\mathcal F_k(g_0, \ldots, g_p, x, q) < \infty$ for $\bb P \otimes \nu$-almost all $g_0, \ldots, g_p, x$. 
 For any $t \in \bb R$, 
 define the function 
$\tilde \tau^{\mathfrak f}_{t}: \bb A_p^{\bb N} \to \bb N \cup \{\infty \} $ by setting 
\begin{align}\label{def-tau-f-y}
 \tilde\tau^{\mathfrak f}_{t} = \min \left\{ k\geq 1:  t + \sum_{i=1}^{k} \sigma_p(\xi_i) + \tilde f(\xi_k)-\tilde f(\xi_0) < 0 \right\}.
\end{align}
It is straightforward to see that $\tilde\tau^{\mathfrak f}_{t}$ is a $(\mathscr G_n)_{n\geq 0}$-stopping time on the space $\bb A_p^{\bb N}$. 

\subsection{Comparison of preharmonic functions}
As a preliminary step in the proofs of Theorems \ref{Thm-CCLT-limit-000}, \ref{Thm-inte-exit-time-001} and \ref{Thm-CCLT-limit},
we investigate the relationships between functions that generalize the function $U^{\mathfrak f}_{n}(t)$ from \eqref{def-U-varphi-001},
where the deterministic integer $n$ is replaced by a stopping time.


Let $p\geq 1$. We still assume that the perturbation $\mathfrak f = (f_n)_{n \geq 0}$ forms a sequence of $\scr A_p$-measurable functions.
Let $\eta: \bb A_p^{\bb N} \to \bb N^*$ be a bounded stopping time, meaning that for any $n \geq 1$, the set $\{ \eta \leq n \}$ belongs to the $\sigma$-algebra $\mathscr G_n$ generated by the coordinate maps $\{\xi_0, \ldots, \xi_n\}$. 
For any  $t\in \bb R$, denote 
\begin{align}\label{def-U-f-n-t-001}
U^{\mathfrak f}_{\eta}(t) 
= \int_{\bb A_p} \mathbb{E}_{a} 
\bigg( t + \sum_{i=1}^{\eta} \sigma_p(\xi_i) + \tilde f(\xi_{\eta}) - \tilde f(\xi_0);   \tilde{\tau}_{t}^{\mathfrak f} > \eta \bigg)
\mu_p(da), 
\end{align}
where $\sigma_p$ is defined by \eqref{def-sigma-p-00a}, and for $a = (g_0, \ldots, g_p, x,q)$, we have denoted 
\begin{align}\label{def-mu-p-da}
\mu_p(da) = \mathds 1_{\{q=0\}} \mu(dg_0) \ldots \mu(dg_p) \nu(dx).  
\end{align}
More generally, if $\theta$ is a non-negative and bounded $\mathscr A_p$-measurable function on $\Omega$, 
for $a = (g_0, \ldots, g_p, x,q) \in \bb A_p$, setting
\begin{align}\label{def-tilde-theta}
\tilde \theta(a) = \theta(g_1, \ldots, g_p), 
\end{align} 
we can extend our definition \eqref{def-U-f-n-t-001} to include this function, leading to: 
\begin{align}\label{def-U-f-n-t-002}
U^{\mathfrak f, \theta}_{\eta}(t) 
= \int_{\bb A_p} \tilde \theta(a) \mathbb{E}_{a} 
\bigg( t + \sum_{i=1}^{\eta} \sigma_p(\xi_i) + \tilde f(\xi_{\eta}) - \tilde f(\xi_0);   \tilde{\tau}_{t}^{\mathfrak f} > \eta \bigg)
\mu_p(da). 
\end{align}
This formulation incorporates the influence of the function $\tilde \theta$
 on the expectation, allowing for a broader analysis of the expectations under consideration. 
 
The goal of this subsection is to establish the following monotonicity property for the map $\eta \mapsto U^{\mathfrak f, \theta}_{\eta}$. 
This property indicates that increasing the stopping time leads to an increase in the expected value of the perturbed process. 

\begin{proposition}\label{Prop-U-f-t-increase-001}
Suppose that the cocycle $\sigma$ admits finite exponential moments \eqref{exp mom for f 001}
and is centered \eqref{centering-001}.
Let $1 < \gamma < 8$. 
There exist constants $\beta, c > 0$ with the following property. 
Let $ n \geq 1$, $p\leq n^{\beta}$, $t\in \bb R$, 
$\mathfrak f = (f_n)_{n \geq 0}$ be a sequence of $\scr A_p$-measurable functions, 
 $\eta_1, \eta_2: \bb A_p^{\bb N} \to \bb N^*$ be bounded stopping times 
 with $n \leq \eta_1 \leq \eta_2 \leq n^{\gamma}$, 
and  $\theta$ be a non-negative and bounded $\mathscr A_p$-measurable function on $\Omega$. Then, we have 
\begin{align*}
U^{\mathfrak f, \theta}_{\eta_1}(t) \leq U^{\mathfrak f, \theta}_{\eta_2}(t) 
+ \frac{c}{n^{\beta}}   \left( 1 + \max \{t,0\} \right)  
\|\theta\|_{\infty} C_{\alpha}(\mathfrak f). 
\end{align*}
\end{proposition}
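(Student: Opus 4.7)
The plan is to exploit the martingale structure provided by the centering hypothesis \eqref{centering-001}, apply optional stopping between the two stopping times, and then control the resulting perturbation error terms using the exponential moment bound \eqref{exp mom for g 002} together with the persistence probability estimate for the conditioned walk.

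First, I would introduce the ``unperturbed'' process $M_k := t + \sum_{i=1}^{k} \sigma_p(\xi_i) = t + \sigma(g_k\cdots g_1,x)$. Thanks to \eqref{centering-001}, $M$ is a martingale with respect to the (shifted) filtration $\sigma(g_0,\ldots,g_k,x)$, while the perturbed walk decomposes as $V_k = M_k + \tilde{f}(\xi_k) - \tilde{f}(\xi_0)$ with $\tau := \tilde{\tau}^{\mathfrak f}_t$ being its first negative time. Applying optional stopping to $M$ at the bounded stopping times $\eta_1 \wedge \tau$ and $\eta_2 \wedge \tau$ (valid because $\eta_2 \leq n^{\gamma}$ and $|\sigma_p|$ has exponential moments), one obtains the identity $\mathbb{E}_a[M_{\eta_2 \wedge \tau}] = \mathbb{E}_a[M_{\eta_1 \wedge \tau}]$. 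Splitting the event space into $\{\tau > \eta_2\}$, $\{\eta_1 < \tau \leq \eta_2\}$, $\{\tau \leq \eta_1\}$, substituting $M_k = V_k + \tilde{f}(\xi_0) - \tilde{f}(\xi_k)$, and observing that the $\tilde{f}(\xi_0)$ contributions cancel across the three cases, one derives
\begin{align*}
U^{\mathfrak{f},\theta}_{\eta_2}(t) - U^{\mathfrak{f},\theta}_{\eta_1}(t) = R_1 - R_2 + R_3 - R_4,
\end{align*}
where $R_1 = \int \tilde{\theta} \, \mathbb{E}_a[\tilde{f}(\xi_{\eta_2}) \mathds{1}_{\tau > \eta_2}]\,\mu_p$, $R_2 = \int \tilde{\theta} \, \mathbb{E}_a[\tilde{f}(\xi_{\eta_1}) \mathds{1}_{\tau > \eta_1}]\,\mu_p$, $R_3 = \int \tilde{\theta} \, \mathbb{E}_a[\tilde{f}(\xi_{\tau}) \mathds{1}_{\eta_1 < \tau \leq \eta_2}]\,\mu_p$ and $R_4 = \int \tilde{\theta} \, \mathbb{E}_a[V_{\tau} \mathds{1}_{\eta_1 < \tau \leq \eta_2}]\,\mu_p$.

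Each remainder $R_i$ is then estimated by combining (i) a Cauchy--Schwarz or H\"older decoupling of the perturbation value from the persistence event, (ii) the persistence bound $\int_{\mathbb{A}_p} \mathbb{P}_a(\tau > n)\,\mu_p(da) \leq c(1+\max\{t,0\})/\sqrt{n}$, which follows from its unperturbed counterpart proved in \cite{GLP17} (extended using the exponential moment control of $\tilde{f}(\xi_0) - \tilde{f}(\xi_k)$ to pass from $\{M_k \geq 0\}$ to $\{V_k \geq 0\}$), and (iii) the overshoot estimate $|V_\tau| \leq |\sigma_p(\xi_\tau)| + |\tilde{f}(\xi_\tau)| + |\tilde{f}(\xi_{\tau-1})|$, whose terms enjoy uniform $L^r$ bounds for every $r<\infty$ via \eqref{exp mom for f 001} and \eqref{exp mom for g 002}. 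Collecting these estimates produces the bound $|U^{\mathfrak{f},\theta}_{\eta_2}(t) - U^{\mathfrak{f},\theta}_{\eta_1}(t)| \leq c (1+\max\{t,0\}) \|\theta\|_\infty C_\alpha(\mathfrak f) / n^{\beta}$ for a sufficiently small $\beta>0$ depending on $\gamma$; the required smallness of $\beta$ comes from balancing the $1/\sqrt{n}$ persistence decay against polynomial losses produced by the stopping-time range $\eta_i \in [n, n^{\gamma}]$.

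The main obstacle will be controlling the perturbation values at the \emph{random} stopping times $\eta_1, \eta_2$, since a crude Cauchy--Schwarz yields a sum $\sum_{k=n}^{n^\gamma} \mathbb{E}|\tilde{f}(\xi_k)|^2$ that grows like $n^{\gamma}$ and destroys the estimate. Overcoming this requires a more refined conditioning on $\mathcal{G}_{\eta_i}$: by the Markov property and the fact that the perturbation only couples $p$ consecutive coordinates (with $p \leq n^{\beta}$ negligible), the persistence factor $\mathbb{P}_a(\tau > \eta_i)$ can effectively be factored out of $\mathbb{E}_a[|\tilde{f}(\xi_{\eta_i})| \mathds{1}_{\tau > \eta_i}]$, transferring the $1/\sqrt{n}$ decay to the perturbation-weighted expectation up to a negligible loss tied to the window size $p$. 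The stationarity of $(\xi_k)$ under $\mu_p$ (inherited from the $\mu$-stationarity of $\nu$) is crucial in producing the $k$-uniform moment bounds that this transfer argument requires.
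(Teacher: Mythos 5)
Your high-level strategy (optional stopping on the norm-cocycle martingale, a resulting decomposition into boundary/overshoot error terms, then moment/persistence-probability bounds on those terms) is the same as the paper's. However, there is a genuine gap in the martingale step. You stop $M_k = t + \sum_{i=1}^k \sigma_p(\xi_i)$ at $\eta_j \wedge \tau$ and assert $\bb E_a[M_{\eta_1\wedge\tau}] = \bb E_a[M_{\eta_2\wedge\tau}]$. But $M_k$ is a martingale only for the filtration $\mathcal F_k = \sigma(g_0,\ldots,g_k,x)$, whereas $\eta_1$, $\eta_2$ and $\tau = \tilde\tau^{\mathfrak f}_t$ are stopping times for the Markov chain's filtration $\mathscr G_k = \sigma(\xi_0,\ldots,\xi_k) = \sigma(g_0,\ldots,g_{k+p},x)$, which is strictly finer (it sees the next $p$ coordinates). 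Indeed $\sigma_p(\xi_{k+1})$ is $\mathscr G_k$-measurable, so $(M_k)$ is \emph{not} a $(\mathscr G_k)$-martingale, and the claimed identity is unjustified. The paper handles this by shifting the martingale by $p$, working with $\sum_{i=p+1}^{p+n}\sigma_p(\xi_i)$ and a modified bracket $W^{\mathfrak f}_\eta(a,t)=\bb E_a(t + \sum_{i=1}^{\eta+p}\sigma_p(\xi_i);\tilde\tau_t^{\mathfrak f}>\eta)$ (Lemma \ref{Lem-Optional-thm-W-001} and Corollary \ref{Coro-Optional-thm-W-002}). Passing from $W^{\mathfrak f}_\eta$ back to $U^{\mathfrak f,\theta}_\eta$ produces the extra boundary remainders $\bb E_a(\sum_{i=\eta_j+1}^{\eta_j+p}\sigma_p(\xi_i);\tilde\tau_t^{\mathfrak f}>\eta_j)$, which are absent from your decomposition $R_1 - R_2 + R_3 - R_4$; these are not negligible formalities and are bounded in Lemma \ref{Lemma martingale-001}, eq. \eqref{sum-hp-nu-nt-001}.

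Two further points. First, the persistence estimate you invoke, $\int_{\bb A_p}\bb P_a(\tau > n)\,\mu_p(da) \lesssim (1+\max\{t,0\})/\sqrt{n}$, is established only later in the appendix (Corollary \ref{Cor-CCLT-001-a}), whose proof goes through Proposition \ref{Prop-CCLT-001-a}, which in turn relies on Proposition \ref{Prop-U-f-t-increase-001}. Using it here would be circular; the correct input is the weaker but self-contained Lemma \ref{Lem-tau-prior}, which gives a polynomial-in-$n$ decay good enough for the purpose. Second, your concern that ``a crude Cauchy--Schwarz yields a sum that grows like $n^\gamma$ and destroys the estimate'' is accurate for $L^2$--$L^2$, but the paper resolves it more simply than your proposed $\mathcal G_{\eta_i}$-conditioning argument: one applies H\"older with exponent $s$ large, so the moment sum contributes $n^{\gamma/s}$ while the persistence factor contributes $n^{-\beta/s'}$ with $s'$ close to $1$; choosing $s$ sufficiently large wins, with no need to disentangle the $p$-window coupling.
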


The proof of Proposition \ref{Prop-U-f-t-increase-001}
 will essentially follows from the optional stopping time theory, which we will apply to a certain martingale 
defined through the Markov chain on the space $\bb A_p$.
To draw this connection, 
we introduce the following auxiliary function. Let $\eta: \bb A_p^{\bb N} \to \bb N^*$ be a bounded stopping time. 
For $a  \in \bb A_p$ and $t \in \bb R$, define
\begin{align} \label{EXPECT-E_x-001}
W^{\mathfrak f}_{\eta}(a, t)
= \bb E_a \bigg( t + \sum_{i=1}^{\eta + p} \sigma_p(\xi_i); \tilde \tau^{\mathfrak f}_{t} > \eta \bigg). 
\end{align}

\begin{lemma}\label{Lem-Optional-thm-W-001}
Suppose that the cocycle $\sigma$ admits finite exponential moments \eqref{exp mom for f 001}
and is centered \eqref{centering-001}.
Let $p\geq 1$. Let $\mathfrak f = (f_n)_{n \geq 0}$ be a sequence of $\scr A_p$-measurable functions 
and $\eta: \bb A_p^{\bb N} \to \bb N^*$ be a bounded stopping time. 
Then, for any $a = (g_0, \ldots, g_p, x,q) \in \bb A_p$ and  $t \in \bb R$, we have
\begin{align*}
W^{\mathfrak f}_{\eta}(a, t)
= t + \sigma(g_p \cdots g_1, x)
 - \bb E_a \bigg( t + \sum_{i=1}^{\tilde \tau^{\mathfrak f}_{t} + p} \sigma_p(\xi_i); \tilde \tau^{\mathfrak f}_{t} \leq \eta \bigg). 
\end{align*}
\end{lemma}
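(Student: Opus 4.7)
The plan is to prove this lemma via an application of the optional stopping theorem to a suitable martingale attached to the Markov chain $(\xi_i)_{i\geq 0}$. The starting observation is that, unrolling the definition of the transition probabilities of the chain starting from $a=(g_0,\ldots,g_p,x,q)$, one has $\xi_i=(g_i,\ldots,g_{i+p},g_i\cdots g_1 x,q+i)$, where $g_{p+1},g_{p+2},\ldots$ are the fresh $\mu$-distributed coordinates successively sampled by the chain. Consequently $\sigma_p(\xi_i)=\sigma(g_i,g_{i-1}\cdots g_1 x)$ for $i\geq 1$, and the cocycle identity gives
\begin{align*}
\sum_{i=1}^{p}\sigma_p(\xi_i)=\sigma(g_p\cdots g_1,x).
\end{align*}
This will produce the ``boundary'' term $t+\sigma(g_p\cdots g_1,x)$ that appears in the claim.

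Next I will define $M_k=\sum_{i=1}^{k+p}\sigma_p(\xi_i)$ for $k\geq 0$ and check that $(M_k)_{k\geq 0}$ is a martingale with respect to $(\mathscr G_k)_{k\geq 0}$ under $\bb P_a$. The increment is $M_k-M_{k-1}=\sigma_p(\xi_{k+p})=\sigma(g_{k+p},g_{k+p-1}\cdots g_1 x)$, and the key point is the asymmetric measurability: by the construction above, $g_{k+p}$ is the new random coordinate introduced at the $k$-th step of the chain, hence it is independent of $\mathscr G_{k-1}$ with law $\mu$, while $g_{k+p-1}\cdots g_1 x$ is $\mathscr G_{k-1}$-measurable. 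Therefore
\begin{align*}
\bb E_a \bigl[\sigma_p(\xi_{k+p}) \,\big|\, \mathscr G_{k-1}\bigr]
=\int_{\bb G}\sigma(g,g_{k+p-1}\cdots g_1 x)\,\mu(dg)=0
\end{align*}
by the centering assumption \eqref{centering-001}, which yields the martingale property. Note also that $M_0=\sigma(g_p\cdots g_1,x)$ by the boundary computation above.

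Now consider the stopping time $\rho:=\min(\eta,\tilde\tau^{\mathfrak f}_t)$, which is bounded since $\eta$ is bounded by assumption. The integrability needed for the optional stopping theorem is automatic because, with $\eta\leq N$ say, one has $|M_\rho|\leq\sum_{i=1}^{N+p}|\sigma_p(\xi_i)|$, whose expectation is finite thanks to the exponential moment \eqref{exp mom for f 001}. Applying the optional stopping theorem to the martingale $t+M_k$ gives
\begin{align*}
t+\sigma(g_p\cdots g_1,x)
=t+M_0
=\bb E_a(t+M_\rho).
\end{align*}
Splitting this expectation according to whether $\tilde\tau^{\mathfrak f}_t>\eta$ (in which case $\rho=\eta$ and $M_\rho=\sum_{i=1}^{\eta+p}\sigma_p(\xi_i)$) or $\tilde\tau^{\mathfrak f}_t\leq\eta$ (in which case $\rho=\tilde\tau^{\mathfrak f}_t$ and $M_\rho=\sum_{i=1}^{\tilde\tau^{\mathfrak f}_t+p}\sigma_p(\xi_i)$) and using the definition \eqref{EXPECT-E_x-001} of $W^{\mathfrak f}_\eta(a,t)$ rearranges precisely into the stated identity.

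There is no serious obstacle beyond careful bookkeeping of the index shift: the main thing to be meticulous about is the correspondence between the ``old'' coordinates $g_1,\ldots,g_p$ encoded inside the initial state $\xi_0$ and the ``fresh'' coordinates $g_{p+1},g_{p+2},\ldots$ introduced successively by the transitions, since it is only the latter that provide genuine martingale increments. Once this is laid out, centering plus optional stopping do the rest.
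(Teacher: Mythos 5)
Your proof is correct and follows essentially the same route as the paper's: identify $(\sum_{i=p+1}^{p+n}\sigma_p(\xi_i))_{n\geq 0}$ as a martingale under $\bb P_a$ via the centering condition, apply the optional stopping theorem with a bounded stopping time, and split the resulting expectation on the events $\{\tilde\tau^{\mathfrak f}_t>\eta\}$ and $\{\tilde\tau^{\mathfrak f}_t\leq\eta\}$. The only cosmetic difference is that you apply optional stopping once to $\rho=\eta\wedge\tilde\tau^{\mathfrak f}_t$, whereas the paper applies it to $\eta$ and then invokes it a second time to replace $\eta$ by $\tilde\tau^{\mathfrak f}_t$ on $\{\tilde\tau^{\mathfrak f}_t\leq\eta\}$; the underlying argument is identical.
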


\begin{proof}
As $\eta$ is bounded and the sequence $(\sum_{i =p+1}^{p+n} \sigma_p (\xi_i))_{n \geq 0}$ is a martingale with respect to the probability measure $\bb P_a$, 
we have 
\begin{align*}
\bb E_a \bigg( t + \sum_{i=1}^{\eta + p} \sigma_p(\xi_i)  \bigg) = t + \sigma(g_p \cdots g_1, x). 
\end{align*}
Therefore, 
\begin{align*}
W^{\mathfrak f}_{\eta}(a, t)
= t + \sigma(g_p \cdots g_1, x)
 - \bb E_a \bigg( t + \sum_{i=1}^{\eta + p} \sigma_p(\xi_i); \tilde \tau^{\mathfrak f}_{t} \leq \eta \bigg). 
\end{align*}
The conclusion follows from the optional stopping theorem. 
\end{proof}

From Lemma \ref{Lem-Optional-thm-W-001}, we get the following corollary. 

\begin{corollary}\label{Coro-Optional-thm-W-002}
Suppose that the cocycle $\sigma$ admits finite exponential moments \eqref{exp mom for f 001}
and is centered \eqref{centering-001}.
Let $p\geq 1$. Let $\mathfrak f = (f_n)_{n \geq 0}$ be a sequence of $\scr A_p$-measurable functions
satisfying the moment assumption \eqref{exp mom for g 002}.
Let  
 $\eta_1, \eta_2: \bb A_p^{\bb N} \to \bb N^*$ be bounded stopping times with $\eta_1 \leq \eta_2$. 
Then, for any $a  \in \bb A_p$ and $t \in \bb R$, we have 
\begin{align*}
W^{\mathfrak f}_{\eta_1}(a, t) \leq W^{\mathfrak f}_{\eta_2}(a, t) 
+  \bb E_a \left( \tilde f(a) - \tilde f (  \xi_{\tilde \tau^{\mathfrak f}_{t}}); \eta_1 < \tilde \tau^{\mathfrak f}_{t} \leq \eta_2 \right). 
\end{align*}
\end{corollary}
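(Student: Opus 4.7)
The plan is to first reduce the inequality to an expectation over the event $\{\eta_1<\tilde\tau^{\mathfrak f}_t\leq\eta_2\}$ via Lemma \ref{Lem-Optional-thm-W-001}, and then to combine the overshoot identity at $\tilde\tau^{\mathfrak f}_t$ with an optional stopping argument. Applying Lemma \ref{Lem-Optional-thm-W-001} separately to $\eta_1$ and to $\eta_2$ and subtracting, the deterministic term $t+\sigma(g_p\cdots g_1,x)$ cancels; moreover $\{\tilde\tau^{\mathfrak f}_t\leq\eta_1\}\subseteq\{\tilde\tau^{\mathfrak f}_t\leq\eta_2\}$ with difference $\{\eta_1<\tilde\tau^{\mathfrak f}_t\leq\eta_2\}$, so that
\[W^{\mathfrak f}_{\eta_1}(a,t)-W^{\mathfrak f}_{\eta_2}(a,t)=\bb E_a\bigg(t+\sum_{i=1}^{\tilde\tau^{\mathfrak f}_t+p}\sigma_p(\xi_i);\ \eta_1<\tilde\tau^{\mathfrak f}_t\leq\eta_2\bigg).\]

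Next, I would invoke the defining property \eqref{def-tau-f-y} of $\tilde\tau^{\mathfrak f}_t$, which at the stopping time gives the overshoot bound $t+\sum_{i=1}^{\tilde\tau^{\mathfrak f}_t}\sigma_p(\xi_i)<\tilde f(\xi_0)-\tilde f(\xi_{\tilde\tau^{\mathfrak f}_t})=\tilde f(a)-\tilde f(\xi_{\tilde\tau^{\mathfrak f}_t})$. Splitting $\sum_{i=1}^{\tilde\tau^{\mathfrak f}_t+p}\sigma_p(\xi_i)=\sum_{i=1}^{\tilde\tau^{\mathfrak f}_t}\sigma_p(\xi_i)+\sum_{i=\tilde\tau^{\mathfrak f}_t+1}^{\tilde\tau^{\mathfrak f}_t+p}\sigma_p(\xi_i)$ and inserting this bound into the identity from the previous paragraph, the conclusion of the corollary reduces to showing that the tail contribution $\bb E_a(\sum_{i=\tilde\tau^{\mathfrak f}_t+1}^{\tilde\tau^{\mathfrak f}_t+p}\sigma_p(\xi_i);\ \eta_1<\tilde\tau^{\mathfrak f}_t\leq\eta_2)$ is non-positive.

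To settle this last point, the key tool will be the optional stopping theorem for the $(\mathscr G_n)$-martingale $M_n=\sum_{j=1}^{n}\sigma_p(\xi_{p+j})$, whose martingale property rests on the centering hypothesis \eqref{centering-001} together with the Markov structure of $(\xi_n)$, exactly as in the proof of Lemma \ref{Lem-Optional-thm-W-001}. Comparing the identities furnished by optional stopping at the bounded $(\mathscr G_n)$-stopping times $\tilde\tau^{\mathfrak f}_t\wedge\eta_1\leq\tilde\tau^{\mathfrak f}_t\wedge\eta_2$ and at $\eta_1\leq\eta_2$, together with a case split on whether $\tilde\tau^{\mathfrak f}_t$ lies in $\{\leq\eta_1\}$, $\{\eta_1<\cdot\leq\eta_2\}$, or $\{>\eta_2\}$, will allow the tail contribution on $\{\eta_1<\tilde\tau^{\mathfrak f}_t\leq\eta_2\}$ to be recast in terms of martingale increments on $\{\tilde\tau^{\mathfrak f}_t>\eta_2\}$, thereby yielding the desired non-positive sign. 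The hard part will be the careful bookkeeping of the $p$-step delay between the two filtrations at play: the cocycle $(S_n^x)$ is a martingale for the natural filtration $\sigma(g_1,\ldots,g_n)$, whereas $\tilde\tau^{\mathfrak f}_t$ is a stopping time only for the larger filtration $\mathscr G_n=\sigma(g_1,\ldots,g_{n+p})$, and the shift by $p$ between them is exactly what is packaged in the $\tilde\tau^{\mathfrak f}_t+p$ appearing in Lemma \ref{Lem-Optional-thm-W-001}.
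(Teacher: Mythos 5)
Your setup is the same as the paper's: apply Lemma \ref{Lem-Optional-thm-W-001} to both $\eta_1$ and $\eta_2$, subtract, and reduce to showing
\[
\bb E_a\bigg(t+\sum_{i=1}^{\tilde\tau^{\mathfrak f}_t+p}\sigma_p(\xi_i)+\tilde f(\xi_{\tilde\tau^{\mathfrak f}_t})-\tilde f(a);\ \eta_1<\tilde\tau^{\mathfrak f}_t\leq\eta_2\bigg)\leq 0.
\]
The paper disposes of this in one line, citing ``the definition of $\tilde\tau^{\mathfrak f}_t$''. You are right to be suspicious of that: \eqref{def-tau-f-y} pointwise gives $t+\sum_{i=1}^{\tilde\tau^{\mathfrak f}_t}\sigma_p(\xi_i)+\tilde f(\xi_{\tilde\tau^{\mathfrak f}_t})-\tilde f(a)<0$ at the stopping time, with the sum up to $\tilde\tau^{\mathfrak f}_t$, not up to $\tilde\tau^{\mathfrak f}_t+p$, so there is a residual $\sum_{i=\tilde\tau^{\mathfrak f}_t+1}^{\tilde\tau^{\mathfrak f}_t+p}\sigma_p(\xi_i)$ that the paper's justification simply does not address. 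Noticing that gap is the right instinct.

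Where the proposal breaks down is the claim that this residual is annihilated (or sign-controlled) by optional stopping. It is not. For $1\leq j\leq p$ the increment $\sigma_p(\xi_{\tilde\tau^{\mathfrak f}_t+j})=\sigma(g_{\tilde\tau^{\mathfrak f}_t+j},\,g_{\tilde\tau^{\mathfrak f}_t+j-1}\cdots g_1x)$ is determined by $g_1,\ldots,g_{\tilde\tau^{\mathfrak f}_t+p}$, hence is $\mathscr G_{\tilde\tau^{\mathfrak f}_t}$\emph{-measurable}: it lies in the \emph{past} of the stopped filtration, not its future. So conditioning on $\mathscr G_{\tilde\tau^{\mathfrak f}_t}$ does not zero it out, and since $\{\eta_1<\tilde\tau^{\mathfrak f}_t\leq\eta_2\}\in\mathscr G_{\tilde\tau^{\mathfrak f}_t}$ as well, there is no room for a martingale cancellation of the type you sketch. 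Worse, the event $\{\tilde\tau^{\mathfrak f}_t=k\}$ genuinely depends on $g_{k+1},\ldots,g_{k+p}$ through $\tilde f(\xi_k)$, so each $\sigma_p(\xi_{k+j})$ is correlated with the conditioning event, with a sign that depends on the perturbations $f_k$; one cannot expect $\bb E_a(\sum_{i=\tilde\tau^{\mathfrak f}_t+1}^{\tilde\tau^{\mathfrak f}_t+p}\sigma_p(\xi_i);\ \eta_1<\tilde\tau^{\mathfrak f}_t\leq\eta_2)\leq 0$ in general. Also note that ``tail $\leq 0$'' is only a sufficient condition — the corollary only requires the tail to be dominated by the (nonnegative) overshoot defect $-(t+\sum_{i=1}^{\tilde\tau^{\mathfrak f}_t}\sigma_p(\xi_i)+\tilde f(\xi_{\tilde\tau^{\mathfrak f}_t})-\tilde f(a))$ — but the OST comparisons at $\tilde\tau^{\mathfrak f}_t\wedge\eta_i$ versus $\eta_i$ that you list only relate $M_{\tilde\tau^{\mathfrak f}_t}$ to $M_{\eta_1}$ and $M_{\eta_2}$, and never produce $M_{\tilde\tau^{\mathfrak f}_t}-M_{\tilde\tau^{\mathfrak f}_t-p}$, which is what the residual amounts to; and $\tilde\tau^{\mathfrak f}_t-p$ is not even a $(\mathscr G_n)$-stopping time. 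The last step of the sketch therefore does not go through as written, and you would need a different mechanism (or an additional error term, along the lines of how the analogous boundary sums at $\eta_1,\eta_2$ are handled by Lemma \ref{Lemma martingale-001}) to close it.
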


\begin{proof}
By Lemma \ref{Lem-Optional-thm-W-001}, 
for $a = (g_0, \ldots, g_p, x,q) \in \bb A_p$ and $t \in \bb R$, we have
\begin{align*}
W^{\mathfrak f}_{\eta_2}(a, t) - W^{\mathfrak f}_{\eta_1}(a, t)
& = - \bb E_a \bigg( t + \sum_{i=1}^{\tilde \tau^{\mathfrak f}_{t} + p} \sigma_p(\xi_i); \eta_1 < \tilde \tau^{\mathfrak f}_{t} \leq \eta_2 \bigg) \notag\\
& = - \bb E_a \bigg( t + \sum_{i=1}^{\tilde \tau^{\mathfrak f}_{t} + p} \sigma_p(\xi_i) + \tilde f (\xi_{\tilde \tau^{\mathfrak f}_{t}}) - \tilde f(a); \eta_1 < \tilde \tau^{\mathfrak f}_{t} \leq \eta_2 \bigg) \notag\\ 
& \quad + \bb E_a \left(  \tilde f (\xi_{\tilde \tau^{\mathfrak f}_{t}}) - \tilde f(a); \eta_1 < \tilde \tau^{\mathfrak f}_{t} \leq \eta_2 \right) \notag\\
& \geq  \bb E_a \left(  \tilde f (\xi_{\tilde \tau^{\mathfrak f}_{t}}) - \tilde f(a); \eta_1 < \tilde \tau^{\mathfrak f}_{t} \leq \eta_2 \right), 
\end{align*}
where the last inequality holds by the definition of $\tilde \tau^{\mathfrak f}_{t}$ (cf.\  \eqref{def-tau-f-y}). 
The result follows. 
\end{proof}

The next lemma is proved in \cite[Lemma 6.3]{GQX24a}.
Recall that $\mathcal F_{\ell}(a)$ is defined by \eqref{def-Fka}. 

\begin{lemma}\label{Lem-tau-prior}
Suppose that the cocycle $\sigma$ admits finite exponential moments \eqref{exp mom for f 001}
and is centered \eqref{centering-001}.
Then, there exist constants $\beta, c>0$ 
such that for any $a= (g_0,\ldots,g_{p},x,q) \in \bb A_p$, $t \in \bb R$, $n\geq 1$ and $1\leq p < n $,
and any sequence
$\mathfrak f = (f_n)_{n \geq 0}$ of $\scr A_p$-measurable functions,   
\begin{align*}
\mathbb{P}_a \left( \tilde \tau^{\mathfrak f}_{t} \geq n \right)  
 \leq
c \frac{\max \{t,0\} +  |\sigma(g_p\cdots g_1,x)|   +|\tilde{f}(a)|+ \log n}{(n-p)^{\beta}}  
 + \frac{c }{n^8} \sum_{k = p+1}^n \mathcal F_k(a).   
\end{align*}
\end{lemma}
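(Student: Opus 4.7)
My plan is to split the event $\{\tilde\tau_t^{\mathfrak f}\geq n\}$ according to whether the perturbation $|\tilde f(\xi_k)|$ stays bounded by $M := (8/\alpha)\log n$ at all intermediate times $p< k< n$, where $\alpha>0$ is the constant in \eqref{exp mom for g 002}. The ``bad'' event, on which this bound fails, yields the second term via exponential Chebyshev:
\begin{align*}
\bb P_a\Bigl(\bigcup_{k=p+1}^{n-1}\{|\tilde f(\xi_k)|>M\}\Bigr)
\;\leq\;\sum_{k=p+1}^{n-1} e^{-\alpha M}\mathcal F_k(a)
\;=\;n^{-8}\sum_{k=p+1}^{n}\mathcal F_k(a).
\end{align*}

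On the complementary ``good'' event, the inequality $|\tilde f(\xi_k)-\tilde f(\xi_0)|\leq M+|\tilde f(a)|$ valid for every $p<k<n$, together with $\{\tilde\tau_t^{\mathfrak f}\geq n\}$, forces the unperturbed walk starting from $y:=g_p\cdots g_1 x$ to satisfy $s+\sigma(g_{p+j}\cdots g_{p+1},y)\geq 0$ for all $1\leq j\leq n-p-1$, where $s:=t+\sigma(g_p\cdots g_1,x)+M+|\tilde f(a)|$. Under $\bb P_a$ the coordinates $g_{p+1},g_{p+2},\ldots$ are i.i.d.\ of law $\mu$ and independent of $a$, so this is an ordinary persistence event for the unperturbed walk. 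Hence on the good event
\begin{align*}
\bb P_a(\tilde\tau_t^{\mathfrak f}\geq n,\,\text{good}) \;\leq\; \bb P\bigl(\tau_{y,s}>n-p-1\bigr).
\end{align*}

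The next step is to establish, from the strong approximation \eqref{KMTbound001}, an abstract persistence estimate of the form
\begin{align*}
\bb P(\tau_{y,s}>m)\;\leq\;c\,(1+\max\{s,0\})\,m^{-\beta}
\end{align*}
uniformly in $y\in\bb X$ and $s\in\bb R$, for some $\beta\in(0,1/2)$. I will do this by coupling $S^y_{[mt]}$ with $\mathbf v B_{mt}$ using \eqref{KMTbound001} with a fixed small $\ee\in(0,1/4)$; on the KMT good event the persistence of $S^y$ forces $\mathbf v B_{mt}\geq -s-m^{1/2-\ee}$ on $[0,1]$, whose probability is estimated through the reflection-principle identity $\bb P(\inf_{[0,m]}B\geq -u)=2\Phi(u/\sqrt m)-1\leq c\,u/\sqrt m$ in the regime $u\leq \sqrt m$, while for $s<-m^{1/2-\ee}$ a Gaussian tail gives a super-polynomially small contribution; the KMT error $c_\ee m^{-\ee}$ is absorbed by choosing $\beta\leq\ee$.

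Applying this with $m=n-p-1$ (the case $n=p+1$ being trivial since the right-hand side of the claim then exceeds $1$), and using $\max\{s,0\}\leq\max\{t,0\}+|\sigma(g_p\cdots g_1,x)|+M+|\tilde f(a)|$, gives exactly the first term of the stated bound, with $M=c\log n$ absorbed into the numerator. The main technical obstacle is the abstract persistence estimate itself: it must hold uniformly in $(y,s,m)$ with the same exponent $\beta$ governing both the Brownian exit probability and the KMT coupling error, and it must cover without fuss the regime where $s$ is very negative.
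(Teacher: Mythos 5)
The paper does not prove this lemma here---it cites the proof as \cite[Lemma~6.3]{GQX24a}---so a direct comparison with the paper's own argument is not possible. On its own terms, your decomposition is the natural one and is sound: splitting on whether $\max_{p<k<n}|\tilde f(\xi_k)|$ exceeds $M=(8/\alpha)\log n$ gives the $n^{-8}\sum_{k}\mathcal F_k(a)$ term exactly by exponential Chebyshev with the constant $\alpha$ from \eqref{exp mom for g 002}, and on the good event the perturbed persistence does reduce to an unperturbed persistence event $\{\tau_{y,s}>n-p-1\}$ for $y=g_p\cdots g_1 x$ and $s=t+\sigma(g_p\cdots g_1,x)+M+|\tilde f(a)|$, since under $\bb P_a$ the letters $g_{p+1},g_{p+2},\dots$ are fresh i.i.d.\ of law $\mu$; the cocycle identity converts the level $s$ into the stated numerator $\max\{t,0\}+|\sigma(g_p\cdots g_1,x)|+|\tilde f(a)|+c\log n$. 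Two points deserve attention. First, your abstract persistence estimate $\bb P(\tau_{y,s}>m)\le c(1+\max\{s,0\})m^{-\beta}$ uses the strong approximation \eqref{KMTbound001}, which is \emph{not} listed among the hypotheses of the lemma as stated (only exponential moments and centering are). Since \eqref{KMTbound001} is a standing assumption of the appendix and is used freely in the nearby Lemma~\ref{Lemma Weak Conv-a} (which also does not restate it), this is most likely an omission in the hypotheses rather than a defect of your argument, but you should be explicit that your proof requires it. Second, the KMT-to-Brownian transfer needs a little care that your sketch glosses over: the coupling controls $B_{mt}$ only for $t\ge 1/m$ when you lower-bound via $S^y_{[mt]}$, and for $s<0$ the asserted lower bound $\mathbf v B_{mt}\ge -s-m^{1/2-\ee}$ on all of $[0,1]$ is simply false near $t=0$ since $B_0=0$. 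The clean fix is to estimate $\bb P(\inf_{[1,m]}B\ge -u)$ rather than $\bb P(\inf_{[0,m]}B\ge -u)$, conditioning on $B_1$; this produces an extra $+1$ in the numerator, which is absorbed into the $\log n$ term for $n\ge 2$. With those two points made explicit, your argument closes correctly and yields the claimed bound with $\beta=\ee$ for any $\ee\in(0,1/4)$.
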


\begin{lemma} \label{Lemma martingale-001}
Suppose that the cocycle $\sigma$ admits finite exponential moments \eqref{exp mom for f 001}
and is centered \eqref{centering-001}.
Let $1 < \gamma < 8$. 
Then, there exist constants $\beta, c > 0$ such that for any $ n \geq 1$, $p\leq n^{\beta}$, 
$a=(g_0,\ldots,g_{p},x, q) \in \bb A_p$, $t\in \bb R$,
 any bounded stopping time $\eta: \bb A_p^{\bb N} \to \bb N^*$ with $\eta \in [n, n^{\gamma}]$, 
and any sequence
$\mathfrak f = (f_n)_{n \geq 0}$ of $\scr A_p$-measurable functions,   
\begin{align}\label{sum-hp-nu-nt-001}
&\bb E_a \bigg( \bigg| \sum_{i= \eta -p+1}^{\eta} h_p(\xi_i) \bigg|; \; \tilde \tau^{\mathfrak f}_{t} > \eta  \bigg)  \notag\\
&\leq 
\frac{c}{n^{\beta/2} } \bigg( \max \{t,0\} + | \sigma(g_p\cdots g_1, x)| 
 + |\tilde f(a)| + \frac{1}{n} \sum_{k = p+1}^n \mathcal F_k(a) \bigg)  
\end{align}
and
\begin{align}\label{sum-hp-nu-nt-002}
 \bb E_a \left( |\tilde f(\xi_{\eta})|; \; \tilde \tau^{\mathfrak f}_{t} > \eta  \right) 
 \leq 
\frac{c}{n^{\beta/2}}   \left( 1 + \max \{t,0\} + | \sigma(g_p\cdots g_1, x)| + |\tilde f(a)|  \right)  
 \frac{1}{n^{\gamma} }  \sum_{k = p+1}^{n^{\gamma}} \mathcal F_k(a). 
\end{align}
\end{lemma}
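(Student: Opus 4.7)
The plan is to prove both bounds by combining the persistence probability estimate of Lemma \ref{Lem-tau-prior} with exponential-moment controls via the quantities $\mathcal F_k(a)=\bb E_a e^{\alpha|\tilde f(\xi_k)|}$. The random stopping time $\eta\in[n,n^{\gamma}]$ is handled by the disjoint decomposition
\begin{align*}
\{\tilde\tau_{t}^{\mathfrak f}>\eta\}=\bigsqcup_{k=n}^{n^{\gamma}}\{\eta=k,\ \tilde\tau_{t}^{\mathfrak f}>k\},
\end{align*}
followed by a logarithmic truncation argument tuned to the available exponential moments.

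For the bound \eqref{sum-hp-nu-nt-002}, I would fix a threshold $s$ of order $\log n$ and write
\begin{align*}
\bb E_a\bigl(|\tilde f(\xi_{\eta})|;\,\tilde\tau_{t}^{\mathfrak f}>\eta\bigr)\leq s\,\bb P_a(\tilde\tau_{t}^{\mathfrak f}>n)+\sum_{k=n}^{n^{\gamma}}\bb E_a\bigl(|\tilde f(\xi_k)|;\,|\tilde f(\xi_k)|>s\bigr).
\end{align*}
The first term is controlled by Lemma \ref{Lem-tau-prior}, noting that $n-p\geq n/2$ since $p\leq n^{\beta}$. For the second, the elementary inequality $|x|\mathds 1_{\{|x|>s\}}\leq \tfrac{2}{\alpha} e^{\alpha|x|/2-\alpha s/2}$ combined with Cauchy--Schwarz yields $\bb E_a(|\tilde f(\xi_k)|;\,|\tilde f(\xi_k)|>s)\leq ce^{-\alpha s/2}\mathcal F_k(a)^{1/2}\leq ce^{-\alpha s/2}\mathcal F_k(a)$, since $\mathcal F_k(a)\geq 1$. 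Choosing $s=c_0\log n$ with $c_0$ large enough that $e^{-\alpha s/2}\leq n^{-\gamma-\beta/2}$ turns the sum into the prescribed average $\tfrac{1}{n^{\gamma}}\sum_{k=p+1}^{n^{\gamma}}\mathcal F_k(a)$, the logarithmic factor $s$ being absorbed by replacing the exponent $\beta$ from Lemma \ref{Lem-tau-prior} by $\beta/2$.

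For the bound \eqref{sum-hp-nu-nt-001}, the same decomposition over values of $\eta$ applies, but the test random variable is now a sum of $p$ consecutive terms. I would apply Cauchy--Schwarz in the form
\begin{align*}
\bb E_a\bigg(\bigg|\sum_{i=\eta-p+1}^{\eta}h_p(\xi_i)\bigg|;\,\tilde\tau_{t}^{\mathfrak f}>\eta\bigg)\leq \bb E_a^{1/2}\bigg[\bigg(\sum_{i=\eta-p+1}^{\eta}h_p(\xi_i)\bigg)^2\bigg]\bb P_a\bigl(\tilde\tau_{t}^{\mathfrak f}>n\bigr)^{1/2},
\end{align*}
and control the $L^2$ moment of the partial sum by a maximal-type estimate exploiting the Markov structure of $(\xi_i)$ on $\bb A_p$ together with the exponential moment \eqref{exp mom for f 001} of $\sigma$, which governs $h_p$; the factor $\tfrac{1}{n}\sum_{k=p+1}^{n}\mathcal F_k(a)$ on the right-hand side then appears from the same truncation device applied to the $|\tilde f(a)|$ and higher-order terms in Lemma \ref{Lem-tau-prior}.

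The principal obstacle is the interplay between the random range $[n,n^{\gamma}]$ of $\eta$ and the absence of any uniform bound on $\tilde f$: summing over all possible values of $\eta$ produces an $n^{\gamma}$ loss, yet the only control available on $\tilde f(\xi_k)$ is through exponential moments. Balancing these constraints via a logarithmic truncation $s=c_0\log n$ is the delicate point; the logarithmic overhead introduced by this truncation is precisely what forces the final exponent $\beta/2$ in place of the exponent $\beta$ of Lemma \ref{Lem-tau-prior}.
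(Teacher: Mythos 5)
Your approach is genuinely different from the paper's, and read charitably it works for both bounds, but the $L^2$ control in \eqref{sum-hp-nu-nt-001} must be stated carefully because the route you chose is less forgiving than the paper's. The paper handles both inequalities uniformly with one application of H\"older, $\bb E_a(\cdot\,; \tilde\tau^{\mathfrak f}_t>\eta)\leq\bb E_a^{1/s}(\cdot^s)\,\bb P_a^{1/s'}(\tilde\tau^{\mathfrak f}_t>\eta)$ with $s'>1$ arbitrarily close to $1$, and then bounds $\bb E_a(\cdot^s)$ by the crude enumeration over the values of $\eta$: $\leq c\,p^s n^\gamma$ for the first bound and $\leq c_s\sum_{\ell=n}^{n^\gamma}\mathcal F_\ell(a)$ for the second. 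The $\sim n^\gamma$ values of $\eta$ cost only a factor $n^{\gamma/s}$, which tends to $1$ as $s\to\infty$, so the naive enumeration suffices. Your truncation at scale $s\sim\log n$ for \eqref{sum-hp-nu-nt-002} is a valid, arguably more transparent, alternative to the same effect.

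For \eqref{sum-hp-nu-nt-001}, however, Cauchy--Schwarz fixes $s=2$, and the crude enumeration of the $L^2$-moment no longer works: $\sum_{\ell=n}^{n^\gamma}\bb E_a\bigl[\bigl(\sum_{i=\ell-p+1}^\ell h_p(\xi_i)\bigr)^2\bigr]\leq cp^2n^\gamma$ gives $cp\,n^{\gamma/2}$ after the square root, and multiplying by $\bb P_a^{1/2}(\tilde\tau^{\mathfrak f}_t>n)\lesssim n^{-\beta_0/2}$ yields a term of order $n^{\gamma/2-\beta_0/2}$, which diverges since $\gamma>1$. So the step you describe as a ``maximal-type estimate'' is not a cosmetic improvement on the naive sum over $\ell$; it is the essential point. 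You need the genuinely maximal bound
$\bb E_a\bigl[\max_{n\leq\ell\leq n^\gamma}\bigl(\sum_{i=\ell-p+1}^\ell h_p(\xi_i)\bigr)^2\bigr]\leq c\,(p\log n)^2,$
obtained by a union bound over $\ell$ together with the uniform exponential moment \eqref{exp mom for f 001}, which dominates $|\sigma_p(\xi_i)|\leq\sup_{x'\in\bb X}|\sigma(g_i,x')|$ by an i.i.d.\ sequence. With that in hand the rest of your computation goes through; without it, the argument fails at exactly the step the paper's large-$s$ device was designed to finesse. Two small inaccuracies worth fixing: your closing explanation of where the factor $\tfrac1n\sum_{k=p+1}^n\mathcal F_k(a)$ comes from is off (it is simply the $n^{-8}\sum\mathcal F_k$ summand of Lemma \ref{Lem-tau-prior} surviving the square root, not a truncation of $|\tilde f(a)|$), and the Cauchy--Schwarz step for \eqref{sum-hp-nu-nt-002} produces $\mathcal F_k(a)$ rather than $\mathcal F_k(a)^{1/2}$ (which you then correctly absorb anyway).
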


\begin{proof} 
We first prove \eqref{sum-hp-nu-nt-001}. 
By H\"older's inequality, with $s >1$ and $\frac{1}{s} +\frac{1}{s'}=1$,
\begin{align} \label{app-lem-10-001}
 \bb E_a \bigg( \bigg| \sum_{i= \eta -p+1}^{\eta} h_p(\xi_i) \bigg|; \; \tilde \tau^{\mathfrak f}_{t} > \eta  \bigg)
 \leq 
  \bb E_a^{1/s} \bigg( \bigg| \sum_{i= \eta -p+1}^{\eta} h_p(\xi_i) \bigg|^s  \bigg) 
   \bb P_a^{1/s'} \left(  \tilde \tau^{\mathfrak f}_{t} > \eta  \right).
\end{align}
For the first expectation on the right-hand side, we have
\begin{align*} 
\bb E_a \bigg( \bigg| \sum_{i= \eta -p+1}^{\eta} h_p(\xi_i) \bigg|^s  \bigg)
= \sum_{\ell = n}^{n^{\gamma}} 
\bb E_a \bigg( \bigg| \sum_{i=\ell-p+1}^{\ell} h_p(\xi_i)  \bigg|^s;  \eta =\ell  \bigg)
\leq  c p^s n^{\gamma}. 
\end{align*}
Substituting this into \eqref{app-lem-10-001} and using Lemma \ref{Lem-tau-prior}, 
we get that there exist constants $c, \beta, \ee > 0$  such that, 
for any $a \in \bb A_p$, $t \in \bb R$, $n\geq 1$ and $p\leq n^{\ee} $,  
\begin{align*} 
& \bb E_a \bigg( \bigg| \sum_{i= \eta -p+1}^{\eta} h_p(\xi_i) \bigg|; \; \tilde \tau^{\mathfrak f}_{t} > \eta  \bigg) \\
 &\leq  c \left( p^{s} n^{\gamma}  \right)^{1/s}  \bb P_a^{1/s'} \left(  \tilde \tau^{\mathfrak f}_{t} > n  \right)\\
 &\leq   c   p n^{\gamma/s}    
 \bigg(  \frac{\max \{t,0\} +  |\sigma(g_p\cdots g_1,x)|   +|\tilde{f}(a)|+ \log n}{(n-p)^{\beta}}
 + \frac{1}{n^8} \sum_{k = p+1}^n \mathcal F_k(a) \bigg)^{1/s'}   \notag\\
 &\leq  
 \frac{c}{n^{\beta/2} } \bigg( \max \{t,0\} + | \sigma(g_p\cdots g_1, x)| + |\tilde f(a)| + \frac{1}{n} \sum_{k = p+1}^n \mathcal F_k(a) \bigg), 
\end{align*}
where $\mathcal F_{\ell}(a)$ is defined by \eqref{def-Fka}, 
and in the last inequality we choose $\beta > 2 \ee$ and take $s' > 1$ to be sufficiently close to $1$.
This concludes the proof of \eqref{sum-hp-nu-nt-001}. 

We next prove \eqref{sum-hp-nu-nt-002}. 
By H\"older's inequality, with $s >1$ and $\frac{1}{s} +\frac{1}{s'}=1$,
\begin{align} \label{app-lem-second-ine001}
 \bb E_a \left( |\tilde f(\xi_{\eta})|; \; \tilde \tau^{\mathfrak f}_{t} > \eta  \right) 
 \leq   \bb E_a^{1/s} \left( |\tilde f(\xi_{\eta})|^{s}   \right)
   \bb P_a^{1/s'} \left(  \tilde \tau^{\mathfrak f}_{t} > \eta  \right).
\end{align}
In view of \eqref{def-Fka}, we write, with $c_s > 0$ depending on $s$, 
\begin{align*} 
\bb E_a \left( |\tilde f(\xi_{\eta})|^{s}   \right)
= \sum_{\ell =n}^{n^{\gamma}} 
\bb E_a \left( |\tilde f(\xi_{\ell})|^s;  \eta = \ell  \right)
\leq  c_s \sum_{\ell =n}^{n^{\gamma}}  \mathcal F_{\ell}(a). 
\end{align*}
Substituting this into \eqref{app-lem-second-ine001}, using Lemma \ref{Lem-tau-prior}
and taking into account the fact that $\eta \in [n, n^{\gamma}]$, we get
\begin{align*} 
&  \bb E_a \left( |\tilde f(\xi_{\eta})|; \; \tilde \tau^{\mathfrak f}_{t} > \eta  \right)  
\leq   \bigg(  c_s \sum_{\ell =n}^{n^{\gamma}}  \mathcal F_{\ell}(a)  \bigg)^{1/s}  
  \bb P_a^{1/s'} \left(  \tilde \tau^{\mathfrak f}_{t} > \eta  \right)  \notag\\
 &\leq  c_s'  \bigg(  \sum_{\ell =n}^{n^{\gamma}}  \mathcal F_{\ell}(a)  \bigg)^{1/s}  
  \bigg(  \frac{\max \{t,0\} +  |\sigma(g_p\cdots g_1,x)|   +|\tilde{f}(a)|+ \log n}{(n-p)^{\beta}}
 + \frac{1}{n^8} \sum_{k = p+1}^n \mathcal F_k(a) \bigg)^{1/s'}. 
 \end{align*}
By taking $s'$ to be sufficiently close to $1$, we get
\begin{align*} 
 \bb E_a \left( |\tilde f(\xi_{\eta})|; \; \tilde \tau^{\mathfrak f}_{t} > \eta  \right)   
   \leq  
 \frac{c}{n^{\beta/2}}   \left( 1 + \max \{t,0\} + | \sigma(g_p\cdots g_1, x)| + |\tilde f(a)|  \right)  
 \frac{1}{n^{\gamma} }  \sum_{k = p+1}^{n^{\gamma}} \mathcal F_k(a). 
\end{align*}
This concludes the proof of the second inequality \eqref{sum-hp-nu-nt-002}.
\end{proof}

\begin{lemma} \label{Lemma martingale-002}
Suppose that the cocycle $\sigma$ admits finite exponential moments \eqref{exp mom for f 001} and is centered \eqref{centering-001}.
Let $1 < \gamma < 8$. 
Then, there exist constants $\beta, c > 0$ such that 
for any $ n \geq 1$, $p\leq n^{\beta}$, $a=(g_0,\ldots,g_{p},x, q) \in \bb A_p$, $t\in \bb R$, 
and any sequence $\mathfrak f = (f_n)_{n \geq 0}$ of $\scr A_p$-measurable functions,   
\begin{align*}
& \bb E_a \left( |\tilde f(\xi_{ \tilde \tau^{\mathfrak f}_{t} })|; \; n \leq  \tilde \tau^{\mathfrak f}_{t}  \leq n^{\gamma}  \right)  \notag\\
& \leq 
\frac{c}{n^{\beta / 2}}   \left( 1 + \max \{t,0\} + | \sigma(g_p\cdots g_1, x)| + |\tilde f(a)|  \right)  
 \frac{1}{n^{\gamma} }  \sum_{k = p+1}^{n^{\gamma}} \mathcal F_k(a). 
\end{align*}
\end{lemma}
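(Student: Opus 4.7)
My plan is to follow the same strategy as in the proof of the second inequality \eqref{sum-hp-nu-nt-002} of Lemma \ref{Lemma martingale-001}, with the bounded stopping time $\eta \in [n, n^\gamma]$ replaced by $\tilde\tau^{\mathfrak f}_t$ itself restricted to the event $\{n \leq \tilde\tau^{\mathfrak f}_t \leq n^\gamma\}$. First I would apply H\"older's inequality with exponents $s, s' > 1$ satisfying $\frac{1}{s} + \frac{1}{s'} = 1$ to split the expectation as
\begin{align*}
\bb E_a \Big( |\tilde f(\xi_{\tilde \tau^{\mathfrak f}_t})|;\, n \leq \tilde \tau^{\mathfrak f}_t \leq n^\gamma \Big)
\leq \bb E_a^{1/s} \Big( |\tilde f(\xi_{\tilde \tau^{\mathfrak f}_t})|^s;\, \tilde \tau^{\mathfrak f}_t \leq n^\gamma \Big)\, \bb P_a^{1/s'}\big( \tilde \tau^{\mathfrak f}_t \geq n \big).
\end{align*}
Decomposing the first factor according to the value of $\tilde \tau^{\mathfrak f}_t$ and using the definition \eqref{def-Fka} of $\mathcal F_\ell(a)$, one gets
\begin{align*}
\bb E_a \Big( |\tilde f(\xi_{\tilde \tau^{\mathfrak f}_t})|^s;\, \tilde \tau^{\mathfrak f}_t \leq n^\gamma \Big)
= \sum_{\ell=1}^{\lfloor n^\gamma \rfloor} \bb E_a \Big( |\tilde f(\xi_\ell)|^s;\, \tilde \tau^{\mathfrak f}_t = \ell \Big)
\leq c_s \sum_{\ell=1}^{\lfloor n^\gamma \rfloor} \mathcal F_\ell(a),
\end{align*}
exactly as in the proof of \eqref{sum-hp-nu-nt-002}. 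The second factor is controlled by Lemma \ref{Lem-tau-prior}, which yields
\begin{align*}
\bb P_a \big( \tilde \tau^{\mathfrak f}_t \geq n \big)
\leq \frac{c\, M}{(n-p)^{\beta_0}} + \frac{c}{n^8}\sum_{k=p+1}^{n}\mathcal F_k(a),
\end{align*}
with $M = \max\{t,0\} + |\sigma(g_p \cdots g_1, x)| + |\tilde f(a)| + \log n$ and $\beta_0 > 0$ the exponent supplied by that lemma.

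Combining these bounds, the remaining task is to tune the parameters so that the resulting product matches the target form $\frac{c}{n^{\beta/2}} M' \cdot \frac{1}{n^\gamma} \sum_{k=p+1}^{n^\gamma} \mathcal F_k(a)$, where $M' = 1 + \max\{t,0\} + |\sigma(g_p \cdots g_1, x)| + |\tilde f(a)|$. The key observation, already implicit in the proof of \eqref{sum-hp-nu-nt-002}, is that $\mathcal F_k(a) \geq 1$ for every $k$ (since $e^{\alpha|\tilde f(\xi_k)|} \geq 1$), so $\sum_{k \leq n^\gamma} \mathcal F_k(a) \geq c\, n^\gamma$ for $p \leq n^\gamma/2$. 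This allows one to trade the sub-linear power $(\sum_{\ell \leq n^\gamma} \mathcal F_\ell)^{1/s}$ against $(\sum_{\ell \leq n^\gamma} \mathcal F_\ell)\cdot n^{-\gamma(1-1/s)}$, at the cost of a polynomial factor in $n$ that is absorbed by choosing $s'$ close enough to $1$. The assumption $\gamma < 8$ then ensures that the $n^{-8}\sum_{k \leq n}\mathcal F_k(a)$ contribution to $\bb P_a(\tilde \tau^{\mathfrak f}_t \geq n)^{1/s'}$ remains dominated by the desired $n^{-\beta/2}$ decay, while the first term of the probability bound contributes the $M'$ factor.

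The main obstacle, as in Lemma \ref{Lemma martingale-001}, is bookkeeping rather than any new probabilistic input: the exponent $\beta$ with $p \leq n^\beta$, the H\"older exponent $s'$ close to $1$, and the final exponent $\beta/2$ in the statement must all be chosen in a consistent order so that the polynomial factors in $n$ balance simultaneously in both error terms arising from Lemma \ref{Lem-tau-prior}. Beyond this calibration, the argument is essentially mechanical; the only substantive modification compared to the proof of \eqref{sum-hp-nu-nt-002} is that the event $\{\tilde \tau^{\mathfrak f}_t > \eta\}$ is replaced throughout by $\{n \leq \tilde \tau^{\mathfrak f}_t \leq n^\gamma\}$.
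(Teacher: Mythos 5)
Your proposal is correct and mirrors the paper's argument line for line: H\"older's inequality with exponents $s, s'$ close to $1$, decomposition of the first factor over the value of $\tilde \tau^{\mathfrak f}_t$ bounded by $c_s\sum_{\ell \leq n^\gamma}\mathcal F_\ell(a)$, and Lemma \ref{Lem-tau-prior} for the probability factor, followed by the calibration exploiting $\mathcal F_k(a)\geq 1$ and $\gamma<8$. The paper in fact delegates exactly the same calibration to the proof of \eqref{sum-hp-nu-nt-002}, so your more explicit discussion of the $n^{-\gamma/s'}$ trade-off is a faithful expansion rather than a genuine deviation.
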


\begin{proof}
By H\"older's inequality, with $s >1$ and $\frac{1}{s} +\frac{1}{s'}=1$,
\begin{align*} 
 \bb E_a \left( |\tilde f(\xi_{ \tilde \tau^{\mathfrak f}_{t} })|; \; n \leq  \tilde \tau^{\mathfrak f}_{t}  \leq n^{\gamma}  \right)
 \leq 
 \bb E_a^{1/s} \left( |\tilde f(\xi_{ \tilde \tau^{\mathfrak f}_{t} })|^{s}; \tilde \tau^{\mathfrak f}_{t}  \leq n^{\gamma}  \right)
   \bb P_a^{1/s'} \left(  \tilde \tau^{\mathfrak f}_{t} \geq  n  \right).
\end{align*}
We write, with $c_s > 0$ depending on $s$, 
\begin{align*} 
\bb E_a \left( |\tilde f(\xi_{ \tilde \tau^{\mathfrak f}_{t} })|^{s}; \tilde \tau^{\mathfrak f}_{t}  \leq n^{\gamma}  \right)
= \sum_{\ell = 1}^{n^{\gamma}} 
\bb E_a \left( |\tilde f(\xi_{\ell})|^s;  \tilde \tau^{\mathfrak f}_{t} = \ell  \right)
\leq  c_s \sum_{\ell =n}^{n^{\gamma}}  \mathcal F_{\ell}(a). 
\end{align*}
The remainder of the proof can be completed in a manner similar to that of \eqref{sum-hp-nu-nt-002}.  
\end{proof}

\begin{proof}[Proof of Proposition \ref{Prop-U-f-t-increase-001}]
By \eqref{def-U-f-n-t-001}, \eqref{def-U-f-n-t-002} and Corollary \ref{Coro-Optional-thm-W-002}, we get 
\begin{align*}
U^{\mathfrak f, \theta}_{\eta_1}(t)
& \leq U^{\mathfrak f, \theta}_{\eta_2}(t)  
 +   \int_{\bb A_p} \tilde \theta(a) \mathbb{E}_{a} 
\bigg( - \sum_{i=\eta_1 + 1}^{\eta_1 + p} \sigma_p(\xi_i) 
 + \tilde f(\xi_{\eta_1}) - \tilde f(\xi_0);   \tilde{\tau}_{t}^{\mathfrak f} > \eta_1 \bigg)
\mu_p(da)  \notag\\
& \quad -   \int_{\bb A_p}  \tilde \theta(a)  \mathbb{E}_{a} 
\bigg( - \sum_{i=\eta_2 + 1}^{\eta_2 + p} \sigma_p(\xi_i) 
 + \tilde f(\xi_{\eta_2}) - \tilde f(\xi_0);   \tilde{\tau}_{t}^{\mathfrak f} > \eta_2 \bigg)
\mu_p(da) \notag\\
& \quad +   \int_{\bb A_p}  \tilde \theta(a) \mathbb{E}_{a} 
 \left(  \tilde f(\xi_0) - \tilde f (   \xi_{\tilde \tau^{\mathfrak f}_{t}}); \eta_1 < \tilde \tau^{\mathfrak f}_{t} \leq \eta_2 \right)
\mu_p(da)  \notag\\
& = U^{\mathfrak f, \theta}_{\eta_2}(t) 
+  \int_{\bb A_p} \tilde \theta(a) \mathbb{E}_{a} 
\bigg( - \sum_{i=\eta_1 + 1}^{\eta_1 + p} \sigma_p(\xi_i) 
 + \tilde f(\xi_{\eta_1});   \tilde{\tau}_{t}^{\mathfrak f} > \eta_1 \bigg)
\mu_p(da)  \notag\\
& \quad -  \int_{\bb A_p} \tilde \theta(a)  \mathbb{E}_{a} 
\bigg( - \sum_{i=\eta_2 + 1}^{\eta_2 + p} \sigma_p(\xi_i) 
 + \tilde f(\xi_{\eta_2});   \tilde{\tau}_{t}^{\mathfrak f} > \eta_2 \bigg)
\mu_p(da) \notag\\
& \quad -  \int_{\bb A_p} \tilde \theta(a) \mathbb{E}_{a} 
 \left(  \tilde f (   \xi_{\tilde \tau^{\mathfrak f}_{t}}); \eta_1 < \tilde \tau^{\mathfrak f}_{t} \leq \eta_2 \right)
\mu_p(da). 
\end{align*}
Now the conclusion follows from Lemmas \ref{Lemma martingale-001} and \ref{Lemma martingale-002},
by integrating over $a \in \bb A_p$ with respect to $\nu_p$ and replacing $\beta/4$ by $\beta$.  
\end{proof}


\subsection{Estimates of the persistence probability}

We continue to assume that the sequence of perturbations $\mathfrak f = (f_n)_{n \geq 0}$
depends only on finitely many coordinates, and we keep the notations introduced in Subsection \ref{susec: perturb finite-001}.
In this context, Theorem \ref{Thm-CCLT-limit-000} will be derived from the following effective result. 
Recall that $U_{n}^{\mathfrak f, \theta}$ is defined by \eqref{def-U-f-n-t-theta-001}. 
 

\begin{proposition}\label{Prop-CCLT-001-a}
Suppose that the cocycle $\sigma$ admits finite exponential moments \eqref{exp mom for f 001} and is centered \eqref{centering-001}.
Let $\ee \in (0, \frac{1}{8})$. 
Then, there exist constants $\beta, c >0$ such that for any $n \geq 1$, $1 \leq p \leq n^{\ee}$, $|t| \leq n^{\ee}$, 
any sequence $\mathfrak f = (f_n)_{n \geq 0}$ of $\scr A_p$-measurable functions, 
and any non-negative and bounded $\mathscr A_p$-measurable function $\theta$ on $\Omega$, 
\begin{align}\label{CCLT-001-a}
 \int_{\bb X} \bb E \left( \theta(\omega);   \tau_{x, t}^{\mathfrak f} > n \right) \nu(dx)  
  \leq  \frac{2}{  \sqrt{2\pi n} \bf{v} }  
  U_{n}^{\mathfrak f, \theta}(t) 
  +  \frac{c (1 + \max \{t,0\}) }{n^{1/2 + \beta}}   \|\theta\|_{\infty} C_{\alpha}(\mathfrak f)
\end{align}
and 
\begin{align}\label{CCLT-002-a}
\int_{\bb X} \bb E \left( \theta(\omega);   \tau_{x, t}^{\mathfrak f} > n \right) \nu(dx)  
 \geq  \frac{2}{  \sqrt{2\pi n} \bf{v} }  
  U_{n^{1/2 - 2\ee}}^{\mathfrak f, \theta}(t) 
  -  \frac{c (1 + \max \{t,0\})}{n^{1/2 + \beta}}  \|\theta\|_{\infty}  C_{\alpha}(\mathfrak f). 
\end{align}
\end{proposition}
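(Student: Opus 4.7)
My plan is to use a Markov-type splitting at an intermediate time $k$ together with the strong approximation \eqref{KMTbound001} to compare with Brownian motion, and then invoke the monotonicity result of Proposition \ref{Prop-U-f-t-increase-001} when needed. The fundamental input is the explicit Brownian exit formula: for standard Brownian motion $B$ and for $u > 0$,
$$\bb P\Bigl(\inf_{s \in [0,N]}(u + \bf{v} B_s) \geq 0\Bigr) = 2\Phi\bigl(u/(\bf{v}\sqrt{N})\bigr) - 1,$$
which is bounded above by $\frac{2u}{\bf{v}\sqrt{2\pi N}}$ and, for $u/\sqrt{N} \to 0$, approaches this upper bound from below. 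This is the arithmetic source of the leading factor $\frac{2}{\bf{v}\sqrt{2\pi n}}$ in the statement.

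For the upper bound \eqref{CCLT-001-a} I will fix an intermediate time $k = [n^{1 - \delta}]$ for small $\delta > 0$ and, using the Markov chain structure from Subsection \ref{susec: perturb finite-001}, rewrite
$$\int_{\bb X} \bb E\bigl(\theta; \tau^{\mathfrak f}_{x,t} > n\bigr) \nu(dx) = \int_{\bb A_p} \tilde\theta(a)\, \bb E_a\!\left(\bb P_{\xi_k}\!\bigl(\tilde\tau^{\mathfrak f}_{u_k} > n - k\bigr);\ \tilde\tau^{\mathfrak f}_{t} > k\right) \mu_p(da),$$
where $u_k = t + \sum_{i=1}^{k} \sigma_p(\xi_i) + \tilde f(\xi_k) - \tilde f(\xi_0)$. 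I will then apply \eqref{KMTbound001} over the next $n-k$ steps to dominate the inner probability by $\frac{2\,\max\{u_k, 0\}}{\bf{v}\sqrt{2\pi(n-k)}}\bigl(1 + O(n^{-\eta})\bigr)$, plus a small probability accounting for the bad event in which the coupling fails (of order $n^{-\eta}$). Because $k = n^{1-\delta}$, we have $\sqrt{n-k} = \sqrt{n}\bigl(1 + O(n^{-\delta})\bigr)$, so the target constant is recovered and we arrive at
$$\int_{\bb X} \bb E\bigl(\theta; \tau^{\mathfrak f}_{x,t} > n\bigr) \nu(dx) \leq \frac{2}{\bf{v}\sqrt{2\pi n}}\, U^{\mathfrak f,\theta}_{k}(t) + \text{error}.$$
The bridge to $U^{\mathfrak f,\theta}_{n}(t)$ is then provided by Proposition \ref{Prop-U-f-t-increase-001} with $\eta_1 = k$, $\eta_2 = n$ (valid provided $\delta$ is small enough so $n \leq k^{\gamma}$ for some $\gamma < 8$), which swaps $U^{\mathfrak f,\theta}_k$ for $U^{\mathfrak f,\theta}_n$ at a cost that fits inside the allowed error.

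For the lower bound \eqref{CCLT-002-a} I will split at the much smaller time $k = [n^{1/2 - 2\ee}]$. Here $\sqrt{k} = n^{1/4 - \ee}$ is much smaller than $\sqrt{n-k}$, so the ratio $u_k/(\bf{v}\sqrt{n-k})$ is typically small, and the Brownian formula above yields a matching two-sided estimate
$$\bb P_{\xi_k}\!\bigl(\tilde\tau^{\mathfrak f}_{u_k} > n - k\bigr) \geq \frac{2\,u_k}{\bf{v}\sqrt{2\pi(n-k)}} - O\bigl(n^{-1/2-\eta}\bigr)(1 + |u_k|),$$
again up to a bad coupling event controlled via \eqref{KMTbound001}. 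Integrating against $\tilde\theta$ produces directly $\frac{2}{\bf{v}\sqrt{2\pi n}} U^{\mathfrak f,\theta}_{n^{1/2 - 2\ee}}(t)$, so no monotonicity bridge is required on this side.

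The hard part will be the bookkeeping of the remainder terms. I need Brownian approximations that are uniform over the $\mathscr F_k$-conditioning, including the rare events where $u_k$ is atypically large or where $t + S_k$ is close to the boundary; the latter creates overshoot contributions on which the Brownian formula is least reliable. I will absorb these using Lemmas \ref{Lem-tau-prior}, \ref{Lemma martingale-001} and \ref{Lemma martingale-002}, which together with the exponential moment assumption \eqref{exp mom for g 002} bound the contributions of $\tilde f(\xi_k)$, $\tilde f(\xi_{\tilde\tau^{\mathfrak f}_t})$ and the tail of $\tilde\tau^{\mathfrak f}_t$ in the precise form $n^{-\beta/2}(1 + \max\{t,0\}) C_\alpha(\mathfrak f)$. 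A further subtlety is that $\mathfrak f$ depends on the initial coordinates $g_1,\dots,g_p$, so the perturbation at time $k$ and the boundary value $\tilde f(\xi_{\tilde \tau^{\mathfrak f}_t})$ retain coupling to the pre-$k$ past; the assumption $p \leq n^\ee$ and the restriction $|t| \leq n^\ee$ together ensure that this coupling error is dwarfed by the $n^{-1/2-\beta}$ remainder, which is what finally yields the stated bounds.
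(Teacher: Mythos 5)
Your upper-bound strategy (deterministic split at $k = [n^{1-\delta}]$, Brownian upper bound, bridge to $U^{\mathfrak f,\theta}_n$ via Proposition~\ref{Prop-U-f-t-increase-001}) is in the right spirit and can likely be pushed through, though it omits a substantial amount of bad-event bookkeeping — the coupling failure, the tails of the perturbations $\tilde f(\xi_k)$, and the overshoot — that the paper handles uniformly through the carefully constructed set $B_{n,t}$ and Lemma~\ref{Lem-Bnt-001}. More importantly, your \emph{lower bound} has a genuine gap. You split at $k = [n^{1/2-2\ee}]$ and claim the two-sided estimate
\begin{align*}
\bb P_{\xi_k}\bigl(\tilde\tau^{\mathfrak f}_{u_k} > n - k\bigr) \geq \frac{2u_k}{\bf{v}\sqrt{2\pi(n-k)}} - O\bigl(n^{-1/2-\eta}\bigr)(1 + |u_k|),
\end{align*}
but KMT-type coupling \eqref{KMTbound001} over the remaining $n-k \approx n$ steps does not deliver a correction proportional to $u_k$; it delivers a fixed \emph{additive} shift. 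On the good coupling event the walk stays within $n^{1/2-\ee'}$ of the Brownian path, so the lower bound becomes $\bb P\bigl(\tau^{bm}_{(u_k - c\,n^{1/2-\ee'})/\bf{v}} > n-k\bigr)$, which is zero whenever $u_k < c\,n^{1/2-\ee'}$. Conditional on survival at time $k = n^{1/2-2\ee}$ the typical position is $u_k \sim \sqrt{k} = n^{1/4-\ee}$, which is far below the coupling threshold $n^{1/2-\ee'}$ for any $\ee' \in (0, 1/4)$; so the Brownian lower bound is vacuous precisely where the Rayleigh mass lives, and you cannot recover $U^{\mathfrak f,\theta}_{n^{1/2-2\ee}}(t)$ this way.

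The paper avoids this by splitting at the \emph{random} stopping time $\tilde\nu_{n,t}$ defined in \eqref{nu n02} — the first hitting time of the level $2n^{1/2-\ee}$ — so that on the good set $B_{n,t}$ the post-split starting position is guaranteed to be in $[n^{1/2-\ee}, n^{1/2}]$, safely above the coupling threshold and exactly in the regime where Lemma~\ref{Lemma Weak Conv-a} gives a genuine two-sided Brownian estimate. The quantity $E_n(a,t)$ then corresponds to $U^{\mathfrak f,\theta}_{\eta_{n,t}}(t)$ for a \emph{bounded} stopping time $\eta_{n,t} \in [n^{1/2-2\ee}, n^{1-\ee}]$, and the paper uses Proposition~\ref{Prop-U-f-t-increase-001} to bridge $U^{\mathfrak f,\theta}_{\eta_{n,t}} \geq U^{\mathfrak f,\theta}_{n^{1/2-2\ee}} - (\text{small})$. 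This contradicts your remark that ``no monotonicity bridge is required on this side'': both bounds in the paper's proof go through that proposition, and for the lower bound it is exactly what converts a random stopped time to the deterministic one in the statement.
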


As a corollary of the proof of Proposition \ref{Prop-CCLT-001-a}, we obtain the following upper bound,
which implies Theorem \ref{Thm-inte-exit-time-001} for perturbations depending on finitely many coordinates. 

\begin{corollary}\label{Cor-CCLT-001-a}
Suppose that the cocycle $\sigma$ admits finite exponential moments \eqref{exp mom for f 001} and is centered \eqref{centering-001}. 
Let $\ee \in (0, \frac{1}{8})$. 
Then, there exists a constant $c >0$ such that for any $n \geq 1$, $1 \leq p \leq n^{\ee}$, $t \in \bb R$, 
any sequence $\mathfrak f = (f_n)_{n \geq 0}$ of $\scr A_p$-measurable functions, 
\begin{align}\label{Corollary-of-CCLT-001-a}
 \int_{\bb X} \bb P \left( \tau_{x, t}^{\mathfrak f} > n \right) \nu(dx)  
 \leq  \frac{c (1 + \max \{t,0\})}{\sqrt{n}} C_{\alpha}(\mathfrak f). 
\end{align}
\end{corollary}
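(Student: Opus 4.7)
\medskip
\noindent\textbf{Proof plan for Corollary \ref{Cor-CCLT-001-a}.} The plan is to extract the uniform bound by re-examining the proof of the upper estimate \eqref{CCLT-001-a} in Proposition \ref{Prop-CCLT-001-a}, specialized to $\theta \equiv 1$. That proof, which combines the Brownian strong approximation \eqref{KMTbound001} with a Caravenna-type convolution decomposition, delivers at an intermediate step an estimate of the form
\begin{align*}
\int_{\bb X} \bb P(\tau^{\mathfrak f}_{x,t} > n)\, \nu(dx)
\leq \frac{2}{\sqrt{2 \pi n}\, \bf v}\, U^{\mathfrak f}_n(t) + \frac{c(1+\max\{t,0\})}{n^{1/2+\beta}}\, C_\alpha(\mathfrak f),
\quad 1 \leq p \leq n^\ee,\ |t| \leq n^\ee,
\end{align*}
for some $\beta > 0$. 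The restriction $|t| \leq n^\ee$ enters only when passing from this inequality to the sharper asymptotic form of \eqref{CCLT-001-a}. Accordingly, to derive \eqref{Corollary-of-CCLT-001-a} it suffices to supply the uniform preharmonic bound
\begin{align} \label{Uf-bound-plan}
U^{\mathfrak f}_n(t) \leq c\,(1 + \max\{t,0\})\, C_\alpha(\mathfrak f), \qquad n \geq 1,\ t \in \bb R,
\end{align}
and to handle the range $|t| > n^\ee$ by a separate direct argument.

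\medskip
The proof of \eqref{Uf-bound-plan} will rest on the optional stopping theorem applied to the centered martingale $(S^x_k)_{k \geq 0}$ at the bounded stopping time $\tau^{\mathfrak f}_{x,t} \wedge n$, giving
$\bb E(S^x_n; \tau^{\mathfrak f}_{x,t} > n) = -\bb E(S^x_{\tau^{\mathfrak f}_{x,t}}; \tau^{\mathfrak f}_{x,t} \leq n).$
Combined with the definition \eqref{def-stop time with preturb-001} of the perturbed exit time and the one-step bound $|S^x_\tau - S^x_{\tau-1}| \leq \sup_{x'} |\sigma(g_\tau, x')|$, the overshoot satisfies
$-(t + S^x_\tau) \leq |f_0(\omega,x)| + |f_{\tau-1}\circ T^{\tau-1}(\omega,x)| + \sup_{1 \leq k \leq n} |\sigma(g_k, g_{k-1}\cdots g_1 x)|$
on $\{\tau^{\mathfrak f}_{x,t} \leq n\}$ (with the case $\tau = 1$ treated separately). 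Each of these contributions is uniformly bounded in expectation after integrating against $\nu$: the first via \eqref{exp mom for g 002}, the third via \eqref{exp mom for f 001}, and the middle via H\"older's inequality combined with the a priori polynomial tail estimate of Lemma \ref{Lem-tau-prior}, exploiting arbitrarily high polynomial moments of $|f_{k-1}|$ afforded by the exponential moment \eqref{exp mom for g 002}. Adding back the contributions of $|f_n \circ T^n|$ and $|f_0|$ from the definition \eqref{def-U-f-n-t-theta-001} of $U^{\mathfrak f}_n(t)$ (at $\theta = 1$) completes \eqref{Uf-bound-plan}.

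\medskip
For $|t| > n^\ee$, I will argue directly. When $t \leq -n^\ee$, the event $\{\tau^{\mathfrak f}_{x,t} \geq 1\}$ forces $\sigma(g_1, x) + f_1 \circ T(\omega, x) - f_0(\omega, x) \geq -t \geq n^\ee$, which has probability decaying faster than any negative power of $n$ by exponential Chebyshev applied to \eqref{exp mom for f 001}--\eqref{exp mom for g 002}. When $t \geq n^\ee$, I will truncate the perturbations at level $M = c \log n$: on the event $E_M = \{\sup_{0 \leq k \leq n} |f_k \circ T^k(\omega, x)| \leq M\}$ one has $\bb P(\tau^{\mathfrak f}_{x,t} > n, E_M) \leq \bb P(\tau^{0}_{x, t+2M} > n)$, where $\tau^{0}$ denotes the unperturbed exit time, and the right-hand side is bounded by $c(1 + t + 2M)/\sqrt n$ via the classical conditioned tail estimate for the unperturbed random walk (a direct consequence of the strong approximation \eqref{KMTbound001}). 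The complement probability $\bb P(E_M^c) \leq (n+1) e^{-\alpha M}\, C_\alpha(\mathfrak f)$ is dominated by any negative power of $n$ by choosing the constant $c$ large. Since $M \leq c \log n \leq t$ for $t \geq n^\ee$ and $n$ large, the two contributions combine to $c(1+t)/\sqrt n$.

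\medskip
The main technical obstacle is step \eqref{Uf-bound-plan}, specifically the control of $\int \bb E(|f_{\tau-1} \circ T^{\tau-1}|; \tau^{\mathfrak f}_{x,t} \leq n)\, \nu(dx)$: any bound based solely on the $T$-invariance of $\bb P$ introduces a spurious factor of $n$, and the tail estimate $\bb P(\tau^{\mathfrak f}_{x,t} \leq k) \leq c k^{-\beta}$ from Lemma \ref{Lem-tau-prior} alone need not yield a summable series. The resolution is to carefully balance the H\"older exponents against the exponential moment \eqref{exp mom for g 002}, choosing the H\"older parameter large so that the dual exponent multiplying $\bb P(\tau = k)$ is close to $1$ and the resulting series converges uniformly in $\mathfrak f$.
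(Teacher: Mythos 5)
Your high-level plan — use the non-asymptotic part of the upper bound \eqref{CCLT-001-a} (with $\theta\equiv 1$) to reduce the corollary to the preharmonic estimate $U^{\mathfrak f}_n(t)\leq c(1+\max\{t,0\})\,C_\alpha(\mathfrak f)$, and deal with large $|t|$ separately — is correct and actually cleaner in one respect than the paper's proof. However, the ``main technical obstacle'' you identify, controlling $\int\bb E(|f_{\tilde\tau-1}\circ T^{\tilde\tau-1}|;\,\tilde\tau\leq n)\,\nu(dx)$ in an optional-stopping reproof of the preharmonic bound, is a detour: the bound $U^{\mathfrak f}_n(t)\leq c(1+\max\{t,0\})$ is already established as Corollary 4.3 of the companion paper \cite{GQX24a}, and this is exactly what the authors invoke. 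There is no need to reconstruct it via martingale overshoot estimates, and your sketch of that reconstruction (``carefully balance the H\"older exponents so the dual exponent multiplying $\bb P(\tau=k)$ is close to $1$'') is not airtight as stated, since Lemma \ref{Lem-tau-prior} only gives a fixed small exponent $\beta$ for the tail of $\tilde\tau^{\mathfrak f}_t$, and one would have to verify that $\sum_k k^{-\beta/q'}$ converges for the chosen $q'$, which it need not.

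Regarding the range $|t|>n^\ee$: your separate truncation argument (compare with the unperturbed exit time at level $t+2M$ on the good event where all perturbations are below $M=c\log n$) is a reasonable and valid way to close that gap, and in fact the paper's own exposition is terse here. The paper instead first extends the Brownian comparison of Lemma \ref{Lemma Weak Conv-a} to the inequality \eqref{upper-bound-tilde-tau-f-001}, which holds for all $t\geq n^{1/2-\ee}$ (by invoking the trivial bound $\bb P\leq 1\leq t/\sqrt n$ when $t>\sqrt n$), then re-runs the Markov decomposition at the stopping time $\tilde\nu_{n,t}$ with this one-sided estimate replacing the two-sided asymptotic \eqref{Inequa-H-nk-tilde-f}, obtaining $\int\bb P_a(\tilde\tau^{\mathfrak f}_t>n)\,\mu_p(da)\leq \frac{c}{\sqrt n}U^{\mathfrak f}_n(t)+\frac{c(1+\max\{t,0\})}{n^{1/2+\beta}}C_\alpha(\mathfrak f)$, before applying Corollary 4.3 of \cite{GQX24a}. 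So the two proofs agree on the skeleton (Markov decomposition at $\tilde\nu_{n,t}$ plus the preharmonic bound from \cite{GQX24a}); your version dispenses with the re-derivation by citing \eqref{CCLT-001-a} directly, but pays for it with an unnecessary and incomplete reproof of the $U^{\mathfrak f}_n$ bound. Replacing that reproof by the citation to Corollary 4.3 of \cite{GQX24a} would make your argument complete and, arguably, slightly more economical than the paper's.
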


The proof of Proposition \ref{Prop-CCLT-001-a} relies on the Markov property together with the 
following lemma for the asymptotic of persistence probability when the starting point $t$ is large.

\begin{lemma}\label{Lemma Weak Conv-a}
Suppose that the cocycle $\sigma$ admits finite exponential moments \eqref{exp mom for f 001} and is centered \eqref{centering-001}.
Let $\ee \in (0, \frac{1}{8})$. 
Then there exists a constant $c>0$ with the following property: for any $n \geq 1$, $1 \leq p \leq n^{\ee}$, 
$n^{1/2 - \ee} \leq t \leq n^{1/2}$, 
any sequence $\mathfrak f = (f_n)_{n \geq 0}$ of $\scr A_p$-measurable functions, 
and any $a = (g_0, \ldots, g_p, x,q) \in \mathbb{A}_p$ satisfying $|\sigma(g_i, g_{i-1} \cdots g_1 x)| \leq n^{\ee}$ for $1 \leq i \leq p$ and $\tilde f(a) \leq n^{\ee}$,
we have 
\begin{align*}
 \left\vert 
\mathbb{P}_{a} \left( \tilde{\tau}_{t}^{\mathfrak f} > n  \right)
 - \frac{2t}{  \sqrt{2\pi n} \bf{v} }    \right\vert  
 \leq  \frac{ct^2}{n} + c e^{-\alpha n^{\ee} } \sum_{i = 1}^{n} \mathcal{F}_i(a).  
\end{align*}
%
%
\end{lemma}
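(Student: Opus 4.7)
The plan is to sandwich the perturbed persistence probability between two unperturbed ones, reduce the latter to a Brownian exit problem using the strong approximation \eqref{KMTbound001}, and then read off the leading term by combining the reflection principle with a Taylor expansion of $\Phi$.

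First I would localise the perturbation: introduce the bad event $E = \bigcup_{k=1}^{n} \{|\tilde f(\xi_k)| > n^{\ee}\}$, and observe by Chebyshev's inequality together with the definition \eqref{def-Fka} that
\[
\mathbb{P}_a(E) \leq e^{-\alpha n^{\ee}} \sum_{k=1}^{n} \mathcal{F}_k(a),
\]
which is precisely the second error term in the conclusion. On $E^c$, using that $|\tilde f(a)| \leq n^{\ee}$ by hypothesis, one has $|\tilde f(\xi_k) - \tilde f(a)| \leq 2 n^{\ee}$ for every $1 \leq k \leq n$, giving the sandwich
\[
\mathbb{P}_a\bigl(\tau^{\circ}_{t-2n^{\ee}} > n\bigr) - \mathbb{P}_a(E) \;\leq\; \mathbb{P}_a\bigl(\tilde\tau^{\mathfrak f}_t > n\bigr) \;\leq\; \mathbb{P}_a\bigl(\tau^{\circ}_{t+2n^{\ee}} > n\bigr) + \mathbb{P}_a(E),
\]
where $\tau^{\circ}_s := \min\{k \geq 1 : s + \sigma(g_k \cdots g_1, x) < 0\}$ is the unperturbed exit time.

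Next I would dispose of the deterministic first $p$ coordinates. Under $\mathbb{P}_a$ the initial increments are fixed and contribute $|\sigma(g_p \cdots g_1, x)| \leq p\, n^{\ee} \leq n^{2\ee}$, while the increments from time $p+1$ onwards are i.i.d.\ with law $\mu$. Since $t \geq n^{1/2-\ee}$ dominates any shift of order $3 n^{2\ee}$ when $\ee < 1/8$, the first $p$ constraints defining $\tau^{\circ}_{t \pm 2 n^{\ee}}$ are automatically fulfilled, and the sandwich reduces to the exit probability of an i.i.d.\ walk of length $n-p$ starting from $g_p \cdots g_1 x$ at an initial height $t \pm O(n^{2\ee})$. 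I would then invoke the strong approximation \eqref{KMTbound001} with parameter $\ee' := 2\ee \in (0, 1/4)$: up to coupling, with probability at least $1 - c\, n^{-2\ee}$ the random walk stays within $n^{1/2 - 2\ee}$ of $\bf{v} B_{\cdot}$ uniformly on $[0, n]$, which sandwiches the i.i.d.\ exit probability between two Brownian ones at initial heights differing from $t$ by at most $O(n^{1/2 - 2\ee})$.

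The Brownian exit probability is then computed by the reflection principle, $\mathbb{P}(\min_{t' \in [0, n]} \bf{v} B_{t'} > -s) = 2 \Phi(s/(\bf{v} \sqrt{n})) - 1$ for $s \geq 0$. The Taylor expansion $2\Phi(u) - 1 = \frac{2u}{\sqrt{2\pi}} + O(u^3)$ applied at $u = t/(\bf{v}\sqrt{n}) \leq 1/\bf{v}$ produces the main term $\frac{2t}{\sqrt{2\pi n}\,\bf{v}}$ with remainder $O(t^3/n^{3/2})$; since the derivative of $s \mapsto 2 \Phi(s/(\bf{v}\sqrt{n})) - 1$ is bounded by $c/\sqrt{n}$, the accumulated shifts of size $O(n^{1/2 - 2\ee})$ add an error of order $O(n^{-2\ee})$. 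For $t \in [n^{1/2-\ee}, n^{1/2}]$ one checks $t^3/n^{3/2} \leq t^2/n$ (because $t \leq \sqrt{n}$) and $n^{-2\ee} \leq t^2/n$ (because $t \geq n^{1/2-\ee}$), so every error merges into $c\, t^2/n$.

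The main obstacle is the tight balance of error terms: the leading quantity is of order $t/\sqrt{n}$ and the target absolute error $t^2/n$ is as small as $n^{-2\ee}$ at the lower end $t = n^{1/2-\ee}$ of the admissible range. This forces the KMT exponent to be at least $\ee' = 2\ee$, and the KMT constraint $\ee' < 1/4$ then demands $\ee < 1/8$, matching exactly the hypothesis of Proposition \ref{Prop-CCLT-001-a}. Verifying that no error term escapes this narrow window, while keeping the bound independent of the fixed $a$ apart from the term $\sum_i \mathcal{F}_i(a)$, is the delicate part of the argument.
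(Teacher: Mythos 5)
Your proposal is correct and follows essentially the same route as the paper: both control the perturbation on a good event bounded via Chebyshev and the moment functionals $\mathcal F_i(a)$, both reduce the main term to a Brownian exit probability through the KMT coupling at scale $n^{1/2-2\ee}$, and both extract the leading term $\frac{2t}{\sqrt{2\pi n}\,\mathbf{v}}$ by a Taylor expansion while absorbing the $O(n^{-2\ee})$ coupling error into $ct^2/n$ using the bound $t \geq n^{1/2-\ee}$. The only cosmetic differences are that the paper bundles the perturbation event and the KMT event into a single set $A_{n,a}$ and writes the Brownian probability via L\'evy's integral formula rather than the reflection-principle form $2\Phi(u)-1$.
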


\begin{proof}
Let us fix $a = (g_0, \ldots, g_p, x,q) \in \mathbb{A}_p$, $n \geq 1$ and $t \geq n^{1/2-\ee}$.  
Denote $t^{+} = t  + \sigma(g_p \cdots g_1, x) - \tilde f(a) + n^{1/2-2\ee },$ 
$t^{-} = t + \sigma(g_p \cdots g_1, x) - \tilde f(a) -n^{1/2-2\ee },$ 
$b^{+}=b + 2 \bf{v} n^{-2\ee}$ and $b^{-} = b -  2\bf{v} n^{-2\ee }.$ 
Set 
\begin{align}\label{def-Ana-00a}
A_{n, a}  = \bigg\{ \sup_{0\leq s \leq 1} \bigg\vert \sum_{i=p +1}^{p+ [(n-p)s]} \sigma_p(\xi_i) 
  - \bf{v} B_{(n-p)s} \bigg\vert \leq  \frac{1}{2} n^{1/2-2\ee} \bigg\} 
 \bigcap \left\{ |\tilde f(\xi_i) | \leq n^{\ee}, \forall i \in [1, n] \right\}.
\end{align}
For $u \in \bb R$, define $\tau_{u}^{bm} = \inf \{ s \geq 0:  u  + B_s \leq 0 \}$, 
where $(B_s)_{s \geq 0}$ is the standard Brownian motion on $\bb R$ with $B_0 = 0$. 
Then, by \eqref{def-tau-f-y}, it holds that, $\bb P_a$-almost surely, for $n$ large enough, 
\begin{align*}
\left\{ \tilde{\tau}_{t}^{\mathfrak f} > n  \right\}  \cap A_{n, a}
\subseteq \left\{ \tau_{ \bf{v}^{-1} t^{+} }^{bm}> n  - p \right\}. 
\end{align*}
It follows that 
\begin{align}\label{WK001-a}
\mathbb{P}_{a} \left(   \tilde{\tau}_{t}^{\mathfrak f} >n  \right) 
& \leq  \bb P_a \left(    \tilde{\tau}_{t}^{\mathfrak f} >n , A_{n,a} \right)
 + \bb P_a (A_{n,a}^c)  \notag\\
&  \leq  \bb P \left(  \tau_{ \bf{v}^{-1} t^{+}}^{bm}> n  - p \right)
 + \bb P_a (A_{n,a}^c). 
\end{align}
In the same way, by \eqref{def-Ana-00a}, it holds that, $\bb P_a$-almost surely, 
\begin{align*}
\left\{ \tilde{\tau}_{t}^{\mathfrak f} > n  \right\} 
\supseteq   \left\{ \tau_{ \bf{v}^{-1} t^{-}}^{bm}> n  - p \right\} \cap  \{ \tilde{\tau}_t^{\mathfrak f} > p \} \cap A_{n, a}. 
\end{align*}
Besides, notice that on the set $A_{n, a}$, for $1 \leq i \leq p$, we have
\begin{align*}
t + \sigma(g_i \cdots g_1, x) + \tilde f(\xi_i) - \tilde f(a) \geq n^{1/2 - \ee} - (p+2) n^{\ee} > 0
\end{align*}
for $n$ large enough. Therefore, we have $\tilde{\tau}_t^{\mathfrak f} > p$ and  
\begin{align*}
\left\{ \tilde{\tau}_{t}^{\mathfrak f} > n  \right\} 
\supseteq   \left\{ \tau_{ \bf{v}^{-1} t^{-}}^{bm}> n  - p \right\}  \cap A_{n, a}, 
\end{align*}
so that 
\begin{align}\label{WK002-a}
\mathbb{P}_{a} \left(   \tilde{\tau}_{t}^{\mathfrak f} >n  \right)  
\geq  \bb P \left(  \tau_{\bf{v}^{-1} t^{-}}^{bm}> n  - p \right)    - \bb P_a (A_{n,a}^c).  
\end{align}
Now we deal with the first probability on the right-hand side of \eqref{WK001-a}. 
By L\'evy \cite{Levy37} (Theorem 42.I, pp. 194-195), we have 
\begin{align}\label{WK004-a}
\bb P \left(   \tau_{ \bf{v}^{-1} t^{+}}^{bm}> n  - p \right) 
 = \frac{2}{\sqrt{2\pi (n -p)}  } \int_{0}^{ \bf{v}^{-1} t^{+} } e^{- \frac{ s^{2}}{2  (n - p)   } } ds  
 \leq  \frac{2 t^{+} }{\sqrt{2\pi (n -p)} \bf{v}}. 
\end{align}
Since $p \leq n^{\ee}$ and $t \geq n^{1/2-\ee},$ we have, as $n\rightarrow \infty,$ 
\begin{align}\label{WK005-a}
(n-p) /n = 1- O\left( n^{-\ee }\right)  
\end{align}
and 
\begin{align}\label{WK006-a}
\frac{t^{+}}{t} = \frac{ t  + \sigma(g_p \cdots g_1, x) - \tilde f(a) + n^{1/2-2\ee } }{ t}
= 1 + O\left( n^{-\ee}\right).  
\end{align}
Therefore, as we assumed $t \geq n^{1/2 - \ee}$, 
combining \eqref{WK004-a}, \eqref{WK005-a} and \eqref{WK006-a}, 
we get an upper bound for the first probability on the right-hand side of \eqref{WK001-a}: 
\begin{align}\label{WK009-a}
\bb P \left(   \tau_{ \bf{v}^{-1} t^{+}}^{bm}> n  - p \right) 
\leq \frac{2t}{\sqrt{2\pi n} \bf{v} }  + \frac{ct}{n^{1/2 + \ee}} 
\leq  \frac{2t}{\sqrt{2\pi n} \bf{v} }  + \frac{ct^2}{n}.  
\end{align}
Next we provide a lower bound for the first probability on 
the right-hand side of \eqref{WK002-a}. 
Using again L\'evy \cite{Levy37} (Theorem 42.I, pp. 194-195), 
the inequality $|e^z - 1| \leq |z| e^{|z|}$ with $z = - \frac{ s^{2}}{2  (n - p) }$ and the fact that $t^- \leq t +  n^{3\ee} \leq c \sqrt{n}$, we get 
\begin{align}\label{lower-bound-proba-001}
 \bb P \left(  \tau_{\bf{v}^{-1} t^{-}}^{bm}> n  - p \right) 
& =  \frac{2}{\sqrt{2\pi (n -p)}  } \int_{0}^{ \bf{v}^{-1} t^{-} } e^{- \frac{ s^{2}}{2  (n - p)   } } ds  \notag\\ 
& \geq  \frac{2 t^-}{\sqrt{2\pi (n -p)} \bf{v} } -  \frac{2}{\sqrt{2\pi (n -p)}  }  
\int_{0}^{ \bf{v}^{-1} t^{-} }  \frac{ s^{2}}{2  (n - p)   }  e^{ \frac{ s^{2}}{2  (n - p)   } } ds \notag\\
& \geq  \frac{2 t^-}{\sqrt{2\pi (n -p)} \bf{v} } - c \bigg( \frac{t^-}{\sqrt{n - p}} \bigg)^3,  
\end{align}
where in the last inequality we used $|e^{ \frac{ s^{2}}{2  (n - p)   } }| \leq c$ for any $s \in [0, \bf{v}^{-1} t^{-}]$. 
Similarly to \eqref{WK006-a}, it holds that 
\begin{align}\label{lower-bound-proba-002}
\frac{t^{-}}{t}=\frac{ t  + \sigma(g_p \cdots g_1, x) - \tilde f(a) - n^{1/2-2\ee } }{ t}
=1+O\left( n^{-\ee} \right).  
\end{align}
Combining \eqref{lower-bound-proba-001} and \eqref{lower-bound-proba-002}, 
and taking into account that $p \leq n^{\ee}$ and $n^{1/2 - \ee} \leq t \leq n^{1/2}$, 
we get
\begin{align}\label{WK010-a}
\bb P \left(  \tau_{\bf{v}^{-1} t^{-}}^{bm}> n  - p \right)
\geq  \frac{2t}{\sqrt{2\pi n} \bf{v} }  -  \frac{ct}{n^{1/2 + \ee}}  - c \left( \frac{t}{\sqrt{n}} \right)^3
\geq \frac{2t}{\sqrt{2\pi n} \bf{v} }  - \frac{ct^2}{n}.  
\end{align}
By \eqref{def-Ana-00a}, \eqref{KMTbound001} and \eqref{def-Fka}, 
we have 
\begin{align}
\bb P_a (A_{n,a}^c) \leq c n^{-2 \ee} + \sum_{i = 1}^{n} \bb P_a (| \tilde f(\xi_i) | > n^{\ee} )
\leq  c n^{-2 \ee} + e^{-\alpha n^{\ee} } \sum_{i = 1}^{n} \mathcal{F}_i(a). 
\label{WK003-a}
\end{align}
From \eqref{WK001-a}, \eqref{WK002-a}, \eqref{WK009-a}, \eqref{WK010-a} and \eqref{WK003-a},  
we conclude the proof of the lemma. 
\end{proof}


We now proceed to deducing Proposition \ref{Prop-CCLT-001-a} from Lemma \ref{Lemma Weak Conv-a}.
To this aim, we will stop the random walk when it reaches a certain high level.
By adapting the approach developed in \cite[Section 7.1]{GQX24a}, 
for any $a \in \bb A_p$, $n \geq 1$ and $t\in \bb R$, we define a stopping time on $\bb A_p^{\bb N}$ by setting 
\begin{align} \label{nu n02}
\tilde \nu_{n,t} = \min \bigg\{ k\geq p+1: \bigg| t  + \sum_{i=1}^k \sigma_p(\xi_i) -\tilde f(\xi_0)  \bigg|  \geq 2 n^{1/2-\ee } \bigg\},  
\end{align}
where $\sigma_p$ is defined by \eqref{def-sigma-p-00a}. 
For $n \geq 1$ and $t \in \bb R$, we also define the following subset of $\bb A_p^{\bb N}$, 
which will be used in the proof of Proposition \ref{Prop-CCLT-001-a} as well as in Proposition \ref{Prop-CCLT-001}:  
\begin{align}\label{def-Bnt-001}
B_{n, t} 
& = \left\{ |\tilde f(\xi_{\tilde \nu_{n,t}})| \leq n^{\ee}, 
|\sigma_p(\xi_{\tilde \nu_{n,t} + i})| \leq n^{\ee}, i \in [1, p] \right\} \notag\\
&  \bigcap 
\bigg\{ t + \sum_{i=1}^{\tilde \nu_{n,t}} \sigma_p(\xi_i)  
 - \tilde f(\xi_0) \leq n^{(1-\ee)/2},  n^{1/2 - 2\ee} \leq  \tilde \nu_{n,t} \leq n^{1-\ee} \bigg\}. 
\end{align}
The next lemma shows that the integral of the probability of the complement of the set $B_{n,t}$ is small
with respect to the measure $\mu_p$ defined by \eqref{def-mu-p-da}. 

\begin{lemma}\label{Lem-Bnt-001}
Suppose that the cocycle $\sigma$ admits finite exponential moments \eqref{exp mom for f 001} and is centered \eqref{centering-001}.
Let $\ee \in (0, \frac{1}{8})$. 
Then, there exists a constant $c >0$ such that for any $n \geq 1$, $1 \leq p \leq n^{\ee}$, $|t| \leq n^{\ee}$, 
and any sequence $\mathfrak f = (f_n)_{n \geq 0}$ of $\scr A_p$-measurable functions, 
\begin{align*}
\int_{\bb A_p}  \bb P_a \big( B_{n, t}^c  \big) \mu_p(da) 
\leq  
c e^{- \frac{\alpha}{2} n^{\ee}} C_{\alpha}(\mathfrak f). 
\end{align*}
\end{lemma}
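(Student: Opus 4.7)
The plan is to bound $B_{n,t}^c$ by a union of the five events corresponding to the failure of each defining condition in \eqref{def-Bnt-001}, and to dominate the $\mu_p$-integral of each by a constant multiple of $e^{-\alpha n^{\ee}/2}C_{\alpha}(\mathfrak f)$. The stationarity built into $\mu_p$ (cf.\ \eqref{def-mu-p-da}) and the hypothesis \eqref{exp mom for g 002} together ensure that $\int_{\bb A_p}\mathcal F_k(a)\,\mu_p(da)\leq C_{\alpha}(\mathfrak f)$ uniformly in $k\geq 0$, a fact used throughout.

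For the events $\{|\tilde f(\xi_{\tilde\nu_{n,t}})|>n^{\ee}\}$ and $\{|\sigma_p(\xi_{\tilde\nu_{n,t}+i})|>n^{\ee}\text{ for some }i\in[1,p]\}$, I would first restrict to $\tilde\nu_{n,t}\leq n^{1-\ee}$ (the complement being handled below), union-bound over the at most $n^{1-\ee}$ possible values of $\tilde\nu_{n,t}$, apply the exponential Chebyshev inequality, and integrate against $\mu_p$; the resulting estimate is of order $n^{1-\ee}e^{-\alpha n^{\ee}}C_{\alpha}(\mathfrak f)$, which is absorbed into $c\,e^{-\alpha n^{\ee}/2}C_{\alpha}(\mathfrak f)$ for $n$ large. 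The overshoot event $\{t+\sum_{i=1}^{\tilde\nu_{n,t}}\sigma_p(\xi_i)-\tilde f(\xi_0)>n^{(1-\ee)/2}\}$ falls to the same strategy: by minimality of $\tilde\nu_{n,t}$, the partial sum at time $\tilde\nu_{n,t}-1$ lies in $(-2n^{1/2-\ee},2n^{1/2-\ee})$, so on this event one must have $|\sigma_p(\xi_{\tilde\nu_{n,t}})|\geq n^{(1-\ee)/2}-2n^{1/2-\ee}$, a quantity of order $n^{1/2-\ee/2}$, far larger than what we actually need.

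For $\{\tilde\nu_{n,t}<n^{1/2-2\ee}\}$, since $|t|\leq n^{\ee}$, the event is contained in $\{|\tilde f(\xi_0)|\geq\tfrac12 n^{1/2-\ee}\}\cup\{\max_{k\leq n^{1/2-2\ee}}|\sum_{i=1}^k\sigma_p(\xi_i)|\geq n^{1/2-\ee}\}$. The first piece is handled by Markov together with \eqref{exp mom for g 002}; for the second, the typical scale of the maximum is $n^{1/4-\ee}$, so Doob's maximal inequality combined with the Fuk-type bound from Lemma \ref{Lem_Fuk} applied to the centered, exponentially integrable increments $\sigma_p(\xi_i)$ yields a probability exponentially small in $n^{\ee}$.

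The hard part will be the last event $\{\tilde\nu_{n,t}>n^{1-\ee}\}$: it requires the random walk $(t+\sum_{i=1}^k\sigma_p(\xi_i)-\tilde f(\xi_0))_{k\leq n^{1-\ee}}$ to remain inside a band of half-width $2n^{1/2-\ee}$, whereas its natural spread at time $n^{1-\ee}$ is of order $n^{(1-\ee)/2}$. I would invoke the strong approximation \eqref{KMTbound001} to couple the walk with $\mathbf v B_{\cdot}$ up to an error much smaller than $n^{1/2-\ee}$ (this is where the constraint $\ee<\tfrac18$ comes in), and then apply the Brownian small-ball estimate $\bb P(\sup_{u\in[0,1]}|B_u|\leq\delta)\leq c\,e^{-c'/\delta^2}$ at scale $\delta\asymp n^{-\ee/2}$, producing the required decay at rate $e^{-c n^{\ee}}$. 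Summing the five contributions and using that each carries at most a factor $C_{\alpha}(\mathfrak f)$ after integration against $\mu_p$ yields the claimed bound.
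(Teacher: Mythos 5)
Your decomposition of $B_{n,t}^c$ into five events and the handling of four of them are sound and agree in spirit with the paper (the paper restricts to the pointwise bound $\mathbb P_a(\{\tilde\nu_{n,t}>n^{1-\ee}\})\leq 2e^{-\beta n^\ee}$, then treats the $\tilde f$-bound and $\sigma_p$-bound events on that good set by exponential Chebyshev, the overshoot event by noting it forces a single increment of size $\asymp n^{(1-\ee)/2}$, and the event $\{\tilde\nu_{n,t}<n^{1/2-2\ee}\}$ via a pre-established maximal estimate restricted to a good set of starting points $a$ with $|\sigma(g_p\cdots g_1,x)-\tilde f(a)|\leq n^\ee$).

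The genuine gap is in your treatment of $\{\tilde\nu_{n,t}>n^{1-\ee}\}$. You want to couple the walk with Brownian motion via \eqref{KMTbound001} and then apply a small-ball estimate. But \eqref{KMTbound001} only says the coupling fails with probability $\leq c_\ee\, n^{-\ee}$, which is \emph{polynomially} small; after the triangle inequality your bound becomes $e^{-cn^\ee}+c_{\ee'}n^{-\ee'}$, and the polynomial failure-of-coupling term dominates. You therefore cannot reach the claimed $e^{-\alpha n^\ee/2}$ rate this way. The paper sidesteps this by invoking Lemma~7.3 of \cite{GQX24a}, which produces the genuinely exponential bound $\mathbb P_a(\tilde\nu_{n,t}>n^{1-\ee})\leq 2e^{-\beta n^\ee}$ directly; the underlying mechanism there is a block/escape argument (break $[0,n^{1-\ee}]$ into $\asymp n^\ee$ pieces of length $n^{1-2\ee}$, on each of which the walk exits the band of width $\asymp n^{1/2-\ee}$ with probability bounded away from $0$ uniformly, and multiply conditional escape probabilities), not a Gaussian coupling. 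If you wish to avoid citing that lemma you would need to reconstruct such an iterated escape argument; the KMT-type approximation of \eqref{KMTbound001} is simply too weak to deliver exponential decay.
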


\begin{proof}
For $n\geq 1$, $t\in \bb R$ and $a \in \bb A_p$, define 
\begin{align*}
A_{n, t} = \left\{ \tilde \nu_{n,t} \leq n^{1-\ee} \right\}.
\end{align*}
By Lemma 7.3 of \cite{GQX24a}, there exists a constant $\beta >0$ 
such that for any $1 \leq p \leq n^{\ee}$,  
\begin{align}\label{inequa-Ant-000}
\mathbb{P}_a \big( A_{n, t}^c \big) 
\leq  2 \exp \left(- \beta n^{\ee}\right).
\end{align}
By the moment assumptions \eqref{exp mom for f 001} and \eqref{exp mom for g 002},  
still for $1 \leq p \leq n^{\ee}$, 
we get 
\begin{align}\label{inequa-Ant-001}
\bb P_a \left( \left\{ |\tilde f(\xi_{\tilde \nu_{n,t}})| \leq n^{\ee}, 
|\sigma_p(\xi_{\tilde \nu_{n,t} + i})| \leq n^{\ee}, i \in [1, p] \right\}^c 
 \cap A_{n, t}  \right)
 \leq c e^{- \frac{\alpha}{2} n^{\ee}} C_{\alpha}(\mathfrak f),  
\end{align}
for some constant $\alpha > 0$. 
For $n \geq 1$, define
\begin{align*}
F_n = \left\{ a = (g_0, \ldots, g_p, x,q) \in \bb A_p:  |\sigma(g_p \cdots g_1, x) - \tilde f(a) | \leq n^{\ee}  \right\}. 
\end{align*}
By the moment assumptions \eqref{exp mom for f 001} and \eqref{exp mom for g 002},  
still for $1 \leq p \leq n^{\ee}$, 
we have 
\begin{align}\label{inequa-mu-p-Fn}
\mu_p (F_n^c) \leq  c e^{- \frac{\alpha}{2} n^{\ee}} C_{\alpha}(\mathfrak f). 
\end{align}
For $a \in F_n$, by Lemma 7.4 of \cite{GQX24a},
if $p \leq n^{\ee}$ and $|t| \leq n^{\ee}$,  
\begin{align}\label{inequa-Ant-002}
\mathbb{P}_a \left( \tilde \nu_{n,t} \leq n^{1/2- 2\ee }  \right) 
\leq c \exp \left( -c_{\ee }n^{\ee/2}   \right).
\end{align}
Finally, we denote 
\begin{align*}
G_{n,t} = \bigg\{ t + \sum_{i=1}^{\tilde \nu_{n,t}} \sigma_p(\xi_i)  
 - \tilde f(\xi_0) \leq n^{(1-\ee)/2} \bigg\}.
\end{align*}
Notice that on the set $G_{n, t}^c \cap A_{n, t}$, by \eqref{nu n02}, we have 
\begin{align*}
 t + \sum_{i=1}^{\tilde \nu_{n,t}} \sigma_p(\xi_i)   - \tilde f(\xi_0) > n^{(1-\ee)/2}
 \quad  \mbox{and} \quad 
 t + \sum_{i=1}^{\tilde \nu_{n,t} - 1} \sigma_p(\xi_i)   - \tilde f(\xi_0) \leq 2 n^{1/2 - \ee}, 
\end{align*}
which implies that, for some constant $c>0$, 
\begin{align*}
 \sigma_p(\xi_{ \tilde \nu_{n,t} }) >  n^{(1-\ee)/2} - 2 n^{1/2 - \ee} > c n^{(1-\ee)/2}. 
\end{align*}
Therefore, as $\tilde \nu_{n,t} \leq n^{1-\ee}$ (by the definition of $A_{n, t}$), 
by the moment assumption \eqref{exp mom for f 001}, 
we obtain
\begin{align}\label{inequa-Ant-003}
\bb P_a \big( G_{n, t}^c \cap A_{n,t} \big) 
\leq \bb P_a \left( \sigma_p(\xi_{ \tilde \nu_{n,t} }) > c n^{(1-\ee)/2} \right)
\leq  n^{1 - \ee} e^{- c \alpha n^{(1-\ee)/2}} 
\leq c e^{- n^{\ee}}. 
\end{align}
Using \eqref{inequa-mu-p-Fn}, we derive that
\begin{align*}
 \int_{\bb A_p}  \bb P_a \big( B_{n, t}^c  \big) \mu_p(da)  
& \leq  \mu_p (F_n^c) + \int_{F_n}  \bb P_a \big( B_{n, t}^c  \big) \mu_p(da)   \notag\\
& \leq c e^{- \frac{\alpha}{2} n^{\ee}} C_{\alpha}(\mathfrak f) +  \int_{F_n}  \bb P_a \big( A_{n, t}^c \big) \mu_p(da)
+   \int_{F_n}  \bb P_a \big( A_{n, t} \cap B_{n, t}^c \big) \mu_p(da).  
\end{align*}
The conclusion follows from \eqref{inequa-Ant-000}, \eqref{inequa-Ant-001}, \eqref{inequa-Ant-002} and \eqref{inequa-Ant-003}. 
\end{proof}

\begin{proof}[Proof of Proposition \ref{Prop-CCLT-001-a}]
Note that, for any $n \geq 1$ and $t \in \bb R$, 
\begin{align*}
 \int_{\bb X}  \bb E \left( \theta(\omega);    \tau_{x, t}^{\mathfrak f} > n \right) \nu(dx)  
 = \int_{\bb A_p} \tilde \theta(a) \mathbb{P}_{a} 
\left( \tilde{\tau}_{t}^{\mathfrak f} > n \right)
\mu_p(da), 
\end{align*}
where $\tilde\tau^{\mathfrak f}_{t}$, $\mu_p$ and 
$\tilde \theta$ are defined by \eqref{def-tau-f-y}, \eqref{def-mu-p-da} and \eqref{def-tilde-theta}, respectively. 
Recall that $\tilde \nu_{n,t}$ and $B_{n,t}$ are defined by \eqref{nu n02} and \eqref{def-Bnt-001}, respectively. 
Note that for any $k \geq p + 1$, the set 
\begin{align}\label{def-B-ntk-00a}
B_{n, t, k}: = B_{n, t} \cap \{ \tilde \nu_{n,t} = k \}
\end{align}
 is $\mathscr G_k$-measurable. 
For $n \geq 1$, $a \in \bb A_p$ and $t \in \bb R$, 
we have
\begin{align}\label{WK201-a}
 H_n(a, t): = \mathbb{P}_a
\left(   \tilde{\tau}_{t}^{\mathfrak f} > n \right) 
 =  \mathbb{P}_a
\left(  \tilde{\tau}_{t}^{\mathfrak f} > n,  B_{n, t} \right) 
  +  \mathbb{P}_a
\left(   \tilde{\tau}_{t}^{\mathfrak f} > n, 
B_{n, t}^c \right). 
\end{align}
Due to Lemma \ref{Lem-Bnt-001}, the integral of the second term on the right-hand side of \eqref{WK201-a} is negligible. 
Now let us control the first term on the right-hand side of \eqref{WK201-a}, which gives the main contribution. 
On the set $B_{n, t}$, we have $\tilde \nu_{n,t} \leq n^{1-\ee}$, so that, by \eqref{def-B-ntk-00a}, 
\begin{align}\label{Equ-sum-nu-001-a}
 \mathbb{P}_a
\left(   \tilde{\tau}_{t}^{\mathfrak f} > n,  B_{n, t}  \right)  
  = \sum_{k=1}^{[n^{1-\ee}]} 
 \mathbb{P}_a
\left(  \tilde{\tau}_{t}^{\mathfrak f} > n,  B_{n, t, k} \right). 
\end{align}
By the Markov property and the definition of $\tilde\tau^{\mathfrak f}_{t}$ (cf.\  \eqref{def-tau-f-y}), 
we get that, for any $1 \leq k \leq [n^{1-\ee}]$, 
\begin{align}\label{Equ-sum-nu-002-a}
 \mathbb{P}_a
\left(  \tilde{\tau}_{t}^{\mathfrak f} > n,  B_{n, t, k} \right)   
 =  \bb E_a  \bigg( H_{n-k} \bigg( \xi_k, t + \sum_{i=1}^{k} \sigma_p(\xi_i) +  \tilde f(\xi_k) - \tilde f(\xi_0) \bigg);
\tilde{\tau}_{t}^{\mathfrak f} > k,  B_{n, t, k}  \bigg). 
\end{align}
Note that on the set $B_{n, t, k}$, by \eqref{nu n02}, \eqref{def-Bnt-001} and \eqref{def-B-ntk-00a}, 
we have that, for any $1 \leq k \leq [n^{1-\ee}]$, 
\begin{align}
& |\tilde f(\xi_k)| \leq n^{\ee},  \label{Inequ-t-ee-001-a}\\
& |\sigma_p(\xi_{k+i})| \leq n^{\ee}, \forall i \in [1, p],  \label{Inequ-t-ee-002-a}\\
&  t + \sum_{i=1}^{k} \sigma_p(\xi_i) +  \tilde f(\xi_k) - \tilde f(\xi_0) \leq n^{(1-\ee)/2} + n^{\ee} \leq n^{1/2}, \label{Inequ-t-ee-003-a}\\
& t + \sum_{i=1}^{k} \sigma_p(\xi_i) +  \tilde f(\xi_k) - \tilde f(\xi_0) \geq 2 n^{1/2-\ee} - n^{\ee} \geq n^{1/2-\ee}, \label{Inequ-t-ee-004-a}
\end{align}
so that the conditions of Lemma \ref{Lemma Weak Conv-a} are satisfied. 
Therefore, applying Lemma \ref{Lemma Weak Conv-a}, and using again \eqref{Inequ-t-ee-003-a} and \eqref{Inequ-t-ee-004-a}, we get that, uniformly in $0 \leq k\leq [n^{1-\ee}]$, 
\begin{align}\label{Inequa-H-nk-tilde-f}
& \Bigg| 
H_{n-k} \bigg( \xi_k, t + \sum_{i=1}^{k} \sigma_p(\xi_i) +  \tilde f(\xi_k) - \tilde f(\xi_0) \bigg)  
 - \frac{2 \left( t + \sum_{i=1}^{k} \sigma_p(\xi_i) +  \tilde f(\xi_k) - \tilde f(\xi_0) \right)}{  \sqrt{2\pi n} \bf{v} }   \Bigg|  \notag\\
& \leq c n^{-(1+\ee)/2} \bigg(  t + \sum_{i=1}^{k} \sigma_p(\xi_i) +  \tilde f(\xi_k) - \tilde f(\xi_0)  \bigg)
+ c e^{- \alpha n^{\ee} }  \sum_{i = 1}^{n} \mathcal{F}_i(\xi_k). 
\end{align}
Substituting \eqref{Inequa-H-nk-tilde-f} into \eqref{Equ-sum-nu-002-a}, and using \eqref{def-B-ntk-00a} and \eqref{Equ-sum-nu-001-a}, 
we get
\begin{align}\label{Equ-sum-nu-003-a}
& \left| \mathbb{P}_a
\left(  \tilde{\tau}_{t}^{\mathfrak f} > n,  B_{n, t} \right) 
  -  \frac{2}{  \sqrt{2\pi n} \bf{v} }  E_n(a, t)
     \right| \notag\\
& \leq  c  n^{-(1+\ee)/2}  E_n(a, t)   +  c e^{- \alpha n^{\ee} }
  \bb E_a \bigg(  \sum_{i = 1}^{n} \mathcal{F}_i(\xi_{\tilde{\nu}_{n,t}}); \tilde{\tau}_{t}^{\mathfrak f} > \tilde{\nu}_{n,t},  B_{n, t} \bigg), 
\end{align}
where, for $n \geq 1$, $a \in \bb A_p$ and $t \in \bb R$, we have denoted 
\begin{align}\label{def-En-at-00b}
E_n(a, t) =  \bb E_a  \bigg( t + \sum_{i=1}^{ \tilde{\nu}_{n,t} } \sigma_p(\xi_i) +  \tilde f(\xi_{\tilde{\nu}_{n,t}}) - \tilde f(\xi_0);
       \tilde{\tau}_{t}^{\mathfrak f} > \tilde \nu_{n,t},  B_{n, t} \bigg). 
\end{align}
If we define 
\begin{align}\label{def-eta-nt-001}
\eta_{n, t} = \begin{cases}
  [n^{1/2 - 2\ee}]   &   \text{ if }  \  \tilde \nu_{n,t} < n^{1/2 - 2\ee},    \\
  \tilde \nu_{n,t}   &  \text{ if }  \  \tilde \nu_{n,t} \in [n^{1/2 - 2\ee}, n^{1 - \ee}],  \\
     [n^{1 - \ee}]   &  \text{ if }  \  \tilde \nu_{n,t} > n^{1 - \ee},  
\end{cases}
\end{align}
then, by \eqref{def-U-f-n-t-002}, we get 
\begin{align}\label{equation-Enat-001-a}
&  \int_{\bb A_p} \tilde \theta(a) E_n(a, t) \mu_p(da)  
=  U_{\eta_{n, t}}^{\mathfrak f, \theta}(t)  \notag\\
& \quad - \int_{\bb A_p} \tilde \theta(a) \mathbb{E}_{a} 
\bigg( t + \sum_{i=1}^{\eta_{n, t}} \sigma_p(\xi_i) + \tilde f(\xi_{\eta_{n, t}}) - \tilde f(\xi_0); 
 \tilde{\tau}_{t}^{\mathfrak f} > \eta_{n, t}, B_{n,t}^c \bigg)
\mu_p(da). 
\end{align}
In particular, since $t + \sum_{i=1}^{\eta_{n, t}} \sigma_p(\xi_i) + \tilde f(\xi_{\eta_{n, t}}) - \tilde f(\xi_0) \geq 0$
 on the set $\{ \tilde{\tau}_{t}^{\mathfrak f} > \eta_{n, t} \}$, we have
\begin{align}\label{inequa-En-at-upper}
\int_{\bb A_p} \tilde \theta(a) E_n(a, t) \mu_p(da) \leq  U_{\eta_{n, t}}^{\mathfrak f, \theta}(t), 
\end{align}
so that integrating over $a \in \bb A_p$ with respect to $\mu_p$ in \eqref{Equ-sum-nu-003-a} yields
\begin{align*}
 \int_{\bb A_p}  \tilde \theta(a) \mathbb{P}_a
\left(  \tilde{\tau}_{t}^{\mathfrak f} > n,  B_{n, t} \right)  \mu_p(da)  
 \leq  \left( \frac{2}{  \sqrt{2\pi n} \bf{v} }  +  c  n^{-(1+\ee)/2} \right) 
  U_{\eta_{n, t}}^{\mathfrak f, \theta}(t) 
  +  c e^{- \frac{\alpha}{2} n^{\ee} } \|\theta\|_{\infty}  C_{\alpha}(\mathfrak f). 
\end{align*}
Since $\eta_{n, t} < n$, by Proposition \ref{Prop-U-f-t-increase-001}, we have 
\begin{align*}
U_{\eta_{n, t}}^{\mathfrak f, \theta}(t) \leq  U_{n}^{\mathfrak f, \theta}(t) 
+ \frac{c}{n^{\beta}}   \left( 1 + \max \{t,0\} \right)   \|\theta\|_{\infty}
 C_{\alpha}(\mathfrak f). 
\end{align*}
Therefore, by using Lemma \ref{Lem-Bnt-001}, with some constant $\beta > 0$ depending on $\ee$, we obtain 
\begin{align}\label{inte-tau-f-p-001}
  \int_{\bb A_p}  \tilde \theta(a) \mathbb{P}_a
\left(  \tilde{\tau}_{t}^{\mathfrak f} > n \right)  \mu_p(da)  
 \leq   \frac{2}{  \sqrt{2\pi n} \bf{v} }    
  U_{n}^{\mathfrak f, \theta}(t) 
  +  \frac{c (1 + \max \{t,0\})}{n^{1/2 + \beta}}  \|\theta\|_{\infty} C_{\alpha}(\mathfrak f), 
\end{align}
which concludes the proof of \eqref{CCLT-001-a}.

For the proof of \eqref{CCLT-002-a}, we notice that, 
by the moment assumptions \eqref{exp mom for f 001} and \eqref{exp mom for g 002},  
Lemma \ref{Lem-Bnt-001} and \eqref{equation-Enat-001-a}, 
we have 
\begin{align}\label{lower-bound-En-at-mu-da-001}
&  \int_{\bb A_p} \tilde \theta(a)  E_n(a, t) \mu_p(da)   \notag\\
& \geq  U_{\eta_{n, t}}^{\mathfrak f, \theta}(t) -  c n^{1 - \ee} \left( 1 + \max \{t,0\} \right)  \|\theta\|_{\infty}  \sqrt{ C_{\alpha}(\mathfrak f)}
 \int_{\bb A_p} \mathbb{P}_{a}^{1/2} \big(  B_{n,t}^c \big) \mu_p(da)  \notag\\
& \geq  U_{\eta_{n, t}}^{\mathfrak f, \theta}(t) -  c n^{1 - \ee} \left( 1 + \max \{t,0\} \right)  \|\theta\|_{\infty} \sqrt{ C_{\alpha}(\mathfrak f)}
\bigg(  \int_{\bb A_p} \mathbb{P}_{a} \big(  B_{n,t}^c \big) \mu_p(da) \bigg)^{1/2}  \notag\\
& \geq  U_{\eta_{n, t}}^{\mathfrak f, \theta}(t) -  c e^{- \frac{\alpha}{8} n^{\ee}}  \left( 1 + \max \{t,0\} \right)  \|\theta\|_{\infty} C_{\alpha}(\mathfrak f),
\end{align}
where we have also used Lemma 5.8 of \cite{GQX24a}. 
Then we proceed as in the proof of  \eqref{CCLT-001-a}. 
\end{proof}

\begin{proof}[Proof of Corollary \ref{Cor-CCLT-001-a}]
By Lemma \ref{Lemma Weak Conv-a}, 
for any $\ee \in (0, \frac{1}{8})$, 
there exists a constant $c_\ee>0$ such that for any $n \geq 1$, $1 \leq p \leq n^{\ee}$, 
$t \geq n^{1/2 - \ee}$, 
any sequence $\mathfrak f = (f_n)_{n \geq 0}$ of $\scr A_p$-measurable functions, 
and any $a = (g_0, \ldots, g_p, x, q) \in \mathbb{A}_p$ satisfying $|\sigma(g_i, g_{i-1} \cdots g_1 x)| \leq n^{\ee}$ for $1 \leq i \leq p$ and $\tilde f(a) \leq n^{\ee}$,
we have 
\begin{align}\label{upper-bound-tilde-tau-f-001}
\mathbb{P}_{a} \left( \tilde{\tau}_{t}^{\mathfrak f} > n  \right)  \leq
c_{\ee }\frac{t}{\sqrt{n}} + c e^{-\alpha n^{\ee} } \sum_{i = 1}^{n} \mathcal{F}_i(a).  
\end{align}
Indeed, if $t \leq \sqrt{n}$, the inequality \eqref{upper-bound-tilde-tau-f-001} follows directly from Lemma \ref{Lemma Weak Conv-a},
and if $t > \sqrt{n}$, then we use the fact that $\mathbb{P}_{a} ( \tilde{\tau}_{t}^{\mathfrak f} > n ) \leq 1 < \frac{t}{\sqrt{n}}$. 

Now we proceed with the proof of the bound \eqref{Corollary-of-CCLT-001-a}.
Our approach follows the same path as the proof for  \eqref{CCLT-001-a}, 
except that instead of using \eqref{Inequa-H-nk-tilde-f}, 
we apply the following inequality:
on the set $B_{n, t, k}$, uniformly in $0 \leq k\leq [n^{1-\ee}]$, 
\begin{align*}
&  H_{n-k} \bigg( \xi_k, t + \sum_{i=1}^{k} \sigma_p(\xi_i) +  \tilde f(\xi_k) - \tilde f(\xi_0) \bigg)   \notag\\
& \leq c \frac{t + \sum_{i=1}^{k} \sigma_p(\xi_i) +  \tilde f(\xi_k) - \tilde f(\xi_0)}{\sqrt{n}}  
  + c e^{- \alpha n^{\ee} }  \sum_{i = 1}^{n} \mathcal{F}_i(\xi_k). 
\end{align*}
Therefore, instead of \eqref{Equ-sum-nu-003-a}, we have 
\begin{align*}
\mathbb{P}_a
\left(  \tilde{\tau}_{t}^{\mathfrak f} > n,  B_{n, t} \right) 
 \leq  \frac{c}{\sqrt{n}}   E_n(a, t)   +  c e^{- \alpha n^{\ee} }
  \bb E_a \bigg(  \sum_{i = 1}^{n} \mathcal{F}_i(\xi_{\tilde{\nu}_{n,t}}); \tilde{\tau}_{t}^{\mathfrak f} > \tilde{\nu}_{n,t},  B_{n, t} \bigg),  
\end{align*}
so that, instead of \eqref{inte-tau-f-p-001}, we obtain 
\begin{align*}
 \int_{\bb A_p}  \mathbb{P}_a
\left(  \tilde{\tau}_{t}^{\mathfrak f} > n \right)  \mu_p(da) 
 \leq   \frac{c}{  \sqrt{n} }  
  U_{n}^{\mathfrak f}(t) 
  +  \frac{c(1 + \max \{t,0\})}{n^{1/2 + \beta}}  C_{\alpha}(\mathfrak f), 
\end{align*}
where $U_{n}^{\mathfrak f}(t)$ is defined by \eqref{def-U-varphi-001}. 
By Corollary 4.3 of \cite{GQX24a}, there exists a constant $c>0$ such that $U^{\mathfrak f}_n(t) \leq c \left( 1 + \max \{t, 0 \}  \right)$
for any $n \geq 1$ and $t \in \bb R$,  
hence \eqref{Corollary-of-CCLT-001-a} follows. 
\end{proof}

\subsection{Estimates in the conditioned central limit theorem}
As in the previous subsection, we assume that the sequence of perturbations $\mathfrak f = (f_n)_{n \geq 0}$
depends only on finitely many coordinates, 
and we keep the notations introduced in Section \ref{susec: perturb finite-001}.
For such perturbations, Theorem \ref{Thm-CCLT-limit} follows from the following more precise result. 

\begin{proposition}\label{Prop-CCLT-001}
Suppose that the cocycle $\sigma$ admits finite exponential moments \eqref{exp mom for f 001} and is centered \eqref{centering-001}.
Let $\ee \in (0, \frac{1}{8})$ and $b_0 >0$. 
Then, there exist constants $\beta, c >0$ such that for any $n \geq 1$, $1 \leq p \leq n^{\ee}$, $|t| \leq n^{\ee}$, $b \leq b_0$,
any sequence $\mathfrak f = (f_n)_{n \geq 0}$ of $\scr A_p$-measurable functions, 
and any non-negative bounded $\mathscr A_p$-measurable function $\theta$ on $\Omega$, 
\begin{align}\label{CCLT-001}
& \int_{\bb X} \bb E \left( \theta(\omega); \frac{t + \sigma(g_n \cdots g_1, x) }{ \bf{v} \sqrt{n}} \leq b,  
 \tau_{x, t}^{\mathfrak f} > n \right) \nu(dx)  \notag\\
& \leq \bigg( \frac{2}{  \sqrt{2\pi n} \bf{v} }  \int_{0}^{\max\{b, 0\}} \phi^+(u) du  \bigg) 
  U_{n}^{\mathfrak f, \theta}(t) 
  +  \frac{c (1 + \max \{t,0\})}{n^{1/2 + \beta}}  \|\theta\|_{\infty} C_{\alpha}(\mathfrak f)
\end{align}
and 
\begin{align}\label{CCLT-002}
& \int_{\bb X} \bb E \left( \theta(\omega); \frac{t + \sigma(g_n \cdots g_1, x) }{ \bf{v} \sqrt{n}} \leq b,  
 \tau_{x, t}^{\mathfrak f} > n \right) \nu(dx) 
 \notag\\
& \geq \bigg( \frac{2}{  \sqrt{2\pi n} \bf{v} }  \int_{0}^{\max\{b, 0\}} \phi^+(u) du  \bigg) 
  U_{n^{1/2 - 2\ee}}^{\mathfrak f, \theta}(t) 
  -  \frac{c (1 + \max \{t,0\})}{n^{1/2 + \beta}}  \|\theta\|_{\infty}  C_{\alpha}(\mathfrak f). 
\end{align}
\end{proposition}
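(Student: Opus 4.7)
The plan is to follow verbatim the argument of Proposition~\ref{Prop-CCLT-001-a}, with a single substantive modification: the key weak-convergence input, Lemma~\ref{Lemma Weak Conv-a}, will be replaced by a refined version that handles the additional CLT-type constraint $\{t + \sum_{i=1}^n \sigma_p(\xi_i) \leq \bf{v}\sqrt{n}\,b\}$ on top of the persistence event. Once this refined input is available, every other step of the proof of Proposition~\ref{Prop-CCLT-001-a}, including the decomposition via the stopping time $\tilde\nu_{n,t}$, the negligibility of $B_{n,t}^c$ from Lemma~\ref{Lem-Bnt-001}, and the monotonicity comparison of Proposition~\ref{Prop-U-f-t-increase-001}, will be reused unchanged.

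The first task is to prove the following CLT-analog of Lemma~\ref{Lemma Weak Conv-a}: for $n^{1/2-\ee} \leq t \leq n^{1/2}$, $b \leq b_0$, and $a = (g_0,\ldots,g_p,x,q) \in \bb A_p$ with $|\sigma(g_i, g_{i-1}\cdots g_1 x)| \leq n^\ee$ for $1 \leq i \leq p$ and $|\tilde f(a)| \leq n^\ee$, one has
\begin{align*}
\biggl| \mathbb{P}_a\biggl( \frac{t + \sum_{i=1}^n \sigma_p(\xi_i)}{\bf{v}\sqrt{n}} \leq b,\ \tilde\tau_t^{\mathfrak f} > n \biggr) - \frac{2t}{\bf{v}\sqrt{2\pi n}} \int_0^{\max\{b,0\}} \phi^+(u)\,du \biggr| \leq \frac{ct^2}{n} + c e^{-\alpha n^\ee} \sum_{i=1}^n \mathcal F_i(a).
\end{align*}
On the good event $A_{n,a}$ of \eqref{def-Ana-00a}, the strong approximation \eqref{KMTbound001} sandwiches the target probability between two Brownian probabilities
\begin{align*}
\mathbb{P}\bigl( \tau_{\bf{v}^{-1} t^{\pm}}^{bm} > n-p,\ \bf{v}^{-1} t^{\pm} + B_{n-p} \leq \sqrt{n}\,b^{\pm}/\bf{v} \bigr),
\end{align*}
with $t^\pm$ as in the proof of Lemma~\ref{Lemma Weak Conv-a} and $b^\pm = b \pm n^{-2\ee}$. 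By L\'evy's reflection principle, each such probability equals
\begin{align*}
\int_0^{\sqrt{n}\,\max\{b^\pm,0\}/\bf{v}} \frac{1}{\sqrt{2\pi(n-p)}}\left( e^{-(y - \bf{v}^{-1}t^\pm)^2/2(n-p)} - e^{-(y + \bf{v}^{-1}t^\pm)^2/2(n-p)} \right) dy.
\end{align*}
Writing the integrand as $\frac{1}{\sqrt{2\pi(n-p)}} e^{-(y^2 + (t^\pm/\bf{v})^2)/2(n-p)} \cdot 2\sinh(t^\pm y/(\bf{v}(n-p)))$, expanding the hyperbolic sine to first order, and substituting $y = \bf{v}^{-1}\sqrt{n-p}\,s$ yields the Rayleigh integral $\frac{2t^\pm}{\bf{v}\sqrt{2\pi(n-p)}} \int_0^{\max\{b^\pm,0\}} \phi^+(s)\,ds$ with cubic error of order $t^3/n^{3/2} = O(t^2/n)$ since $t \leq \sqrt{n}$. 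Using $t^\pm = t + O(n^{\ee})$ and $b^\pm = b + O(n^{-2\ee})$ recovers the target main term with error $O(t^2/n)$, while the $A_{n,a}^c$ contribution is bounded exactly as in \eqref{WK003-a}. The case $b \leq 0$ is trivial since the event is empty.

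With this refined lemma in hand, the rest of the argument is a line-by-line transcription of the proof of Proposition~\ref{Prop-CCLT-001-a}. One writes the left-hand side of \eqref{CCLT-001} as $\int_{\bb A_p} \tilde\theta(a)\, \mathbb{P}_a(\cdot \cap B_{n,t})\,\mu_p(da) + \text{negligible}$, uses \eqref{def-B-ntk-00a} to split $B_{n,t} = \bigcup_k B_{n,t,k}$, and applies the strong Markov property at $\tilde\nu_{n,t} = k$ to reduce to the refined lemma for the remaining $n-k$ steps starting from $\xi_k$, whose effective position lies in $[n^{1/2-\ee}, n^{1/2}]$ by \eqref{Inequ-t-ee-001-a}--\eqref{Inequ-t-ee-004-a}. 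Summing over $k \in [p+1, n^{1-\ee}]$ produces $\frac{2}{\bf{v}\sqrt{2\pi n}} \int_0^{\max\{b,0\}} \phi^+(u)\,du$ multiplied by the expectation $E_n(a,t)$ from \eqref{def-En-at-00b}. Integrating over $a$ and combining with \eqref{inequa-En-at-upper} gives \eqref{CCLT-001}, while combining with \eqref{lower-bound-En-at-mu-da-001} and Proposition~\ref{Prop-U-f-t-increase-001} gives \eqref{CCLT-002}.

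The main technical hurdle will be ensuring that the error estimate in the refined Brownian lemma is genuinely uniform in $b \leq b_0$. Close to $b = 0$, the integration range shrinks and the naive Taylor bound $(t^\pm y/(n-p))^3$ inside the $\sinh$ expansion must be controlled by bounding the integrand directly rather than through Taylor remainders; similarly, for $b$ near $b_0$, one must verify that the Gaussian tail does not inflate the error beyond $O(t^2/n)$. Once this uniform quantitative expansion of the Brownian boundary density is secured, the propagation of the error through the Markov decomposition is insensitive to $b$ and the bounds $\beta$ and $c$ in \eqref{CCLT-001}--\eqref{CCLT-002} follow without further effort.
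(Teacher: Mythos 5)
Your proposal follows essentially the same route as the paper: you replace Lemma \ref{Lemma Weak Conv-a} by a refined Brownian-boundary estimate incorporating the endpoint constraint (this is precisely the paper's Lemma \ref{Lemma Weak Conv}, proved exactly as you describe, via the strong approximation sandwich, L\'evy's reflection formula and a first-order expansion of the resulting $\sinh$ factor), and then transplant the decomposition over $\tilde\nu_{n,t}$, the treatment of $B_{n,t}^c$ via Lemma \ref{Lem-Bnt-001}, and the monotonicity comparison of Proposition \ref{Prop-U-f-t-increase-001} from Proposition \ref{Prop-CCLT-001-a} unchanged. The uniformity in $b\leq b_0$ that you flag as a hurdle is in fact benign — near $b=0$ the spurious range $[0,\max\{b^{\pm},0\}]$ contributes $O(n^{-2\ee})$ which is absorbed into the $ct^2/n$ error using $t\geq n^{1/2-\ee}$, and near $b=b_0$ the Taylor remainder is controlled with a constant depending only on $b_0$ — so the argument is correct.
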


We will deduce the statement of the proposition using the Markov property, 
along with the following lemma that addresses the case when the starting point $t$ is large.

\begin{lemma}\label{Lemma Weak Conv} 
Suppose that the cocycle $\sigma$ admits finite exponential moments \eqref{exp mom for f 001} and is centered \eqref{centering-001}.
Let $\ee \in (0, \frac{1}{8})$ and $b_0 > 0$. 
Then there exists a constant $c>0$ such that for any $n \geq 1$, $1 \leq p \leq n^{\ee}$, $0 \leq k\leq n^{1-\ee}$, 
$n^{1/2 - \ee} \leq t \leq n^{1/2}$,  $b \leq b_0$, 
any sequence $\mathfrak f = (f_n)_{n \geq 0}$ of $\scr A_p$-measurable functions, 
and any $a = (g_0, \ldots, g_p, x, q) \in \mathbb{A}_p$ satisfying $|\sigma(g_i, g_{i-1} \cdots g_1 x)| \leq n^{\ee}$ for $1 \leq i \leq p$ and $\tilde f(a) \leq n^{\ee}$,
we have 
\begin{align*}
& \left\vert 
\mathbb{P}_{a} \left( \frac{t + \sum_{i=1}^{n-k} \sigma_p(\xi_i) }{ \bf{v} \sqrt{n}} \leq b,  \tilde{\tau}_{t}^{\mathfrak f} >n - k
 \right) - \frac{2t}{  \sqrt{2\pi n} \bf{v} }  \int_{0}^{\max \{b, 0\} } \phi^+(u) du  \right\vert  \notag\\
& \leq  \frac{ct^2}{n} + c e^{-\alpha n^{\ee} } \sum_{i = 1}^{n} \mathcal{F}_i(a).   
\end{align*}
\end{lemma}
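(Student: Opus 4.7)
The plan is to mirror the proof of Lemma \ref{Lemma Weak Conv-a} line by line, with the single modification that, in addition to the positivity constraint on the Markov random walk, we must also keep track of the endpoint constraint $t + \sum_{i=1}^{n-k}\sigma_p(\xi_i) \leq b\bf{v}\sqrt{n}$. The strategy is to use the strong approximation \eqref{KMTbound001} to sandwich the Markov chain probability between two Brownian probabilities, then apply the reflection principle to compute the Brownian probabilities exactly, and finally Taylor expand to extract the Rayleigh factor $1 - e^{-b^2/2} = \int_0^{\max\{b,0\}}\phi^+(r)dr$.

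First I would introduce the shifted quantities $t^\pm = t + \sigma(g_p\cdots g_1,x) - \tilde f(a) \pm n^{1/2-2\ee}$ and $b^\pm = b \pm 2\bf{v} n^{-\ee}$ as in the proof of Lemma \ref{Lemma Weak Conv-a}, and the KMT good event $A_{n,a}$ defined in \eqref{def-Ana-00a}. Arguing as for \eqref{WK001-a} and \eqref{WK002-a}, on $A_{n,a}$ the event $\{\tilde\tau^{\mathfrak f}_t > n-k,\ t+\sum_{i=1}^{n-k}\sigma_p(\xi_i) \leq b\bf{v}\sqrt{n}\}$ is sandwiched between the events $\{\bf{v}^{-1}t^{\mp} + B_s \geq 0 \ \forall s\leq n-k-p,\ \bf{v}^{-1}t^{\mp} + B_{n-k-p} \leq b^{\pm}\sqrt{n}\}$ for the Brownian motion driving the KMT coupling. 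The probability of $A_{n,a}^c$ is controlled as in \eqref{WK003-a} by $cn^{-2\ee} + c e^{-\alpha n^{\ee}}\sum_{i=1}^n \mathcal{F}_i(a)$, using the hypotheses on $\sigma(g_i,g_{i-1}\cdots g_1 x)$ and $\tilde f(a)$ and the moment bound \eqref{exp mom for g 002}.

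Second, the Brownian probabilities will be computed via the joint reflection formula: for $u>0$, $c \geq 0$ and $T>0$,
\[
\mathbb{P}\bigl(u+B_s \geq 0\ \forall s \leq T,\ u+B_T \leq c\bigr) = \Phi\bigl(\tfrac{c-u}{\sqrt{T}}\bigr) - \Phi\bigl(\tfrac{-u}{\sqrt{T}}\bigr) - \Phi\bigl(\tfrac{c+u}{\sqrt{T}}\bigr) + \Phi\bigl(\tfrac{u}{\sqrt{T}}\bigr),
\]
and vanishes if $c<0$. Substituting $u = \bf{v}^{-1}t^{\pm}$, $T = n-k-p$, $c = b^{\pm}\sqrt{n}$, and using that $u/\sqrt{T} = O(1)$ with $u/\sqrt{T} \leq c_{b_0}$ from $t\leq \sqrt{n}$, the Taylor expansion $\Phi(x+h)-\Phi(x-h) = 2h\phi(x) + O(h^3(1+|x|^2))$ applied twice gives
\[
\Phi(\alpha)-\Phi(-\alpha) - \bigl[\Phi(\beta+\alpha)-\Phi(\beta-\alpha)\bigr] = \frac{2\alpha}{\sqrt{2\pi}}\bigl(1 - e^{-\beta^2/2}\bigr) + O\bigl(\alpha^3(1+\beta^2)\bigr),
\]
with $\alpha = u/\sqrt{T}$, $\beta = c/\sqrt{T}$. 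The parameters evaluate to $\alpha = t/(\bf{v}\sqrt{n})(1+O(n^{-\ee}))$ and $\beta = b + O(n^{-\ee})$, so the main term reduces, modulo an error of size $O(t^2/n + t n^{-1/2-\ee})$, to $\frac{2t}{\bf{v}\sqrt{2\pi n}}(1 - e^{-b^2/2})$. The case $b^+ < 0$ (which forces the Brownian probability to be zero) yields bound $0$ for the target as well, since $\max\{b,0\} = 0$; and the intermediate case where $b$ is small but positive is handled automatically by the smoothness of $1-e^{-b^2/2}$ around $b=0$.

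The main obstacle will be to keep the error terms under control uniformly over $b \leq b_0$ and over $k\in[0,n^{1-\ee}]$. When $b$ is very negative, we need to verify that the Markov chain probability is indeed small (comparable to $e^{-\alpha n^{\ee}}\sum \mathcal{F}_i(a)$) rather than merely bounded by the BM formula---this follows because, on $A_{n,a}$, the endpoint satisfies $t+\sum\sigma_p(\xi_i) \geq -n^{1/2-2\ee} + \tilde f(\xi_0) \geq -2n^{1/2-2\ee} > b\bf{v}\sqrt{n}$ for $b$ sufficiently negative, forcing the event to occur only on $A_{n,a}^c$. When $b$ is near $0$, the term $1-e^{-b^2/2}$ is small and one must check that the expansion error of order $O(\alpha^3)$, which is $O(t^3/n^{3/2}) = O(t^2/n)$ under $t \leq \sqrt{n}$, absorbs gracefully into the stated error bound. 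Once these regimes are reconciled, combining the sandwich inequalities with the bound on $\mathbb{P}(A_{n,a}^c)$ yields the stated estimate with constant depending only on $b_0$.
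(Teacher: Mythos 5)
Your proposal follows the paper's proof essentially line by line: the same KMT sandwich using $A_{n,a}$ and the shifted starting points $t^\pm$, the same reflection-principle formula for the joint event (you write it as a difference of four Gaussian distribution functions, the paper writes it as L\'evy's integral form — these are algebraically the same identity), and the same Taylor expansion to extract the Rayleigh mass $1-e^{-b^2/2}$ up to errors $O(t^2/n)$. The only cosmetic discrepancy is that you widen $b^\pm$ by $O(n^{-\ee})$ where the paper uses $O(n^{-2\ee})$; since $t\geq n^{1/2-\ee}$, the extra $\frac{t}{\sqrt{n}}\cdot n^{-\ee}$ contribution is still dominated by $t^2/n$, so your version also closes.
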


\begin{proof}
Let us fix $a = (g_0, \ldots, g_p, x, q) \in \mathbb{A}_p$, $n \geq 1$ and $t \geq n^{1/2-\ee}$.  
For brevity, we denote $t^{+} = t  + \sigma(g_p \cdots g_1, x) - \tilde f(a) + n^{1/2-2\ee },$ 
$t^{-} = t + \sigma(g_p \cdots g_1, x) - \tilde f(a) -n^{1/2-2\ee },$ 
$b^{+}=b + 2 \bf{v} n^{-2\ee}$ and $b^{-} = b -  2\bf{v} n^{-2\ee }.$ 
As in the proof of Lemma \ref{Lemma Weak Conv-a}, we set 
\begin{align*}
A_{n, a}  = \bigg\{ \sup_{0\leq s \leq 1} \bigg\vert  \sum_{i=p +1}^{p+ [(n-p)s]} \sigma_p(\xi_i) 
  - \bf{v} B_{(n-p)s} \bigg\vert \leq  \frac{1}{2} n^{1/2-2\ee } \bigg\} 
  \bigcap \left\{ |\tilde f(\xi_i) | \leq n^{\ee}, \forall i \in [1, n] \right\}.
\end{align*}
For $u \in \bb R$, define $\tau_{u}^{bm} = \inf \{ s \geq 0:  u  + B_s \leq 0 \}$, where $(B_s)_{s \geq 0}$ is the standard Brownian motion
on $\bb R$ with $B_0 = 0$. 
Thus, by \eqref{def-tau-f-y}, it holds that, $\bb P_a$-almost surely, for $n$ large enough and $k \leq n - p$, 
\begin{align*}
\left\{ \tilde{\tau}_{t}^{\mathfrak f} > n - k \right\}  \cap A_{n, a}
\subseteq \left\{ \tau_{ \bf{v}^{-1} t^{+} }^{bm}> n - k - p \right\}
\end{align*}
and 
\begin{align*}
 \bigg\{ \frac{t + \sum_{i=1}^{n-k} \sigma_p(\xi_i) }{\bf{v} \sqrt{n}} \leq b \bigg\}  \cap A_{n, a}  
& \subseteq  \bigg\{ \frac{t + \sigma(g_p \cdots g_1, x) - \tilde f(a) + \bf{v} B_{n-k-p}}{ \bf{v} \sqrt{n}}\leq b 
+ \bf{v} n^{-2\ee} \bigg\}  \notag\\
& \subseteq  \bigg\{ \frac{t^+  + \bf{v} B_{n-k-p}}{ \bf{v} \sqrt{n}}\leq b^+  \bigg\}. 
\end{align*}
Hence, we obtain the following upper bound: 
\begin{align}\label{WK001}
& \mathbb{P}_{a} \bigg( \frac{t + \sum_{i=1}^{n-k} \sigma_p(\xi_i) }{ \bf{v} \sqrt{n}} \leq b,   \tilde{\tau}_{t}^{\mathfrak f} >n - k \bigg) \notag\\
& \leq  \bb P_a \bigg( \frac{t + \sum_{i=1}^{n-k} \sigma_p(\xi_i) }{ \bf{v} \sqrt{n}} \leq b,  
  \tilde{\tau}_{t}^{\mathfrak f} >n - k, A_{n,a}  \bigg)
 + \bb P_a (A_{n,a}^c) \notag\\
 & \leq  \bb P \bigg( \frac{t^+  + \bf{v} B_{n-k-p}}{ \bf{v} \sqrt{n}}\leq b^{+},  
  \tau_{ \bf{v}^{-1} t^{+}}^{bm}> n - k - p  \bigg)
 + \bb P_a (A_{n,a}^c). 
\end{align}
In the same way, it holds that, $\bb P_a$-almost surely, for $k \leq n - p$, 
\begin{align*}
\left\{ \tilde{\tau}_{t}^{\mathfrak f} > n - k \right\} 
\supseteq   \left\{ \tau_{ \bf{v}^{-1} t^{-}}^{bm}> n - k - p \right\} \cap  \{ \tilde{\tau}_t^{\mathfrak f} > p \} \cap A_{n, a}. 
\end{align*}
Besides, notice that on the set $A_{n, a}$, for $1 \leq i \leq p$, we have
\begin{align*}
t + \sigma(g_i \cdots g_1, x) + \tilde f(\xi_i) - \tilde f(a) \geq n^{1/2 - \ee} - (p+2) n^{\ee} > 0
\end{align*}
for $n$ large enough. Thus, $\tilde{\tau}_t^{\mathfrak f} > p$. 
Therefore, we have 
\begin{align*}
\left\{ \tilde{\tau}_{t}^{\mathfrak f} > n - k \right\} 
\supseteq   \left\{ \tau_{ \bf{v}^{-1} t^{-}}^{bm}> n - k - p \right\}  \cap A_{n, a}. 
\end{align*}
Then, we get the following lower bound: 
\begin{align}\label{WK002}
&\mathbb{P}_{a} \bigg( \frac{t + \sum_{i=1}^{n-k} \sigma_p(\xi_i) }{ \bf{v} \sqrt{n}} \leq b,   \tilde{\tau}_{t}^{\mathfrak f} >n - k \bigg)  \notag \\
&\geq  
  \bb P \bigg( \frac{t^-   + \bf{v} B_{n-k-p}}{ \bf{v} \sqrt{n}}\leq b^{-},  
  \tau_{\bf{v}^{-1} t^{-}}^{bm}> n - k - p \bigg)    - \bb P_a (A_{n,a}^c).  
\end{align}
Now we handle the first probability on the right-hand side of \eqref{WK001}. 
By L\'evy \cite{Levy37} (Theorem 42.I, pp. 194-195) and the change of variable $s=u  \sqrt{n}$, 
\begin{align}\label{WK004} 
&\bb P \bigg( \frac{t^+  + \bf{v} B_{n-k-p}}{ \bf{v} \sqrt{n}}\leq b^{+},  
  \tau_{ \bf{v}^{-1} t^{+}}^{bm}> n - k - p \bigg)  \notag \\
& = \frac{1}{\sqrt{2\pi (n-k-p)}  }\int_{0}^{  \sqrt{ n} \max \{b^{+}, 0\} } \bigg( e^{-
\frac{\left( s - \bf{v}^{-1} t^{+}\right) ^{2}}{2  (n - k - p)   } } - e^{-\frac{\left(
s+ \bf{v}^{-1} t^{+}\right) ^{2}}{2 (n - k - p) }}  \bigg)   ds   \notag \\
& = \frac{e^{-\frac{\left( t^{+}/\sqrt{n}\right) ^{2}}{2\bf{v}^{2} (n-k-p)  /n}}}{\sqrt{2\pi (n-k-p)/n} }   
 \int_{0}^{ \max \{b^{+}, 0\} } e^{-\frac{u^{2}}{2 (n-k-p)/n}} 
\bigg( e^{\frac{u t^{+}/\sqrt{n}}{ \bf{v} (n-k-p) /n}}
 -e^{\frac{-u t^{+}/\sqrt{n}}{ \bf{v} (n-k-p)/n}} \bigg) du.  
\end{align}
Note that, as $k\leq n^{1-\ee }$, $p \leq n^{\ee}$ and 
$t \geq n^{1/2-\ee},$ we have, as $n\rightarrow \infty$,  
\begin{align}
(n-k-p) /n =1-O\left( n^{-\ee }\right)  \label{WK005}
\end{align}
and 
\begin{align}\label{WK006}
\frac{t^{+}}{t}=\frac{ t  + \sigma(g_p \cdots g_1, x) - \tilde f(a) + n^{1/2-2\ee } }{ t}
=1+O\left( n^{-\ee}\right) .  
\end{align}
Therefore, by using the inequality $|e^z - 1 - z| \leq z^2 e^{|z|}$ for $z \in \bb R$, we get, for $0 \leq u \leq \max\{ b^+, 0\}$ and 
$n^{1/2 - \ee} \leq t \leq n^{1/2}$, 
\begin{align*}
\left| e^{\frac{u t^{+}/\sqrt{n}}{ \bf{v} (n-k-p) /n}}
 -e^{\frac{-u t^{+}/\sqrt{n}}{ \bf{v} (n-k-p)/n}}
 - 2 \frac{u t^{+}/\sqrt{n}}{ \bf{v} (n-k-p) /n}  \right|
 \leq  c \frac{t^2}{n}, 
 \end{align*}
where $c$ depends on $b_0$. Thus, by \eqref{WK005} and \eqref{WK006}, 
 \begin{align*}
\bigg| e^{\frac{u t^{+}/\sqrt{n}}{ \bf{v} (n-k-p) /n}}
 -e^{\frac{-u t^{+}/\sqrt{n}}{ \bf{v} (n-k-p)/n}}
 - \frac{2 u t}{ \bf{v} \sqrt{n} }   \bigg|
 \leq  c \frac{t^2}{n} + c \frac{t}{ n^{1/2 + \ee} }. 
 \end{align*}
Similarly, using the inequality $|e^z - 1| \leq |z| e^{|z|}$ for $z \in \bb R$, 
we have, uniformly in $k\leq n^{1-\ee },$ $ n^{1/2-\ee  }\leq t \leq n^{1/2}$ and $0\leq u\leq \max\{ b^+, 0\}$, 
\begin{align}
\bigg| e^{-\frac{\left( t^{+}/\sqrt{n}\right) ^{2}}{2\bf{v}^{2} (n-k-p)  /n}} - 1 \bigg|
\leq  c \frac{t^2}{n} 
\quad {\rm and} \quad
e^{-\frac{u^{2}}{2 (n-k-p)/n}}   = e^{-\frac{u^{2}}{2}}  \left( 1+  O(n^{-\ee})   \right). 
\label{WK008}
\end{align}
From \eqref{WK004}, \eqref{WK005} and \eqref{WK008}, it follows that, for $n^{1/2-\ee} \leq t \leq n^{1/2}$, 
\begin{align*}
& \bb P \bigg( \frac{t^+  + \bf{v} B_{n-k-p}}{ \bf{v} \sqrt{n}}\leq b^{+},  
  \tau_{\bf{v}^{-1} t^{+}}^{bm}> n - k - p \bigg)  \notag\\
& \leq \frac{2 t}{\sqrt{2\pi n} \bf{v} }\int_{0}^{ \max\{ b^+, 0\} } u e^{-\frac{u^{2}}{2}}  du
\left( 1+  \frac{c}{n^{\ee}} +  \frac{ct}{\sqrt{n}}  \right)  \notag\\
& \leq \frac{2 t}{\sqrt{2\pi n} \bf{v} }\int_{0}^{ \max\{ b^+, 0\} } \phi^+(u) du
\left( 1 + \frac{ct}{\sqrt{n}}   \right). 
\end{align*}
 Since $b^{+}=b + 2 \bf{v} n^{-2\ee}$, we have $\int_{0}^{ \max\{ b^+, 0\} } \phi^+(u) du 
 = \int_{0}^{ \max\{ b, 0\} }  \phi^+(u)  du + O\left( n^{-2\ee}\right) $.  
As $n^{1/2-\ee  }\leq t \leq n^{1/2}$, we get
\begin{align}
& \bb P \bigg( \frac{t^+  + \bf{v} B_{n-k-p}}{ \bf{v} \sqrt{n}}\leq b^{+},  
  \tau_{\bf{v}^{-1} t^{+}}^{bm}> n - k - p \bigg)  \notag \\
& \leq  \frac{2t}{\sqrt{2\pi n} \bf{v} } \bigg( \int_{0}^{\max\{b, 0\}} \phi^+(u) du  +  \frac{c}{n^{2\ee}}  \bigg)
\bigg( 1 + \frac{ct}{\sqrt{n}}   \bigg)  \notag \\
& \leq \frac{2t}{\sqrt{2\pi n} \bf{v} }\int_{0}^{\max\{b, 0\}} \phi^+(u) du + \frac{ct^2}{n}.  \label{WK009}
\end{align}
Similarly, we obtain
\begin{align}\label{WK010}
 \bb P \bigg( \frac{t^-   + \bf{v} B_{n-k-p}}{ \bf{v} \sqrt{n}}\leq b^{-},  
  \tau_{\bf{v}^{-1} t^{-}}^{bm}> n - k - p \bigg) 
  \geq
  \frac{2t}{\sqrt{2\pi n} \bf{v} }\int_{0}^{b} \phi^+(u) du - \frac{ct^2}{n}. 
\end{align}
By \eqref{WK003-a}, 
we have 
\begin{align}
\bb P_a (A_{n,a}^c)  
\leq  c n^{-2 \ee} + e^{-\alpha n^{\ee} } \sum_{i = 1}^{n} \mathcal{F}_i(a). 
\label{WK003}
\end{align}
Combining \eqref{WK001}, \eqref{WK002}, \eqref{WK009}, \eqref{WK010} and \eqref{WK003} 
concludes the proof of the lemma. 
\end{proof}

\begin{proof}[Proof of Proposition \ref{Prop-CCLT-001}]
Note that, by \eqref{sigma-Gn-001}, \eqref{def-mu-p-da} and \eqref{def-tilde-theta}, it holds that,
for any $n \geq 1$ and $t \in \bb R$, 
\begin{align*}
 \int_{\bb X}  \bb E \left( \theta(\omega);  \frac{t + \sigma(g_n \cdots g_1, x) }{ \bf{v} \sqrt{n}} \leq b,  \tau_{x, t}^{\mathfrak f} > n \right) \nu(dx)  
 = \int_{\bb A_p} \tilde \theta(a)  I_n(a, t)  \mu_p(da), 
\end{align*}
where, for any $n \geq 1$, $a \in \bb A_p$ and $t \in \bb R$, we have denoted 
\begin{align*}
I_n(a, t): = \mathbb{P}_a
\left( \frac{t + \sum_{i=1}^{n} \sigma_p(\xi_i) }{ \bf{v} \sqrt{n}} \leq b,  \tilde{\tau}_{t}^{\mathfrak f} > n \right). 
\end{align*}
Recalling that the set $B_{n,t}$ is defined by \eqref{def-Bnt-001}, we have 
\begin{align*}
I_n(a, t) & =  \mathbb{P}_a
\bigg( \frac{t + \sum_{i=1}^{n} \sigma_p(\xi_i) }{ \bf{v} \sqrt{n}} \leq b,  
 \tilde{\tau}_{t}^{\mathfrak f} > n,  B_{n, t} \bigg)  \notag\\
& \quad  +  \mathbb{P}_a
\bigg( \frac{t + \sum_{i=1}^{n} \sigma_p(\xi_i) }{ \bf{v} \sqrt{n}} \leq b,  \tilde{\tau}_{t}^{\mathfrak f} > n,  B_{n, t}^c \bigg). 
\end{align*}
By Lemma \ref{Lem-Bnt-001}, the second term is negligible. 
Now let us control the first term which gives the main contribution. 
Similarly to \eqref{Equ-sum-nu-001-a}, 
we have
\begin{align}\label{Equ-sum-nu-001}
& \mathbb{P}_a
\left( \frac{t + \sum_{i=1}^{n} \sigma_p(\xi_i) }{ \bf{v} \sqrt{n}} \leq b,  
 \tilde{\tau}_{t}^{\mathfrak f} > n,  B_{n, t}  \right)   \notag\\
&  = \sum_{k=1}^{[n^{1-\ee}]} 
 \mathbb{P}_a
\left( \frac{t + \sum_{i=1}^{n} \sigma_p(\xi_i) }{ \bf{v} \sqrt{n}} \leq b,  
 \tilde{\tau}_{t}^{\mathfrak f} > n,  B_{n, t, k} \right), 
\end{align}
where $B_{n, t, k}: = B_{n, t} \cap \{ \tilde \nu_{n,t} = k \}$ is a $\mathscr G_k$-measurable set, for any $k \geq p + 1$. 
As in \eqref{Equ-sum-nu-002-a}, we apply the Markov property to get that, for any $1 \leq k \leq [n^{1-\ee}]$, 
\begin{align}\label{Equ-sum-nu-002}
& \mathbb{P}_a
\left( \frac{t + \sum_{i=1}^{n} \sigma_p(\xi_i) }{ \bf{v} \sqrt{n}} \leq b,  
 \tilde{\tau}_{t}^{\mathfrak f} > n,  B_{n, t, k} \right)  \notag\\
& =  \bb E_a  \bigg( I_{n-k} \bigg( \xi_k, t + \sum_{i=1}^{k} \sigma_p(\xi_i) +  \tilde f(\xi_k) - \tilde f(\xi_0) \bigg);
\tilde{\tau}_{t}^{\mathfrak f} > k,  B_{n, t, k}  \bigg). 
\end{align}
Note that on the set $B_{n, t, k}$, the inequalities \eqref{Inequ-t-ee-001-a}, \eqref{Inequ-t-ee-002-a}, 
\eqref{Inequ-t-ee-003-a} and \eqref{Inequ-t-ee-004-a} hold true, 
so that the conditions of Lemma \ref{Lemma Weak Conv} are satisfied. 
Consequently, we apply Lemma \ref{Lemma Weak Conv}, and use
 again \eqref{Inequ-t-ee-003-a} and \eqref{Inequ-t-ee-004-a} to get that, uniformly in $0 \leq k\leq [n^{1-\ee}]$, 
\begin{align*}
& \Bigg| 
I_{n-k} \bigg( \xi_k, t + \sum_{i=1}^{k} \sigma_p(\xi_i) +  \tilde f(\xi_k) - \tilde f(\xi_0) \bigg)  \notag\\
& \qquad - \frac{2 \left( t + \sum_{i=1}^{k} \sigma_p(\xi_i) +  \tilde f(\xi_k) - \tilde f(\xi_0) \right)}{  
 \sqrt{2\pi n} \bf{v} }  \int_{0}^{\max\{b, 0\}} \phi^+(u) du \Bigg|  \notag\\
& \leq c n^{-\ee/2} \frac{t + \sum_{i=1}^{k} \sigma_p(\xi_i) +  \tilde f(\xi_k) - \tilde f(\xi_0)}{\sqrt{n}}  + c e^{- \frac{\alpha}{2} n^{\ee} }  \sum_{i = 1}^{n} \mathcal{F}_i(\xi_k). 
\end{align*}
Substituting this into \eqref{Equ-sum-nu-002} and using \eqref{Equ-sum-nu-001}, we get 
\begin{align}\label{Equ-sum-nu-003}
& \bigg| \mathbb{P}_a
\left( \frac{t + \sum_{i=1}^{n} \sigma_p(\xi_i) }{ \bf{v} \sqrt{n}} \leq b,  
 \tilde{\tau}_{t}^{\mathfrak f} > n,  B_{n, t} \right)  
  -  \frac{2}{  \sqrt{2\pi n} \bf{v} }  \int_{0}^{ \max\{b, 0\} } \phi^+(u) du E_n(a, t)
     \bigg| \notag\\
& \leq  c  n^{-(1+\ee)/2}  E_n(a, t)   +  c e^{- \frac{\alpha}{2} n^{\ee} }
  \bb E_a \bigg(  \sum_{i = 1}^{n} \mathcal{F}_i(\xi_{\tilde{\nu}_{n,t}}); \tilde{\tau}_{t}^{\mathfrak f} > \tilde{\nu}_{n,t},  B_{n, t} \bigg), 
\end{align}
where $E_n(a, t)$ is defined by \eqref{def-En-at-00b}. 
Integrating over $a \in \bb A_p$ with respect to $\mu_p$ in \eqref{Equ-sum-nu-003} 
and using \eqref{inequa-En-at-upper}, we obtain 
\begin{align*}
& \int_{\bb A_p}  \tilde \theta(a) \mathbb{P}_a
\bigg( \frac{t + \sum_{i=1}^{n} \sigma_p(\xi_i) }{ \bf{v} \sqrt{n}} \leq b,  
 \tilde{\tau}_{t}^{\mathfrak f} > n,  B_{n, t} \bigg)  \mu_p(da) \notag\\
& \leq  \bigg( \frac{2}{  \sqrt{2\pi n} \bf{v} }  \int_{0}^{\max\{b, 0\}} \phi^+(u) du +  c  n^{-(1+\ee)/2} \bigg) 
  U_{\eta_{n, t}}^{\mathfrak f, \theta}(t) 
  +  c e^{- \frac{\alpha}{2} n^{\ee} } \|\theta\|_{\infty}  C_{\alpha}(\mathfrak f), 
\end{align*}
where $\eta_{n, t}$ is defined by \eqref{def-eta-nt-001}. 
Applying Proposition \ref{Prop-U-f-t-increase-001} and Lemma \ref{Lem-Bnt-001},
we conclude the proof of the upper bound \eqref{CCLT-001}. 

The proof of the lower bound \eqref{CCLT-002} can be performed in a similar way by using \eqref{lower-bound-En-at-mu-da-001}. 
\end{proof}

\subsection{Approximation through finite-size perturbations}
As outlined in  \cite[Section 6.3]{GQX24a}, we will use the approximation property \eqref{approxim rate for gp-002}
to deduce Theorems \ref{Thm-CCLT-limit-000}, \ref{Thm-inte-exit-time-001} and \ref{Thm-CCLT-limit}
in the general case of arbitrary perturbations from the case of perturbations depending only on finitely many coordinates, 
which was established in Proposition \ref{Prop-CCLT-001-a}, Corollary \ref{Cor-CCLT-001-a} and Proposition \ref{Prop-CCLT-001}.  

We begin by showing an effective version of the conditioned central limit theorem (Theorem \ref{Thm-CCLT-limit}),
which will serve as a crucial foundation for our analysis.

\begin{proposition}\label{Prop-CCLT-bound-001}
Suppose that the cocycle $\sigma$ admits finite exponential moments \eqref{exp mom for f 001}
and is centered \eqref{centering-001}. 
We also suppose that the strong approximation property \eqref{KMTbound001}  is satisfied.
Fix $\beta > 0$ and $B \geq 1$.
Fix $b_0 > 0$. 
Then, for any $\gamma > 0$, there exist constants $c, \kappa, \ee, A >0$ with the following property. 
Assume that $\mathfrak f = (f_n)_{n \geq 0}$ is a sequence of  measurable functions on $\Omega \times \bb X$
satisfying the moment condition \eqref{exp mom for g 002} and the approximation property \eqref{approxim rate for gp-002}
with $C_{\alpha}(\mathfrak f) \leq B$ and $D_{\alpha,\beta}(\mathfrak f) \leq B$. 
Assume that $\theta \in L^{\infty}(\Omega, \bb P)$ satisfies the 
approximation property \eqref{approx property of theta-001} 
with $\|\theta\|_{\infty} \leq B$ and $N_{\beta}(\theta) \leq B$. 
Then, we have, for any $n \geq 1$, $|t| \leq n^{\ee}$ and $b \leq b_0$, 
\begin{align}\label{CCLT-003}
& \int_{\bb X}  \bb E \left( \theta(\omega);  \frac{t + \sigma(g_n \cdots g_1, x) }{ \bf{v} \sqrt{n}} \leq b,  \tau_{x, t}^{\mathfrak f} > n \right) \nu(dx)  \notag\\
&  \leq \bigg( \frac{2}{  \sqrt{2\pi n} \bf{v} }  \int_{0}^{\max\{b, 0\}} \phi^+(u) du  \bigg) 
  U_{n}^{\mathfrak f, \theta}(t + A n^{-\gamma}) 
  +  \frac{c (1 + \max \{t,0\})}{n^{1/2 + \kappa}}    
\end{align}
and 
\begin{align}\label{CCLT-004}
& \int_{\bb X}  \bb E \left( \theta(\omega);  \frac{t + \sigma(g_n \cdots g_1, x) }{ \bf{v} \sqrt{n}} \leq b,  \tau_{x, t}^{\mathfrak f} > n \right) \nu(dx) 
 \notag\\
&  \geq \bigg( \frac{2}{  \sqrt{2\pi n} \bf{v} }  \int_{0}^{\max\{b, 0\}} \phi^+(u) du  \bigg) 
  U_{n}^{\mathfrak f, \theta}(t - A n^{-\gamma}) 
  -  \frac{c (1 + \max \{t,0\} )}{n^{1/2 + \kappa}}. 
\end{align}
\end{proposition}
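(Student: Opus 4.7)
\medskip

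\noindent\textbf{Proof plan for Proposition \ref{Prop-CCLT-bound-001}.}

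The plan is to reduce the general case to the finite-dimensional case already treated in Proposition \ref{Prop-CCLT-001} by an approximation-plus-sandwich argument, using the approximation property \eqref{approxim rate for gp-002} on the perturbations and \eqref{approx property of theta-001} on the weight. Specifically, for an integer $p$ to be chosen of size $n^{\ee'}$ with $\ee' \in (0,\ee)$ small, I set $f_n^{(p)} = \bb E(f_n \mid \scr A_p)$, $\mathfrak f^{(p)} = (f_n^{(p)})_{n \geq 0}$, and $\theta^{(p)} = \bb E(\theta \mid \scr A_p)$. By Jensen, these approximations still satisfy $C_\alpha(\mathfrak f^{(p)}) \leq C_\alpha(\mathfrak f)$, $\|\theta^{(p)}\|_\infty \leq \|\theta\|_\infty$, while by \eqref{approxim rate for gp-002} and \eqref{approx property of theta-001}, the $L^1(e^{\alpha \cdot})$-errors decay like $B e^{-\beta p}$ and $B e^{-\beta p}$ respectively.

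The next step is to introduce the good event
\begin{equation*}
E_{n,p} = \Big\{ \max_{0 \leq k \leq n} |f_k - f_k^{(p)}| \circ T^k \leq \tfrac{1}{2} A n^{-\gamma} \Big\},
\end{equation*}
where $A$ is a parameter to be fixed. By Markov's inequality applied to $e^{\alpha |f_k - f_k^{(p)}|}$, together with \eqref{approxim rate for gp-002}, one gets
$\bb P(E_{n,p}^c) \leq c n \cdot e^{-\tfrac{\alpha A}{2} n^{-\gamma}} B e^{-\beta p},$
which, for the choice $p = \lceil n^{\ee'}\rceil$ with $\ee'>0$ and any prescribed $\gamma$, is bounded by $c n^{-M}$ for any $M$. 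On the event $E_{n,p}$, whenever $t + S^x_k + f_k\circ T^k - f_0 \geq 0$ for all $1 \leq k \leq n$, the same inequality holds for $f^{(p)}$ in place of $f$ once we shift $t$ to $t + A n^{-\gamma}$; conversely, $\{\tau_{x,t}^{\mathfrak f} > n\} \cap E_{n,p} \supset \{\tau_{x,t-A n^{-\gamma}}^{\mathfrak f^{(p)}} > n\} \cap E_{n,p}$ after a similar shift. Combined with the fact that $|\theta - \theta^{(p)}|$ is dominated in $L^1$ by $B e^{-\beta p}$, these sandwich relations express the left-hand side of \eqref{CCLT-003} (resp.\ \eqref{CCLT-004}) in terms of the same quantity with $(\mathfrak f, \theta)$ replaced by $(\mathfrak f^{(p)}, \theta^{(p)})$ and $t$ replaced by $t \pm A n^{-\gamma}$, up to errors of order $\bb P(E_{n,p}^c) + \bb E|\theta - \theta^{(p)}| \leq c n^{-M}$, which are negligible in comparison with the desired $n^{-1/2-\kappa}$.

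Now Proposition \ref{Prop-CCLT-001} applies to $(\mathfrak f^{(p)}, \theta^{(p)})$ with $p \leq n^{\ee'} \leq n^{\ee}$, provided $|t \pm A n^{-\gamma}| \leq n^{\ee}$ (which holds for $n$ large enough since $|t| \leq n^{\ee}$). This gives bounds of the form
\begin{equation*}
\Big( \frac{2}{\sqrt{2\pi n}\bf{v}} \int_0^{\max\{b,0\}} \phi^+(u)\,du \Big) U_n^{\mathfrak f^{(p)}, \theta^{(p)}}(t \pm A n^{-\gamma}) + \frac{c(1 + \max\{t,0\})}{n^{1/2 + \beta}}\,\|\theta\|_\infty C_\alpha(\mathfrak f).
\end{equation*}
It remains to compare $U_n^{\mathfrak f^{(p)}, \theta^{(p)}}$ with $U_n^{\mathfrak f, \theta}$. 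By a direct analogue of the sandwich argument above on the event $E_{n,p}$ applied to the definition \eqref{def-U-f-n-t-theta-001}, and using the fact that $|f_n - f_n^{(p)}|$, $|f_0 - f_0^{(p)}|$ and $|\theta - \theta^{(p)}|$ all have exponentially small $L^1$-norm, one obtains $|U_n^{\mathfrak f^{(p)}, \theta^{(p)}}(t') - U_n^{\mathfrak f, \theta}(t')| \leq c n^{-M}(1 + \max\{t',0\})$; absorbing one further shift of $t'$ by $\pm A n^{-\gamma}$ then yields the right-hand sides of \eqref{CCLT-003} and \eqref{CCLT-004}.

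The main technical obstacle will be in the sandwich step. Specifically, to convert an inequality of the form $t + S^x_k + f_k \circ T^k - f_0 \geq 0$ into one involving $f^{(p)}$, I must be able to uniformly control $|f_k - f_k^{(p)}| \circ T^k$ and $|f_0 - f_0^{(p)}|$ at scale $A n^{-\gamma}$, which is much smaller than the typical fluctuation of these perturbations. This forces the choice $p = n^{\ee'}$ rather than a fixed $p$, and one must then verify that this choice is still compatible with the constraint $p \leq n^{\ee}$ in Proposition \ref{Prop-CCLT-001} (requiring $\ee' < \ee$) and that the remainder $\bb P(E_{n,p}^c)$ decays fast enough to dominate the $n^{-1/2 - \kappa}$ target; both are satisfied thanks to the exponential nature of the approximation bound \eqref{approxim rate for gp-002}, but the bookkeeping between $\gamma$, $\ee$, $\ee'$ and $\kappa$ must be done carefully.
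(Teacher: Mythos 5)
Your overall strategy matches the paper's: approximate $\mathfrak f$ and $\theta$ by their conditional expectations given $\scr A_p$ with $p$ of polynomial size in $n$, sandwich the stopping-time event on a good set where the approximation is uniformly $O(n^{-\gamma})$, invoke Proposition~\ref{Prop-CCLT-001}, and then replace $U_n^{\mathfrak f_p,\theta_p}$ by $U_n^{\mathfrak f,\theta}$. Two points need fixing, one cosmetic and one substantive.

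First, the minor issue. The bound you write, $\bb P(E_{n,p}^c)\leq c\,n\,e^{-\alpha A n^{-\gamma}/2}\,B e^{-\beta p}$, does not follow from ``Markov's inequality applied to $e^{\alpha|f_k-f_k^{(p)}|}$'': that inequality gives $e^{-\alpha A n^{-\gamma}/2}\,\bb E\,e^{\alpha|f_k-f_k^{(p)}|}$, and by \eqref{approxim rate for gp-002} the expectation is only $1+B e^{-\beta p}$, not $B e^{-\beta p}$; the prefactor $e^{-\alpha A n^{-\gamma}/2}$ tends to $1$, so this form of the bound is useless. You need to write $\bb P(|X|>\delta)=\bb P(e^{\alpha|X|}-1>e^{\alpha\delta}-1)\leq (e^{\alpha\delta}-1)^{-1}\bb E(e^{\alpha|X|}-1)\leq c\,n^{\gamma}\,D_{\alpha,\beta}(\mathfrak f)\,e^{-\beta p}$, which gives $\bb P(E_{n,p}^c)\leq c\,n^{1+\gamma}e^{-\beta p}$; this is still superpolynomially small with $p\asymp n^{\ee'}$, so the conclusion you draw is correct, but the intermediate formula is not.

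Second, there is a genuine gap. When you write ``This gives bounds of the form $(\ldots)\,U_n^{\mathfrak f^{(p)},\theta^{(p)}}(t\pm A n^{-\gamma})+\ldots$'', you have applied \eqref{CCLT-001} and \eqref{CCLT-002} of Proposition~\ref{Prop-CCLT-001} as if both output $U_n^{\cdot}$, but the lower bound \eqref{CCLT-002} actually produces $U_{n^{1/2-2\ee}}^{\mathfrak f_p,\theta_p}$ with a strictly smaller index. Converting this to $U_n^{\mathfrak f_p,\theta_p}$ is not a sandwiching step: the preharmonic monotonicity estimate (Proposition~\ref{Prop-U-f-t-increase-001}) only gives $U_{\eta_1}\leq U_{\eta_2}+\mathrm{error}$ for $\eta_1\leq\eta_2$, which goes the wrong way for a lower bound. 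What one actually needs is an effective convergence rate $|U_m^{\mathfrak f,\theta}(t)-U_n^{\mathfrak f,\theta}(t)|\lesssim (1+\max\{t,0\}) m^{-\kappa}$ for $m\leq n$, which is a nontrivial consequence of the Cauchy-sequence analysis behind \eqref{def-U-f-theta-001}; the paper handles this by citing Theorem~4.1 of \cite{GQX24a}. Your proposal silently assumes this without proof. Relatedly, your claim that $|U_n^{\mathfrak f_p,\theta_p}(t')-U_n^{\mathfrak f,\theta}(t')|\leq c\,n^{-M}(1+\max\{t',0\})$ follows ``by a direct analogue of the sandwich argument'' also glosses over the need to control how $U_n^{\mathfrak f,\theta}$ depends on a small shift in $t$ and how to dominate the integrand $(t+S_n^x+f_n\circ T^n-f_0)\theta$ on the exceptional event: the paper does not reprove this inline but instead invokes Proposition~6.8 of \cite{GQX24a}. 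You should either cite those two results from the companion paper explicitly or supply the missing Cauchy-rate argument for $(U_n^{\mathfrak f,\theta})_n$.
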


\begin{proof}
Let $\ee > 0$ be as in Proposition \ref{Prop-CCLT-001} and set $p = [n^{\ee}]$.
For $n \geq 1$ and $x \in \bb X$, define 
\begin{align*}
& E_{n,x} = \bigg\{ \omega \in \Omega:  \max_{0 \leq k \leq n} 
 \left| f_k \circ T^k(\omega, x)  - f_{k,p} \circ T^k(\omega, x) \right| \leq \frac{1}{2} n^{-\gamma}, \notag\\
& \qquad\qquad\qquad\qquad\qquad\qquad\qquad\qquad
 |\theta(\omega) - \theta_p(\omega)| \leq  n^{-1/2 - \gamma} \bigg\},
\end{align*}
where, for $\omega=(g_1,g_2,\ldots) \in \Omega$
and $x \in \bb X$, 
$$f_{k,p}(\omega,x) = \bb E( f_{k}(\cdot, x) | \scr A_p)(\omega)$$
 (see (4.8) in \cite{GQX24a} for details). 
Note that, by Chebyshev's inequality and \eqref{approxim rate for gp-002}, we get 
\begin{align*}
\int_{\bb X} \bb P (E_{n,x}^c) \nu(dx) 
& \leq (n+1) \frac{ e^{-\beta p}  }{e^{\frac{\alpha}{2} n^{-\gamma} } - 1}  D_{\alpha, \beta}(\mathfrak f)
 + e^{-\beta p} n^{1/2 + \gamma} N_{\beta}(\theta)  \notag\\
& \leq  c n^{\gamma + 1} e^{-\beta n^{\ee}} ( D_{\alpha, \beta}(\mathfrak f) + N_{\beta}(\theta) ), 
\end{align*}
where $D_{\alpha, \beta}(\mathfrak f)$ and $N_{\beta}(\theta)$ are defined by \eqref{approxim rate for gp-002}
and \eqref{approx property of theta-001}, respectively. 
For $x \in \bb X$ and $t \in \bb R$, on the set $E_{n,x}$, 
by the definition of $\tau_{x,t}^{\mathfrak f}$ (cf.\ \eqref{def-stop time with preturb-001}), 
we have
$\tau_{x, t}^{\mathfrak f} \leq  \tau_{x, t + n^{-\gamma}}^{\mathfrak f_p}$. 
Therefore, 
\begin{align*}
& \bb E \bigg( \theta(\omega);   \frac{t + \sigma(g_n \cdots g_1, x) }{ \bf{v} \sqrt{n}} \leq b,  \tau_{x, t}^{\mathfrak f} > n \bigg) \notag\\
&\leq  \bb E \bigg( \theta_p(\omega);   \frac{t + n^{-\gamma} + \sigma(g_n \cdots g_1, x) }{ \bf{v} \sqrt{n}} \leq b,  \tau_{t + n^{-\gamma}}^{\mathfrak f_p} > n \bigg) 
 + n^{-1/2 - \gamma} + \|\theta\|_{\infty}  \bb P (E_{n,x}^c).  
\end{align*}
As $C_{\alpha}(\mathfrak f_p) \leq C_{\alpha}(\mathfrak f)$ and $\|\theta_p\|_{\infty} \leq \|\theta\|_{\infty}$, 
by \eqref{CCLT-001} of Proposition \ref{Prop-CCLT-001}, it follows that 
\begin{align*}
& \int_{\bb X} \bb E \left( \theta(\omega);  \frac{t + \sigma(g_n \cdots g_1, x) }{ \bf{v} \sqrt{n}} \leq b,  \tau_{x, t}^{\mathfrak f} > n \right) \nu(dx)  \notag\\
& \leq  \bigg( \frac{2}{  \sqrt{2\pi n} \bf{v} }  \int_{0}^{\max\{b, 0\}} \phi^+(u) du  \bigg) 
  U_{n}^{\mathfrak f_p, \theta_p}(t + n^{-\gamma})  \notag\\
& \quad  +  \frac{c (1 + \max \{t,0\})}{n^{1/2 + \beta}}  \|\theta\|_{\infty} C_{\alpha}(\mathfrak f)  
 + n^{-1/2 - \gamma} 
  +  c n^{\gamma + 1} e^{-\beta n^{\ee}}  \|\theta\|_{\infty} ( D_{\alpha, \beta}(\mathfrak f) + N_{\beta}(\theta) ). 
\end{align*}
Therefore, the upper bound \eqref{CCLT-003} follows from 
the approximation property of $U_{n}^{\mathfrak f, \theta}$ by $U_{n}^{\mathfrak f_p, \theta_p}$ 
established in \cite[Proposition 6.8]{GQX24a}. 

In the same way, by \eqref{CCLT-002} of Proposition \ref{Prop-CCLT-001},
we have
\begin{align*}
& \int_{\bb X}  \bb E \left( \theta(\omega); \frac{t + \sigma(g_n \cdots g_1, x) }{ \bf{v} \sqrt{n}} \leq b,  \tau_{x, t}^{\mathfrak f} > n \right) \nu(dx)  \notag\\
& \geq  \bigg( \frac{2}{  \sqrt{2\pi n} \bf{v} }  \int_{0}^{\max\{b, 0\}} \phi^+(u) du  \bigg) 
  U_{n^{1/2 - 2\ee}}^{\mathfrak f_p, \theta_p}(t - n^{-\gamma})  \notag\\
& \quad  -  \frac{c (1 + \max \{t,0\})}{n^{1/2 + \beta}}  \|\theta\|_{\infty}  C_{\alpha}(\mathfrak f)  
 - n^{-1/2 - \gamma}  
  -  c n^{\gamma + 1} e^{-\beta n^{\ee}} \|\theta\|_{\infty} ( D_{\alpha, \beta}(\mathfrak f) + N_{\beta}(\theta) ). 
\end{align*}
Again, the lower bound \eqref{CCLT-004} follows from \cite[Proposition 6.8]{GQX24a}, 
as well as from the bound on the difference $U_{n^{1/2 - 2\ee}}^{\mathfrak f, \theta} - U_{n}^{\mathfrak f, \theta}$
which is established in \cite[Theorem 4.1]{GQX24a}. 
\end{proof}

Likewise, we derive effective estimates for the persistence probability, as presented in 
Theorems \ref{Thm-CCLT-limit-000} and \ref{Thm-inte-exit-time-001}.

\begin{proposition}\label{Prop-CCLT-bound-001-a}
Suppose that the cocycle $\sigma$ admits finite exponential moments \eqref{exp mom for f 001}
and is centered \eqref{centering-001}. 
We also suppose that the strong approximation property \eqref{KMTbound001}  is satisfied.
Fix $\beta > 0$ and $B \geq 1$.
Then, for any $\gamma > 0$, there exist constants $c, \kappa, \ee, A >0$ with the following property. 
Assume that $\mathfrak f = (f_n)_{n \geq 0}$ is a sequence of  measurable functions on $\Omega \times \bb X$
satisfying the moment condition \eqref{exp mom for g 002} and the approximation property \eqref{approxim rate for gp-002}
with $C_{\alpha}(\mathfrak f) \leq B$ and $D_{\alpha,\beta}(\mathfrak f) \leq B$. 
Assume that $\theta \in L^{\infty}(\Omega, \bb P)$ satisfies the 
approximation property \eqref{approx property of theta-001} 
with $\|\theta\|_{\infty} \leq B$ and $N_{\beta}(\theta) \leq B$. 
Then, we have, for any $n \geq 1$, 
\begin{align*}
 \int_{\bb X}  \bb P \left(  \tau_{x, t}^{\mathfrak f} > n \right) \nu(dx) 
 \leq   \frac{c (1 + \max \{t,0\} )}{\sqrt{n} }, 
\end{align*}
and if $|t| \leq n^{\ee}$, 
\begin{align*}
& \frac{2}{  \sqrt{2\pi n} \bf{v} }   
  U_{n}^{\mathfrak f, \theta}(t - A n^{-\gamma}) 
  -  \frac{c (1 + \max \{t,0\} ) }{n^{1/2 + \kappa}}    \notag\\
&  \leq  \int_{\bb X}  \bb E \left( \theta(\omega);    \tau_{x, t}^{\mathfrak f} > n \right) \nu(dx) 
 \leq  \frac{2}{  \sqrt{2\pi n} \bf{v} }    
  U_{n}^{\mathfrak f, \theta}(t + A n^{-\gamma}) 
  +  \frac{c (1 + \max \{t,0\} )}{n^{1/2 + \kappa}}. 
\end{align*}
\end{proposition}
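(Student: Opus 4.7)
The plan is to obtain Proposition~\ref{Prop-CCLT-bound-001-a} from the corresponding statements for finite-coordinate perturbations---namely Corollary~\ref{Cor-CCLT-001-a} for the uniform bound and Proposition~\ref{Prop-CCLT-001-a} for the two-sided asymptotic---via exactly the same truncation scheme used in the proof of Proposition~\ref{Prop-CCLT-bound-001}. Fix $\gamma>0$, take $\ee>0$ as in Proposition~\ref{Prop-CCLT-001-a}, set $p=[n^{\ee}]$, and introduce the truncated perturbation $\mathfrak f_p=(f_{k,p})_{k\geq 0}$ with $f_{k,p}(\omega,x)=\bb E(f_k(\cdot,x)\mid\scr A_p)(\omega)$, together with the truncated weight $\theta_p=\bb E(\theta\mid\scr A_p)$. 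On the good event
\begin{align*}
E_{n,x}=\bigg\{\max_{0\leq k\leq n}|f_k\circ T^k(\omega,x)-f_{k,p}\circ T^k(\omega,x)|\leq\tfrac12 n^{-\gamma},\ |\theta(\omega)-\theta_p(\omega)|\leq n^{-1/2-\gamma}\bigg\},
\end{align*}
one has the inclusion $\{\tau_{x,t}^{\mathfrak f}>n\}\subseteq\{\tau_{x,t+n^{-\gamma}}^{\mathfrak f_p}>n\}$, while Chebyshev combined with \eqref{approxim rate for gp-002} and \eqref{approx property of theta-001} yields $\int_{\bb X}\bb P(E_{n,x}^c)\nu(dx)\leq c n^{\gamma+1}e^{-\beta n^{\ee}}(D_{\alpha,\beta}(\mathfrak f)+N_\beta(\theta))$, which is negligible at every polynomial rate.

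For the first assertion (the uniform upper bound valid for all $t\in\bb R$), I would apply Corollary~\ref{Cor-CCLT-001-a} to $\mathfrak f_p$ at starting point $t+n^{-\gamma}$: since $C_\alpha(\mathfrak f_p)\leq C_\alpha(\mathfrak f)\leq B$ by Jensen's inequality, this gives $\int_{\bb X}\bb P(\tau_{x,t+n^{-\gamma}}^{\mathfrak f_p}>n)\nu(dx)\leq c(1+\max\{t,0\})/\sqrt{n}$ uniformly in $t$. Combining with the inclusion on $E_{n,x}$ and the negligible error from $E_{n,x}^c$ (bounded crudely by $1$ in probability) yields the first assertion; a small point to verify is that $\max\{t+n^{-\gamma},0\}\leq 1+\max\{t,0\}$ for $n\geq 1$, so the constant absorbs cleanly.

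For the two-sided estimate when $|t|\leq n^{\ee}$, I would apply \eqref{CCLT-001-a} of Proposition~\ref{Prop-CCLT-001-a} to $(\mathfrak f_p,\theta_p)$ at $t+n^{-\gamma}$ for the upper bound and \eqref{CCLT-002-a} at $t-n^{-\gamma}$ for the lower bound, exactly as in the proof of Proposition~\ref{Prop-CCLT-bound-001}. The transition from bounds involving $U^{\mathfrak f_p,\theta_p}$ to bounds involving $U^{\mathfrak f,\theta}$ is then handled by invoking Proposition~6.8 of \cite{GQX24a}, which controls $|U_n^{\mathfrak f,\theta}(t)-U_n^{\mathfrak f_p,\theta_p}(t)|$ by a negative power of $p$ times $(1+\max\{t,0\})$. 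For the lower bound the right-hand side of \eqref{CCLT-002-a} features $U_{n^{1/2-2\ee}}^{\mathfrak f_p,\theta_p}$ rather than $U_n^{\mathfrak f_p,\theta_p}$, and this is the step I expect to be the main obstacle: it must be handled by Theorem~4.1 of \cite{GQX24a} (or equivalently Proposition~\ref{Prop-U-f-t-increase-001}), which bounds $U_n^{\mathfrak f,\theta}-U_{n^{1/2-2\ee}}^{\mathfrak f,\theta}$ by a negative power of $n$ times $(1+\max\{t,0\})$; absorbing the resulting $1/\sqrt n$ prefactor into the $n^{-1/2-\kappa}$ error requires $\kappa$ smaller than the exponent provided by that comparison, but this only forces a quantitative choice of $\kappa$ depending on $\gamma$ and $\ee$, not a qualitative change of strategy.
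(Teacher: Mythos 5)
Your proposal is correct and follows essentially the same route as the paper: the paper's proof of Proposition~\ref{Prop-CCLT-bound-001-a} is a one-line reference to the truncation scheme already laid out in the proof of Proposition~\ref{Prop-CCLT-bound-001}, and you have accurately unpacked that reference, including the good event $E_{n,x}$, the inclusion $\{\tau_{x,t}^{\mathfrak f}>n\}\subseteq\{\tau_{x,t+n^{-\gamma}}^{\mathfrak f_p}>n\}$, the application of Corollary~\ref{Cor-CCLT-001-a} and Proposition~\ref{Prop-CCLT-001-a} to the truncated data, the transfer via Proposition~6.8 of \cite{GQX24a}, and the use of Theorem~4.1 of \cite{GQX24a} to replace $U_{n^{1/2-2\ee}}^{\mathfrak f,\theta}$ by $U_n^{\mathfrak f,\theta}$ in the lower bound. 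The only minor caveat is that Proposition~\ref{Prop-U-f-t-increase-001} as stated requires the stopping times to lie in $[n,n^{\gamma}]$, so to compare $U_{n^{1/2-2\ee}}$ with $U_n$ you must apply it with the baseline integer taken to be $m=[n^{1/2-2\ee}]$ rather than $n$; this is what Theorem~4.1 of \cite{GQX24a} encapsulates, and your main citation is to that result, so the argument stands.
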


\begin{proof}
The proof of the first statement follows from Corollary \ref{Cor-CCLT-001-a},
employing a method analogous to that used in deriving Proposition 
 \ref{Prop-CCLT-bound-001}  from Proposition \ref{Prop-CCLT-001}. 


Similarly, the proof of the second statement is derived from Proposition \ref{Prop-CCLT-001-a}. 
\end{proof}

\begin{proof}[Proof of Theorems \ref{Thm-CCLT-limit-000}, \ref{Thm-inte-exit-time-001} and \ref{Thm-CCLT-limit}]
Under the assumptions of our theorems, by \cite[Corollary 4.2]{GQX24a}, 
the convergence \eqref{def-U-f-theta-001} holds. 
Therefore, 
these theorems are direct consequences of Propositions \ref{Prop-CCLT-bound-001} and \ref{Prop-CCLT-bound-001-a}. 
\end{proof}




\begin{thebibliography}{99}

\bibitem{ABP24} Aoun, R., Brofferio, S., Peign\'e, M.: 
Recurrence of multidimensional affine recursions in the critical case. arXiv:2408.03853, 1--27 (2024).

\bibitem{AS21} Aoun, R., Sert, C.: 
Law of large numbers for the spectral radius of random matrix products. 
\textit{Amer. J. Math.} 
\textbf{143}(3): 995--1010 (2021).


\bibitem{AS22} Aoun, R., Sert, C.: 
Random walks on hyperbolic spaces: Concentration inequalities and probabilistic Tits alternative. 
\textit{Probab. Theory Related Fields} 
\textbf{184}(1), 323--365 (2022). 

\bibitem{AS24a} Aoun, R., Sert, C.: 
Stationary probability measures on projective spaces 1: block-Lyapunov dominated systems. 
\textit{Math. Ann.} 
\textbf{388}(3), 2573--2610 (2024). 

\bibitem{AS24b}
Aoun, R., Sert, C.: 
Stationary probability measures on projective spaces 2: the critical case.
to appear in \textit{Duke Math. J.}
arXiv:2305.02879, 1--15 (2024). 


\bibitem{BQ16} Benoist Y., Quint J. F.: Central limit theorem for linear groups. 
\textit{Ann. Probab.}
\textbf{44}(2), 1308--1340 (2016).

\bibitem{BQ16aa}
Benoist Y., Quint J. F.: 
Central limit theorem on hyperbolic groups. 
\textit{Izv. Math.}
\textbf{80}(1),  3--23 (2016). 


\bibitem{BQ16b} Benoist, Y., Quint, J.-F.:  \textit{Random walks on reductive groups.} 
Springer, Cham, Heidelberg, New York, Dordrecht, London. (2016). 



\bibitem{BertDoney94} Bertoin, J., Doney, R. A.:  On conditioning a random walk to stay nonnegative. 
\textit{Ann. Probab.} \textbf{22}(4), 2152--2167 (1994).


\bibitem{Bolth} Bolthausen, E.: 
On a functional central limit theorem for random walk conditioned to stay positive. 
\textit{Ann. Probab.}  \textbf{4}(3), 480--485 (1976). 


\bibitem{Borovk62}  Borovkov. A. A.:  
New limit theorems for boundary-valued problems for sums of independent terms. 
\textit{Sib. Math. J.}  
\textbf{3}(5), 645--694 (1962).

\bibitem{Bor70}  Borovkov. A. A.: 
Factorization identities and properties of the distribution of the supremum of sequential sums. 
\textit{Theory Probab. Appl.} 
\textbf{15}(3),  359--402 (1970).


\bibitem{Boug-Lacr85} Bougerol, P., Lacroix, J.:  
\textit{Products of Random Matrices with Applications to Sch\"{o}dinger Operators.} 
Birgh\"{a}user, Boston, MA. (1985). 

\bibitem{BMSS22}
Boulanger, A., Mathieu, P., Sert, \c{C}.,  Sisto, A.: 
Large deviations for random walks on hyperbolic spaces. 
\textit{Ann. Sci. \'Ec. Norm. Sup\'er.} 
\textbf{56}(4), 885--944 (2023). 


%

\bibitem{BPP20} Brofferio, S.,  Peign\'e, M.,  Pham, C.:  
On the affine recursion on $\mathbb R_+^d$ in the critical case. 
\textit{ALEA, Lat. Am. J. Probab. Math. Stat.}  
\textbf{18}, 1007--1028 (2023). 


\bibitem{BDGM14} Buraczewski, D., Damek, E., Guivarc'h, Y., Mentemeier, S.: 
On multidimensional Mandelbrot cascades. 
\textit{J. Differ. Equ. Appl.} 
\textbf{20}(11), 1523--1567 (2014). 


\bibitem{Carav05} Caravenna, F.: 
A local limit theorem for random walks conditioned to stay positive. 
\textit{Probab. Theory Related Fields} 
\textbf{133}(4), 508--530 (2005). 

\bibitem{CDJP23}
Cuny, C., Dedecker. J., Merlev\`ede, F., Peligrad, M.: 
Berry--Esseen type bounds for the left random walk on $GL_d (\bb R)$ under polynomial moment conditions. 
\textit{Ann. Probab.}
\textbf{51}(2): 495--523 (2023).


\bibitem{Den Wacht 2008} Denisov, D., Wachtel, V.:  
Conditional limit theorems for ordered random walks, 
\textit{Electron. J. Probab.} 
\textbf{15}(11), 292--322 (2010). 


\bibitem{DW15} Denisov, D., Wachtel, V.: 
Random walks in cones. 
\textit{Ann. Probab.} 
\textbf{43}(3), 992--1044 (2015). 





\bibitem{Don12} Doney, R. A.: Local behaviour of first passage probabilities. 
\emph{Probab. Theory Related Fields} 
\textbf{152}(3-4), 559--588 (2012). 



\bibitem{EichKonig} Eichelsbacher, P., K\"onig, W.: 
Ordered random walks. 
\textit{Electron. J. Probab.} 
\textbf{13}(46), 1307--1336 (2008).


\bibitem{Eppel-1979} Eppel, M.S.: A local limit theorem for the first overshoot. 
\textit{Siberian Math. J.} 
\textbf{20}, 130--138 (1979). 


\bibitem{EP15} Essifi, R., Peign\'e, M.:
Return probabilities for the reflected random walk on $\mathbb N_0$. 
\textit{J. Theoret. Probab.} 
\textbf{28}(1): 231--258 (2015).


\bibitem{EsPeRa2013} Essifi, R., Peign\'e, M., Raschel, K.:  
Some aspects of fluctuations of random walks on $\bb R$ and applications to random walks on $\bb R^+$
with non-elastic reflection at $0$. 
\textit{ALEA, Lat. Am. J. Probab. Math. Stat.} 
\textbf{10}(2), 591--607 (2013).  


\bibitem{Feller} Feller, W.:  
\textit{An Introduction to Probability Theory and Its Applications, Vol. 2}. 
Wiley, New York. (1964). 

  

\bibitem{FK60} 
Furstenberg, H., Kesten, H.: 
Products of random matrices. 
\textit{Ann. Math. Stat.}
\textbf{31}(2), 457--469 (1960).



\bibitem{Gou09} Gou\"ezel, S.: 
Local limit theorem for nonuniformly partially hyperbolic skew-products and Farey sequences. 
\textit{Duke Math. J.}
\textbf{147}(2), 193--284 (2009).




\bibitem{Gou14} Gou\"ezel, S.: 
Local limit theorem for symmetric random walks in Gromov-hyperbolic groups. 
\textit{J. Amer. Math. Soc.}
\textbf{27}(3), 893--928 (2014).


\bibitem{Gou15} Gou\"ezel, S.: 
Martin boundary of random walks with unbounded jumps in hyperbolic groups.
\textit{Ann. Probab.}
\textbf{43}(5), 2374--2404 (2015). 


\bibitem{GM89}
Gol'dsheid I. Y., Margulis G. A.:
 Lyapunov indices of a product of random matrices. 
 \textit{Russian Math. Surveys}
 \textbf{44}(5), 11--71 (1989).


\bibitem{GLL18Ann}  Grama, I., Lauvergnat, R., Le Page, \'{E}.: 
Limit theorems for Markov walks conditioned to stay positive under a spectral gap assumption. 
\textit{Ann. Probab.}
\textbf{46}(4), 1807--1877 (2018). 


\bibitem{GLL20}  Grama, I., Lauvergnat, R., Le Page, \'{E}.: 
Conditioned local limit theorems for random walks defined on finite Markov chains. 
\textit{Probab. Theory Related Fields} 
\textbf{176}(1-2), 669--735 (2020). 



\bibitem{GLL24}  Grama, I., Lauvergnat, R., Le Page, \'{E}.: 
Construction d'un espace de Banach pour le produit de matrices al\'eatoires. 
arXiv:2410.05795. (2024). 



\bibitem{GLP17} Grama, I., Le Page, \'E, Peign\'e, M.: 
Conditioned limit theorems for products of random matrices. 
\textit{Probab. Theory Related Fields} 
\textbf{168}(3-4), 601--639 (2017). 

\bibitem{GXM2024ExtrPos} Grama, I., Mentemeier, S., Xiao, H.: 
The extremal position of a branching random walk on the general linear group. 
to appear in \emph{Ann. Inst. Henri Poincar\'e Probab. Stat.}, 
arXiv:2206.04941, 1--57 (2022). 


\bibitem{GQX23} Grama, I., Quint, J.-F., Xiao, H.:  
Conditioned limit theorems for hyperbolic dynamical systems. 
\textit{Ergodic Theory Dynam. Systems}  
\textbf{44}(1), 50--117 (2024). 

\bibitem{GQX24a} Grama I., Quint, J.-F., Xiao, H.:   
Conditioned random walks on linear groups I: construction of the target harmonic measure. 
arXiv:2410.05812, 1--80 (2024). 


\bibitem{GX2022IID} Grama, I., Xiao, H.: 
Conditioned local limit theorems for random walks on the real line. 
to appear in \textit{Ann. Inst. Henri Poincar\'e Probab. Stat.}, 
arXiv:2110.05123, 1--81 (2021). 


\bibitem{GL16} Guivarc'h, Y., Le Page, \'{E}.: 
Spectral gap properties for linear random walks 
and Pareto's asymptotics for affine stochastic recursions. 
\textit{Ann. Inst. Henri Poincar\'e Probab. Stat.}
\textbf{52}(2), 503--574 (2016).

\bibitem{GR85} Guivarc'h, Y., Raugi, A.: 
Frontiere de Furstenberg, propri\'et\'es de contraction et th\'eor\`emes de convergence. 
\textit{Probab. Theory Related Fields} 
\textbf{69}(2), 187--242 (1985).


\bibitem{Hae84}  Haeusler, E.:  
An exact rate of convergence in the functional central limit theorem 
for special martingale difference arrays. 
\textit{Z. Wahrsch. Verw. Gebiete}
\textbf{65}(4), 523--534 (1984). 




\bibitem{Igle74} Iglehart, D. L.: 
Functional central limit theorems for random walks conditioned to stay positive. 
\textit{Ann. Probab.}  
\textbf{2}(4), 608--619 (1974). 


\bibitem{KV17}  Kersting, G., Vatutin, V. A.:  
Discrete time branching processes in random environment. 
\textit{John Wiley} \& \textit{Sons} (2017). 


\bibitem{Lalley95} Lalley, S. P.:
Return probabilities for random walk on a half-line. 
\textit{J. Theoret. Probab.} 
\textbf{8}(3), 571--599 (1995). 


\bibitem{LeP82}  Le Page, \'E.: 
Th\'eor\`emes limites pour les produits de matrices al\'eatoires. 
  In Probability measures on groups. 
  \textit{Springer Berlin Heidelberg}, 258--303 (1982).


\bibitem{LPPP18} Le Page, \'E., Peign\'e,  M.,  Pham, C.: 
The survival probability of a critical multi-type branching process in i.i.d. random environment, 
\textit{Ann. Probab.}
\textbf{46}(5), 2946--2972 (2018). 


\bibitem{LPPP21} Le Page, \'{E}, Peign\'e, M., Pham, C.: 
Central limit theorem for a critical multitype branching process in random environments.
\textit{Tunis. J. Math.} \textbf{3}(4), 801--842 (2021).  

\bibitem{Levy37} L\'{e}vy, P.: \textit{Th\'eorie de l'addition des variables al\'eatoires.} 
Gauthier-Villars (1937). 


\bibitem{Mentem16} Mentemeier, S.: The fixed points of the multivariate smoothing transform. 
\textit{Probab. Theory Related Fields}
\textbf{164}(1-2), 401--458 (2016). 

\bibitem{PP23}
Peign\'e, M., Pham, C.:  
A conditioned local limit theorem for non-negative random matrices.
\textit{J. Theoret. Probab.} 
\textbf{37}(2), 1882--1901 (2024). 


\bibitem{PP23b}
Peign\'e, M., Pham, C.:   
The survival probability of a weakly subcritical multitype branching process in iid random environment. 
arXiv:2301.06932, 1--19 (2023).


\bibitem{Ser19} 
Sert, C.:  Large deviation principle for random matrix products. 
\textit{Ann. Probab.} 
\textbf{47}(3), 1335--1377 (2019).


\bibitem{Spitzer} Spitzer, F.: \textit{Principles of Random Walk}.
Second edition. Springer (1976).



\bibitem{Var1999} Varopoulos, N. Th.: 
Potential theory in conical domains. 
\textit{Math. Proc. Cambridge Philos. Soc.}, 
\textbf{125}(2), 335--384 (1999).



\bibitem{Var2000} Varopoulos, N. Th.:  Potential theory in conical domains. II. 
\textit{Math. Proc. Cambridge Philos. Soc.} 
\textbf{129}(2), 301--319 (2000). 



\bibitem{VatWacht09} Vatutin, V. A., Wachtel, V.: 
Local probabilities for random walks conditioned to stay positive. 
\textit{Probab. Theory Related Fields} 
\textbf{143}(1-2), 177--217 (2009). 



\end{thebibliography}
\end{document}